\DeclareMathOperator{\dist}{dist}
\DeclareMathOperator{\trace}{trace}
\newtheorem{theorem}{Theorem}[section]
\newtheorem{lemma}[theorem]{Lemma}
\newtheorem{cla}[theorem]{Claim}
\newtheorem{prop}[theorem]{Proposition}
\newtheorem{cor}[theorem]{Corollary}
\theoremstyle{definition}
\newtheorem{defn}[theorem]{Definition}
\newtheorem{rem}[theorem]{Remark}
\newtheorem{obs}[theorem]{Observation}
\newcommand{\Z}{\mathbb Z}
\newcommand{\R}{\mathbb R}
\newcommand{\Prob}{\mathbb P}
\title{Anderson-Bernoulli localization at large disorder on the 2D lattice}
\author{Linjun Li
\thanks{Department of Mathematics, University of Pennsylvania, Philadelphia, PA. e-mail:
 linjun@sas.upenn.edu} 
}
\date{}
\numberwithin{equation}{section}
\begin{document}

\maketitle
\begin{abstract}
We consider the Anderson model at large disorder on $\Z^2$ where the potential has a symmetric Bernoulli distribution. 
We prove that Anderson localization happens outside a small neighborhood of finitely many energies. These finitely many energies are Dirichlet eigenvalues of the minus Laplacian restricted on some finite subsets of $\Z^{2}$.
\end{abstract}

\tableofcontents

\section{Introduction}
\subsection{Main result}
Let $p \in (0,1)$ and $\Bar{V}>0$. Let $V:\Z^d \rightarrow \{0,\Bar{V}\}$ be a random function such that $\{V(a):a \in \Z^{d}\}$ is a family of independent Bernoulli random variables with $\Prob (V(a)=0)=p$ and $\Prob (V(a)=\Bar{V})=1-p$ for each $a\in \Z^{d}$. Let $\Delta$ denote the Laplacian  
\begin{equation}
\Delta u(a) =- 2d u(a)+ \sum_{b \in \Z^d, |a-b|=1}u(b)  ,\; \forall u: \Z^d\rightarrow \mathbb{R}, a \in \Z^d.
\end{equation}
Here and throughout the paper, $|a|=\|a\|_{\infty}$ for $a \in \Z^d$.
We study the spectra property of the random Hamiltonian operator 
\begin{equation}\label{eq:andersonmodel}
    H=-\Delta +V
\end{equation}
when $\Bar{V}$ is large enough.

This model is sometimes called the ``Anderson-Bernoulli model''. It is known that (see e.g. \cite{pastur1980spectral}),  almost surely, the spectrum of $H=-\Delta + V$ is 
\begin{equation}\label{eq:spectrum-interval}
    \sigma(H) = \left[0,4d\right]\cup \left[\Bar{V},\Bar{V}+4d\right]
\end{equation}
which is a union of two disjoint intervals when $\Bar{V}>4d$. Here and throughout the paper, we denote by $\sigma(A)$ the spectrum of a self-adjoint operator $A$. 
Our main theorem is the following

\begin{theorem}[Main theorem]\label{thm:1/2_main}
  Let $d=2$, $p=\frac{1}{2}$. There exist positive integer $n$ and energies $\lambda^{(1)},\lambda^{(2)},\cdots,\lambda^{(n)} \in \left[0,8 \right]$ such that the following holds.
  
  For each $\Bar{V}$ large enough, suppose $\widetilde{\lambda^{(i)}}=\Bar{V}+8-\lambda^{(i)}$ for $i=1,\cdots,n$. Let $$Y_{\Bar{V}} = \bigcup_{i=1}^{n} \left[ \lambda^{(i)}-\Bar{V}^{-\frac{1}{4}},\lambda^{(i)}+\Bar{V}^{-\frac{1}{4}} \right]$$
  and 
  $$\widetilde{Y_{\Bar{V}}}= \bigcup_{i=1}^{n} \left[ \widetilde{\lambda^{(i)}}-\Bar{V}^{-\frac{1}{4}},\widetilde{\lambda^{(i)}}+\Bar{V}^{-\frac{1}{4}} \right].
  $$
  Let $H$ be defined as in \eqref{eq:andersonmodel}.
  Then almost surely, for any $\lambda_{0} \in \sigma(H)\setminus (Y_{\Bar{V}}\cup \widetilde{Y_{\Bar{V}}})$ and $u:\Z^2 \rightarrow \mathbb{R}$, if $H u= \lambda_{0} u$ and
\begin{equation}\label{eq:polynomialbound}
    \inf_{m>0} \sup_{a \in \Z^2} (|a|+1)^{-m}|u(a)|<\infty,
\end{equation} 
then
\begin{equation}\label{eq:exponentialdecaybound}
    \inf_{c>0} \sup_{a \in \Z^2} \exp(c|a|)|u(a)|<\infty.
\end{equation}
\end{theorem}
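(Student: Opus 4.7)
The plan is to run a multiscale analysis (MSA) for $H=-\Delta+V$ on $\mathbb{Z}^2$ in the Bourgain--Kenig / Ding--Smart framework, exploiting the large disorder $\Bar V$ to get a favorable base case. The MSA target is a Green's-function bound on finite boxes $\Lambda_L$ of the form
\[
 \Prob\!\left[\|G_{\Lambda_L}(\lambda_0)\|\le e^{L^{1-\eta}}\ \text{and}\ |G_{\Lambda_L}(\lambda_0)(x,y)|\le e^{-m|x-y|}\ \text{for}\ |x-y|\ge L/10\right]\ge 1-L^{-\tau},
\]
uniform in $\lambda_0$ in a small window around any fixed energy outside $Y_{\Bar V}\cup \widetilde{Y_{\Bar V}}$. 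Once established, a Shnol-type argument combined with Borel--Cantelli upgrades any polynomially bounded generalized eigenfunction \eqref{eq:polynomialbound} into the exponential decay \eqref{eq:exponentialdecaybound}, which is the content of the theorem.

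\textbf{Initial scale and role of large disorder.} For $\lambda_0\in[0,8]$ the base case at scale $L_0=\Bar V^{c_0}$ should reduce, via the Schur complement applied to the $\{V=\Bar V\}$-sites (whose diagonal entries in $H-\lambda_0$ have size $\asymp \Bar V$), to an effective operator on the $\{V=0\}$-sublattice that coincides with the induced-subgraph Laplacian up to a correction of size $O(\Bar V^{-1})$. The bad energies $\lambda^{(1)},\dots,\lambda^{(n)}$ should be eigenvalues of the finitely many small resonant $\{V=0\}$-configurations that occur with positive density; deleting a $\Bar V^{-1/4}$-neighborhood of them in $[0,8]$ leaves a spectral gap $\gg \Bar V^{-1}$ for the effective operator and produces the base-scale Green's-function bound. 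For the upper spectral band, the bipartite conjugation $\epsilon(a)=(-1)^{a_1+a_2}$ combined with the fact that $\Bar V-V$ has the same distribution as $V$ gives the identity $(-\Delta+(\Bar V-V))(\epsilon\psi)=(\Bar V+8-\lambda)(\epsilon\psi)$ whenever $H\psi=\lambda\psi$, which directly translates the lower-band analysis into one on $[\Bar V,\Bar V+8]$ with $\widetilde{\lambda^{(i)}}=\Bar V+8-\lambda^{(i)}$.

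\textbf{Inductive step and main obstacle.} The main obstacle is the standard difficulty with Bernoulli disorder: $V(a)$ is not absolutely continuous, so there is no direct Wegner estimate from spectral averaging. To push the Green's-function bound from scale $L_k$ to $L_{k+1}=L_k^{\alpha}$ for some $\alpha\in(1,2)$, I would follow Bourgain--Kenig and reserve a sparse set of ``free sites'' in each box whose Bernoulli values are used to move any near-resonant eigenvalue away from $\lambda_0$. The quantitative eigenvalue displacement requires a lower bound on eigenfunction mass at the free sites, provided in the 2D lattice setting by the polynomial unique continuation principle exploited by Ding--Smart (tracing back to Buhovsky--Logunov--Malinnikova--Sodin). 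The step I expect to be hardest is consistently balancing the free-site density, the eigenvalue displacement, and the required failure probability $L^{-\tau}$ against the scale ratio $L_k^{\alpha}$, while keeping the exceptional energy set fixed at $Y_{\Bar V}\cup\widetilde{Y_{\Bar V}}$ independently of scale; the polynomial rather than exponential quality of 2D lattice unique continuation makes this balance tight, and this is where I expect most of the technical work of the paper to be concentrated.
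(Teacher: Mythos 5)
Your overall architecture matches the paper's: a box-level Green's function estimate established by multi-scale analysis in the Bourgain--Kenig/Ding--Smart framework (this is the paper's Theorem \ref{thm:resolvent-exponential-decay}), upgraded to localization by the standard Shnol/Borel--Cantelli argument, together with the bipartite conjugation $u\mapsto (-1)^{x+y}u$ and $V\mapsto \Bar V-V$ to transfer the result to the band $[\Bar V,\Bar V+8]$ --- that last step is verbatim the paper's reduction. However, there is a structural gap in your inductive step. The free-site eigenvalue-variation argument you describe is the one from Ding--Smart, and it relies on tracking the $j$-th smallest eigenvalue as a \emph{monotone} function of the potential, with the $j$-th eigenvalue of $H_{Q_L}$ compared to the $j$-th eigenvalue of its perturbation. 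For $\lambda_0$ away from the bottom of the spectrum and $\Bar V$ large, this correspondence breaks: flipping a single site from $0$ to $\Bar V$ ejects one eigenvalue from $[0,8]$ into $[\Bar V,\Bar V+8]$, so the $j$-th eigenvalue of the perturbed operator is at distance $\approx\Bar V$ from the $j$-th eigenvalue of the original, and the tracked eigenvalue near $\lambda_0$ may move either up or down depending on whether the flipped site is ``crossing''. The paper resolves this with a new correspondence of eigenvalue indices built from a ``cutting procedure'' (Definitions \ref{def:cutting} and \ref{def:extended-cutting}), a crossing/non-crossing dichotomy handled via Cauchy interlacing and Lemma \ref{lem:eigen-obstruction}, and a generalized Sperner theorem for products of directed graphs (Lemma \ref{lem:combinatorics}) replacing the subset-lattice Sperner theorem, since monotonicity is lost. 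None of this is in your proposal, and the balancing issue you flag as the hardest point is not the actual obstruction.

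Your base case also needs repair. At scale $L_0=\Bar V^{c_0}$ the $\{V=0\}$ clusters have diameter up to order $\log L_0\to\infty$, so their induced Laplacian spectra are not contained in any fixed finite set, and excluding a $\Bar V^{-1/4}$-neighborhood of finitely many energies cannot create a spectral gap for the effective operator on the whole $\{V=0\}$-sublattice of $Q_{L_0}$. The paper instead fixes a scale $r$ independent of $\Bar V$, uses subcriticality of site percolation at $p=\tfrac12$ (Proposition \ref{sta:sharpness}) to decompose an $r$-dyadic box into admissible $r$-bits, and shows that any eigenvalue of the restricted operator in $[0,8]$ is within $O(r^2/\Bar V)$ of the spectrum of $-\Delta$ on some connected subset of $Q_r$; this is what makes the exceptional set $\{\lambda^{(i)}\}$ finite and independent of both $\Bar V$ and the scale. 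Your Schur-complement heuristic identifies the right resonant objects but does not explain why only configurations of bounded size matter, which is exactly where percolation enters.
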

\begin{rem}
The energies $\lambda^{(i)}$'s are defined in Definition \ref{def:J_eig} below and they do not depend on $\Bar{V}$. In fact, $\lambda^{(i)}$'s are Dirichlet eigenvalues of $-\Delta$ restricted on finite subsets of $\Z^{2}$ and $\widetilde{\lambda^{(i)}}$'s are simply images of $\lambda^{(i)}$'s under the mapping $x \mapsto \Bar{V}+8-x$. 
\end{rem}
\begin{rem}\label{rem:perc-threshold}
Our proof and conclusions in Theorem \ref{thm:1/2_main} extend to $1-p_{c}<p<p_{c}$ where $p_{c}>\frac{1}{2}$ is the site percolation threshold on $\Z^{2}$ (see Section \ref{sec:site-percolation}). $p\in (1-p_{c},p_{c})$ is an essential assumption for our method to prove Theorem \ref{thm:1/2_main} (see Section \ref{sec:wegner-outline} below) and it is an interesting question whether a similar result can be proved for $p\in (0,1-p_{c}]\cup[p_{c},1)$.

For simplicity, throughout this paper, we restrict ourselves to the case where $p=\frac{1}{2}$. 
\end{rem}

Denote $\widehat{\sigma(H)} = \sigma(H)\setminus (Y_{\Bar{V}}\cup \widetilde{Y_{\Bar{V}}})$. The result in Theorem \ref{thm:1/2_main} means any polynomially bounded solution of $H u= \lambda_{0} u$ decreases exponentially provided $\lambda_{0} \in \widehat{\sigma(H)}$. This is sometimes called ``Anderson localization'' (in $\widehat{\sigma(H)}$) and it implies that $H$ has pure point spectrum in $ \widehat{\sigma(H)}$, see e.g. \cite[Section 7]{kirsch2007invitation} by Kirsch.  In the literature of physics, Anderson localization was introduced by Anderson in his seminal paper \cite{anderson1958absence} in which
Anderson said,
\begin{multline}\label{eq:anderson-prediction}
    \textit{\parbox{.85\textwidth}{The theorem is that at sufficiently low densities, transport does not take place; the exact wave functions are localized in a small region of space.}}
\end{multline}
Here, the density refers to the \emph{density of states measure} (\emph{DOS measure}) which is defined as follows.
Given a real number $l \geq 1$ and a vertex $a\in \Z^{2}$, we let 
\begin{equation}\label{eq:def-cube-Q_l}
    Q_{l}(a)=\left\{a' \in \Z^{2}:|a-a'| \leq \frac{l-1}{2}\right\}
\end{equation}
be the box centered at $a$ and define its side length $\ell(Q_{l}(a))=2\lfloor \frac{l-1}{2} \rfloor$. 
We restrict operator $H$ to the box centered at the origin with side length $L$ and denote the (random) empirical distribution of the restricted operator's eigenvalues by $\mu_{L}$. It is known that, almost surely, when $L$ goes to infinity, $\mu_{L}$ converges weakly to some probability measure which is called the DOS measure (see e.g. \cite[Chapter 3]{aizenman2015random} by Aizenman and Warzel). Intuitively, the DOS measure in the interval $[E_{1},E_{2}]$ gives the ``number of states per unit volume'' with energy in $[E_{1},E_{2}]$. The smallness of DOS measure was mathematically verified for several cases, in particular for the following two cases,

\begin{enumerate}
    \item For any nontrivial distribution of $V$, the DOS measure is extremely small near the bottom of the spectrum. This is also called the ``Lifshitz tail phenomenon''. See e.g. \cite[Chapter 4.4]{aizenman2015random} and also \cite[Section 6.2]{kirsch2007invitation}. 
    \item Suppose $V=\delta V_{0}$ where $V_{0}$ has a uniformly H{\"o}lder continuous distribution (see \cite[Definition 4.5]{aizenman2015random}). The DOS measure of any finite interval with a given length becomes uniformly small when the disorder strength $\delta$ increases to infinity. See e.g. \cite[Theorem 4.6]{aizenman2015random}.
\end{enumerate}
In both cases, according to \eqref{eq:anderson-prediction}, one expects Anderson localization to happen in the corresponding spectrum range, namely, near the bottom in the first case and throughout the whole spectrum in the second case. Both cases have been studied extensively and Anderson localization was proved for several distributions of $V$. 

For $V$ with a H{\"o}lder continuous distribution, Anderson localization was proved in both cases in any dimension, namely, near the bottom of the spectrum or throughout the spectrum when the disorder strength is large enough. This was first proved for any distribution with a bounded probability density in \cite{frohlich1983absence},\cite{frohlich1985constructive} by Fr{\"o}hlich, Martinelli, Scoppola, and Spencer. Later on, the multi-scale method in \cite{frohlich1983absence},\cite{frohlich1985constructive} was strengthened to prove the same result for general H{\"o}lder continuous distributions in \cite{carmona1987anderson} by Carmona, Klein, and Martinelli.

As for the Bernoulli potential case, Anderson-Bernoulli localization near the bottom of the spectrum was verified in the continuous model $\mathbb{R}^{d}(d\geq 2)$ by Bourgain and Kenig in \cite{bourgain2005localization}. Their method relies on the unique continuation principle in $\mathbb{R}^{d}$ (\cite[Lemma 3.10]{bourgain2005localization}) and thus can not be directly applied to the discrete model on $\Z^{d}$. Recently, Buhovsky, Logunov, Malinnikova, and Sodin \cite{buhovsky2017discrete} developed a certain discrete version of the unique continuation principle for harmonic functions on $\Z^2$. Inspired by their work, Anderson-Bernoulli localization near the bottom of the spectrum was proved for $d=2$ by Ding and Smart in \cite{ding2020localization}, and for $d=3$ by Zhang and the author in \cite{li2022anderson}. 

For the large Bernoulli potential case (i.e. operator \eqref{eq:andersonmodel} with large $\Bar{V}$), the total length of the spectrum is always $8d$ by equation \eqref{eq:spectrum-interval}. 
When $\Bar{V}$ increases, the DOS measure behaves completely different from the case where $V$ has a H{\"o}lder continuous distribution. 
When $d=2$ and $p=\frac{1}{2}$,
the DOS measure always has a constant lower bound in the sets $Y_{\Bar{V}}$ and $\widetilde{Y_{\Bar{V}}}$ defined in Theorem \ref{thm:1/2_main} for sufficiently large $\Bar{V}$. On the other hand, the DOS measure is constantly small outside $Y_{\Bar{V}}\cup \widetilde{Y_{\Bar{V}}}$. 
Hence, Theorem \ref{thm:1/2_main} is again under the umbrella of prediction \eqref{eq:anderson-prediction}. 

Let us also mention that, although the smallness of DOS implies localization in many cases, the converse is not true. A much stronger result for Anderson localization is expected in dimensions one and two. For one dimension, it is proved that Anderson localization happens throughout the whole spectrum for any nontrivial distribution of $V$ with finite moment (see e.g. \cite{carmona1987anderson}). It is a general belief among physicists that (see e.g. \cite{simon2000schrodinger} by Simon), in dimension two, Anderson localization also happens throughout the whole spectrum for any finite nontrivial distribution of $V$.
Thus it is reasonable to conjecture that, in our model, localization also happens inside $Y_{\Bar{V}}\cup \widetilde{Y_{\Bar{V}}}$ and it is more of a technical limitation that we have to exclude $Y_{\Bar{V}}\cup \widetilde{Y_{\Bar{V}}}$ in the current paper. 

To prove Theorem \ref{thm:1/2_main}, we only need to consider the spectrum of $H$ contained in $[0,8]$ and prove the exponential decaying property of the resolvent as in Theorem \ref{thm:resolvent-exponential-decay} below.

We first set up some notations. 
Given $a \in \Z^2$, we define $\mathbbm{1}_{a}(a)=1$ and $\mathbbm{1}_{a}(a')=0$ if $a' \not = a$. Given $S \subset \Z^2$, an operator $A$ on $\ell^{2}(S)$ and $a,b \in S$, we write $A(a,b)=\left\langle \mathbbm{1}_{a},A \mathbbm{1}_{b} \right\rangle_{\ell^{2}(S)}$ where $\left\langle \cdot , \cdot  \right\rangle_{\ell^{2}(S)}$ denotes the inner product in $\ell^{2}(S)$. We also denote by $\|A\|$ the operator norm of $A$ in $\ell^{2}(S)$.

\begin{theorem}\label{thm:resolvent-exponential-decay}
   Let $d=2$, $p=\frac{1}{2}$. There exist positive integer $n$, constants $\kappa,\alpha,\varepsilon>0$ and energies $\lambda^{(1)},\lambda^{(2)},\cdots,\lambda^{(n)} \in \left[0,8 \right]$ such that the following holds.
  
  For any $\Bar{V}>0$, denote $Y_{\Bar{V}} = \bigcup_{i=1}^{n} \left[ \lambda^{(i)}-\Bar{V}^{-\frac{1}{4}},\lambda^{(i)}+\Bar{V}^{-\frac{1}{4}} \right]$. Let $H$ be defined as in \eqref{eq:andersonmodel}. Then for each $\Bar{V},L>\alpha$, each $\lambda_{0} \in [0,8]\setminus Y_{\Bar{V}}$ and each box $Q \subset \Z^2$ of side length $L$,
      \begin{equation}\label{eq:resolvent-exponentially-decaying}
          \Prob\left[|(H_{Q}-\lambda_{0})^{-1}(a,b)| \leq \Bar{V}^{L^{1-\varepsilon}-\varepsilon |a-b|} \text{ for $a,b \in Q$}\right]\geq 1-L^{-\kappa}.
      \end{equation}

\end{theorem}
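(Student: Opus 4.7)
The plan is to combine a Schur-complement reduction to the zero sublattice with a multi-scale analysis in the spirit of Fr\"ohlich-Spencer, using the subcriticality of site percolation at $p=1/2$ on $\Z^2$ to obtain strong initial estimates. The key structural observation is that for $\lambda_{0}\in[0,8]$ and $\Bar{V}$ large, the sites $B:=\{a\in Q:V(a)=\Bar{V}\}$ carry a diagonal entry $\Bar{V}+4-\lambda_0\asymp \Bar{V}$ in $H_Q-\lambda_0$, so the $B\times B$ block $D$ of $H_Q-\lambda_0$ is dominated by $\Bar{V}\cdot I$ and satisfies $|D^{-1}(c,c')|\lesssim \Bar{V}^{-1-\dist_B(c,c')}$. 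A Schur complement then reduces inversion of $H_Q-\lambda_0$ to inversion of an effective operator $M=A-CD^{-1}C^{*}$ on $Z:=Q\setminus B$, where $A$ is block-diagonal over the connected components of $Z$ with each block equal to the Dirichlet Laplacian $-\Delta_C-\lambda_0 I$, and $CD^{-1}C^{*}$ is a perturbation of size $O(\Bar{V}^{-1})$ that couples different $Z$-components only via chains of $B$-sites with exponentially decaying weight.

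Since $p=1/2$ lies strictly below the site-percolation threshold on $\Z^2$, the components of $Z$ are almost surely finite with exponential tails on their diameter. I would fix a large constant $R$ and let $\mathcal{F}_R$ be the finite family of isomorphism classes of connected lattice sets of diameter at most $R$, then define $\{\lambda^{(1)},\dots,\lambda^{(n)}\}\subset[0,8]$ to be the union of the spectra of $-\Delta_C$ for $C\in\mathcal{F}_R$. For $\lambda_0\notin Y_{\Bar{V}}$, every such cluster satisfies $\dist(\lambda_0,\sigma(-\Delta_C))\geq \Bar{V}^{-1/4}$, hence $\|(-\Delta_C-\lambda_0)^{-1}\|\leq \Bar{V}^{1/4}$. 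Expanding $M^{-1}$ around the block diagonal $A^{-1}$ and propagating the result through the Schur formula then gives a pointwise bound $|(H_Q-\lambda_0)^{-1}(a,b)|\lesssim \Bar{V}^{-c|a-b|}$ on any box whose $Z$-components all have diameter at most $R$.

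The multi-scale induction then proceeds as follows. At each scale $L$, call a subsquare $Q'\subset Q$ of side $L$ \emph{good} if every $Z$-component intersecting $Q'$ has diameter at most $R_L:=(\log L)^{2}$; this holds with probability $\geq 1-L^{-\kappa}$ by subcritical-percolation tails. On good subsquares the preceding bound combined with the geometric resolvent identity yields the desired decay $\Bar{V}^{-\varepsilon|a-b|}$. Bad subsquares at each scale are rare enough that by a union bound over scales their union intersects any given line segment in a set of size $\ll L^{1-\varepsilon}$, contributing only the slack $\Bar{V}^{L^{1-\varepsilon}}$ permitted by the theorem, and the induction increases the scale geometrically while keeping the failure probability summable.

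The main obstacle is the Wegner-type estimate on scales where a single $Z$-component spans the whole subsquare $Q'$, since its effective spectrum depends on many Bernoulli variables and is not captured by the a priori finite list $\{\lambda^{(i)}\}$. Here the plan is to invoke a discrete unique continuation principle for approximate eigenfunctions of $-\Delta$ on $\Z^2$, of the type developed by Buhovsky-Logunov-Malinnikova-Sodin and used by Ding-Smart: resampling the Bernoulli values on a carefully chosen free layer near $\partial Q'$ and combining with a Sperner-type argument gives that, with probability $\geq 1-L^{-\kappa}$, a single Bernoulli flip moves the spanning eigenvalue by at least $\Bar{V}^{-1/2}$ away from $\lambda_0$. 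The large disorder regime simplifies this compared with the near-edge case, because the relevant approximate eigenfunctions are already well-localized by the Dirichlet structure of the spanning cluster, so only a local (not global) unique continuation is needed to supply the Wegner input that closes the induction.
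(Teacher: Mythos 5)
Your overall architecture (subcriticality of the zero set at $p=\tfrac12$, a finite exceptional list built from Dirichlet Laplacian spectra of small clusters, a Wegner input from unique continuation plus a Sperner argument, and a multi-scale induction) is the same as the paper's, and the Schur-complement reduction is a reasonable substitute for the paper's direct resolvent-identity computation at the initial scale. However, there is a genuine gap in how you handle zero-clusters of intermediate size, and this gap is precisely where the bulk of the paper's work lives.

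Your deterministic bound $\|(-\Delta_C-\lambda_0)^{-1}\|\leq \Bar V^{1/4}$ holds only for clusters $C$ of diameter at most the \emph{fixed} constant $R$ used to define $\{\lambda^{(i)}\}$; it says nothing about clusters of diameter in $(R,(\log L)^2]$, whose Dirichlet spectra are not contained in the finite list and can sit arbitrarily close to $\lambda_0$. Such clusters occur with constant density $\sim e^{-cR}$ per site, so a box of side $L$ contains order $L^2e^{-cR}$ of them once $L\gg e^{cR/2}$; they are neither rare defects nor covered by your ``good square'' definition, and each one can by itself destroy the claimed resolvent bound. (You cannot fix this by letting $R$ grow with $L$, since the union of cluster spectra becomes dense in $[0,8]$ and the theorem requires a fixed finite exceptional set.) Consequently a probabilistic Wegner estimate is needed for \emph{every} box at every scale beyond the initial one — not only for the exponentially unlikely event that a single component spans $Q'$, which is where your proposal places it. The paper's Wegner estimate (Proposition \ref{prop:Wegner}) is exactly this missing ingredient, and it is not a routine adaptation: because a single Bernoulli flip from $0$ to $\Bar V$ ejects an eigenvalue from $[0,8]$ to $[\Bar V,\Bar V+8]$, eigenvalues are no longer monotone in the potential under the natural indexing, which forces the cutting procedure of Definitions \ref{def:cutting} and \ref{def:extended-cutting}, the reindexing of eigenvalues in Claim \ref{cla:total-number-bad-eigenvalues}, the separate treatment of crossing and non-crossing sites via Lemmas \ref{lem:eigenvariation} and \ref{lem:eigen-obstruction}, and the generalized Sperner theorem for directed graph products (Lemma \ref{lem:combinatorics}). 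Your remark that ``only a local unique continuation is needed'' also understates the input: the argument requires the quantitative global estimate of Theorem \ref{thm:unique-continuation-intro}, giving $|u|\geq(\Bar VL)^{-\alpha L}$ on an $\Omega(L^2)$ fraction of the box, whose proof occupies Section \ref{sec:proof-Lemma-3.5}.
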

Here $H_{Q}:\ell^{2}(Q)\rightarrow \ell^{2}(Q)$ is the restriction of Hamiltonian $H$ to the box $Q$ with the Dirichlet boundary condition.
\begin{proof}[Proof of Theorem \ref{thm:1/2_main} assuming Theorem \ref{thm:resolvent-exponential-decay}]

Probability estimate \eqref{eq:resolvent-exponentially-decaying} with the arguments in \cite[Section 7]{bourgain2005localization} implies that Anderson localization happens in $ [0,8]\setminus Y_{\Bar{V}}$. See also \cite[Section 6,7]{germinet2011comprehensive} by Germinet and Klein.

Now we use symmetry to prove Anderson localization for spectrum range $$[\Bar{V},\Bar{V}+8]\setminus \bigcup_{i=1}^{n} \left[ \widetilde{\lambda^{(i)}}-\Bar{V}^{-\frac{1}{4}},\widetilde{\lambda^{(i)}}+\Bar{V}^{-\frac{1}{4}} \right],$$ where $\widetilde{\lambda^{(i)}}=\Bar{V}+8-\lambda^{(i)}$. Define $\Tilde{V}:\Z^2 \rightarrow \{0,\Bar{V}\}$ by $\Tilde{V}(a)=\Bar{V}-V(a)(a \in \Z^2)$ and let $\Tilde{H}=-\Delta + \Tilde{V}$. Let $\Tilde{\lambda}=\Bar{V}+8-\lambda$ for every $\lambda \in \mathbb{R}$. For each $u:\Z^{2} \rightarrow \mathbb{R}$, define $\Tilde{u}:\Z^2 \rightarrow \mathbb{R}$ by $\Tilde{u}(x,y)=(-1)^{x+y}u(x,y)$ for $x,y \in \Z$. This gives a bijection $u \mapsto \Tilde{u}$ from functions on $\Z^2$ to themselves. The properties \eqref{eq:polynomialbound} and \eqref{eq:exponentialdecaybound} in Theorem \ref{thm:1/2_main} are preserved under this bijection. Moreover, by direct calculations, we have 
\begin{equation}
    H u= \lambda u \text{\; if and only if \;} \Tilde{H}\Tilde{u}=\Tilde{\lambda} \Tilde{u}.  
\end{equation}

Since $\Tilde{H}$ has the same distribution as $H$, Anderson localization happens in $\{\Tilde{\lambda}:\lambda \in \left[0,8 \right]\setminus Y_{\Bar{V}}\}=[\Bar{V},\Bar{V}+8]\setminus \bigcup_{i=1}^{n} \left[ \widetilde{\lambda^{(i)}}-\Bar{V}^{-\frac{1}{4}},\widetilde{\lambda^{(i)}}+\Bar{V}^{-\frac{1}{4}} \right]$. Theorem $\ref{thm:1/2_main}$ follows. 
\end{proof}

\subsection{Outline}\label{sec:wegner-outline}
To prove localization, \cite{ding2020localization} and \cite{bourgain2005localization} used a multi-scale analysis to prove an estimate similar to \eqref{eq:resolvent-exponentially-decaying}. These two previous works considered the edge of the spectrum where the Lifshitz tail phenomenon happens and used this phenomenon to prove the initial step of the induction in the multi-scale analysis. Then they used an eigenvalue variation argument to prove the Wegner estimate which is crucial to the inductive steps.
The key to the eigenvalue variation argument is the unique continuation principle (see \cite[Theorem 1.6]{ding2020localization} and \cite[Lemma 3.10]{bourgain2005localization}).

The current paper follows the multi-scale analysis framework in \cite{ding2020localization} and \cite{bourgain2005localization}, and studies the spectrum range beyond the edge by taking advantage of site percolation (see Section \ref{sec:site-percolation} and \cite[Chapter 1.6]{grimmett1999percolation} for the former definition of site percolation). Informally, the condition $p=\frac{1}{2}$ (or more generally, $p \in (1-p_{c},p_{c})$) implies that the sites with the same potential rarely form large connected components (Proposition \ref{sta:sharpness}). By this fact, the initial scale case (Proposition \ref{prop:L_1good}) for the multi-scale analysis is proved
for energies away from $\lambda^{(i)}$'s which are eigenvalues of the minus Laplacian restricted on small finite subsets of $\Z^{2}$ (thus away from $\lambda^{(i)}$'s, the DOS measure is small). When $p \in (p_{c},1)$, the subset $\{a \in \Z^{2}:V(a) = 0\}$ contains an infinite connected component (see \cite[Chapter 1.6]{grimmett1999percolation}) on which it is not obvious whether the resolvent decays exponentially or not. By symmetry, the same is true for the subset $\{a\in\Z^{2}:V(a) = \Bar{V}\}$ in the case where $p \in (0,1-p_{c})$. The critical cases where $p \in \{1-p_{c},p_{c}\}$ are more subtle.  

The most important and difficult part for the induction of the multi-scale analysis is to prove the Wegner estimate (Proposition \ref{prop:Wegner}) which indicates log-H\"{o}lder continuity of the DOS measure (see e.g. \cite[Section 6]{bourgain2005anderson}). Our Wegner estimate states that, for an interval of length less than $O(\Bar{V}^{-L^{1-\varepsilon'}})$, the probability that it contains an eigenvalue of $H_{Q_{L}}$ is less than $O(L^{-\kappa'})$ for some $\kappa',\varepsilon'>0$. 

To prove the Wegner estimate, we prove an upper bound and a lower bound on how far an eigenvalue of $H_{Q_{L}}$ will move after perturbing the potential function $V$. Here, ``perturb'' means changing the value of $V$ at some vertices from $0$ to $\Bar{V}$ or $\Bar{V}$ to $0$. 

The upper bound estimate requires to show that if the $j$-th smallest eigenvalue is close to a given real number $\lambda_{0}$, then one can perturb the potential $V$ on a $(1-\varepsilon)$ portion of $Q_{L}$ such that the $j$-th smallest eigenvalue will not move too far (less than $O(\Bar{V}^{-L^{1-\varepsilon''}})$ with $\varepsilon''>\varepsilon'$). Here and throughout this section, when we say the $j$-th smallest eigenvalue, we always count with multiplicity.  While this upper bound estimate was proved for $\lambda_{0}$ near the bottom of the spectrum in \cite{ding2020localization}, it is simply not true for $\lambda_{0}$ away from the bottom. For example, suppose $H_{Q_{L}}$ has $k>0$ eigenvalues (with multiplicities) in $[0,8]$. Pick an arbitrary $a\in Q_{L}$ with $V(a)=0$ and let the perturbed operator $H'_{Q_{L}}$ be obtained by changing the potential $V$ from $0$ to $\Bar{V}$ only at vertex $a$. It can be shown that the $k$-th smallest eigenvalue of $H'_{Q_{L}}$ is in $[\Bar{V},\Bar{V}+8]$ and thus is far from the $k$-th smallest eigenvalue of $H_{Q_{L}}$ which is in $[0,8]$. Hence we can not expect the upper bound estimate to hold in its original version.

It turns out that a different version of the upper bound estimate still holds. In that version, we will not compare the $j$-th smallest eigenvalue of an operator with the $j$-th smallest eigenvalue of its perturbation. We will make another correspondence between the eigenvalues of an operator and the eigenvalues of its perturbation. To clarify, in the previous example, the $k$-th eigenvalue of $H_{Q_{L}}$ will actually correspond to the $(k-1)$-th eigenvalue of $H'_{Q_{L}}$ and the distance between these two eigenvalues will be shown to be small, provided one of them is close to $\lambda_{0}$. 
To rigorously find the correspondence between eigenvalues of an operator and eigenvalues of its perturbation, we will introduce the \emph{``cutting procedure''} which continuously ``transforms'' the operator $H_{Q_{L}}$ (and $H'_{Q_{L}}$) to a direct sum operator $\bigoplus_{i} H_{\Lambda_{i}}$ (and $\bigoplus_{i} H'_{\Lambda_{i}}$) respectively. Here, $\bigcup_{i} \Lambda_{i}=Q_{L}$ is a disjoint union.
The $j$-th eigenvalue of the operator $H_{Q_{L}}$ corresponds to $j'$-th eigenvalue of $H'_{Q_{L}}$ only if the $j$-th eigenfunction of $\bigoplus_{i} H_{\Lambda_{i}}$ equals the $j'$-th eigenfunction of $\bigoplus_{i} H'_{\Lambda_{i}}$. Under this correspondence of eigenvalues, the upper bound estimate is stated as Claim \ref{cla:total-number-bad-eigenvalues}. 
The formal definition of the cutting procedure is given in Definition \ref{def:cutting} and \ref{def:extended-cutting} by using percolation clusters. 

The lower bound estimate requires to show that there is an enough portion of points in $Q_{L}$ such that, when the potential increases on any of these points, a given eigenvalue will move a decent distance (at least $\Omega(\Bar{V}^{-L^{1-\varepsilon'}})$). Based on the heuristic that increasing the potential at vertices where an eigenfunction $u$ has large absolute values will increase the associated eigenvalue fast, one only needs to show that the eigenfunction $u$ has a decent lower bound on an enough portion of points in $Q_{L}$. This is guaranteed by a discrete version of the unique continuation principle Theorem \ref{thm:unique-continuation-intro} which is analog of \cite[Theorem 1.6]{ding2020localization}.
However, under the new correspondence of eigenvalues, the $j$-th eigenvalue of $H_{Q_{L}}$ may correspond to either the $j$-th eigenvalue or the $(j-1)$-th eigenvalue of the perturbation $H'_{Q_{L}}$ (here $H'_{Q_{L}}$ is obtained from $H_{Q_{L}}$ by changing the potential $V$ from $0$ to $\Bar{V}$ only at one vertex). If it corresponds to the $j$-th eigenvalue of $H'_{Q_{L}}$, then by monotonicity, the eigenvalue will increase. Otherwise, if it corresponds to the $(j-1)$-th eigenvalue of $H'_{Q_{L}}$, then by Cauchy interlacing theorem, the eigenvalue will decrease. Either way, the lower bound estimate can be proved for rank one perturbation, provided we can have a quantitative estimate on the difference. This is considered in Lemma \ref{lem:eigenvariation} and Lemma \ref{lem:eigen-obstruction}.

To have a polynomial bound on the probability (i.e. the left-hand side of \eqref{eq:resolvent-exponentially-decaying}), we need to consider the perturbation on a large set of vertices rather than only one vertex. For this purpose, the previous works \cite{ding2020localization} and \cite{bourgain2005localization} used the Sperner lemma which deals with monotone functions. However,
as seen in the argument above, under the new correspondence, the eigenvalue is no longer a monotone function of the potential. Thus the original Sperner lemma (\cite[Theorem 4.2]{ding2020localization}) can not be applied to our case. Instead, we generalize Sperner lemma to deal with directed graph products and prove Lemma \ref{lem:combinatorics} which is another new ingredient. The original Sperner lemma (\cite[Theorem 4.2]{ding2020localization}) can be seen as a special case of Lemma \ref{lem:combinatorics} when each directed graph consists of two vertices and one directed edge. The details are given in Section \ref{sec:Sperner-theorem}.
\subsection{Discrete unique continuation principle}\label{sec:unique-continuation-intro}
Let us denote $Q_{l}=Q_{l}(\mathbf{0})$. Given a real number $k>0$, we write $k Q_{l}(a)=Q_{kl}(a)$. 
The discrete unique continuation principle is the following
\begin{theorem}\label{thm:unique-continuation-intro}
  For every small $\varepsilon>0$, there exists $\alpha>1$ such that the following holds. If $\lambda_{0}\in [0,8]$ is an energy, $\Bar{V}\geq 2$ and $Q\subset\Z^2$ is a box of side length $L\geq \alpha$, then $\Prob[\mathcal{E}]\geq 1-\exp(-\varepsilon L^{\frac{2}{3}})$, where $\mathcal{E}$ denotes the event that
  \begin{equation}\label{eq:unique_cont_support}
      \left|\left\{a\in Q: |u(a)|\geq (\Bar{V}L)^{-\alpha L}\|u\|_{\ell^{\infty}(\frac{Q}{100})}\right\}\right|\geq \varepsilon^{3} L^{2}
  \end{equation}
  holds whenever $\lambda\in \R$, $u:\Z^{2}\rightarrow \R$, $|\lambda-\lambda_{0}|\leq (\Bar{V}L)^{-\alpha L}$ and $H u=\lambda u$ in $Q$.
\end{theorem}
 In fact, the multi-scale analysis framework requires us to prove a slightly stronger version, Lemma \ref{lem:unique-continuation}, which accommodates a sparse ``frozen set''. An important feature of Theorem \ref{thm:unique-continuation-intro} and Lemma \ref{lem:unique-continuation} is that the probability estimate does not depend on $\Bar{V}$. This feature is one of the major reasons why the set of energies $\lambda^{(i)}$'s in Theorem \ref{thm:1/2_main} does not grow when $\Bar{V}$ increases.

Theorem \ref{thm:unique-continuation-intro} generalizes \cite[Theorem 1.6]{ding2020localization} to deal with large $\Bar{V}$ and proves a $\Omega(L^{2})$ lower bound on the cardinality of the support of $u$. This improves the previous $\Omega(L^{\frac{3}{2}}(\log L)^{-\frac{1}{2}})$ lower bound in \cite[Theorem 1.6]{ding2020localization}. The price we pay is that the energy window needs to be $O((\Bar{V}L)^{-\alpha L})$ (while it was $O(\exp(-\alpha(L\log L)^{\frac{1}{2}}))$ in \cite{ding2020localization}).

We refer the reader to the beginning of Section \ref{sec:proof-Lemma-3.5} for a proof outline and a comparison between proofs of \cite[Theorem 1.6]{ding2020localization} and Theorem \ref{thm:unique-continuation-intro}. Here, we only mention
the main new ingredient Lemma \ref{lem:Boolean-cube} which is proved in Section \ref{sec:auxiliary-boolean}. In fact, a weaker form of Lemma \ref{lem:Boolean-cube} will suffice for the proof of Theorem \ref{thm:unique-continuation-intro}.
\begin{lemma}\label{lem:Boolean-cube}
Given positive integers $k<n$, we denote the $n$ dimensional Boolean cube by $B^{n}=\left\{(x_{1},x_{2},\cdots,x_{n})\in \mathbb{R}^{n}:x_{i}\in \{0,1\} \text{ for each $1\leq i\leq n$}\right\}$. 
Then for any $k$ dimensional affine space $\Gamma\subset \mathbb{R}^{n}$, 
\begin{equation}\label{eq:number-of-intersection}
    \#\{a \in B^{n}: \min_{b\in \Gamma} \|a-b\|_{2}< \frac{1}{4} n^{-\frac{1}{2}} (n-k)^{-\frac{1}{2}} \} \leq 2^{k+1}.
\end{equation}
\end{lemma}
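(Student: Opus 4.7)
The plan is to bound $|S|$, with $S=\{a\in B^n:\min_{b\in\Gamma}\|a-b\|_2<r\}$ and $r=\tfrac14 n^{-1/2}(n-k)^{-1/2}$, by reducing the problem to a dimension bound on the affine hull of $S$ and then invoking a standard Boolean-subcube count. The enabling geometric fact is that any $\ell$-dimensional affine subspace $\Gamma'\subseteq\R^n$ satisfies $|B^n\cap\Gamma'|\leq 2^\ell$: pick a non-vanishing $\ell\times\ell$ minor in a basis matrix of the direction of $\Gamma'$ and restrict to those $\ell$ coordinates; this projection is an injection $B^n\cap\Gamma'\hookrightarrow\{0,1\}^\ell$. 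So it would suffice to prove $\dim(\mathrm{aff}(S))\leq k+1$, after which applying the fact to $\mathrm{aff}(S)$ itself would give $|S|\leq 2^{k+1}$.

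For the setup, let $V$ be the $k$-dimensional linear subspace parallel to $\Gamma$, $W=V^\perp$, and $P_V,P_W$ the corresponding orthogonal projections. For any $p\in\Gamma$ one has $\min_{b\in\Gamma}\|a-b\|_2=\|P_W(a-p)\|_2$, so that the condition defining $S$ becomes $\|P_W(a-p)\|_2<r$. The dimension bound would be proved by contradiction: assume $\dim(\mathrm{aff}(S))\geq k+2$. Then one can select $s_0\in S$ and linearly independent $u_1,\ldots,u_{k+2}\in S-s_0\subseteq\{-1,0,1\}^n$, each of which satisfies $\|P_Wu_i\|_2<2r$ by the triangle inequality. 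Since the $k+2$ vectors $P_Vu_1,\ldots,P_Vu_{k+2}$ lie in the $k$-dimensional space $V$, the kernel of the linear map $c\mapsto\sum_i c_i P_V u_i$ from $\R^{k+2}$ to $V$ has dimension at least $2$. For any nonzero $c$ in this kernel, $w:=\sum_i c_i u_i=\sum_i c_i P_W u_i$ lies in $W$, is nonzero by linear independence of the $u_i$, and satisfies $\|w\|_2\leq\bigl(\sum_i\|P_Wu_i\|_2^2\bigr)^{1/2}\|c\|_2<2r\sqrt{k+2}\,\|c\|_2$.

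The main obstacle is converting this short nonzero $w\in W$ into a contradiction. The strategy is to exploit the combinatorial structure: each coordinate $w_j=\sum_i c_i u_{i,j}$ is a signed sum of the $c_i$ with signs in $\{-1,0,1\}$, so $|w_j|\leq\|c\|_1$, and by choosing $c$ inside the at-least-two-dimensional kernel so as to align with the vector $(u_{1,j},\ldots,u_{k+2,j})\in\{-1,0,1\}^{k+2}$ for some coordinate $j$, one hopes to force some $|w_j|$ to be comparable to $\|c\|_\infty$. Together with $\|c\|_2\leq\sqrt{k+2}\|c\|_\infty$, this would give a lower bound $\|w\|_2\gtrsim\|c\|_2/\sqrt{k+2}$, to be compared with the upper bound $2r\sqrt{k+2}\|c\|_2$. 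With the explicit value $r=\tfrac14 n^{-1/2}(n-k)^{-1/2}$, the resulting numerical inequality $(k+2)\lesssim\sqrt{n(n-k)}$ provides the desired contradiction, yielding $\dim(\mathrm{aff}(S))\leq k+1$ and hence $|S|\leq 2^{k+1}$. The delicate quantitative step — the choice of $c$ in the kernel and the selection of a coordinate $j$ along which the signed sum stays bounded away from cancellation — is the heart of the argument and where the specific form of $r$ enters.
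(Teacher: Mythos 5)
Your reduction --- bound $\dim(\mathrm{aff}(S))$ by $k+1$ and then use the fact that an $\ell$-dimensional affine subspace meets $B^{n}$ in at most $2^{\ell}$ points --- is a genuinely different strategy from the paper's, and the setup steps (passing to differences $u_{i}\in\{-1,0,1\}^{n}$ with $\|P_{W}u_{i}\|_{2}<2r$, and producing a two-dimensional kernel $K$ of $c\mapsto\sum_{i}c_{i}P_{V}u_{i}$) are correct. But the proof has a gap exactly where you flag it, and the mechanism you sketch for closing it does not work. Writing $U$ for the $n\times(k+2)$ matrix with columns $u_{i}$, what you need is a nonzero $c\in K$ with $\|Uc\|_{2}\gtrsim\|c\|_{2}/\sqrt{k+2}$. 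Your proposal is to pick $c\in K$ ``aligned'' with some row of $U$ so that some coordinate $|w_{j}|=|\langle c,\rho_{j}\rangle|$ is large; but the coordinates of $w=Uc$ \emph{are} the inner products of $c$ with the rows of $U$, so this is precisely the assertion $\|Uc\|_{\infty}\gtrsim\|c\|_{2}$ that you are trying to prove --- the argument is circular. There is no general reason a two-dimensional subspace of $\mathbb{R}^{k+2}$ must contain such a $c$: by Courant--Fischer you would need a lower bound on the second-smallest eigenvalue of $U^{\dag}U$ of order $(n(n-k))^{-1}$, and the only generic bound available for $\{-1,0,1\}$ matrices with independent columns ($\det(U^{\dag}U)\geq 1$ together with $\lambda_{\max}\leq n(k+2)$) gives $\lambda_{1}\lambda_{2}\geq(n(k+2))^{-k}$, far too weak. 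This is exactly the point at which the paper invokes the restricted invertibility theorem of Marcus--Spielman--Srivastava (Lemma \ref{lem:restricted-invertibility}), applied to the isotropic system $\sum_{i}Pe_{i}e_{i}^{\dag}P^{\dag}=I_{n-k}$, to extract $n-k-1$ coordinates on which the normal projection is quantitatively invertible with constant $\frac{1}{4n(n-k)}$; some such nontrivial input appears unavoidable, and nothing in your sketch supplies a substitute.

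Two further problems. First, even granting the hoped-for bound $\|w\|_{2}\gtrsim\|c\|_{2}/\sqrt{k+2}$, the comparison with the upper bound $2r\sqrt{k+2}\,\|c\|_{2}$ yields a contradiction only when $k+2\lesssim\sqrt{n(n-k)}$, which fails for $k$ close to $n$ (e.g.\ $k=n-1$ requires $n+1\lesssim\sqrt{n}$); so the argument as sketched cannot cover the full range of the lemma. Second, your intermediate claim $\dim(\mathrm{aff}(S))\leq k+1$ is strictly stronger than the counting statement the paper actually proves (which is that each translate of a fixed $(n-k-1)$-dimensional Boolean subcube contains at most one point of $S$), and you have given no proof of it; it is not obviously true, and you should not assume it without argument.
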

Lemma \ref{lem:Boolean-cube} can be seen as a quantitative version of Odlyzko Lemma (see e.g. \cite{odlyzko1988subspaces}). To prove it, we will find a subset $\mathcal{S}\subset \{1,\cdots,n\}$ with $|\mathcal{S}|=n-k-1$ such that, the projection operator onto the orthogonal complement of $\Gamma$ is ``well invertible'' when it is restricted on $\mathbb{R}^{\mathcal{S}}$. The existence of $\mathcal{S}$ is
a direct consequence of the following ``Restricted Invertibility Theorem'' for matrices with isotropic columns which was previously proved in \cite{marcus2014ramanujan} by Marcus, Spielman, and Srivastava.
\begin{lemma}[Theorem 3.1 in \cite{marcus2014ramanujan}]\label{lem:restricted-invertibility}
Suppose $v_{1},v_{2},\cdots,v_{l}\in \mathbb{C}^{m}$ are vectors with $\sum_{i=1}^{l} v_{i} v_{i}^{\dag} =I_{m}$ where $v_{i}^{\dag}$ is the dual vector of $v_{i}$ and $I_{m}$ is the identity matrix.

Then for every $m'<m$ there is a subset $\mathcal{S}\subset \{1,2,\cdots,m\}$ of size $m'$ such that the $m'$-th largest eigenvalue of 
$\sum_{i\in \mathcal{S}}  v_{i} v_{i}^{\dag}$ is at least $\left(1-\sqrt{\frac{m'}{m}}\right)^{2} \frac{m}{l}$.
\end{lemma}
For general restricted invertibility principles and their history, we refer to \cite{naor2017restricted} by Naor and Youssef.
\subsection{Notations}
We set up some notations in this subsection. Throughout the paper, we regard $\Z^2$ as a graph with vertices $\{(x,y):x,y \in \Z\}$ and there is an edge connecting $a,b \in \Z^2$ if and only if $|a-b|=1$ (in this case, we also write $a\sim b$).

Given any subset $S \subset \Z^2$ and function $f:\Z^2 \rightarrow \mathbb{R}$, we define the restriction $f|_{S}:S \rightarrow \mathbb{R}$ by $f|_{S}(a)=f(a)$ for $a \in S$.
We denote $P_{S}:\ell^{2}(\Z^2)\rightarrow \ell^{2}(S)$ to be the projection operator defined by $P_{S}f=f|_{S}$ for each $f \in \ell^{2}(\Z^2)$. For simplicity, we write $\|f\|_{\ell^{2}(S)}=\|P_{S}f\|_{\ell^{2}(S)}$.  For an operator $A$ on $\ell^{2}(\Z^2)$, we denote $A_{S}=P_{S} A P_{S}^{\dag}$ where $P_{S}^{\dag}$ is the adjoint operator of $P_{S}$.

Throughout the rest of the paper, $H$ always denotes the operator defined in \eqref{eq:andersonmodel}. Given $\lambda \in \mathbb{C}\setminus \sigma(H_{S})$, we write $G_{S}(a,b;\lambda)=(H_{S}-\lambda)^{-1}(a,b)$ for $S\subset \Z^2$ and $a,b\in S$. 

For any real function $u$ defined on a set $D$ and any real number $c$, we use $\{u\geq c\}$ as shorthand for the set $\{a \in D: u(a) \geq c\}$.

\subsection*{Organization of remaining text}
In Section \ref{sec:initial-scale}, we define the cutting procedure. Along this way, we prove the induction base case (Proposition \ref{prop:L_1good}) for the multi-scale analysis. The sharpness of site percolation (Proposition \ref{sta:sharpness}) plays a key role there.

In Section \ref{sec:Wegner-estimate}, we prove the Wegner estimate Proposition \ref{prop:Wegner}. We will first collect all needed lemmas in Section \ref{sec:auxiliary-lemmas} and prove a generalized Sperner lemma in Section \ref{sec:Sperner-theorem}. The proof of the Wegner estimate is given in Section \ref{sec:proof-Wegner}.

In Section \ref{sec:larger-scale}, we perform the multi-scale analysis by using the Wegner estimate and prove Theorem \ref{thm:resolvent-exponential-decay}.

In Section \ref{sec:proof-Lemma-3.5}, we prove the unique continuation principle Theorem \ref{thm:unique-continuation-intro} and its stronger version Lemma \ref{lem:unique-continuation}.

Among these four sections, Section \ref{sec:larger-scale} follows closely the existing framework in \cite{ding2020localization} and \cite{bourgain2005localization} while other sections contain the new ingredients as follows:
\begin{itemize}
    \item A ``cutting procedure'' which allows us to match eigenvalues under different potential functions (Section \ref{sec:initial-scale}).
    \item The use of the sharpness of site percolation in the proof of the initial case of the multi-scale analysis (Section \ref{sec:initial-scale}).
    \item A generalized Sperner lemma for directed graph products (Section \ref{sec:Wegner-estimate}).
    \item A 2D unique continuation theorem with an improved lower bound (see \eqref{eq:unique_cont_support}) and a smaller energy window (Section \ref{sec:proof-Lemma-3.5}).
\end{itemize}

\section{Initial scale}\label{sec:initial-scale}
In this section, we use site percolation (Section \ref{sec:site-percolation}) to define the cutting procedure described in the introduction. We will first define \emph{$r$-bits} that are boxes centered in a sublattice with a certain edge length (Definition \ref{def:r-bit-admissible}). We then define the cutting procedure for Hamiltonian restricted on $r$-bits by using percolation clusters (Definition \ref{def:cutting}). These $r$-bits will also be used as ``basic units'' for eigenvalue variation arguments in the proof of the Wegner estimate Proposition \ref{prop:Wegner} in Section \ref{sec:Wegner-estimate}. Then we will extend the cutting procedure to boxes with larger length scales (Definition \ref{def:extended-cutting}). Finally, we will prove the induction base case for the multi-scale analysis (Proposition \ref{prop:L_1good}).

\subsection{Site percolation}\label{sec:site-percolation}
Consider the Bernoulli site percolation on $\Z^2$. Let $p \in (0,1)$, suppose each vertex in $\Z^2$ is independently occupied with probability $p$. It is well known that there exists a critical probability $p_{c}\in (0,1)$ such that, for $p>p_{c}$, almost surely, there exists an infinite connected subset of $\Z^2$ whose vertices are occupied; for $p<p_{c}$, almost surely, there does not exist an infinite connected subset of $\Z^2$ whose vertices are occupied. It is known that $p_{c} >\frac{1}{2}$, see e.g. \cite{grimmett1998critical} by Grimmett and Stacey.

\begin{defn}
For any $S \subset \Z^2$, denote 
$$\partial^{+}S=\{a \in \Z^2 \setminus S: \text{$a \sim b$ for some $b\in S$}\}$$ to be the outer boundary of $S$; and $$\partial^{-}S=\{a \in S: \text{$a \sim b$ for some $b\in \Z^2 \setminus S$}\}$$ to be the inner boundary of $S$. Denote $$\partial S=\left\{\{a,b\}: a \in \partial^{+}S \text{, } b \in \partial^{-}S \text{ and } a \sim b \right\}$$ to be the set of edges connecting elements in $\partial^{-}S$ and $\partial^{+}S$.
\end{defn}

The following sharpness proposition follows directly from $p_{c}>\frac{1}{2}$ and \cite[Theorem 7.3]{AizenmanSharpness} by Aizenman and Barsky:
\begin{prop}\label{sta:sharpness}
Suppose $V:\Z^2 \rightarrow \{0,\Bar{V}\}$ is a random function such that $\{V(a): a \in \Z^2\}$ is a family of i.i.d. random variables with $\Prob\left[V(a)=0\right]=\frac{1}{2}$ and $\Prob\left[V(a)=\Bar{V}\right]=\frac{1}{2}$. 
There is a numerical constant $c_{0}>0$ such that, for each $l>10$ and $b\in \Z^2$,
\begin{equation}
    \Prob \left[\mathcal{E}^{l}_{per}(b)\right]< \exp(-c_{0}l).
\end{equation}
Here, $\mathcal{E}_{per}^{l}(b)$ denotes the event that there is a path in $\Z^2$ joining $b$ to some vertex in $\partial^{-} Q_{l}(b)$ such that $V$ equals $0$ on all vertices in this path.
\end{prop}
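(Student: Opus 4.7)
The plan is essentially to reinterpret $\mathcal{E}_{per}^{l}(b)$ as a subcritical percolation connection event and then apply the Aizenman--Barsky sharpness theorem verbatim. By translation invariance we may assume $b = \mathbf{0}$. Declare a vertex $a \in \Z^2$ to be \emph{open} iff $V(a) = 0$; then the random field of open vertices is exactly Bernoulli site percolation on $\Z^2$ with parameter $p = \frac{1}{2}$. The event $\mathcal{E}_{per}^{l}(\mathbf{0})$ is then the event that the open cluster $\mathcal{C}(\mathbf{0})$ of the origin contains a vertex of $\partial^{-}Q_{l}(\mathbf{0})$, equivalently that $\mathcal{C}(\mathbf{0})$ has $\ell^{\infty}$-radius at least $\lfloor (l-1)/2\rfloor$.

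Next I would invoke the two cited inputs. Grimmett--Stacey (\cite{grimmett1998critical}) gives $p_c > \frac{1}{2}$ for site percolation on $\Z^2$, so $p = \frac{1}{2}$ is strictly subcritical. The Aizenman--Barsky theorem (\cite[Theorem 7.3]{AizenmanSharpness}) then yields a finite correlation length at parameter $\frac{1}{2}$: there is a constant $c > 0$ (depending only on the gap $p_c - \frac{1}{2}$) such that for all integers $r \geq 1$,
\begin{equation*}
    \Prob\bigl[\mathbf{0} \leftrightarrow \partial^{-} Q_{2r+1}(\mathbf{0})\bigr] \leq \exp(-c r),
\end{equation*}
where $\leftrightarrow$ denotes connection by an open path. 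Taking $r = \lfloor (l-1)/2\rfloor$ and observing that $\mathcal{E}_{per}^{l}(\mathbf{0})$ is contained in the connection event on the left-hand side completes the bound, with $c_0 = c/4$ say, absorbing the floor and the hypothesis $l > 10$ into the constant.

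There is no substantive obstacle: the proposition is essentially a bookkeeping wrapper that translates the potential-language event $\{V \equiv 0 \text{ on a path to } \partial^{-}Q_{l}\}$ into the standard percolation cluster-radius event, so the whole step consists of (i) checking the identification of open vertices with the zero set of $V$, (ii) noting that $\frac{1}{2} < p_c$ by the cited bound, and (iii) applying the sharpness theorem. The only very mild care needed is aligning the $\ell^{\infty}$-ball $Q_l$ used here with the ball conventions of the cited references, but this only costs an absolute constant in $c_0$ and is absorbed by the restriction $l > 10$.
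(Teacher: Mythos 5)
Your proposal is correct and is exactly the paper's argument: the paper proves this proposition by the one-line observation that it "follows directly from $p_{c}>\frac{1}{2}$ and \cite[Theorem 7.3]{AizenmanSharpness}," which is precisely the identification of $\{V=0\}$ with subcritical site percolation followed by the sharpness theorem that you spell out. No further comment is needed.
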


\subsection{$r$-bit}
Let $\varepsilon_{0}>0$ be a fixed small constant such that 
\begin{equation}\label{eq:def-of-epsilon0}
    \varepsilon_{0}<\varepsilon_{1}^{10}
\end{equation}
where $\varepsilon_{1}$ is the numerical constant appeared in Lemma \ref{lem:unique-continuation} below.

The inequality \eqref{eq:def-of-epsilon0} will only be used in the proof of Proposition \ref{prop:Wegner}. At this moment, the reader can think of $\varepsilon_{0}$ as a small numerical constant.
\begin{defn}\label{def:define-omega}
For any large odd number $r$, denote $\dot{r}=\left\lceil (1-\frac{\varepsilon_{0}}{2})(r-1) \right\rceil$.
For any vertex $a\in \dot{r}\Z^2$ where $\dot{r}\Z^2=\{(\dot{r} x,\dot{r} y):x,y\in \Z\}$, let $\Omega_{r}(a)=Q_{ (1-2\varepsilon_{0})r }(a)$, $\Tilde{\Omega}_{r}(a)=Q_{(1-\frac{3}{2}\varepsilon_{0})r}(a)$ and $F_{r}(a)=Q_{r}(a)\setminus \Omega_{r}(a)$.
\end{defn}

\begin{defn}\label{def:r-bit-admissible}
Given a large odd number $r$, a vertex $a\in \dot{r}\Z^2$ and a potential function $V' :F_{r}(a) \rightarrow \{0,\Bar{V}\}$, we call $(Q_{r}(a),V')$ an \emph{$r$-bit}.
We say $(Q_{r}(a),V')$ is \emph{admissible} if the following two items hold:
\begin{itemize}
    \item For each $x \in \partial^{-}Q_{r}(a)$ and $y \in F_{r}(a)$ with $|x-y| \geq \frac{\varepsilon_{0}}{30}r$, there is no path in $F_{r}(a)$ joining $x$ to $y$ such that $V'$ equals $0$ on all vertices in the path.
    \item There is no path in $F_{r}(a)$ joining some vertex in $\partial^{+}\Omega_{r}(a)$ to some vertex in $\partial^{-}\Tilde{\Omega}(a)$ such that $V'$ equals $0$ on all vertices in the path.
\end{itemize}

With a little abuse of notations, we also call $Q_{r}(a)$ an $r$-bit if $a \in \dot{r}\Z^2$. When $V':F_{r}(a)\rightarrow \{0,\Bar{V}\}$ is obviously given, we also say $Q_{r}(a)$ is admissible if $(Q_{r}(a),V')$ is admissible.

Given an $r$-bit $Q_{r}(a)$, we say it is \emph{inside} some $S \subset \Z^2$ if $Q_{r}(a) \subset S$. We say it \emph{does not affect} $S$ if $\Omega_{r}(a) \cap S=\emptyset$.
\end{defn}

\begin{rem}\label{rem:geo-of-r-bit}
We give here three remarks on $r$-bits, the first two are from Definition \ref{def:define-omega} and the third one is obvious by Definition \ref{def:r-bit-admissible}. See also Figure \ref{fig:r-bits} for an illustration.
\begin{enumerate}
    \item For two different $r$-bits $Q_{r}(a_{1})$ and $Q_{r}(a_{2})$, we have $$\Tilde{\Omega}_{r}(a_{1}) \cap (\partial^{+}Q_{r}(a_{2})\cup Q_{r}(a_{2}))=\emptyset.$$
    Note that, $\Tilde{\Omega}_{r}(a_{1})$ is a scaling image of $r$-bit $Q_{r}(a_{1})$ with the scaling constant slightly smaller than $1$. Thus the equation above means $\Tilde{\Omega}_{r}(a_{1})$ is disjoint from other $r$-bits and their outer boundaries.
    \item  For any $a \in \Z^2$, there exists an $r$-bit $Q_{r}(b)$ with $a \in Q_{(1-\frac{2}{5}\varepsilon_{0})r}(b)$.
    \item Suppose $r$-bits $(Q_{r}(a),V')$ and $(Q_{r}(a'),V'')$ satisfy $V'(b)=V''(b-a+a')$ for each $b\in F_{r}(a)$, then $(Q_{r}(a),V')$ is admissible if and only if $(Q_{r}(a'),V'')$ is admissible.
\end{enumerate}
\end{rem}
\begin{figure}
    \centering
    \includegraphics{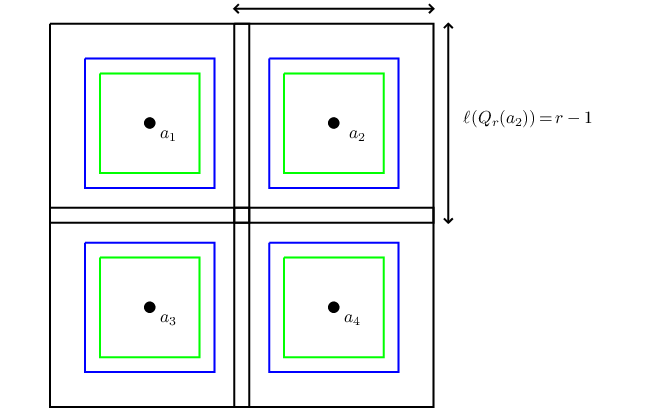}
    \caption{The black squares represent $r$-bits $Q_{r}(a_{i})(i=1,2,3,4)$ with overlaps, the blue squares represent $\Tilde{\Omega}_{r}(a_{i})(i=1,2,3,4)$ and the green squares represent $\Omega_{r}(a_{i})(i=1,2,3,4)$.}
    \label{fig:r-bits}
\end{figure}
The following Proposition \ref{prop:r-admissible} is the place where we use the sharpness of site percolation (Proposition \ref{sta:sharpness}).
\begin{prop}\label{prop:r-admissible}
Suppose odd number $r$ is large enough. Let $V:\Z^2 \rightarrow \{0,\Bar{V}\}$ be the i.i.d. Bernoulli random potential with $\Prob (V(a)=0)=\Prob (V(a)=\Bar{V})=\frac{1}{2}$ for each $a\in \Z^{2}$. Then for each $a\in \dot{r}\Z^2$, we have
\begin{equation}
    \Prob\left[\text{$(Q_{r}(a),V|_{F_{r}(a)})$ is admissible}\right] >  1-\exp(-8c_{1}r)
\end{equation}
where $c_{1}$ is a numerical constant.
\end{prop}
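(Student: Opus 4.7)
The proof is a straightforward application of the sharpness of site percolation (Proposition \ref{sta:sharpness}) combined with a union bound over possible starting vertices of a $V=0$ path. The key observation is that both admissibility conditions fail only if there exists a long $V=0$ path starting from some vertex of an explicit boundary, and the sharpness proposition precisely bounds the probability of such long $V=0$ paths.

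\emph{Condition 1.} Fix $x \in \partial^- Q_r(a)$. If there is a path $\gamma \subset F_r(a)$ connecting $x$ to some $y \in F_r(a)$ with $|x-y| \geq \frac{\varepsilon_0}{30} r$ and $V' \equiv 0$ on $\gamma$, then in particular $\gamma$ is a path in $\Z^2$ from $x$ to a vertex at $\ell^\infty$-distance $\geq \frac{\varepsilon_0}{30} r$; since $V' = V|_{F_r(a)}$, the path has $V \equiv 0$. Hence $\gamma$ must contain a vertex of $\partial^- Q_{l_0}(x)$ for $l_0 = \lfloor \frac{\varepsilon_0}{30} r\rfloor$, which is exactly the event $\mathcal{E}^{l_0}_{per}(x)$. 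Proposition \ref{sta:sharpness} gives $\Prob[\mathcal{E}^{l_0}_{per}(x)] < \exp(-c_0 l_0)$. A union bound over $x \in \partial^- Q_r(a)$, which contains $O(r)$ vertices, bounds the probability that Condition 1 fails by $O(r)\exp(-c_0 \varepsilon_0 r / 30)$.

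\emph{Condition 2.} A direct comparison of side lengths shows that the $\ell^\infty$-distance from $\partial^+ \Omega_r(a)$ to $\partial^- \tilde{\Omega}_r(a)$ is at least $\frac{\varepsilon_0}{4} r - O(1)$. Consequently, any path in $F_r(a)$ from some $x \in \partial^+\Omega_r(a)$ to $\partial^- \tilde{\Omega}_r(a)$ must reach $\partial^- Q_{l_1}(x)$ for $l_1 = \lfloor \frac{\varepsilon_0}{4} r\rfloor - O(1)$. Applying Proposition \ref{sta:sharpness} to each such $x$ and union-bounding over the $O(r)$ vertices of $\partial^+ \Omega_r(a)$ bounds the probability of failure of Condition 2 by $O(r)\exp(-c_0 \varepsilon_0 r / 5)$.

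Summing the two error terms yields a bound of $O(r) \exp(-c_0\varepsilon_0 r/30)$. Choosing a numerical constant $c_1 < c_0 \varepsilon_0/ 240$ and taking $r$ large enough to absorb the polynomial prefactor $O(r)$ into the exponential, we obtain $\Prob[(Q_r(a), V|_{F_r(a)}) \text{ is admissible}] > 1 - \exp(-8 c_1 r)$, as desired. The main (very minor) subtlety is simply the geometric verification that (i) a path in $F_r(a) \subset \Z^2$ of the required length starting from a boundary vertex $x$ forces the $\Z^2$-event $\mathcal{E}^{l}_{per}(x)$ on which Proposition \ref{sta:sharpness} applies, and (ii) the annular width $\frac{\varepsilon_0}{4} r$ between $\Omega_r$ and $\tilde{\Omega}_r$ is large enough to produce exponential decay with the constant $8c_1$ demanded in the statement.
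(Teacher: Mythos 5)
Your proposal is correct and follows essentially the same route as the paper: bound the failure of each admissibility condition by the percolation events $\mathcal{E}^{l}_{per}(x)$ of Proposition \ref{sta:sharpness} for $x$ ranging over the relevant boundary (the paper centers these events on $\partial^{-}Q_{r}(a)\cup\partial^{-}\Tilde{\Omega}_{r}(a)$ with radius $\frac{\varepsilon_{0}}{60}r$, you center the second family on $\partial^{+}\Omega_{r}(a)$ with a slightly larger radius), then conclude by a union bound over $O(r)$ vertices and absorb the prefactor by choosing $c_{1}$ a small enough multiple of $c_{0}\varepsilon_{0}$. The only differences are in the immaterial choices of radii and of the numerical constant $c_{1}$.
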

\begin{proof}
Let $\mathcal{E}_{nad}(a)$ be the event that $(Q_{r}(a),V|_{F_{r}(a)})$ is not admissible. Then by Definition \ref{def:r-bit-admissible},
\begin{equation}
    \mathcal{E}_{nad}(a) \subset \bigcup_{b \in \partial^{-}\Tilde{\Omega}(a) \cup \partial^{-}Q_{r}(a)} \mathcal{E}^{\frac{\varepsilon_{0}}{60} r}_{per}(b).
\end{equation}
Here, the notation $\mathcal{E}^{l}_{per}(b)$ is defined in Proposition \ref{sta:sharpness}.
Assume $r$ is large enough, by Proposition \ref{sta:sharpness},
\begin{equation}
    \Prob\left[\mathcal{E}_{nad}(a)\right] \leq 8r \exp(-\frac{c_{0} \varepsilon_{0}}{60} r)< \exp(-8c_{1}r),
\end{equation}
where $c_{1}<\frac{c_{0} \varepsilon_{0}}{480}$ is a numerical constant.
\end{proof}

\begin{defn}\label{def:define-S-r}
For any $r$-bit $(Q_{r}(a),V|_{F_{r}(a)})$, 
we denote by $S_{r}(a)$ the maximal connected subset of $\Omega_{r}(a)\cup \{b \in F_{r}(a):V(b)=0\}$ that contains $\Omega_{r}(a)$. 
\end{defn}
\begin{lemma}\label{lem:property-of-Sr}
Given $V_{0}:Q_{r}(a)\rightarrow \{0,\Bar{V}\}$, suppose $(Q_{r}(a),V_{0}|_{F_{r}(a)})$ is an admissible $r$-bit. Then we have the following properties:
\begin{enumerate}
    \item $\Omega_{r}(a)\subset S_{r}(a)\subset \Tilde{\Omega}_{r}(a)\setminus \partial^{-}\Tilde{\Omega}_{r}(a)$.
    \item $S_{r}(a)$ is $V_{0}|_{F_{r}(a)}$-measurable.
    \item $V_{0}(b)=\Bar{V}$ for each $b\in \partial^{+}S_{r}(a)$.
\end{enumerate}
\end{lemma}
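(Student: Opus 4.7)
The plan is to verify the three properties in order; only property 1 requires any real use of the admissibility hypothesis, while properties 2 and 3 amount to unwinding the definition of $S_r(a)$ and its maximality.

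For property 1 the inclusion $\Omega_r(a) \subset S_r(a)$ is immediate from the definition of $S_r(a)$. For the second inclusion I would argue by contradiction. Suppose some $b \in S_r(a)$ lies in $\partial^{-}\Tilde{\Omega}_r(a)$. A side-length check against Definition \ref{def:define-omega} shows that for large $r$ the set $\Omega_r(a)$ is strictly interior to $\Tilde{\Omega}_r(a)$ (with a buffer of roughly $\tfrac{\varepsilon_0}{4}r \geq 1$), so $b \notin \Omega_r(a)$ and hence $b \in F_r(a) \cap (O_r(a) \cup \Omega_r(a)) = O_r(a)$. Connectedness of $S_r(a)$ yields a path $b = v_0, v_1, \ldots, v_k$ in $O_r(a) \cup \Omega_r(a)$ ending in $\Omega_r(a)$; taking the initial segment up to the minimal index $j$ with $v_{j+1} \in \Omega_r(a)$ produces a path $v_0, \ldots, v_j$ lying in $F_r(a)$ on which $V_0 \equiv 0$ and joining $b \in \partial^{-}\Tilde{\Omega}_r(a)$ to $v_j \in \partial^{+}\Omega_r(a)$. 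This contradicts the second bullet of Definition \ref{def:r-bit-admissible}. Once $S_r(a) \cap \partial^{-}\Tilde{\Omega}_r(a) = \emptyset$ is known, connectedness of $S_r(a)$ together with the observation that $\partial^{-}\Tilde{\Omega}_r(a)$ separates $\Omega_r(a)$ from $Q_r(a) \setminus \Tilde{\Omega}_r(a)$ forces $S_r(a) \subset \Tilde{\Omega}_r(a) \setminus \partial^{-}\Tilde{\Omega}_r(a)$.

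Property 2 is immediate: $\Omega_r(a)$ depends only on $a$, $O_r(a) = \{b \in F_r(a) : V_0(b) = 0\}$ depends only on $V_0|_{F_r(a)}$, and the maximal connected subset of $O_r(a) \cup \Omega_r(a)$ containing $\Omega_r(a)$ is a function of these two sets alone. For property 3, fix $b \in \partial^{+}S_r(a)$. By property 1, $S_r(a) \subset \Tilde{\Omega}_r(a) \setminus \partial^{-}\Tilde{\Omega}_r(a)$, so every neighbor of $S_r(a)$ still lies in $\Tilde{\Omega}_r(a) \subset Q_r(a)$; hence $b \in Q_r(a)$. If $b \in \Omega_r(a)$ then $b \in S_r(a)$, contradicting $b \in \partial^{+}S_r(a)$, so $b \in F_r(a)$. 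Were $V_0(b) = 0$, we would have $b \in O_r(a)$ adjacent to $S_r(a)$, making $S_r(a) \cup \{b\}$ a strictly larger connected subset of $O_r(a) \cup \Omega_r(a)$ containing $\Omega_r(a)$, contrary to maximality. Hence $V_0(b) = \Bar{V}$.

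The only real subtlety is the path-truncation step in property 1, which is what converts an abstract connectedness statement about $O_r(a) \cup \Omega_r(a)$ into a $V_0$-zero path lying entirely in $F_r(a)$ with endpoints in the particular sets $\partial^{-}\Tilde{\Omega}_r(a)$ and $\partial^{+}\Omega_r(a)$ to which the second bullet of admissibility applies. Everything else is a routine definition chase.
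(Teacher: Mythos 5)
Your proof is correct and follows exactly the route the paper takes (which it compresses into three sentences): property 1 from the second bullet of admissibility via a zero-potential path from $\partial^{-}\Tilde{\Omega}_{r}(a)$ to $\partial^{+}\Omega_{r}(a)$, property 2 by inspection of Definition \ref{def:define-S-r}, and property 3 from maximality. The path-truncation and separation details you supply are the right way to make the paper's terse justification rigorous; nothing is missing.
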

\begin{proof}
The first property is due to the second item in Definition \ref{def:r-bit-admissible}. The second property follows directly from Definition \ref{def:define-S-r}. The third property follows from the maximality of $S_{r}(a)$.
\end{proof}

We now define the ``cutting procedure'' on an admissible $r$-bit $Q_{r}(a)$. Intuitively, the cutting procedure on $Q_{r}(a)$ continuously modifies the edge weight of $\partial S_{r}(a)$ and finally splits $S_{r}(a)$ and $Q_{r}(a)\setminus S_{r}(a)$.
\begin{defn}\label{def:cutting}
Given $V:Q_{r}(a)\rightarrow \{0,\Bar{V}\}$, suppose $(Q_{r}(a),V|_{F_{r}(a)})$ is an admissible $r$-bit. For $t\in \left[0,1\right]$, define operator $H^{t}_{Q_{r}(a)}:\ell^{2}(Q_{r}(a))\rightarrow \ell^{2}(Q_{r}(a))$ as follows: $H^{t}_{Q_{r}(a)}(b,c)=t-1$ if $\{b,c\} \in \partial S_{r}(a)$; $H^{t}_{Q_{r}(a)}(b,c)=H_{Q_{r}(a)}(b,c)$ otherwise. Denote $G^{t}_{Q_{r}(a)}(b,c;\lambda)=(H^{t}_{Q_{r}(a)}-\lambda)^{-1}(b,c)$ for any $b,c \in Q_{r}(a)$.
\end{defn}
\begin{rem}
From Definition \ref{def:cutting}, $H^{t}_{Q_{r}(a)}$ is self-adjoint for each $t$. We have $H^{0}_{Q_{r}(a)}=H_{Q_{r}(a)}$ and $H^{1}_{Q_{r}(a)}= H_{S_{r}(a)} \bigoplus H_{Q_{r}(a) \setminus S_{r}(a)}$.
\end{rem}
\begin{lemma}\label{lem:spectrum-range-of-H}
Given $V:Q_{r}(a)\rightarrow \{0,\Bar{V}\}$, suppose $(Q_{r}(a),V|_{F_{r}(a)})$ is an admissible $r$-bit. Then for each $t\in [0,1]$ and each connected subset $S\subset Q_{r}(a)$, we have
\begin{equation}\label{eq:spectrum-range-Ht}
    \sigma\left(H^{t}_{Q_{r}(a)}\right) \subset \left[0,8\right]\cup \left[\Bar{V},\Bar{V}+8\right],
\end{equation}
and 
\begin{equation}\label{eq:spectrum-range-HS}
    \sigma\left(H_{S}\right) \subset \left[0,8\right]\cup \left[\Bar{V},\Bar{V}+8\right].
\end{equation}
\end{lemma}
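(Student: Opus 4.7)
My plan is to write $\tilde H$ (standing for either $H^{t}_{Q_{r}(a)}$ or $H_{S}$) as a sum $D+B$ of its diagonal and off-diagonal parts and then to invoke the elementary spectral-perturbation estimate $\dist(\mu,\sigma(D))\le\|B\|$, which is valid for every $\mu\in\sigma(D+B)$ whenever $D$ and $B$ are self-adjoint. That estimate itself is immediate from the identity $\tilde H-\mu=(D-\mu)\bigl(I+(D-\mu)^{-1}B\bigr)$ combined with $\|(D-\mu)^{-1}\|=1/\dist(\mu,\sigma(D))$ and a Neumann series.

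Because of the Dirichlet convention, the diagonal of $H_{Q_{r}(a)}$ at a vertex $b$ equals $2d+V(b)=4+V(b)$, irrespective of how many neighbors of $b$ actually lie in $Q_{r}(a)$. The same is therefore true of the diagonals of $H^{t}_{Q_{r}(a)}$ and of $H_{S}$, so in either case $D$ is diagonal with entries in $\{4,\,4+\bar V\}$ and $\sigma(D)\subset\{4,\,4+\bar V\}$.

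For the hopping part $B$, entries equal $-1$ on every edge $\{b,c\}$ lying inside the relevant domain, with the one exception that when $\tilde H=H^{t}_{Q_{r}(a)}$ the edges of $\partial S_{r}(a)$ instead carry weight $t-1\in[-1,0]$; entries off the edge set vanish. Thus $|B(b,c)|\le 1$, $B$ is symmetric, and each row has at most four nonzero entries, since every vertex of $\Z^{2}$ has exactly four neighbors. The standard bound $\|B\|_{\mathrm{op}}\le\max_{b}\sum_{c}|B(b,c)|$ for symmetric matrices then yields $\|B\|\le 4$.

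Putting the two steps together, every $\mu\in\sigma(\tilde H)$ satisfies $\dist(\mu,\{4,4+\bar V\})\le 4$, so $\mu\in[0,8]\cup[\bar V,\bar V+8]$, which gives both inclusions of the lemma. I do not see a real obstacle here; the argument uses neither admissibility of the $r$-bit (which is needed upstream only to guarantee that $S_{r}(a)$ and hence $\partial S_{r}(a)$ are well-defined) nor connectedness of $S$. The only point worth watching is the Dirichlet convention that keeps the full $2d=4$ on each diagonal entry, since this is exactly what centers the two intervals of the conclusion at $4$ and $4+\bar V$.
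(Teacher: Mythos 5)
Your proof is correct and is essentially the paper's argument in operator-norm clothing: the paper evaluates the eigenfunction equation at a point where $|u|$ attains its maximum and concludes $|V(b)+4-\lambda|\leq 4$ from the bound $|H^{t}_{Q_{r}(a)}(b,b')|\leq 1$ on the at most four off-diagonal entries per row, which is exactly your Gershgorin-type estimate $\dist(\mu,\{4,4+\Bar{V}\})\leq\|B\|\leq 4$. Your side remarks — that the Dirichlet convention keeps every diagonal entry equal to $4+V(b)$, and that neither admissibility nor connectedness of $S$ is actually used — are both accurate and consistent with the paper's proof.
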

\begin{proof}
We first prove \eqref{eq:spectrum-range-Ht}.
Suppose $\lambda \in \sigma\left(H^{t}_{Q_{r}(a)}\right)$, let $u$ be an eigenfunction with $H^{t}_{Q_{r}(a)}u=\lambda u$. Pick $b\in Q_{r}(a)$ with $|u(b)| \geq |u(b')|$ for each $b'\in Q_{r}(a)$. Then we have
\begin{equation}\label{eq:maximal-point}
    (V(b)+4-\lambda) u(b)= -\sum_{\substack{ b' \sim b\\ b' \in Q_{r}(a)}} H^{t}_{Q_{r}(a)}(b,b')u(b').
\end{equation}
Since $|H^{t}_{Q_{r}(a)}(b,b')| \leq 1$ for each $b\not= b'$, we have \eqref{eq:maximal-point} implies $$
|(V(b)+4-\lambda) u(b)| \leq 4 |u(b)|,
$$
and thus $|(V(b)+4-\lambda)| \leq 4$. The conclusion follows from $V(b) \in \{0,\Bar{V}\}$.

Finally, to prove \eqref{eq:spectrum-range-HS}, substitute $H^{t}_{Q_{r}(a)}$ by $H_{S}$ and repeat the above argument.
\end{proof}
\subsection{Resolvent estimate on $r$-bits}
Now we define the exceptional energies $\lambda^{(i)}$'s in Theorem \ref{thm:1/2_main} and Theorem \ref{thm:resolvent-exponential-decay}. They are exactly the eigenvalues of the minus Laplacian restricted on connected subsets of $Q_{r}$. A small neighbourhood of them (the set $J_{r}^{\Bar{V}}$ in Definition \ref{def:J_eig}) is excluded so that the resolvent is bounded on admissible $r$-bits (Proposition \ref{prop:0thscale}).
\begin{defn}\label{def:J_eig}
Given an odd number $r$ and a real number $U>1$, let $$Eig_{r}=\bigcup_{\substack{S \subset Q_{r}\\ \text{$S$ is connected}\\ }} \sigma\big( (-\Delta)_{S} \big)$$ and $$J_{r}^{U}=\bigcup_{x\in Eig_{r}} \left[x-U^{-\frac{1}{4}},x+U^{-\frac{1}{4}} \right].$$
\end{defn}

\begin{prop}\label{prop:0thscale}
Given $r$ a large odd number, $a\in \dot{r}\Z^{2}$ and $V':F_{r}(a)\rightarrow \{0,\Bar{V}\}$, we assume $\Bar{V}>\exp(r^{2})$. 
Suppose $r$-bit $(Q_{r}(a),V')$ is admissible and $\lambda_{0} \in [0,8]\setminus J_{r}^{\Bar{V}}$. Then
for each $V:Q_{r}(a)\rightarrow \{0,\Bar{V}\}$ with $V|_{F_{r}(a)}=V'$, each $t \in [0,1]$ and each connected subset $S \subset Q_{r}(a)$, we have the following:

\begin{itemize}
    \item $\|(H^{t}_{Q_{r}(a)}-\lambda_{0})^{-1}\| \leq  2 \Bar{V}^{\frac{1}{4}}$.
    \item $\|(H_{S}-\lambda_{0})^{-1}\| \leq  2 \Bar{V}^{\frac{1}{4}}$.
    \item $|G^{t}_{Q_{r}(a)}(b,b';\lambda_{0})| \leq \Bar{V}^{-\frac{1}{4}}$ for each $b\in \partial^{-}Q_{r}(a)$, $b' \in Q_{r}(a)$ such that $|b-b'| \geq \frac{\varepsilon_{0}}{8}r$.
\end{itemize}

\end{prop}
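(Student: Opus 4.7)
The plan is to run a Schur-complement analysis with respect to the potential decomposition. Set $A=\{x\in Q_{r}(a):V(x)=0\}$ and $B=\{x\in Q_{r}(a):V(x)=\Bar{V}\}$. The decisive structural input is Lemma~\ref{lem:property-of-Sr}(3): every edge of $\partial S_{r}(a)$ has its $\partial^{+}S_{r}(a)$-endpoint in $B$, so no edge inside $A$ is modified by the cutting procedure. Consequently the $A\times A$ block of $H^{t}_{Q_{r}(a)}$ coincides with $-\Delta_{A}$ for every $t\in[0,1]$. Because $-\Delta_{A}$ is block-diagonal over the connected components $A_{i}$ of $A$, and each $A_{i}$ is a connected subset of $Q_{r}(a)$ (hence, after translating by $-a$, of $Q_{r}$), we have $\sigma(-\Delta_{A})\subset Eig_{r}$. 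The hypothesis $\lambda_{0}\notin J_{r}^{\Bar{V}}$ then gives $\|(-\Delta_{A}-\lambda_{0})^{-1}\|\leq \Bar{V}^{1/4}$. Meanwhile the $B\times B$ block equals $(\Bar{V}+4)I+M$ with $\|M\|\leq 4$, so its resolvent at any $\lambda_{0}\in[0,8]$ has norm at most $2/\Bar{V}$.

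For items 1 and 2 I combine these two ingredients with the standard Schur/Neumann argument. The coupling operator $K(\mu)=(H^{t}_{Q_{r}(a)})_{AB}\bigl((H^{t}_{Q_{r}(a)})_{BB}-\mu\bigr)^{-1}(H^{t}_{Q_{r}(a)})_{BA}$ has norm $O(1/\Bar{V})$, so the Schur complement $S(\mu)=-\Delta_{A}-\mu-K(\mu)$ is invertible whenever $\mathrm{dist}(\mu,Eig_{r})>O(1/\Bar{V})$. Hence $\sigma(H^{t}_{Q_{r}(a)})\cap[0,8]$ lies in an $O(1/\Bar{V})$-neighborhood of $Eig_{r}$; together with Lemma~\ref{lem:spectrum-range-of-H} and $\mathrm{dist}(\lambda_{0},Eig_{r})\geq \Bar{V}^{-1/4}$, this forces $\mathrm{dist}(\lambda_{0},\sigma(H^{t}_{Q_{r}(a)}))\geq \tfrac{1}{2}\Bar{V}^{-1/4}$, and by self-adjointness $\|(H^{t}_{Q_{r}(a)}-\lambda_{0})^{-1}\|\leq 2\Bar{V}^{1/4}$. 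The identical argument, with $A,B$ intersected with $S$ (the components of $A\cap S$ still sit inside $Q_{r}(a)$), yields item 2.

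For item 3 the main case is $V(b)=V(b')=0$. I claim the $V=0$ cluster $C$ of $b\in\partial^{-}Q_{r}(a)$ in $Q_{r}(a)$ is contained in $\{y:|y-b|<\tfrac{\varepsilon_{0}}{30}r\}$. Indeed, any $V=0$ path from $b$ that entered $\Omega_{r}(a)$ would first exit $F_{r}(a)$ at a vertex $y'\in\partial^{+}\Omega_{r}(a)\subset F_{r}(a)$, but $|b-y'|\geq \mathrm{dist}(\partial^{-}Q_{r}(a),\partial^{+}\Omega_{r}(a))\geq \varepsilon_{0}r-1>\tfrac{\varepsilon_{0}}{30}r$, contradicting the first admissibility condition; thus $C\subset F_{r}(a)$, and that same admissibility condition bounds the diameter of $C$. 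Since $|b-b'|\geq\tfrac{\varepsilon_{0}}{8}r$ exceeds this diameter, $b'\notin C$, so $(-\Delta_{A}-\lambda_{0})^{-1}(b,b')=0$. Expanding $S(\lambda_{0})^{-1}=\sum_{n\geq 0}\bigl((-\Delta_{A}-\lambda_{0})^{-1}K(\lambda_{0})\bigr)^{n}(-\Delta_{A}-\lambda_{0})^{-1}$, the leading term vanishes at $(b,b')$ and the remainder has operator norm $O(\Bar{V}^{-1/2})$, which is at most $\Bar{V}^{-1/4}$ for $\Bar{V}$ large. If instead $V(b)=\Bar{V}$ or $V(b')=\Bar{V}$, the relevant block of $(H^{t}_{Q_{r}(a)}-\lambda_{0})^{-1}$ factors through the $B\times B$ resolvent of norm $O(1/\Bar{V})$, giving the even smaller bound $O(\Bar{V}^{-3/4})$.

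The main difficulty is really the structural identification that the $A\times A$ block is $t$-independent; without Lemma~\ref{lem:property-of-Sr}(3) the cutting could reshuffle components of $A$ in a $t$-dependent fashion and the spectral inclusion in $Eig_{r}$ would break down. A subordinate technical issue is the cluster-confinement argument for item 3, which hinges on the geometric separation of order $\varepsilon_{0}r$ between $\partial^{-}Q_{r}(a)$ and $\partial^{+}\Omega_{r}(a)$ and uses only the first admissibility condition.
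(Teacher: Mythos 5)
Your proposal is correct, and it reaches the three estimates by a genuinely different (though closely related) route. The paper proves items 1--2 by a quasimode argument: take a normalized eigenfunction $v$ with eigenvalue in $[0,8]$, show $|v|\leq 4/(\Bar V-4)$ on $T=\{V=\Bar V\}$, locate a connected component $W'$ of the complement carrying $\ell^2$-mass at least $1/(2r)$, observe that $(H^{t}_{Q_r(a)})_{W'}=H_{W'}=(-\Delta)_{W'}$ (the same consequence of Lemma \ref{lem:property-of-Sr}(3) that you isolate), and conclude the eigenvalue lies within $32r^{2}/(\Bar V-4)$ of $Eig_r$. Your Schur complement on the $A/B$ decomposition packages the identical structural inputs — the $t$-independence of the $A\times A$ block and $\sigma((-\Delta)_A)\subset Eig_r$ — into a single linear-algebra step, and it yields the slightly cleaner statement that $\sigma(H^{t}_{Q_r(a)})\cap[0,8]$ lies in an $O(1/\Bar V)$-neighborhood of $Eig_r$ rather than $O(r^{2}/\Bar V)$, avoiding the component-counting. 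For item 3 the paper instead builds the small connected set $S_{0}\supset$ (zero-cluster of $b$), applies the resolvent identity across $\partial S_{0}$, and uses $V=\Bar V$ on the relevant boundary sites to gain the factor $\Bar V^{-3/4}$; your version gets the same gain from the vanishing of the zeroth-order term of the Neumann series for $S(\lambda_0)^{-1}$ plus the $O(1/\Bar V)$ size of the coupling $K$, with the mixed cases $b\in B$ or $b'\in B$ handled by the off-diagonal blocks of the block-inverse formula. Your cluster-confinement argument (a $V=0$ path from $\partial^{-}Q_r(a)$ cannot reach $\partial^{+}\Omega_r(a)$ without violating the first admissibility condition, since their separation is about $\varepsilon_0 r\gg \varepsilon_0 r/30$) matches the paper's construction of $S_{1}$ exactly. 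In short: same geometric and spectral ingredients, but a more systematic block-matrix formalism in place of the paper's eigenfunction and resolvent-identity manipulations; both are valid.
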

\begin{proof}
We first prove the first item.  The strategy here is to prove that for any eigenvalue $\lambda$ of $H^{t}_{Q_{r}(a)}$, there is some $W'\subset Q_{r}(a)$ such that $\lambda$ is close to an eigenvalue of $H_{W'}$.

If there is no eigenvalue of $H^{t}_{Q_{r}(a)}$ in $[0,8]$, then by Lemma \ref{lem:spectrum-range-of-H}, $\|(H^{t}_{Q_{r}(a)}-\lambda_{0})^{-1}\| \leq  (\Bar{V}-8)^{-1}<2 \Bar{V}^{\frac{1}{4}}$ and the first item holds.

Now assume there is an eigenvalue $\lambda$ of $H^{t}_{Q_{r}(a)}$ in $[0,8]$ and we need to prove $|\lambda-\lambda_{0}|\geq \frac{1}{2}\Bar{V}^{-\frac{1}{4}}$. Let $v$ be an $\ell^{2}(Q_{r}(a))$ normalised eigenfunction of $H^{t}_{Q_{r}(a)}$ with eigenvalue $\lambda$. Write $T=\{a' \in Q_{r}(a): V(a')=\Bar{V}\}$. For each $a' \in T$, we have 
\begin{equation}
    -\sum_{\substack{b' \sim a'\\ b' \in Q_{r}(a)}} H^{t}_{Q_{r}(a)}(a',b') v(b')=(\Bar{V}+4-\lambda)v(a').
\end{equation} 
Since $\|v\|_{\ell^{2}(Q_{r}(a))}=1$ and $|H^{t}_{Q_{r}(a)}(b',b'')| \leq 1$ for any $b' \not= b''$, we have $|v(a')|\leq 4/(\Bar{V}-4)$ for $a' \in T$. This implies $\|v\|_{\ell^{2}(T)} \leq 4r/(\Bar{V}-4)<\frac{1}{2}$ since $\Bar{V}>\exp(r^{2})$. Consider all maximal connected subsets $W \subset Q_{r}(a)\setminus T$. The number of them is less than $r^{2}$, 
thus there exists one of these subsets $W' \subset Q_{r}(a)$ with 
\begin{equation}\label{eq:lower-bound-W'-v}
    \|v\|_{\ell^{2}(W')} \geq \frac{1}{2 r}.
\end{equation}
Since $V=0$ on $W'$, by Lemma \ref{lem:property-of-Sr}, we have $\partial^{+} S_{r}(a) \cap W'=\emptyset$ and $(H_{Q_{r}(a)}^{t})_{W'}=H_{W'}$. 
Thus for each $b \in W'$,
\begin{equation}\label{eq:region-W'}
    (H_{W'}-\lambda) (v|_{W'}) (b)=(H^{t}_{Q_{r}(a)} -\lambda)v(b) -\sum_{\substack{b' \sim b\\ b' \in \partial^{+}W' \cap Q_{r}(a)}} H^{t}_{Q_{r}(a)}(b,b')v(b').
\end{equation}
By maximality of $W'$, for each $a' \in \partial^{+}W' \cap Q_{r}(a)$, $a' \in T$ and thus $$|v(a')| \leq 4/(\Bar{V}-4).$$
Since $(H^{t}_{Q_{r}(a)}- \lambda)v=0$ and $|H^{t}_{Q_{r}(a)}(b,b')| \leq 1$ when $b \not = b'$, we have \eqref{eq:region-W'} implies $$|(H_{W'}-\lambda) (v|_{W'}) (b)| \leq 16/(\Bar{V}-4)$$ for each $b \in W'$. Thus
\begin{equation}\label{eq:eigenvalue-bound-1}
    \| (H_{W'}-\lambda) (v|_{W'})\|_{\ell^{2}(W')} \leq  16 r/(\Bar{V}-4) \leq 32 r^{2}/(\Bar{V}-4) \|v\|_{\ell^{2}(W')} 
\end{equation}
by \eqref{eq:lower-bound-W'-v}. Writing $v|_{W'}$ as a linear combination of eigenfunctions of $H_{W'}$, \eqref{eq:eigenvalue-bound-1} provides an eigenvalue $\lambda'$ of $H_{W'}$ such that $|\lambda-\lambda'| \leq 32 r^{2}/(\Bar{V}-4)$. Since $\lambda' \in Eig_{r}$ and $\lambda_{0} \not \in J_{r}^{\Bar{V}}$, by Definition \ref{def:J_eig}, we have
$$ |\lambda_{0}-\lambda|\geq |\lambda_{0}-\lambda'|-|\lambda'-\lambda| > \Bar{V}^{-\frac{1}{4}}-32 r^{2}/(\Bar{V}-4)>\frac{1}{2} \Bar{V}^{-\frac{1}{4}}.
$$
Here, we used $\Bar{V}>\exp(r^{2})$.
The first item follows. 

The proof of the second item is similar to the proof of the first item. Assume there is an eigenvalue $\lambda_{*}$ of $H_{S}$ in $[0,8]$. Let $v_{*}$ be an $\ell^{2}(S)$ normalised eigenfunction of $H_{S}$ with eigenvalue $\lambda_{*}$, we need to prove $|\lambda_{*}-\lambda_{0}|\geq \frac{1}{2}\Bar{V}^{-\frac{1}{4}}$. For each $a' \in T\cap S$, we have 
\begin{equation}
    -\sum_{\substack{b' \sim a'\\ b' \in S}} H_{S}(a',b') v_{*}(b')=(\Bar{V}+4-\lambda_{*})v_{*}(a').
\end{equation} 
Since $|H_{S}(b',b'')| \leq 1$ for any $b' \not= b''$, we have $|v_{*}(a')|\leq 4/(\Bar{V}-4)$ for $a' \in T\cap S$. This implies $\|v_{*}\|_{\ell^{2}(T)} \leq 4r/(\Bar{V}-4)<\frac{1}{2}$. Consider all maximal connected subsets $W \subset S\setminus T$. The number of them is less than $r^{2}$, 
thus there exists one of these subsets $W'' \subset S$ with $\|v_{*}\|_{\ell^{2}(W'')} \geq \frac{1}{2 r}$. 
For each $b \in W''$,
\begin{equation}\label{eq:region-W''}
    (H_{W''}-\lambda_{*}) (v_{*}|_{W''}) (b)=(H_{S} -\lambda_{*})v_{*}(b) -\sum_{\substack{b' \sim b\\ b' \in \partial^{+}W'' \cap S}} H_{S}(b,b')v_{*}(b').
\end{equation}
By maximality of $W''$, for each $a' \in \partial^{+}W'' \cap S$, $a' \in T$ and thus $$|v_{*}(a')| \leq 4/(\Bar{V}-4).$$
Since $(H_{S}- \lambda_{*})v_{*}=0$ and $|H_{S}(b,b')| \leq 1$ when $b \not = b'$, we have \eqref{eq:region-W''} implies $$\left|(H_{W''}-\lambda_{*}) (v_{*}|_{W''}) (b)\right| \leq 16/(\Bar{V}-4)$$ for each $b \in W''$. Thus
\begin{equation}\label{eq:eigenvalue-bound-2}
    \| (H_{W''}-\lambda_{*}) (v_{*}|_{W''})\|_{\ell^{2}(W'')} \leq  16 r/(\Bar{V}-4) \leq 32 r^{2}/(\Bar{V}-4) \|v_{*}\|_{\ell^{2}(W'')}.
\end{equation}
Writing $v_{*}|_{W''}$ as a linear combination of eigenfunctions of $H_{W''}$, \eqref{eq:eigenvalue-bound-2} provides an eigenvalue $\lambda_{*}'$ of $H_{W''}$ such that $|\lambda_{*}-\lambda_{*}'| \leq 32 r^{2}/(\Bar{V}-4)$. Since $\lambda_{*}' \in Eig_{r}$ and $\lambda_{0} \not \in J_{r}^{\Bar{V}}$, by the same argument for the first item, we have
$$ |\lambda_{0}-\lambda_{*}|>\frac{1}{2} \Bar{V}^{-\frac{1}{4}}
$$
and the second item follows.

Now we prove the third item and the strategy here is to exploit the resolvent identity. Pick $b,b'\in Q_{r}(a)$ with $b \in \partial^{-} Q_{r}(a)$ and $|b-b'|\geq \frac{\varepsilon_{0}r}{8}$. We claim that, there exists connected $S_{0} \subset Q_{r}(a)\cap Q_{\frac{\varepsilon_{0} r}{9} (b)}$ with $b \in S_{0} $ such that, for any $c \in S_{0}$ and $c' \in Q_{r}(a)\setminus S_{0}$ with $c \sim c'$, we have $c \in T$. To see this, if $V(b)=\Bar{V}$, then simply let $S_{0}=\{b\}$; otherwise, let $S_{1}$ be the maximal connected subset of $Q_{r}(a)\setminus T$ that contains $b$. Since $Q_{r}(a)$ is admissible, the first item in Definition \ref{def:r-bit-admissible} implies $S_{1} \subset Q_{r}(a)\cap Q_{\frac{\varepsilon_{0} r}{10} (b)}$. Let $S_{0}=S_{1} \cup (\partial^{+}S_{1}\cap Q_{r}(a))$ and our claim follows from the maximality of $S_{1}$.

By Lemma \ref{lem:property-of-Sr}, $S_{r}(a)\subset \Tilde{\Omega}_{r}(a)$ and thus $S_{r}(a) \cap (S_{0}\cup \partial^{+}S_{0}) =\emptyset$. By resolvent identity,
\begin{equation}\label{eq:G5}
    G^{t}_{Q_{r}(a)}(b,b';\lambda_{0})=\sum_{\substack{c \in S_{0},c \sim c'\\ c' \in Q_{r}(a) \setminus S_{0}}} G_{S_{0}}(b,c;\lambda_{0})G^{t}_{Q_{r}(a)}(c',b';\lambda_{0}).
\end{equation}
By definition of resolvent,
\begin{equation}
   (V(c)-\lambda_{0}+4) G_{S_{0}}(b,c;\lambda_{0})=  \delta_{c,b} + \sum_{c'' \sim c, c'' \in S_{0}} G_{S_{0}}(b,c'';\lambda_{0})
\end{equation}
where $\delta_{c,b}=1$ if $c=b$ and $\delta_{c,b}=0$ otherwise.
Hence 
\begin{equation}\label{eq:control-G-S}
    |G_{S_{0}}(b,c;\lambda_{0})| \leq \frac{1}{|V(c)-\lambda_{0}+4|} (1+4 \|(H_{S_{0}}-\lambda_{0})^{-1}\|).
\end{equation}
The second item of this proposition implies $\|(H_{S_{0}}-\lambda_{0})^{-1}\| \leq 2\Bar{V}^{\frac{1}{4}}$.
Assume $c \sim c'$ for some $c \in S_{0}$ and $c' \in Q_{r}(a) \setminus S_{0}$, then the property of $S_{0}$ and inequality \eqref{eq:control-G-S} together imply $V(c)=\Bar{V}$ and 
\begin{equation}\label{eq:bounding-G-S_0-b-c}
    |G_{S_{0}}(b,c;\lambda_{0})| \leq 20\Bar{V}^{-\frac{3}{4}}.
\end{equation}
Finally, in \eqref{eq:G5}, by the first item in this proposition and inequality \eqref{eq:bounding-G-S_0-b-c},
\begin{align*}
    |G^{t}_{Q_{r}(a)}(b,b';\lambda_{0})|
    \leq &\|(H^{t}_{Q_{r}(a)}-\lambda_{0})^{-1}\| \sum_{\substack{c \in S_{0},c \sim c'\\ c' \in Q_{r}(a) \setminus S_{0}}} |G_{S_{0}}(b,c;\lambda_{0})|\\
    \leq &2\Bar{V}^{\frac{1}{4}}\sum_{\substack{c \in S_{0},c \sim c'\\ c' \in Q_{r}(a) \setminus S_{0}}} |G_{S_{0}}(b,c;\lambda_{0})|\\
    \leq &320 r^{2} \Bar{V}^{-\frac{1}{2}}\\
    <&\Bar{V}^{-\frac{1}{4}}.
\end{align*}
\end{proof}
\subsection{Initial scale analysis}
In this subsection, we extend the cutting procedure to larger boxes and prove the induction base case for the multi-scale analysis (Proposition \ref{prop:L_1good}).
\begin{defn}
Suppose $r$ is an odd number, $a \in \Z^2$ and $L \in \Z_{+}$. We say $Q_{L}(a)$ is \emph{$r$-dyadic} if there exists $k \in \Z_{+}$ such that $a \in 2^{k}\dot{r}\Z^2$ and $L=2^{k+1}\dot{r}+r$. In this case, $L$ is called an $r$-dyadic scale.
\end{defn}

\begin{lemma}\label{lem:r_dyadic_cube-ele}
Suppose $Q_{L}(a)$ is an $r$-dyadic box. Then we have $$Q_{L}(a)=\bigcup_{b \in \dot{r}\Z^2 \cap Q_{L}(a)} Q_{r}(b).$$ If $r$-bit $Q_{r}(b') \not\subset Q_{L}(a)$, then $\Tilde{\Omega}_{r}(b')\cap Q_{L}(a)=\emptyset$.  
\end{lemma}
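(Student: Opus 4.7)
\medskip

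\noindent\textbf{Proof plan.} Both statements follow from careful unpacking of the definitions together with the two size bounds on $r'$: namely $\tfrac{r-1}{2}<r'\leq r-1$, which follow from $r'=\lceil(1-\tfrac{\varepsilon_{0}}{2})(r-1)\rceil$ and $\varepsilon_{0}$ small. The half-widths in play are: $Q_{L}(a)$ has half-width $(L-1)/2=2^{k}r'+(r-1)/2$, each $r$-bit $Q_{r}(b)$ has half-width $(r-1)/2$, and $\widetilde{\Omega}_{r}(b')$ has half-width $\lfloor((1-\tfrac{3}{2}\varepsilon_{0})r-1)/2\rfloor\leq\tfrac{(1-\tfrac{3}{2}\varepsilon_{0})r-1}{2}$.

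For the first assertion, I would verify each inclusion separately. For $\supset$: if $b\in r'\Z^{2}\cap Q_{L}(a)$, then since $a\in 2^{k}r'\Z^{2}\subset r'\Z^{2}$, the $\ell^{\infty}$-distance $\|b-a\|_{\infty}$ is a multiple of $r'$. The bound $\|b-a\|_{\infty}\leq 2^{k}r'+(r-1)/2$, combined with $r'>(r-1)/2$, forces $\|b-a\|_{\infty}\leq 2^{k}r'$, and the triangle inequality then gives $Q_{r}(b)\subset Q_{L}(a)$. For $\subset$: given $x\in Q_{L}(a)$, pick $b\in a+r'\Z^{2}$ coordinatewise nearest to $x$ among those with $|b_{i}-a_{i}|\leq 2^{k}r'$; using $r'\leq r-1$, i.e.\ $r'/2\leq(r-1)/2$, one checks $\|x-b\|_{\infty}\leq(r-1)/2$, so $x\in Q_{r}(b)$, and by construction $b\in r'\Z^{2}\cap Q_{L}(a)$.

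For the second assertion, I would first observe the contrapositive of the $\supset$ step: if $Q_{r}(b')\not\subset Q_{L}(a)$ with $b'\in r'\Z^{2}$, then $b'\notin Q_{L}(a)$, and since $\|b'-a\|_{\infty}$ is still a multiple of $r'$ exceeding $2^{k}r'+(r-1)/2$, the bound $r'>(r-1)/2$ promotes this to $\|b'-a\|_{\infty}\geq(2^{k}+1)r'$. Then for any $x\in\widetilde{\Omega}_{r}(b')$,
\begin{equation*}
\|x-a\|_{\infty}\geq\|b'-a\|_{\infty}-\|x-b'\|_{\infty}\geq(2^{k}+1)r'-\tfrac{(1-\tfrac{3}{2}\varepsilon_{0})r-1}{2}.
\end{equation*}
I want this to exceed $(L-1)/2=2^{k}r'+(r-1)/2$, which after cancellation becomes $r'>r-\tfrac{3\varepsilon_{0}}{4}r-1$. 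Plugging in $r'\geq(1-\tfrac{\varepsilon_{0}}{2})(r-1)$ and simplifying reduces this to the trivially true $\tfrac{\varepsilon_{0}r}{4}>-\tfrac{\varepsilon_{0}}{2}$, so $x\notin Q_{L}(a)$ as desired.

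There is no real obstacle here; this is pure bookkeeping. The only care needed is the parity/integrality step that turns $\|b-a\|_{\infty}>2^{k}r'+(r-1)/2$ into $\|b-a\|_{\infty}\geq(2^{k}+1)r'$ (which is where the choice $r'\approx(1-\tfrac{\varepsilon_{0}}{2})(r-1)$ is just large enough relative to $(r-1)/2$), and the tracking of floors inside $\widetilde{\Omega}_{r}$; these are handled uniformly by the two inequalities $\tfrac{r-1}{2}<r'\leq r-1$ that hold for all large odd $r$ once $\varepsilon_{0}$ has been fixed.
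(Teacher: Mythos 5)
Your proposal is correct and follows essentially the same route as the paper: write out the coordinate description of the $r$-dyadic box, use the two bounds $\tfrac{r-1}{2}<r'\leq r-1$ to identify $r'\Z^{2}\cap Q_{L}(a)$ and get the covering, and then for the second assertion observe that a non-contained $r$-bit has center at $\ell^{\infty}$-distance at least $(2^{k}+1)r'$ from $a$, which pushes $\widetilde{\Omega}_{r}(b')$ outside $Q_{L}(a)$ by the same arithmetic the paper performs. The only cosmetic difference is that you phrase the separation estimate as a lower bound on $\|x-a\|_{\infty}$ rather than on $\inf_{b''\in Q_{L}(a)}\|b'-b''\|_{\infty}$; the inequalities are identical after rearrangement.
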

\begin{proof}
    We assume without loss of generality that $a=\mathbf{0}$. Write $L=2^{k+1}\dot{r}+r$ with $k\in \Z_{+}$. By \eqref{eq:def-cube-Q_l}, 
    \begin{equation}\label{eq:explicit-Q_L}
        Q_{L}(\mathbf{0})= \{(x,y)\in \Z^{2}: -2^{k}\dot{r} -\frac{r-1}{2} \leq x, y\leq 2^{k}\dot{r}+ \frac{r-1}{2}\}.
    \end{equation}
    By Definition \ref{def:define-omega}, $\dot{r}\geq (1-\frac{\varepsilon_{0}}{2})(r-1)>\frac{r}{2}$ and thus 
    \begin{equation}\label{eq:intersect-r'Z-Q_L}
            \dot{r}\Z^{2} \cap Q_{L}(\mathbf{0}) = \{(\dot{r} x,\dot{r} y): |x|, |y|\leq 2^{k},\text{ $x,y\in \Z$}\}.
    \end{equation}
    Hence by \eqref{eq:explicit-Q_L} and $\dot{r}<r-1$, $$Q_{L}(\mathbf{0})= \bigcup_{b \in \dot{r}\Z^2 \cap Q_{L}(\mathbf{0})} \{b' \in \Z^{2}:|b-b'|\leq \frac{r-1}{2}\}=\bigcup_{b \in \dot{r}\Z^2 \cap Q_{L}(\mathbf{0})} Q_{r}(b).$$
    Assume $r$-bit $Q_{r}(b') \not \subset Q_{L}(\mathbf{0})$, then $b'\not \in \dot{r}\Z^{2}\cap Q_{L}(\mathbf{0})$. Write $b'=(\dot{r} x',\dot{r} y')$ with $x',y'\in \Z$. By \eqref{eq:intersect-r'Z-Q_L}, without loss of generality, we assume $|x'|\geq 2^{k}+1$. By \eqref{eq:explicit-Q_L}, $$\inf_{b''\in Q_{L}(\mathbf{0})} |b' - b''|\geq \dot{r}-\frac{r-1}{2}\geq (1-\frac{\varepsilon_{0}}{2})(r-1)-\frac{r-1}{2}> \frac{(1-\frac{3}{2}\varepsilon_{0})r-1}{2}.$$ By Definition \ref{def:define-omega}, $\Tilde{\Omega}_{r}(b')\cap Q_{L}(\mathbf{0})=\emptyset$.
\end{proof}
We now extend the ``cutting procedure'' to $r$-dyadic boxes. It will be used in the proof of Proposition \ref{prop:Wegner}.
\begin{defn}\label{def:extended-cutting}
Given an $r$-dyadic box $Q_{L}(a)$ and $V:Q_{L}(a)\rightarrow \{0,\Bar{V}\}$,
let $\mathcal{R}$ be a subset of admissible $r$-bits inside $Q_{L}(a)$. For each $t\in [0,1]$, define $H_{Q_{L}(a)}^{\mathcal{R},t}:\ell^{2}(Q_{L}(a)) \rightarrow \ell^{2}(Q_{L}(a))$ as follows:
    $$\left\{\begin{array}{l}
    \text{$H_{Q_{L}(a)}^{\mathcal{R},t}(b,c)=t-1$ \;\;\;\;\;\;\;\;\;\;\;\;\; if $\{b,c\} \in \bigcup_{Q_{r}(a') \in \mathcal{R}} \partial S_{r}(a')$;}\\
    \text{$H_{Q_{L}(a)}^{\mathcal{R},t}(b,c)=H_{Q_{L}(a)}(b,c)$ \;\;\;otherwise.} 
    \end{array}\right.$$
Denote $G^{\mathcal{R},t}_{Q_{L}(a)}(b,c;\lambda)=(H^{\mathcal{R},t}_{Q_{L}(a)}-\lambda)^{-1}(b,c)$ for $b,c\in Q_{L}(a)$.
\end{defn}

\begin{defn}\label{theta_1}
For any large odd number $r$, denote $\Theta^{r}=\cup_{a \in \dot{r}\Z^2}F_{r}(a)$. For simplicity, we also denote $\Theta^{r}$ by $\Theta_{1}$ if $r$ is already given in context.
\end{defn}
The reason to define $\Theta^{r}$ is that, one only needs to know the value of $V$ on $\Theta^{r}$ to decide whether each $r$-bit is admissible or not. The sub-index of ``$\Theta_{1}$'' is for the consistency of notations in later multi-scale analysis Theorem \ref{thm:multi-2}.
\begin{defn}
Given an odd number $r$, an $r$-dyadic box $Q_{L}(a)$ and a potential function $V':\Theta_{1}\cap Q_{L}(a) \rightarrow \{0,\Bar{V}\}$, we say $Q_{L}(a)$ is \emph{perfect} if for any $r$-bit $Q_{r}(b)\subset Q_{L}(a)$, $(Q_{r}(b),V'|_{F_{r}(b)})$ is admissible.
\end{defn}
\begin{prop}\label{prop:prob-of-perfect}
Suppose odd number $r$ is large enough and $c_{1}$ is the constant in Proposition \ref{prop:r-admissible}. Given $r$-dyadic box $Q_{L}(a)$ with $L\leq \exp(c_{1} r)$, the event that \text{$Q_{L}(a)$ is perfect} only depends on $V|_{\Theta_{1}\cap Q_{L}(a)}$ and
\begin{equation}
    \Prob[\text{$Q_{L}(a)$ is perfect}] \geq 1-L^{-6}.
\end{equation}
\end{prop}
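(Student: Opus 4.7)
The statement has two parts: a measurability claim and a probability lower bound. Both are fairly direct given the machinery already in place, and I will treat them in turn.

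For the measurability claim, the plan is to unfold definitions. By definition, $Q_L(a)$ is perfect iff every $r$-bit $Q_r(b)$ with $Q_r(b)\subset Q_L(a)$ is admissible. Fix such a $b$; necessarily $b\in r'\Z^2$ and $F_r(b)\subset Q_r(b)\subset Q_L(a)$, and also $F_r(b)\subset \Theta_1$ by Definition~\ref{theta_1}. Admissibility of $(Q_r(b),V|_{F_r(b)})$ depends only on $V|_{F_r(b)}$ by Definition~\ref{def:r-bit-admissible}, hence only on $V|_{\Theta_1\cap Q_L(a)}$. Taking the intersection over the (finitely many) relevant $r$-bits gives the claim.

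For the probability bound, the plan is a union bound over the $r$-bits inside $Q_L(a)$, using Proposition~\ref{prop:r-admissible} for each individual term. By Lemma~\ref{lem:r_dyadic_cube-ele} (more precisely, its proof via \eqref{eq:intersect-r'Z-Q_L}), the set of centers $b\in r'\Z^2$ with $Q_r(b)\subset Q_L(a)$ has cardinality at most $(L/r'+1)^2$, which is bounded by $16 L^2/r^2$ once $r$ is large (since $r'\geq r/2$). Applying Proposition~\ref{prop:r-admissible} to each such $b$, and taking a union bound,
\begin{equation*}
\Prob[\text{$Q_L(a)$ is not perfect}]\;\leq\;\frac{16 L^2}{r^2}\,\exp(-8 c_1 r).
\end{equation*}
Rearranging, we need $16 L^8\leq r^2\exp(8 c_1 r)$, and the hypothesis $L\leq\exp(c_1 r)$ gives $L^8\leq\exp(8 c_1 r)$, so the inequality holds as soon as $r^2\geq 16$, which is guaranteed by taking $r$ large enough. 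This yields $\Prob[\text{not perfect}]\leq L^{-6}$, as required.

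There is no real obstacle here: both steps are bookkeeping on top of Proposition~\ref{prop:r-admissible} and Lemma~\ref{lem:r_dyadic_cube-ele}. The only mildly delicate point is making sure the $r$-bits one sums over are all contained in $Q_L(a)$ (so that $F_r(b)\subset\Theta_1\cap Q_L(a)$), which is exactly what the first part of Lemma~\ref{lem:r_dyadic_cube-ele} provides, and ensuring that the polynomial prefactor $16 L^2/r^2$ is absorbed; the slack in $L\leq\exp(c_1 r)$ versus the $\exp(-8 c_1 r)$ tail gives plenty of room to reach $L^{-6}$ (indeed, a much smaller power) as long as $r$ is taken sufficiently large.
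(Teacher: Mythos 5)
Your proposal is correct and follows essentially the same route as the paper: the measurability claim is unfolded from Definitions \ref{def:r-bit-admissible} and \ref{theta_1}, and the probability bound is a union bound over the at most $O(L^{2})$ many $r$-bits inside $Q_{L}(a)$ combined with Proposition \ref{prop:r-admissible} and the hypothesis $L\leq\exp(c_{1}r)$. The paper simply uses the cruder count $L^{2}$ in place of your $16L^{2}/r^{2}$; the arithmetic is otherwise identical.
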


\begin{proof}
Since for each $r$-bit $Q_{r}(b) \subset Q_{L}(a)$, the event that it is admissible only depends on $V|_{F_{r}(b)}$, thus the event that \text{$Q_{L}(a)$ is perfect} only depends on $V|_{\Theta_{1}\cap Q_{L}(a)}$.

By Proposition \ref{prop:r-admissible}, we have
\begin{equation}
    \Prob[\text{$Q_{L}(a)$ is perfect}]\geq 1-L^{2}\exp(-8c_{1}r) \geq 1-L^{-6}
\end{equation}
since $L\leq \exp(c_{1} r)$.
\end{proof}
\begin{defn}
Given $S_{1},S_{2}\subset \Z^2$, we define $$\dist(S_{1},S_{2})= \inf_{a \in S_{1},b \in S_{2}} |a-b|.
$$
\end{defn}
We now prove the exponential decaying property of resolvent for perfect $r$-dyadic boxes. It will serve as the induction base case for the multi-scale analysis in Section \ref{sec:larger-scale}.
\begin{prop}\label{prop:L_1good}
Suppose odd number $r$ is large enough and $\Bar{V}>\exp(r^{2})$. If $V':\Theta_{1}\cap Q_{L}(a) \rightarrow \{0,\Bar{V}\}$ such that $Q_{L}(a)$ is a perfect $r$-dyadic box, then for any $V:Q_{L}(a)\rightarrow \{0,\Bar{V}\}$ with $V|_{\Theta_{1}\cap Q_{L}(a)}=V'$, any $\lambda_{0} \in [0,8]\setminus J_{r}^{\Bar{V}}$, any subset $\mathcal{R}$ of $r$-bits inside $Q_{L}(a)$, any $t \in [0,1]$, and each $b,c \in Q_{L}(a)$, we have
\begin{equation}\label{eq:G1}
    |G^{\mathcal{R},t}_{Q_{L}(a)}(b,c;\lambda_{0})| \leq \Bar{V}^{-\frac{|b-c|}{8r}+1}.
\end{equation}
\end{prop}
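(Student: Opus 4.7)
The plan is a contraction/maximum argument based on iterating the resolvent identity across admissible $r$-bits inside $Q_L(a)$, with the key inputs coming from Proposition~\ref{prop:0thscale}.

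Perfectness of $Q_L(a)$ ensures admissibility of every $r$-bit $Q_r(a_1)\subset Q_L(a)$, while Remark~\ref{rem:geo-of-r-bit}(1) says the modifications coming from the other $r$-bits of $\mathcal{R}$ live inside their $\tilde\Omega_r$-neighborhoods and so do not touch any edge incident to $Q_r(a_1)$. Hence $(H^{\mathcal{R},t}_{Q_L(a)})_{Q_r(a_1)}$ equals $H^t_{Q_r(a_1)}$ when $Q_r(a_1)\in\mathcal{R}$ and equals $H_{Q_r(a_1)}=H^0_{Q_r(a_1)}$ otherwise, so Proposition~\ref{prop:0thscale} applies to it in either case. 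For $b\in Q_L(a)$ (away from a narrow strip of $\partial Q_L(a)$), Remark~\ref{rem:geo-of-r-bit}(2) combined with Lemma~\ref{lem:r_dyadic_cube-ele} and the $r$-dyadic structure produces an $r$-bit $Q_r(a_1)\subset Q_L(a)$ with $b\in Q_{(1-2\varepsilon_0/5)r}(a_1)$, so $b$ sits at $\ell^{\infty}$-distance at least $\varepsilon_0 r/5 - O(1)>\varepsilon_0 r/8$ from $\partial^- Q_r(a_1)$. Then Proposition~\ref{prop:0thscale}(3) gives $|G^{\mathcal{R},t}_{Q_r(a_1)}(b,x;\lambda_0)|\leq \bar{V}^{-1/4}$ for every $x\in\partial^- Q_r(a_1)$, while (1) yields $|G^{\mathcal{R},t}_{Q_r(a_1)}(b,c;\lambda_0)|\leq 2\bar{V}^{1/4}$.

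For the core argument I would fix $c$, work with $\lambda=\lambda_0+i\epsilon$ (for small $\epsilon>0$) to guarantee finiteness, and set
\[M_c := \max_{b\in Q_L(a)} |G^{\mathcal{R},t}_{Q_L(a)}(b,c;\lambda)|\,\bar{V}^{|b-c|/(8r)-1}.\]
At any argmax $b^*$, pick $Q_r(a_1^*)$ as above and apply the resolvent identity for the splitting $Q_L(a)=Q_r(a_1^*)\sqcup(Q_L(a)\setminus Q_r(a_1^*))$:
\[G^{\mathcal{R},t}_{Q_L(a)}(b^*,c;\lambda)=G^{\mathcal{R},t}_{Q_r(a_1^*)}(b^*,c;\lambda)\,\mathbbm{1}_{c\in Q_r(a_1^*)}+\sum_{\substack{x\in\partial^- Q_r(a_1^*),\; y\in\partial^+ Q_r(a_1^*)\cap Q_L(a)\\ x\sim y}} G^{\mathcal{R},t}_{Q_r(a_1^*)}(b^*,x;\lambda)\,G^{\mathcal{R},t}_{Q_L(a)}(y,c;\lambda).\]
Inserting the pointwise bounds above, using $|b^*-c|\leq r$ when $c\in Q_r(a_1^*)$ and the triangle estimate $|b^*-c|-\min_y|y-c|\leq (1-\varepsilon_0/5)r$, and dividing by $\bar{V}^{-|b^*-c|/(8r)+1}$ produces
\[M_c \leq 2\bar{V}^{-5/8}\,\mathbbm{1}_{c\in Q_r(a_1^*)}+8r\,\bar{V}^{-1/8-\varepsilon_0/40}\,M_c.\]
Since $\bar{V}\geq\exp(r^2)$ and $r$ is large, the coefficient $8r\,\bar{V}^{-1/8-\varepsilon_0/40}$ is less than $1/2$. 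If $c\in Q_r(a_1^*)$ (Case A) this gives $M_c\leq 4\bar{V}^{-5/8}<1$; if $c\notin Q_r(a_1^*)$ (Case B) the first term vanishes and one obtains $M_c\leq M_c/2$, forcing $M_c=0$ and contradicting $G(c,c;\lambda)\neq 0$. Hence the argmax is necessarily in Case~A, so $M_c\leq 1$; finally $\epsilon\to 0$ extracts both $\lambda_0\notin\sigma(H^{\mathcal{R},t}_{Q_L(a)})$ and the stated bound.

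The hard part, I expect, is handling the boundary of $Q_L(a)$: for $b^*$ sitting within the $\sim \varepsilon_0 r/5$-wide strip along $\partial Q_L(a)$ no $a_1^*$ simultaneously satisfies $b^*\in Q_{(1-2\varepsilon_0/5)r}(a_1^*)$ and $Q_r(a_1^*)\subset Q_L(a)$. The cleanest remedy is to run the resolvent identity for the splitting with $S=Q_r(a_1^*)\cap Q_L(a)$, a connected subset of $Q_r(a_1^*)$ (and to invoke a small extension of Proposition~\ref{prop:0thscale}(2) to cover the modified restriction on $S$), absorbing the resulting $O(1)$ loss into the ``$+1$'' slack of the target exponent $-|b-c|/(8r)+1$.
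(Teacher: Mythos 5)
Your core argument is the same as the paper's: a maximum/contraction argument at the argmax of $|G^{\mathcal{R},t}_{Q_L(a)}(b,c;\lambda_0)|\Bar{V}^{|b-c|/(8r)}$, expanding via the resolvent identity across the $r$-bit containing the maximizing point and feeding in the three items of Proposition~\ref{prop:0thscale}; the exponent bookkeeping and the dichotomy (either the other endpoint lies in the same bit and $g\lesssim \Bar{V}^{3/8}$, or $g\le g/2$ forces $g=0$) are exactly the paper's.

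The one place you go astray is the part you flag as ``the hard part.'' The boundary strip is not actually a problem, and your proposed remedy is both unnecessary and itself gappy. The point is that in the resolvent identity for $H_{Q_L(a)}$ with Dirichlet conditions, the boundary sum only ranges over pairs $x\sim y$ with $y\in Q_L(a)\setminus Q_r(a_1^*)$; hence the smallness you need from Proposition~\ref{prop:0thscale}(3) is controlled by $\dist\bigl(b^*,\,Q_L(a)\setminus Q_r(a_1^*)\bigr)$, not by the distance from $b^*$ to $\partial^- Q_r(a_1^*)$ in all of $\Z^2$. Because $Q_L(a)$ is $r$-dyadic, Lemma~\ref{lem:r_dyadic_cube-ele} tiles it exactly by $r$-bits contained in $Q_L(a)$ whose outermost bits are flush with $\partial^- Q_L(a)$; so even a point $b^*$ on $\partial^- Q_L(a)$ lies in a bit $Q_r(a_1^*)\subset Q_L(a)$ with $\dist(b^*,Q_L(a)\setminus Q_r(a_1^*))\ge \varepsilon_0 r/7$ (the complement \emph{within} $Q_L(a)$ only extends inward), and the argument runs uniformly over all of $Q_L(a)$. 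By contrast, your proposed splitting $S=Q_r(a_1^*)\cap Q_L(a)$ for a bit \emph{not} contained in $Q_L(a)$ cannot work as stated: Lemma~\ref{lem:r_dyadic_cube-ele} gives $\Tilde{\Omega}_r(a_1^*)\cap Q_L(a)=\emptyset$ for such a bit, so $S$ is a thin sliver of $F_r(a_1^*)$ carrying none of the admissible-bit structure ($S_r$, its barrier $\partial^+S_r$, the set $S_0$) on which the proof of item (3) of Proposition~\ref{prop:0thscale} rests, and item (2) alone only yields the norm bound $2\Bar{V}^{1/4}$, not the off-diagonal smallness $\Bar{V}^{-1/4}$ needed for the contraction. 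Replacing that paragraph by the observation above closes the proof.
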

\begin{proof}
For simplicity of notations, we assume $a=\mathbf{0}$.

Let $g=\max_{b,c \in Q_{L}} |G^{\mathcal{R},t}_{Q_{L}}(b,c;\lambda_{0})|\Bar{V}^{\frac{|b-c|}{8r}}$ and
assume for some $b,b' \in Q_{L}$, $|G^{\mathcal{R},t}_{Q_{L}}(b,b';\lambda_{0})|\Bar{V}^{\frac{|b-b'|}{8r}}=g$.

Note that $Q_{L}=\bigcup_{a' \in \dot{r}\Z^2 \cap Q_{L}} Q_{r}(a')$ by Lemma \ref{lem:r_dyadic_cube-ele}. By Definition \ref{def:define-omega}, $\dot{r}=\left\lceil \left(1-\frac{\varepsilon_{0}}{2}\right)(r-1)\right\rceil$. By elementary geometry,
there is an $r$-bit $Q_{r}(c) \subset Q_{L}$ such that $b' \in Q_{r}(c)$ and $\dist(b',Q_{L} \setminus Q_{r}(c))\geq \frac{\varepsilon_{0} r}{7}$. By resolvent identity,
\begin{multline}\label{eq:solvent2}
    G^{\mathcal{R},t}_{Q_{L}}(b,b';\lambda_{0})\\
    =\sum_{\substack{b'' \in Q_{r}(c)\\ b'' \sim b'''\\ b''' \in Q_{L}\setminus Q_{r}(c)}} \widetilde{G_{Q_{r}(c)}}(b',b'';\lambda_{0})G^{\mathcal{R},t}_{Q_{L}}(b''',b;\lambda_{0}) + \mathbbm{1}_{b\in Q_{r}(c)} \widetilde{G_{Q_{r}(c)}}(b,b';\lambda_{0}).
\end{multline}
Here, $\widetilde{G_{Q_{r}(c)}}(b',b'';\lambda_{0})=G^{t}_{Q_{r}(c)}(b',b'';\lambda_{0})$ if $Q_{r}(c) \in \mathcal{R}$ and $\widetilde{G_{Q_{r}(c)}}(b',b'';\lambda_{0})=G_{Q_{r}(c)}(b',b'';\lambda_{0})$ otherwise.
Note that, if $b'' \sim b'''$ for some $b'' \in Q_{r}(c)$ and $b''' \in Q_{L}\setminus Q_{r}(c)$, then $|b'-b''|\geq \frac{\varepsilon_{0}r}{7} -1 \geq  \frac{\varepsilon_{0}r}{8}$.
In this case, by Proposition \ref{prop:0thscale}, $|\widetilde{G_{Q_{r}(c)}}(b',b'';\lambda_{0})| \leq \Bar{V}^{-\frac{1}{4}} \leq \Bar{V}^{-\frac{|b'-b''|}{4r}}$ since $|b'-b''|\leq r$ . Thus, we can estimate the first term in the right hand side of \eqref{eq:solvent2} by
\begin{align}
    &\left|\sum_{\substack{b'' \in Q_{r}(c)\\ b'' \sim b'''\\ b''' \in Q_{L}\setminus Q_{r}(c)}} \widetilde{G_{Q_{r}(c)}}(b',b'';\lambda_{0})G^{\mathcal{R},t}_{Q_{L}}(b''',b;\lambda_{0})\right|\\
    &\leq \sum_{\substack{b'' \in Q_{r}(c)\\ b'' \sim b'''\\ b''' \in Q_{L}\setminus Q_{r}(c)}}  g \Bar{V}^{-\frac{|b'-b''|}{4r}-\frac{|b-b'''|}{8r}}\\
    &<\frac{1}{2} \Bar{V}^{-\frac{|b-b'|}{8r}} g \label{mul:G1}
\end{align}
The second inequality is because, by triangle inequality with $|b''-b'''|=1$ and $|b'-b''| \geq \frac{\varepsilon_{0} r}{8}$,
\begin{align}
    \Bar{V}^{-\frac{|b'-b''|}{4r}-\frac{|b-b'''|}{8r}} 
    &\leq \Bar{V}^{-\frac{|b-b'|}{8r}+\frac{1}{8r}-\frac{\varepsilon_{0}}{64}} \\ \label{eq:the third one}
    &\leq  \exp(\frac{1}{8}r-\frac{\varepsilon_{0}}{64}r^{2}) \Bar{V}^{-\frac{|b-b'|}{8r}}\\
    &< \frac{1}{16}r^{-1}  \Bar{V}^{-\frac{|b-b'|}{8r}}
\end{align}
for large enough $r$, where \eqref{eq:the third one} is due to $\Bar{V}> \exp(r^{2})$. Since $$|G^{\mathcal{R},t}_{Q_{L}}(b,b';\lambda_{0})|=\Bar{V}^{-\frac{|b-b'|}{8r}} g ,$$ by \eqref{eq:solvent2} and \eqref{mul:G1}, we have
\begin{equation}
    g \leq \frac{1}{2} g + \Bar{V}^{\frac{|b-b'|}{8r}} \mathbbm{1}_{b\in Q_{r}(c)} |\widetilde{G_{Q_{r}(c)}}(b,b';\lambda_{0})|.
\end{equation}
If $b \not \in Q_{r}(c)$, then we have $g=0$. Otherwise, $|b-b'|\leq r$. By the first item in Proposition \ref{prop:0thscale}, $|\widetilde{G_{Q_{r}(c)}}(b,b';\lambda_{0})|\leq 2 \Bar{V}^{\frac{1}{4}}$ and thus
\begin{equation}
    g \leq 2 \Bar{V}^{\frac{|b-b'|}{8r}} |\widetilde{G_{Q_{r}(c)}}(b,b';\lambda_{0})| \leq 4 \Bar{V}^{\frac{3}{8}}<\Bar{V},
\end{equation}
which is equivalent to \eqref{eq:G1}.
\end{proof}

\section{Wegner Estimate}\label{sec:Wegner-estimate}
In this section we prove the Wegner estimate (Proposition \ref{prop:Wegner}) which will be used in the multi-scale analysis Theorem \ref{thm:multi-2}. In Section \ref{sec:auxiliary-lemmas}, we collect several lemmas on the unique continuation principle (Lemma \ref{lem:unique-continuation}), the eigenvalue variation (Lemma \ref{lem:eigenvariation} and Lemma \ref{lem:eigen-obstruction}), and the almost orthonormal vectors (Lemma \ref{lem:ortho-bdd}). In Section \ref{sec:Sperner-theorem}, a generalized Sperner lemma (Lemma \ref{lem:combinatorics}) for directed graph products is proved. All these lemmas will be used in Section \ref{sec:proof-Wegner} to prove the Wegner estimate Proposition \ref{prop:Wegner}.
\subsection{Auxiliary lemmas}\label{sec:auxiliary-lemmas}
We first need some geometry notations from \cite{ding2020localization}. The following Definition $\ref{def:geo1}$ to $\ref{def:geo4}$ are the same as Definition 3.1 to 3.4 in \cite{ding2020localization}.
\begin{defn}\label{def:geo1}
Given two subsets $I,J \subset \Z$, we define
\begin{equation}
    R_{I,J}=\{(x,y)\in \Z^2: \text{$x+y \in I$ and $x-y \in J$}\}.
\end{equation}
We call $R_{I,J}$ a \emph{tilted rectangle} if $I,J$ are intervals. A \emph{tilted square} $\Tilde{Q}$ is a tilted rectangle $R_{I,J}$ with $|I|=|J|$. With a little abuse of notations, we denote $\ell(\Tilde{Q})=|I|$ for a tilted square $\Tilde{Q}=R_{I,J}$.
\end{defn}
\begin{defn}\label{def:diagonals}
Given $k \in \Z$, we define the diagonals
\begin{equation}
    \mathcal{D}_{k}^{\pm}=\{(x,y)\in\Z^2:x\pm y=k\}.
\end{equation}
\end{defn}
\begin{defn}\label{def:sparsity}
Suppose $\Theta \subset \Z^2$, $\eta>0$ a density, and $R$ a tilted rectangle. Say that $\Theta$ is $(\eta,\pm)$-sparse in $R$ if 
\begin{equation}
    |\mathcal{D}_{k}^{\pm}\cap \Theta \cap R|\leq \eta |\mathcal{D}_{k}^{\pm} \cap R| \text{ for all diagonals $\mathcal{D}_{k}^{\pm}$}.
\end{equation}
We say that $\Theta$ is $\eta$-sparse in $R$ if it is both $(\eta,+)$-sparse and $(\eta,-)$-sparse in $R$.
\end{defn}

\begin{defn}\label{def:geo4}
A subset $\Theta \subset \Z^2$ is called $\eta$-regular in a set $E \subset \Z^2$ if $\sum_{k} |Q_{k}|\leq \eta |E|$ holds whenever $\Theta$ is not $\eta$-sparse in each of the disjoint tilted squares $Q_{1},Q_{2},\cdots,Q_{n}\subset E$. 
\end{defn}

The following Lemma \ref{lem:unique-continuation} and \ref{lem:findbox} are used to find an enough portion of the box where an eigenfunction has a decent lower bound.
In particular, Lemma \ref{lem:unique-continuation} is analog of \cite[Theorem 3.5]{ding2020localization} 
and its proof is given in Section \ref{sec:proof-Lemma-3.5}.
\begin{lemma}\label{lem:unique-continuation}
There exists numerical constant $0<\varepsilon_{1}<\frac{1}{100}$ such that the following holds. For every $\varepsilon\leq \varepsilon_{1}$, there is a large $\alpha>1$ depending on $\varepsilon$ such that, if
\begin{enumerate}
    \item $Q \subset\Z^2$ a box with $\ell(Q)\geq \alpha$
    \item $\Theta \subset Q$ is $\varepsilon$-regular in $Q$
    \item $\Bar{V}\geq 2$ and $\lambda_{0} \in [0,8]$
    \item $V': \Theta \rightarrow \{0,\Bar{V}\}$
    \item $\mathcal{E}^{\varepsilon,\alpha}_{uc}(Q,\Theta)$ denotes the event that
    \begin{equation}\label{eq:unique-conti}
            \left\{\begin{array}{l}|\lambda-\lambda_{0}|\leq (\ell(Q)\Bar{V})^{-\alpha \ell(Q)}\\\text{$H u= \lambda u$ in Q}\\ \text{$|u| \leq 1$ in a $1-\varepsilon^{3}$ fraction of $Q\setminus \Theta$} \end{array}\right. 
    \end{equation}
    implies $|u|\leq (\ell(Q)\Bar{V})^{\alpha\ell(Q)}$ in $\frac{1}{100}Q$,
\end{enumerate}
then $\Prob[\mathcal{E}^{\varepsilon,\alpha}_{uc}(Q,\Theta)\big| V|_{\Theta}=V'] \geq 1- \exp(-\varepsilon \ell(Q)^{\frac{2}{3}})$.
\end{lemma}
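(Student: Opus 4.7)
The plan is to adapt the discrete unique continuation framework of \cite{ding2018localization} (its Theorem 3.5) to the large disorder regime, replacing its Sperner-type anti-concentration by Lemma \ref{lem:Boolean-cube}. The deterministic skeleton is a BLMS-style diagonal propagation borrowed from \cite{buhovsky2017discrete}, run on tilted squares produced by the $\varepsilon$-regularity of $\Theta$.

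First I would use the $\varepsilon$-regularity hypothesis to cover all but an $\varepsilon$-fraction of $Q$ by disjoint tilted squares $\tilde Q_1,\dots,\tilde Q_m$ on each of which $\Theta$ is $\varepsilon$-sparse in the sense of Definition \ref{def:sparsity}. On each such $\tilde Q_i$, sparsity of $\Theta$ along both diagonal directions $\mathcal{D}_k^{\pm}$ permits a discrete three-line / harmonic-measure argument that propagates the hypothesis $|u|\le 1$ on $(1-\varepsilon^3)$ of $Q\setminus\Theta$ to an $L^{\infty}$ bound on a smaller concentric tilted square at a multiplicative cost of at most $(\ell(Q)\Bar V)^{C\ell(\tilde Q_i)}$. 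The large-$\Bar V$ feature enters only through the a priori estimate $|u(a)|\le \tfrac{4}{\Bar V-4}\max_{b\sim a}|u(b)|$, valid whenever $V(a)=\Bar V$ and $\lambda\in[0,8]$, which effectively removes the obstacle vertices from the diagonal propagation at negligible cost and reduces matters to a problem for eigenfunctions of $-\Delta$ on connected components of $\{V=0\}$. Chaining these local estimates across the $\tilde Q_i$ yields the deterministic bound $|u|\le (\ell(Q)\Bar V)^{\alpha\ell(Q)}$ on $\tfrac{1}{100}Q$ on the good event.

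For the probabilistic part, the bad configurations of $V|_{Q\setminus\Theta}$ that block the above propagation should be characterized by a short list of resonance-type conditions: typically, that there exist a cluster $W\subset\{V=0\}$ of moderate size and an eigenvalue of $(-\Delta)_W$ within $(\ell(Q)\Bar V)^{-\alpha\ell(Q)}$ of $\lambda_0$, or an analogous near-degeneracy for a boundary trace. Parametrizing $V|_{Q\setminus\Theta}$ as a vertex of the Boolean cube $B^N\subset\R^N$ with $N\asymp\ell(Q)^2$, each such condition is an affine constraint of some codimension $k$. Lemma \ref{lem:Boolean-cube} bounds the number of cube vertices within the relevant thin slab around that affine subspace by $2^{k+1}$, and a union bound over the combinatorial data (choice of cluster, of eigenfunction of $(-\Delta)_W$, and of boundary trace) converts this into the required tail $\exp(-\varepsilon\ell(Q)^{2/3})$, with the exponent $\tfrac{2}{3}$ emerging from balancing combinatorial complexity against the codimension gained from each resonance condition.

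The main obstacle I expect is this last step: identifying the correct family of affine constraints, few enough in number for the union bound to survive yet jointly capturing every failure mode of the deterministic propagation, and then controlling their codimension on the Boolean cube well enough that the resulting probability bound still dominates the deterministic cost $(\ell(Q)\Bar V)^{\alpha\ell(Q)}$. The smallness requirement $\varepsilon_1<\tfrac{1}{100}$ imposed in Definition \ref{def:epsilon0} and the fractional exponent $\tfrac{2}{3}$ in the probability tail both seem to be artefacts of this balancing.
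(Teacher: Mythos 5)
Your high-level architecture matches the paper's: $\varepsilon$-regularity produces tilted squares on which $\Theta$ is sparse, a local propagation estimate on thin tilted rectangles is the engine, a growth lemma plus a covering/Vitali argument assembles the global bound, and Lemma \ref{lem:Boolean-cube} is indeed the anti-concentration input replacing the Sperner-type argument of \cite{ding2018localization}. But the core of the proof --- how Lemma \ref{lem:Boolean-cube} is actually brought to bear --- is missing, and the mechanism you propose for it would not work. You suggest characterizing the bad potential configurations by ``resonance-type conditions,'' e.g.\ that some cluster $W\subset\{V=0\}$ has an eigenvalue of $(-\Delta)_W$ within $(\ell(Q)\Bar V)^{-\alpha\ell(Q)}$ of $\lambda_0$, and treating each such condition as an affine constraint on the Boolean cube. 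Eigenvalues of $(-\Delta)_W$ are not affine (nor even continuous in any useful linearized sense) as functions of the Boolean potential vector, and the clusters $W$ themselves depend on the configuration, so there is no affine subspace to which Lemma \ref{lem:Boolean-cube} applies. The paper instead exploits an exactly linear structure: working in tilted coordinates, the value of $u$ on a far row $R_{[1,a],\{b_0\}}$ of a thin rectangle ($a\ge 10b$) decomposes as $M_{[1,a]}^{t'+1,b_0}\bigl(u^*+A_0(\vec V_{t'})\bigr)+A_1(u_1)+A_2(u_2)+v^*$, where $\vec V_{t'}$ is the potential restricted to a \emph{single} diagonal row, $A_0$ and the transfer operator $M_{[1,a]}^{t'+1,b_0}$ are triangular with invertible principal minors of controlled inverse norm (Proposition \ref{prop:transfer-conditions-col}, Lemma \ref{lem:upper-triangular}, Corollary \ref{cor:transfer-bound-M}), and the nuisance terms $A_1(u_1),A_2(u_2)$ and the discarded coordinates span an affine subspace of dimension at most $\tfrac25 a$, strictly below the dimension $\ge \tfrac25 a+\tfrac{1}{20}a$ of the free Boolean coordinates. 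Only then does ``$u$ small on most of the far row'' force $\vec V_{t'}$ into a thin slab around a low-dimensional affine subspace, and Lemma \ref{lem:Boolean-cube} gives probability $\le 2^{-ca}$ per rectangle. Identifying and verifying this linear/triangular structure is the substance of Section \ref{sec:key-lemmas}; without it your plan has no executable probabilistic step.

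Two further points. First, the role of large $\Bar V$ is not what you describe: the deterministic propagation (Lemmas \ref{lem:extension}--\ref{lem:variate-lambda}) only needs $\Bar V\ge 2$ and does not ``remove obstacle vertices'' or reduce to eigenfunctions of $-\Delta$ on components of $\{V=0\}$; the $\Bar V$-dependence merely inflates the polynomial base to $(\ell(Q)\Bar V)^{\alpha\ell(Q)}$. Second, the exponent $\tfrac23$ does not come from balancing combinatorial complexity against codimension: it arises in the covering argument, where one takes $\sim\ell(Q)^{2/3}$ translates of intermediate-scale squares that are pairwise separated by distance $>50\,\ell(Q)^{2/3}$, so the corresponding events are independent and a large deviation bound yields $\exp(-c\,\ell(Q)^{2/3})$.
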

The following lemma is a rewrite of \cite[Lemma 5.3]{ding2020localization} and its proof is the same as the proof of \cite[Lemma 5.3]{ding2020localization}.
\begin{lemma}\label{lem:findbox}
For every integer $K\geq 1$, there exists $C_{K}>0$ depending on $K$ such that the following holds. If 
\begin{enumerate}
    \item $\Bar{V}\geq 2$ and $\lambda \in [0,8]$
    \item $L\geq C_{K}L' \geq L' \geq C_{K}$
    \item box $Q \subset \Z^2$ with $\ell(Q)=L$
    \item boxes $Q'_{k} \subset Q$ with $\ell(Q'_{k}) =L'$ for $k=1,2,\cdots,K$
    \item $H_{Q} u =\lambda u$,
\end{enumerate}
then,
\begin{equation}
    \|u\|_{\ell^{\infty}(Q')} \geq \Bar{V}^{-C_{K}L'}\|u\|_{\ell^{\infty}(Q)}
\end{equation}
holds for some $2 Q' \subset Q\setminus \cup_{k}Q'_{k}$ with $\ell(Q')=L'$.
\end{lemma}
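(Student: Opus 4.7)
My plan is to exhibit an ``admissible target'' sub-box $Q'$ of side length $L'$ that lies close to the point where $|u|$ is maximized, and then to propagate the maximum down to $Q'$ via the eigenequation. Write $M=\|u\|_{\ell^{\infty}(Q)}$ and pick $a_{0}\in Q$ with $|u(a_{0})|=M$.

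First, I want to find an anchor $a^{\star}\in Q$ with $2Q_{L'}(a^{\star})\subset Q\setminus\bigcup_{k=1}^{K}Q'_{k}$ and $|a^{\star}-a_{0}|\leq C_{K}L'$. This is purely geometric: the set of admissible anchors has complement (inside $Q$) of cardinality at most $C\bigl(K(L')^{2}+LL'\bigr)$, so for $L\geq C_{K}L'$ with $C_{K}$ large enough, any ball $Q_{C'L'\sqrt{K}}(a_{0})\cap Q$ of radius $\sim\sqrt{K}L'$ around $a_{0}$ cannot be swallowed by the forbidden boxes together with the boundary strip, and a pigeonhole argument selects the desired $a^{\star}$.

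Second, the analytic core is a chain propagation. Pick a lattice path $a_{0},a_{1},\dots,a_{N}=a^{\star}$ of length $N\leq |a_{0}-a^{\star}|\leq C_{K}L'$ joining $a_{0}$ to $a^{\star}$ entirely inside $Q$. The eigenequation at $a_{i}$ reads
\[
u(a_{i+1})\;=\;\bigl(4-\lambda+V(a_{i})\bigr)u(a_{i})\;-\;\sum_{\substack{b\sim a_{i}\\ b\neq a_{i+1}}}u(b),
\]
so $u(a_{i+1})$ is determined by $u(a_{i})$ and three other lattice values, with coefficient of absolute value at most $\Bar{V}+12$. Iterating along the chain, one expresses $u(a^{\star})$ as a linear combination of values of $u$ on an $O(N)$-thick tubular neighbourhood of the path, with coefficients of total size at most $(\Bar{V}+12)^{N}\leq\Bar{V}^{C_{K}L'}$. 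Since $|u(a_{0})|=M$, some site $\tilde a$ in that tubular neighbourhood, arranged to lie inside $Q_{L'}(a^{\star})$, must satisfy $|u(\tilde a)|\geq \Bar{V}^{-C_{K}L'}M$. Taking $Q'=Q_{L'}(a^{\star})$ then gives the lemma.

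The main obstacle is that a naive site-by-site chain ``$|u(a_{i+1})|\geq c\,|u(a_{i})|$'' fails when $V(a_{i})=0$ and $\lambda$ is close to $4$, because then the coefficient $4-\lambda+V(a_{i})$ can vanish and all four neighbours of $a_{i}$ can simultaneously be small due to cancellation. The remedy (cf.~\cite{ding2018localization}) is to propagate a pair of consecutive parallel lines of values of $u$ at each step rather than a single value: two such lines uniquely determine $u$ on the next line through the eigenequation, giving a transfer map of operator norm at most $\Bar{V}+12$, which iterates cleanly over the $O(C_{K}L')$ steps with total loss $\Bar{V}^{C_{K}L'}$, as required.
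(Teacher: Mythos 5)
Your geometric step (locating an admissible anchor $a^\star$ with $2Q_{L'}(a^\star)\subset Q\setminus\bigcup_k Q'_k$ within distance $O(\sqrt{K}\,L')$ of the maximizer $a_0$) is fine, but the analytic core does not close, for two reasons. First, the direction of your inequality is backwards: expressing $u(a^\star)$ (or $u(a_{i+1})$) as a linear combination of other values with coefficients of total size $\Bar{V}^{C_KL'}$ yields an \emph{upper} bound on $|u(a^\star)|$; it does not force any site of $Q_{L'}(a^\star)$ to be large just because $|u(a_0)|=M$. What you need is the reverse dependence: a set $S\subset Q_{L'}(a^\star)$ such that $u|_S$ \emph{determines} $u(a_0)$ with controlled norm, giving $M\le \Bar{V}^{C_KL'}\|u\|_{\ell^\infty(S)}$. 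Second, and more seriously, the proposed remedy --- transferring a pair of consecutive parallel lines --- does not produce such a set. Two consecutive \emph{finite} segments do not determine the next segment: in axis-parallel coordinates the eigenequation at $(x,j)$ involves $u(x\pm1,j)$, so the determined region shrinks by one site at each end per step, and data confined to a box of side $L'$ determines nothing beyond about $L'/2$ further rows, while $a_0$ lies at distance up to $C\sqrt{K}L'$ from $Q_{L'}(a^\star)$ (already more than $L'$ for $K=1$ when $a_0$ sits inside $Q'_1$). In tilted coordinates, two consecutive diagonals determine the next one only up to a free seed value per new diagonal (this is exactly why the west boundary in Definition \ref{def:geo1}ff.\ is L-shaped). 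The only fully determined version of your transfer uses two \emph{full} rows of $Q$, closed off by the Dirichlet condition at the ends --- but then the data has length $L$, fits in no $L'$-box, and the loss is $\Bar{V}^{CL}$.

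The mechanism the paper actually invokes (by deferring to \cite[Lemma 5.3]{ding2018localization}) is the tilted-rectangle west-boundary extension, Lemmas \ref{lem:extension} and \ref{lem:extension-bound}: $u$ on all of $R_{[1,a],[1,b]}$ is determined by $u$ on $\partial^{w}R$, with loss exponential in only one of the two dimensions and base polynomial in the other. Choosing the rectangle thin in the exponentiated direction relocates a near-maximal value by a distance of order $L'$ at multiplicative cost $\Bar{V}^{CL'}$, and iterating such jumps $O_K(1)$ times --- rather than fixing one target anchor in advance and propagating to it in a single shot --- eventually lands in an admissible $L'$-box. To repair your outline you would need to replace the two-line transfer by this west-boundary extension and restructure the argument as that iteration; as written, the assertion that the transfer ``iterates cleanly over the $O(C_KL')$ steps'' is precisely where the proof fails.
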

\begin{defn}
Given a self-adjoint matrix $A$ and $\lambda \in \mathbb{R}$, we define $$n(A;\lambda)=\trace \mathbbm{1}_{(-\infty,\lambda)}(A).$$ i.e. $n(A;\lambda)$ is the number of $A$'s eigenvalues (with multiplicities) which are less than $\lambda$.
\end{defn}
The following Lemma \ref{lem:eigenvariation} and \ref{lem:eigen-obstruction} will provide a lower bound of the eigenvalue variation under a rank one perturbation of an operator. Lemma \ref{lem:eigenvariation} was proved in \cite[Lemma 5.1]{ding2020localization}.
\begin{lemma}\label{lem:eigenvariation}
Suppose real symmetric $n \times n$ matrix $A$ has eigenvalues $\lambda_{1}\leq \lambda_{2} \leq \cdots \leq \lambda_{n} \in \mathbb{R}$ with orthonormal eigenbasis $v_{1},v_{2},\cdots,v_{n} \in \mathbb{R}^{n}$. Let integers $k\in \{1,2,\cdots,n\}$ and $1\leq j\leq i\leq n-1$. If 
\begin{enumerate}
    \item $0<r_{1}<r_{2}<r_{3}<r_{4}<r_{5}<1$
    \item $r_{1} \leq C \min\{r_{3}r_{5},r_{2}r_{3}/r_{4}\}$
    \item $0<\lambda_{j}\leq \lambda_{i} <r_{1}<r_{2}<\lambda_{i+1} $
    \item $v^{2}_{j}(k)\geq r_{3}$
    \item $\sum_{r_{2}<\lambda_{l}<r_{5}} v^{2}_{l}(k) \leq r_{4}$
\end{enumerate}
then $n(A;r_{1})> n(A+tP_{k};r_{1})$ for $t\geq 1$, where $P_{k}$ is the projection operator defined by  $(P_{k} u)(i)=0$ if $i\not =k$ and $(P_{k} u)(k)=u(k)$ for any $u\in \R^{n}$.
\end{lemma}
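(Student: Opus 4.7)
The plan is to treat $A\mapsto A+tP_k=A+te_ke_k^\top$ as a one-parameter rank-one positive semidefinite update, locate via the matrix determinant lemma the unique $t^\ast>0$ at which some eigenvalue of $A+tP_k$ crosses $r_1$, and verify the scalar inequality $t^\ast\le 1$. Under assumption (3), $r_1\in(\lambda_i,\lambda_{i+1})$, so $n(A;r_1)=i$ and $A-r_1 I$ is invertible. Weyl's inequality and Cauchy interlacing for rank-one positive perturbations give $\lambda_l\le\mu_l(t)\le\lambda_{l+1}$ for the sorted eigenvalues $\mu_l(t)$ of $A+tP_k$, each non-decreasing in $t$; hence $n(A+tP_k;r_1)$ is non-increasing in $t$, and my goal reduces to producing a single $t\in(0,1]$ at which $\det(A+tP_k-r_1 I)=0$.

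The matrix determinant lemma applied to the rank-one update gives
$$\det\bigl((A-r_1 I)+te_ke_k^\top\bigr)=\det(A-r_1 I)\,(1+tS),\qquad S:=\sum_{l=1}^n\frac{v_l(k)^2}{\lambda_l-r_1},$$
a nonzero linear function of $t$ whose unique root is $t^\ast=-1/S$ (provided $S<0$). At $t^\ast$ exactly one eigenvalue of $A+tP_k$ equals $r_1$, and by the monotonicity above it has left $(-\infty,r_1)$ permanently for $t>t^\ast$. Thus the entire statement reduces to the scalar bound $-S\ge 1$.

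For that estimate I would decompose $-S=\Sigma_--\Sigma_+$ with $\Sigma_-:=\sum_{\lambda_l<r_1}v_l(k)^2/(r_1-\lambda_l)$ and $\Sigma_+:=\sum_{\lambda_l>r_1}v_l(k)^2/(\lambda_l-r_1)$. Keeping only the $j$-th term in $\Sigma_-$, assumption (4) together with $0<\lambda_j<r_1$ yields $\Sigma_-\ge v_j(k)^2/(r_1-\lambda_j)\ge r_3/r_1$. In $\Sigma_+$ every summand satisfies $\lambda_l>r_2$ by (3); I would split at $r_5$, using (5) to control the mass on $(r_2,r_5)$ by $r_4$ and the orthonormality bound $\sum_l v_l(k)^2\le 1$ on the tail $[r_5,\infty)$. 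Taking the constant $C$ in (2) small enough (say $C=1/10$) so that $r_1\le r_2/2$ and $r_1\le r_5/2$, one has $\lambda_l-r_1\ge r_2/2$ on the window and $\lambda_l-r_1\ge r_5/2$ on the tail, so $\Sigma_+\le 2r_4/r_2+2/r_5$.

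Combining the two estimates, $-S\ge r_3/r_1-2r_4/r_2-2/r_5$. The same choice of $C$ in (2) forces $r_3/r_1\ge 10/r_5$ and $r_3/r_1\ge 10\,r_4/r_2$, so both subtracted terms are at most $(1/5)(r_3/r_1)$ and $-S\ge (3/5)(r_3/r_1)\ge 6\ge 1$, so $t^\ast\le 1$ and $n(A+tP_k;r_1)\le i-1$ for every $t\ge 1$. The only delicate step is precisely this balance: the single dominant negative term $v_j(k)^2/(\lambda_j-r_1)$ has to beat \emph{both} the window contribution (controlled by (5)) and the tail contribution (controlled only by orthonormality), which is exactly why hypothesis (2) is stated as a bound against $\min\{r_3 r_5,\ r_2 r_3/r_4\}$ rather than against any single product.
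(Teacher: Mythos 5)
Your argument is correct. Note that the paper does not actually prove this lemma: it simply cites \cite[Lemma 5.1]{ding2018localization} for the case $t=1$ and appeals to monotonicity for $t\geq 1$, so you have supplied a complete self-contained proof where the paper supplies none. Your route is the standard one for rank-one updates: interlacing pins $n(A+tP_k;r_1)$ to $\{i-1,i\}$, the matrix determinant lemma reduces everything to the sign and size of $S=\sum_l v_l(k)^2/(\lambda_l-r_1)$ (equivalently, to the secular equation evaluated at $r_1$), and the hypotheses are exactly calibrated so that the single term $v_j(k)^2/(r_1-\lambda_j)\geq r_3/r_1$ dominates both the window contribution $2r_4/r_2$ (controlled by hypothesis (5)) and the tail contribution $2/r_5$ (controlled by orthonormality), which is indeed why (2) involves $\min\{r_3r_5,\,r_2r_3/r_4\}$. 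The numerical bookkeeping with $C=1/10$ checks out ($r_1\le r_2/2$ and $r_1\le r_5/2$ follow from $r_3<r_4$ and $r_3<1$, and $-S\ge\tfrac{3}{5}\cdot r_3/r_1\ge 6$), and your observation that $t\mapsto\mu_i(t)$ is non-decreasing correctly handles all $t\geq t^\ast$, matching the paper's monotonicity remark for $t\geq 1$.
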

\begin{proof}
\cite[Lemma 5.1]{ding2020localization} implies the conclusion for the case where $t=1$. The conclusion still holds for $t\geq 1$ by monotonicity.
\end{proof}

\begin{lemma}\label{lem:eigen-obstruction}
Let $k \in \{1,2,\cdots,n\}$ and $P_{k}$ be the projection operator defined by  $(P_{k} u)(i)=0$ if $i\not =k$ and $(P_{k} u)(k)=u(k)$ for any $u\in \R^{n}$. Suppose self-adjoint operator $A: \R^{n} \rightarrow \R^{n}$ has eigenvalues $\lambda_{1}\leq \lambda_{2} \leq \cdots \leq \lambda_{n} \in \mathbb{R}$ with orthonormal eigenbasis $v_{1},v_{2},\cdots,v_{n} \in \mathbb{R}^{n}$.

If $\lambda \not \in \sigma(A)$ and $\sum_{i=1}^{n} \frac{v_{i}(k)^{2}}{\lambda_{i}-\lambda}>0(<0)$, then $\lambda \not \in \sigma(A+t P_{k})$ for each $t >0(<0)$, respectively.
\end{lemma}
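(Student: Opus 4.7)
The plan is to reduce the problem to the standard rank-one perturbation identity (a Weinstein--Aronszajn / Sherman--Morrison style formula) and then finish by a sign check. First, I would observe that
\begin{equation*}
\sum_{i=1}^{n}\frac{v_{i}(k)^{2}}{\lambda_{i}-\lambda}=\bigl\langle \mathbbm{1}_{k},(A-\lambda)^{-1}\mathbbm{1}_{k}\bigr\rangle,
\end{equation*}
which follows directly from expanding $\mathbbm{1}_{k}=\sum_{i}v_{i}(k)v_{i}$ in the orthonormal eigenbasis and applying $(A-\lambda)^{-1}$ to each $v_{i}$. Denote this quantity by $f(\lambda)$; it is well defined because $\lambda\notin\sigma(A)$.

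Next I would translate ``$\lambda\in\sigma(A+tP_{k})$'' into an equation involving $f(\lambda)$. Suppose $u\in\R^{n}\setminus\{0\}$ satisfies $(A+tP_{k}-\lambda)u=0$. Rewriting, $(A-\lambda)u=-t\,u(k)\,\mathbbm{1}_{k}$, so since $\lambda\notin\sigma(A)$ we can invert to obtain
\begin{equation*}
u=-t\,u(k)\,(A-\lambda)^{-1}\mathbbm{1}_{k}.
\end{equation*}
If $u(k)=0$, then $u=0$, contradicting nontriviality; so $u(k)\ne 0$, and taking the $k$-th component of the displayed identity gives $u(k)=-t\,u(k)\,f(\lambda)$, i.e.\ $1+tf(\lambda)=0$, equivalently $f(\lambda)=-1/t$.

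Finally, I would finish by a trivial sign argument: if $f(\lambda)>0$ and $t>0$, then $-1/t<0<f(\lambda)$, so the equation $f(\lambda)=-1/t$ cannot hold, hence $\lambda\notin\sigma(A+tP_{k})$; similarly if $f(\lambda)<0$ and $t<0$, then $-1/t>0>f(\lambda)$, again impossible. This gives both halves of the conclusion.

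There is essentially no obstacle here beyond being careful that $u(k)\ne 0$ (otherwise the rank-one perturbation has no effect on the eigenproblem), which is why the hypothesis $\lambda\notin\sigma(A)$ is used. The lemma is genuinely just the observation that when the Borg/Krein function $f(\lambda)$ has a fixed sign, the secular equation $1+tf(\lambda)=0$ has no solution for $t$ of the appropriate sign, so no new eigenvalue at $\lambda$ can be created by the perturbation in that direction.
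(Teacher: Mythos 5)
Your proof is correct. It rests on the same underlying fact as the paper's — the rank-one (Krein/secular equation) identity for $A+tP_{k}$ — but the execution is genuinely more direct. The paper instead applies the resolvent identity at the complex energy $\lambda+i\eta$, writes $G_{t}(k,k;\lambda+i\eta)=\bigl(t+G_{0}(k,k;\lambda+i\eta)^{-1}\bigr)^{-1}$, observes that the limit as $\eta\to 0$ is finite, and concludes that any putative eigenvector $v^{t}_{i_{0}}$ of $A+tP_{k}$ with eigenvalue $\lambda$ must satisfy $v^{t}_{i_{0}}(k)=0$, which forces $Av^{t}_{i_{0}}=\lambda v^{t}_{i_{0}}$ and contradicts $\lambda\notin\sigma(A)$. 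You bypass the complex regularization and the limiting argument entirely by manipulating the eigenvector equation itself: $(A-\lambda)u=-t\,u(k)\,\mathbbm{1}_{k}$ forces $u(k)\neq 0$ and hence the secular equation $1+tf(\lambda)=0$, which the sign hypothesis rules out. Your route is shorter, avoids any subtlety about interchanging the limit with the spectral sum, and treats both sign cases symmetrically (the paper writes out only one and appeals to symmetry). The one point worth keeping explicit, which you do handle, is that $u(k)=0$ would make $u$ a genuine eigenvector of $A$ at $\lambda$, which is exactly where $\lambda\notin\sigma(A)$ enters.
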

\begin{proof}
We only consider the case where $\sum_{i=1}^{n} \frac{v_{i}(k)^{2}}{\lambda_{i}-\lambda}>0$, the other case follows the same argument.

For each $t\in \R$, let $v^{t}_{1},v^{t}_{2},\cdots, v^{t}_{n}$ be the orthonormal eigenbasis of $A+t P_{k}$ with eigenvalues $\lambda_{1}^{t},\lambda_{2}^{t}\cdots,\lambda_{n}^{t}$. Then
the resolvent of $A+t P_{k}$ at $k$ with energy $\lambda_{0} \not\in \sigma(A+tP_{k})$ is 
\begin{equation}\label{eq:resolvent-at-t}
    G_{t}(k,k;\lambda_{0})=\left\langle \mathbbm{1}_{k}, (A+t P_{k}-\lambda_{0})^{-1} \mathbbm{1}_{k} \right\rangle =\sum_{i=1}^{n} \frac{v^{t}_{i}(k)^{2}}{\lambda^{t}_{i}-\lambda_{0}}. 
\end{equation}
Let $\mathrm{i}$ denote the imaginary unit. By resolvent identity, for each $t,\eta>0$, 
\begin{equation}\label{eq:resolvent-obstruction}
G_{t}(k,k;\lambda+\mathrm{i} \eta)=\frac{1}{t+G_{0}(k,k;\lambda+\mathrm{i}\eta)^{-1}}.    
\end{equation}
Since $G_{0}(k,k;\lambda)=\sum_{i=1}^{n} \frac{v_{i}(k)^{2}}{\lambda_{i}-\lambda}>0$, \eqref{eq:resolvent-obstruction} implies $\lim_{\eta \rightarrow 0}G_{t}(k,k;\lambda+\mathrm{i} \eta)>0$ for any $t>0$. Since $G_{t}(k,k;\lambda+\mathrm{i} \eta)=\sum_{i=1}^{n} \frac{v^{t}_{i}(k)^{2}}{\lambda^{t}_{i}-\lambda-\mathrm{i}\eta}$, we have
\begin{equation}\label{eq:finite-limit-green-function}
   \lim_{\eta\rightarrow 0} \sum_{i=1}^{n} \frac{v^{t}_{i}(k)^{2}}{\lambda^{t}_{i}-\lambda-\mathrm{i}\eta} < \infty.
\end{equation}
Assume $\lambda \in \sigma(A+t P_{k})$, then there exists $i_{0}$ with $\lambda^{t}_{i_{0}}=\lambda$. Equation \eqref{eq:finite-limit-green-function} implies $v^{t}_{i_{0}}(k)=0$. Since $(A + t P_{k}) v^{t}_{i_{0}}= \lambda^{t}_{i_{0}} v^{t}_{i_{0}}$, we have $A v^{t}_{i_{0}} = \lambda v^{t}_{i_{0}}$. This contradicts with $\lambda \not\in \sigma(A)$.
\end{proof}
We also need the following bound on the number of almost orthonormal vectors which was proved in \cite{ding2020localization}. A similar version of the following lemma was also proved in \cite{tao2013cheap}.
\begin{lemma}[Lemma 5.2 in \cite{ding2020localization}] \label{lem:ortho-bdd}
If $v_{1},\cdots,v_{m} \in \R^{n}$ satisfy $|\langle v_{i},v_{j} \rangle  - \delta_{i j}| \leq (5 n)^{-\frac{1}{2}}$, then $m \leq (5-\sqrt{5})n/2$.
\end{lemma}
\subsection{Sperner Lemma}\label{sec:Sperner-theorem}
We prove a generalization of \cite[Theorem 4.2]{ding2020localization} which will be used in an eigenvalue variation argument in the proof of Proposition \ref{prop:Wegner}. 
\begin{defn}\label{def:rho-Sperner}
Suppose $\rho\in (0,1]$. A set $\mathcal{A}$ of subsets of $\{1,2,\cdots,n\}$ is $\rho$-Sperner if, for every $A\in \mathcal{A}$, there is a set $B(A)\subset \{1,2,\cdots,n\}\setminus A$ such that $|B(A)|\geq \rho (n-|A|)$ and $A \subset A' \in \mathcal{A}$ implies $A' \cap B(A)=\emptyset$.
\end{defn}
The following lemma is proved in \cite[Theorem 4.2]{ding2020localization}.
\begin{lemma}[Theorem 4.2 in \cite{ding2020localization}] \label{lem:muti-Sperner}
If $\rho \in (0,1]$ and $\mathcal{A}$ is a $\rho$-Sperner set of subsets of $\{1,2,\cdots,n\}$, then 
$$|\mathcal{A}|\leq 2^{n} n^{-\frac{1}{2}} \rho^{-1}.
$$
\end{lemma}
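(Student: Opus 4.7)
The plan is to adapt the classical LYM--inequality proof of Sperner's theorem to the weaker $\rho$-Sperner hypothesis. I will sample a uniformly random maximal chain $\emptyset = C_{0}\subsetneq C_{1}\subsetneq \cdots \subsetneq C_{n}=\{1,\ldots,n\}$ in the Boolean lattice, equivalently a uniformly random permutation $\pi$ of $\{1,\ldots,n\}$ with $C_{k}=\{\pi(1),\ldots,\pi(k)\}$. The goal is to associate to each $A\in\mathcal{A}$ an event $\mathcal{E}_{A}$ on this random chain such that (i) $\Prob[\mathcal{E}_{A}]\ge \rho\binom{n}{|A|}^{-1}$, and (ii) the events $\{\mathcal{E}_{A}\}_{A\in\mathcal{A}}$ are pairwise disjoint. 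Once both are in place, $\sum_{A}\Prob[\mathcal{E}_{A}]\le 1$ gives $\rho\sum_{A\in\mathcal{A}}\binom{n}{|A|}^{-1}\le 1$, and the standard central binomial estimate $\binom{n}{\lfloor n/2\rfloor}\le 2^{n}n^{-1/2}$ closes out the desired bound $|\mathcal{A}|\le 2^{n}n^{-1/2}\rho^{-1}$.

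The natural choice, dictated by the definition of $\rho$-Sperner, is $\mathcal{E}_{A}=\{A=C_{|A|}\}\cap\{\pi(|A|+1)\in B(A)\}$: the set $A$ is a prefix of the chain, and the very next step lands inside the blocking set $B(A)$. The probability estimate is immediate: $\Prob[A=C_{|A|}]=\binom{n}{|A|}^{-1}$, and conditional on that event the element $\pi(|A|+1)$ is uniform on the $n-|A|$ elements of $\{1,\ldots,n\}\setminus A$, of which at least $\rho(n-|A|)$ lie in $B(A)$ by hypothesis, giving (i). For (ii), suppose $\mathcal{E}_{A}$ and $\mathcal{E}_{A'}$ both occur for distinct $A,A'\in\mathcal{A}$ with $|A|<|A'|$; then $A$ and $A'$ are both prefixes of $\pi$, so $A\subsetneq A'$. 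The $\rho$-Sperner hypothesis forces $A'\cap B(A)=\emptyset$, yet $\pi(|A|+1)\in B(A)$ by $\mathcal{E}_{A}$ and $\pi(|A|+1)\in A'$ because $|A|+1\le|A'|$, which is a contradiction.

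I do not anticipate a substantive obstacle. The closest thing to a sticking point is the edge case $|A|=n$, for which $\pi(|A|+1)$ is undefined; but then $A=\{1,\ldots,n\}$ is the unique element of $\mathcal{A}$ of that size and contributes only $1$ to $|\mathcal{A}|$, an amount easily absorbed into the target bound. The whole proof therefore reduces to identifying the correct event $\mathcal{E}_{A}$, which is forced on one by the $\rho$-Sperner condition, followed by a two-line probability computation and a short disjointness check.
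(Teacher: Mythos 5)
Your argument is correct and is essentially the randomized LYM/maximal-chain proof that the cited source (Ding--Smart, Theorem 4.2) uses for this lemma, which the present paper does not reprove: the event $\mathcal{E}_{A}=\{A=C_{|A|}\}\cap\{\pi(|A|+1)\in B(A)\}$, the bound $\Prob[\mathcal{E}_{A}]\geq\rho\binom{n}{|A|}^{-1}$, and the disjointness via the $\rho$-Sperner condition are exactly the intended steps. Your edge case $|A|=n$ is in fact even easier than you suggest, since $\rho>0$ forces $B(A')\neq\emptyset$ for every proper subset $A'$ in $\mathcal{A}$, so the full set can belong to $\mathcal{A}$ only when $\mathcal{A}$ is a singleton and the bound is trivial.
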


\begin{defn}
Suppose $A=(T,E)$ is a simple directed graph (without multi-edges or self-loops) with vertex set $T$ and edge set $E$. For each $e\in E$, we denote by $e^{-}(e^{+})$ the starting (ending) vertex of $e$ respectively. i.e. $e=(e^{-},e^{+})$. For two $e_{1},e_{2}\in E$, we say $e_{1}$ and $e_{2}$ \emph{have no intersection} if $e_{1}^{\pm},e_{2}^{\pm}$ are four different vertices; otherwise, we say $e_{1}$ and $e_{2}$ \emph{have intersection}. 
\end{defn}
\begin{defn}
Given $k\in \Z_{+}$ and a simple directed graph $A=(T,E)$, $A$ is called \emph{$k$-colourable} if $E$ can be written as a disjoint union $E=\bigcup_{j=1}^{k}E_{j}$ such that for each $j \in \{1,\cdots,k\}$ and $e_{1}\not=e_{2}\in E_{j}$, $e_{1}$ and $e_{2}$ have no intersection.
\end{defn}
\begin{lemma}\label{lem:coloring-graph}
Suppose $A=(T,E)$ is a simple directed graph and $m\in \Z_{+}$. Assume for each $x\in T$, 
\begin{equation}\label{eq:degree-of-vertices}
    |\{e\in E: e^{+}=x\}\cup \{e\in E:e^{-}=x\}| \leq m.
\end{equation}
Then $A$ is $2m-1$-colourable. 
\end{lemma}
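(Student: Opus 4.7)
The plan is to ignore the orientations and run a greedy edge-coloring argument on the underlying undirected graph of $A$. The hypothesis \eqref{eq:degree-of-vertices} says exactly that, as an undirected simple graph, every vertex of $A$ has degree at most $m$. Two edges $e_{1},e_{2}\in E$ ``have no intersection'' in the sense of the excerpt iff, as undirected edges, they share no endpoint; so a color class in a valid $k$-colouring is nothing but a matching. Thus the target is to partition $E$ into at most $2m-1$ matchings, i.e.\ to bound the chromatic index of $A$ by $2m-1$.

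I would enumerate $E=\{f_{1},f_{2},\ldots,f_{N}\}$ in an arbitrary order and greedily assign colors from the palette $\{1,2,\ldots,2m-1\}$. Suppose $f_{1},\ldots,f_{i-1}$ have already been colored and let $f_{i}$ have (undirected) endpoints $u$ and $v$. An earlier edge $f_{j}$ ($j<i$) forbids its color for $f_{i}$ only if $f_{j}$ is incident to $u$ or to $v$. By \eqref{eq:degree-of-vertices}, the total number of edges incident to $u$ is at most $m$, and one of them is $f_{i}$ itself, so at most $m-1$ previously colored edges touch $u$; symmetrically for $v$. Hence the number of colors forbidden for $f_{i}$ is at most $(m-1)+(m-1)=2m-2$, and at least one color in $\{1,\ldots,2m-1\}$ remains available. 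Assign it to $f_{i}$.

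After all edges have been processed, grouping edges by color yields a disjoint decomposition $E=E_{1}\cup\cdots\cup E_{2m-1}$ in which each $E_{j}$ is a matching, which is precisely the $(2m-1)$-colourability condition from the definition.

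I do not anticipate any real obstacle: this is the classical fact that a simple graph of maximum degree $\Delta$ has chromatic index at most $2\Delta-1$ by a greedy argument. The only points to be careful about are (i) reading the hypothesis as a bound on the \emph{total} number of incident edges at each vertex (so that one legitimately gets the $m-1$ count at each endpoint of $f_{i}$), and (ii) translating ``no intersection'' into the matching condition before invoking the greedy bound. A sharper bound $\Delta+1$ could be obtained from Vizing's theorem, but the weaker $2m-1$ suffices for the application in Proposition \ref{prop:Wegner} and has a self-contained one-paragraph proof.
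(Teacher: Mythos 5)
Your proof is correct and is essentially the paper's argument: the paper likewise observes that by \eqref{eq:degree-of-vertices} each edge has intersection with at most $2m-2$ other edges and then colours greedily with $2m-1$ colours. Your write-up just makes the per-endpoint count $(m-1)+(m-1)$ and the greedy step explicit.
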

\begin{proof}
By \eqref{eq:degree-of-vertices}, each $e\in E$ has intersection with at most $2m-2$ other edges. Thus we can color the edges of $A$ by at most $2m-1$ colors such that any two edges with the same color have no intersection. 
\end{proof}
The following lemma is a generalization of \cite[Theorem 4.2]{ding2020localization} in the sense that \cite[Theorem 4.2]{ding2020localization} is the special case where each graph $A_{i}$ (see below) has two vertices and one directed edge.
\begin{lemma}\label{lem:combinatorics}
Given $N,k,K_{0}\in \Z_{+}$, suppose $A_{i}=(T_{i},E_{i})$ are simple directed graphs for $1\leq i\leq N$. Assume $A_{i}$ is $k$-colourable for each $1\leq i\leq N$.

Suppose $B \subset T_{1}\times T_{2}\times \cdots \times T_{N}$ satisfies the following:
\begin{enumerate}
    \item Each $\vec{x}=(x_{1},x_{2},\cdots,x_{N}) \in B$ is associated with $K_{0}$ indices $1\leq I_{1}(\vec{x})<I_{2}(\vec{x})<\cdots<I_{K_{0}}(\vec{x}) \leq N$ and $K_{0}$ edges $e_{j}(\vec{x})\in E_{I_{j}(\vec{x})}$ $(j=1,\cdots,K_{0})$ such that $e_{j}(\vec{x})^{-}=x_{I_{j}(\vec{x})}$ $(j=1,\cdots,K_{0})$.\label{item:first}
    \item $|B|> K_{0}^{-1} k^{2} N^{\frac{1}{2}} |T_{1}| |T_{2}| \cdots |T_{N}|$,\label{item:second}
\end{enumerate}
then there exist $\vec{x},\vec{y}\in B$ such that the following properties hold:
\begin{enumerate}
    \item[$(a)$] for each $i=1,2,\cdots,N$, either $x_{i}=y_{i}$ or $(x_{i},y_{i})\in E_{i}$,
    \item[$(b)$] there exists $j \in \{1,2,\cdots,K_{0}\}$ such that $(x_{I_{j}(\vec{x})},y_{I_{j}(\vec{x})})=e_{j}(\vec{x})$.
\end{enumerate}
\end{lemma}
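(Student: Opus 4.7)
The plan is to argue by contrapositive, combining the $k$-colourability of each $A_i$ with the $\rho$-Sperner lemma (Lemma \ref{lem:muti-Sperner}). First, fix a $k$-colouring $E_i=\bigcup_{c=1}^{k}M_i^{c}$ of each $A_i$ into matchings. For a fixed colour $c$, the matching $M_i^{c}$ partitions $T_i$ into pair vertices (those incident to an edge of $M_i^{c}$) and singleton vertices. This induces a partition of $T_1\times\cdots\times T_N$ into cells $R=\prod_i R_i$, where each $R_i$ is either a singleton or a pair $\{u_i,v_i\}$ with $(u_i,v_i)\in M_i^{c}$. Labelling $u_i$ by $0$ and $v_i$ by $1$ at each pair coordinate identifies $R$ with the Boolean cube $\{0,1\}^{n'(R)}$, where $n'(R)$ is the number of pair coordinates of $R$; a direct expansion gives the total-mass identity $\sum_{R}2^{n'(R)}=\prod_i|T_i|$.

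Assume no pair in $B$ satisfies (a) and (b). For each $\vec{x}\in B$ and $j\in\{1,\ldots,K_0\}$, let $c_j(\vec{x})$ denote the colour of $e_j(\vec{x})$ in the decomposition of $E_{I_j(\vec{x})}$, and set $J^{c}(\vec{x})=\{j:c_j(\vec{x})=c\}$. Since $\sum_c|J^c(\vec{x})|=K_0$, averaging over $c\in\{1,\ldots,k\}$ produces a colour $c^{*}$ with $\sum_{\vec{x}\in B}|J^{c^*}(\vec{x})|\geq K_0|B|/k$. A Markov-type truncation then isolates the subset $B^{+}=\{\vec{x}\in B:|J^{c^*}(\vec{x})|\geq K_0/(2k)\}$, which satisfies $|B^{+}|\geq|B|/(2k)$.

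The central geometric observation is the following: within a single $c^*$-cell $R$, if $\vec{x}\in B^{+}\cap R$ and $\vec{y}\in R$ with $\vec{y}\geq\vec{x}$ in the Boolean order, then at each coordinate $i$ either $y_i=x_i$ or $(x_i,y_i)\in M_i^{c^*}\subset E_i$, so condition (a) holds automatically. For any $j\in J^{c^*}(\vec{x})$, the index $I_j(\vec{x})$ is a pair coordinate of $R$ at which $\vec{x}$ takes the value $0$ (since $x_{I_j(\vec{x})}=e_j(\vec{x})^{-}$), and condition (b) at this $j$ reduces to $y_{I_j(\vec{x})}=1$. For $j\notin J^{c^*}(\vec{x})$, the target $e_j(\vec{x})^{+}$ lies outside the $M_{I_j(\vec{x})}^{c^*}$-pair containing $x_{I_j(\vec{x})}$, so (b) via such a $j$ would force $\vec{y}\notin R$. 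Consequently the no-(a)+(b) assumption restricted to each cell makes $B^{+}\cap R\subset\{0,1\}^{n'(R)}$ a $\rho$-Sperner family in the sense of Definition \ref{def:rho-Sperner}, with forbidden sets $\{I_j(\vec{x}):j\in J^{c^*}(\vec{x})\}$ of size at least $K_0/(2k)$, giving $\rho\geq K_0/(2k\,n'(R))$.

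Lemma \ref{lem:muti-Sperner} then bounds $|B^{+}\cap R|\leq 2^{n'(R)}(n'(R))^{-1/2}\cdot(2k\,n'(R)/K_0)\leq(2k/K_0)N^{1/2}\cdot 2^{n'(R)}$. Summing over cells yields $|B^{+}|\leq(2k/K_0)N^{1/2}\prod_i|T_i|$, and combining with $|B^{+}|\geq|B|/(2k)$ gives $|B|\leq 4(k^{2}/K_0)N^{1/2}\prod_i|T_i|$, contradicting the hypothesis up to an absolute constant that can be absorbed either by sharpening the truncation threshold or, alternatively, by replacing the two-step pigeonhole/Markov argument with a single weighted Sperner estimate. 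The main obstacle is the geometric verification in the third paragraph: one must carefully track that moves in colours different from $c^{*}$ automatically take $\vec{y}$ out of the $c^{*}$-cell, so that only the indices $j\in J^{c^*}(\vec{x})$ can trigger condition (b) inside $R$, and hence the forbidden set used to invoke Lemma \ref{lem:muti-Sperner} is legitimate.
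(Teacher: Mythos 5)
Your overall strategy coincides with the paper's: reduce to a single colour class, partition $T_{1}\times\cdots\times T_{N}$ into Boolean cubes using the matching structure of that colour class, and apply Lemma \ref{lem:muti-Sperner} with the forbidden sets $\{I_{j}(\vec{x}):j\in J^{c^{*}}(\vec{x})\}$. Your verification of the Sperner property is correct, and the worry in your last paragraph is unnecessary: once $A_{\vec{x}}\subset A_{\vec{y}}$ forces property $(a)$, the standing assumption kills property $(b)$ for \emph{all} $j$, and you only ever use its failure at $j\in J^{c^{*}}(\vec{x})$ (where, inside the cell, failure of $(b)$ is exactly $y_{I_{j}(\vec{x})}=x_{I_{j}(\vec{x})}$); nothing needs to be tracked for $j\notin J^{c^{*}}(\vec{x})$. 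Also, your identity $\sum_{R}2^{n'(R)}=\prod_{i}|T_{i}|$ and the cell-by-cell summation are a clean, lossless alternative to the paper's step of pigeonholing a single densest cell $C_{\vec{f'}}$.

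The one genuine defect is the constant, which you flag but do not repair, and your first proposed repair cannot work. With threshold $\theta K_{0}/k$ the Markov step gives $|B^{+}|\geq(1-\theta)|B|/k$ and the Sperner density $\rho\geq\theta K_{0}/(k\,n'(R))$, so the final bound is $|B|\leq\frac{k^{2}}{\theta(1-\theta)K_{0}}N^{1/2}\prod_{i}|T_{i}|$; since $\theta(1-\theta)\leq\frac{1}{4}$, no truncation threshold beats the factor $4$, and the stated hypothesis $|B|>K_{0}^{-1}k^{2}N^{1/2}\prod_{i}|T_{i}|$ is not contradicted. The correct fix is your second suggestion, which is what the paper does: pigeonhole \emph{per element}, choosing for each $\vec{x}$ a colour $m(\vec{x})$ with $|J^{m(\vec{x})}(\vec{x})|\geq\lceil K_{0}/k\rceil$, and then pigeonhole over $B$ to find a colour $c^{*}$ with $|\{\vec{x}\in B:m(\vec{x})=c^{*}\}|\geq|B|/k$. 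Every element of that subset then has at least $K_{0}/k$ forbidden coordinates, the density becomes $\rho\geq K_{0}/(k\,n'(R))$, and the two losses multiply to exactly $k^{2}$, giving the contradiction with the stated constant. With that one substitution your argument is complete and matches the paper's.
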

\begin{proof}
Let us first consider an easier case where each $A_{i}$ consists of two vertices and one directed edge (thus we can assume $k=1$). Let $e_{i}$ denote the single directed edge in $A_{i}$ for $1\leq i \leq N$. Then there is a bijection between $T_{1}\times T_{2}\times \cdots \times T_{N}$ and the power set of $\{1,\cdots,N\}$:
\begin{equation}\label{eq:bijection_simple_case}
    \vec{x} \longmapsto Y_{\vec{x}} = \{i : 1\leq i \leq N, x_{i} = e_{i}^{+}\}.
\end{equation}
We prove the lemma by contradiction. We assume
there are no two elements in $B$ satisfying both $(a)$ and $(b)$. Note that in our case, for any $\vec{x},\vec{y} \in B$, condition $(a)$ is equivalent to $Y_{\vec{x}}\subset Y_{\vec{y}}$ and condition $(b)$ is equivalent to $Y_{\vec{y}}\cap \{I_{j}(\vec{x}):1\leq j\leq K_{0}\} \not= \emptyset$. Hence, for any $\vec{x},\vec{y} \in B$, $Y_{\vec{x}}\subset Y_{\vec{y}}$ implies $Y_{\vec{y}}\cap \{I_{j}(\vec{x}):1\leq j\leq K_{0}\} = \emptyset$. By Definition \ref{def:rho-Sperner}, $\{Y_{\vec{z}}: \vec{z} \in B\}$ is $K_{0}/N$-Sperner. By Lemma \ref{lem:muti-Sperner}, $|B| \leq  2^{N} N^{-\frac{1}{2}} K_{0}^{-1} = K_{0}^{-1} N^{\frac{1}{2}} |T_{1}| |T_{2}| \cdots |T_{N}|$ which contradicts with assumption \ref{item:second}.

Now we consider the general case and we first prove that we can assume $k=1$ without loss of generality.
By assumption \ref{item:second}, $$K_{0}^{-1} k^{2} N^{\frac{1}{2}} |T_{1}| |T_{2}| \cdots |T_{N}|<|B|\leq |T_{1}| |T_{2}| \cdots |T_{N}|,$$ thus we have $k^{2} N^{\frac{1}{2}}<K_{0}$. In particular, $k<K_{0}$.
\begin{cla}
We can assume $k=1$.
\end{cla}
\begin{proof}[Proof of the claim]
For each $i$, since $A_{i}$ is $k$-colourable,
we can write $E_{i}$ as a disjoint union $E_{i}=\bigcup_{m=1}^{k} E^{(m)}_{i}$ such that any two edges in $E^{(m)}_{i}$ have no intersection. For each $\vec{x} \in B$, by pigeonhole principle, there exists $m(\vec{x})\in \{1,2,\cdots,k\}$, such that  
$$\left|\left\{1\leq j\leq K_{0}:e_{j}(\vec{x})\in E_{I_{j}(\vec{x})}^{(m(\vec{x}))}\right\}\right|\geq \left\lceil \frac{K_{0}}{k} \right\rceil.$$
Since $B=\bigcup_{m=1}^{k}B_{m}$ with $B_{m}=\{\vec{x}\in B:m(\vec{x})=m\}$, by pigeonhole principle again, there exists $m' \in \{1,\cdots,k\}$ with $|B_{m'}|\geq \left\lceil \frac{1}{k} |B| \right\rceil$ and thus $$|B_{m'}|\geq \left\lceil \frac{K_{0}}{k} \right\rceil^{-1}  N^{\frac{1}{2}} |T_{1}| |T_{2}| \cdots |T_{N}|.
$$

We substitute $A_{i}=(T_{i},E_{i})$ by $A'_{i}=(T_{i},E_{i}^{(m')})$ for $i=1,\cdots,N$, substitute $B$ by $B_{m'}$, $K_{0}$ by $\left\lceil \frac{K_{0}}{k} \right\rceil$ and $k$ by $1$.
\end{proof}

Now we assume $k=1$. We prove the lemma by contradiction. We assume
\begin{equation}\label{assum:the-opposite}
    \textit{\parbox{.85\textwidth}{there are no two elements in $B$ satisfying both $(a)$ and $(b)$.}}
\end{equation}

Given $i\in \{1,\cdots,N\}$, we write $E_{i}=\{e_{i s}:s=1,\cdots,n_{i}\}$ and denote $T'_{i}=T_{i}\setminus \bigcup_{s=1}^{n_{i}} \{e_{i s}^{-},e_{i s} ^{+}\}$. For simplicity, denote $$\prod_{i=1}^{N} T_{i} = T_{1}\times T_{2}\times \cdots \times T_{N}.$$ Let $F_{i}=E_{i}\cup T'_{i}$ which consists of some edges and vertices. 
For each $\vec{f}=(f_{1},\cdots,f_{N}) \in F_{1}\times F_{2} \times \cdots \times F_{N}$, denote 
$$C_{\vec{f}}=\left\{\vec{x}\in \prod_{i=1}^{N} T_{i}:\text{$\forall 1\leq i\leq N$, $x_{i}=f_{i}$ if $f_{i}\in T'_{i}$; $x_{i}\in \left\{f_{i}^{-},f_{i}^{+}\right\}$ if $f_{i}\in E_{i}$}\right\}.$$
Then 
\begin{equation}\label{eq:decompose-T}
    T_{1}\times T_{2}\times \cdots \times T_{N}=\bigcup_{\vec{f}\in F_{1}\times F_{2} \times \cdots \times F_{N}} C_{\vec{f}}.
\end{equation}
For each $1\leq i\leq N$, since $A_{i}$ is $1$-colourable, any two edges in $E_{i}$ have no intersection. Thus the union in \eqref{eq:decompose-T} is a disjoint union. Since $|B|/(|T_{1}| |T_{2}| \cdots |T_{N}|)> K_{0}^{-1} N^{\frac{1}{2}} $, by pigeonhole principle again, there exists $\vec{f'}=(f'_{1},\cdots,f'_{N})\in F_{1}\times F_{2} \times \cdots \times F_{N}$ such that 
\begin{equation}\label{eq:estimate-B}
    |B\cap C_{\vec{f'}}|/|C_{\vec{f'}}|> K_{0}^{-1} N^{\frac{1}{2}}.
\end{equation}
Let $\mathcal{I}=\left\{1\leq i\leq N: f'_{i} \in E_{i}\right\}$ be the set of coordinates such that $f'_{i}$ is an edge. By assumption \ref{item:first}, for each $\vec{x}\in B\cap C_{\vec{f'}}$ and $j\in\{1,\cdots,K_{0}\}$, we have $e_{j}(\vec{x})=f'_{I_{j}(\vec{x})}$, $e_{j}^{-}(\vec{x})=x_{I_{j}(x)}$ and $I_{j}(\vec{x})\in \mathcal{I}$.
Denote $$Y_{\vec{x}}=\{i\in \mathcal{I}:x_{i}=(f'_{i})^{+}\}$$ for each $\vec{x}\in B\cap C_{\vec{f'}}$. Then $\vec{z}\mapsto Y_{\vec{z}}$ is an injection from $B\cap C_{\vec{f'}}$ to the power set of $\mathcal{I}$. Note that, the definition of set $Y_{\vec{x}}$ is analog to \eqref{eq:bijection_simple_case} except that we are now restricting on the subset $\mathcal{I}$.

We claim that $\{Y_{\vec{x}}:\vec{x}\in B\cap C_{\vec{f'}}\}$ is $K_{0}/|\mathcal{I}|$-Sperner as a set of subsets of $\mathcal{I}$. To see this,
suppose $Y_{\vec{x}}\subset Y_{\vec{y}}$ for some $\vec{x},\vec{y}\in B\cap C_{\vec{f'}}$. Then $\vec{x},\vec{y}$ satisfy property $(a)$. By assumption \eqref{assum:the-opposite}, $\vec{x},\vec{y}$ do not satisfy property $(b)$, thus $$Y_{\vec{y}} \cap \{I_{j}(\vec{x}):j=1,\cdots,K_{0}\}=\emptyset.$$
Since $\{I_{j}(\vec{x}):j=1,\cdots,K_{0}\}\subset \mathcal{I}\setminus Y_{\vec{x}}$, our claim follows from Definition \ref{def:rho-Sperner}.

Now Lemma \ref{lem:muti-Sperner} implies $$|B\cap C_{\vec{f'}}|=|\{Y_{\vec{x}}:\vec{x}\in B\cap C_{\vec{f'}}\}| \leq 2^{|\mathcal{I}|} K_{0}^{-1} |\mathcal{I}|^{\frac{1}{2}} \leq |C_{\vec{f'}}|  K_{0}^{-1} N^{\frac{1}{2}},$$ which contradicts with \eqref{eq:estimate-B}.
\end{proof}
\subsection{Proof of the Wegner estimate}\label{sec:proof-Wegner}
We now prove analog of the Wegner estimate \cite[Lemma 5.6]{ding2020localization}. 
\begin{prop}[Wegner estimate]\label{prop:Wegner}
Assume\\
(1) $\varepsilon>\delta>0$ are small and $c_{2}>0$ is a numerical constant\\
(2) integer $K\geq 1$, odd number $r>C_{\varepsilon,\delta,K}$ and real $\Bar{V}>\exp(r^{2})$\\
(3) $\lambda_{0} \not \in J_{r}^{\Bar{V}}$ which is defined in Definition \ref{def:J_eig}\\
(4) scales $R_{0}\geq R_{1}\geq \cdots \geq R_{6}\geq \exp(c_{2} r)$ with $R_{k}^{1-2\delta}\geq R_{k+1} \geq R_{k}^{1-\frac{1}{2}\varepsilon}$ and $R_{0},R_{3}$ are $r$-dyadic \\
(5) $Q\subset \Z^2$ an $r$-dyadic box with $\ell(Q)=R_{0}$\\
(6) $Q'_{1},\cdots,Q'_{K}\subset Q$ $r$-dyadic boxes, each with length $R_{3}$ (called ``defects'')\\
(7) $G \subset \cup_{k}Q'_{k}$ with $0<|G|\leq R_{0}^{\delta}$\\
(8) $\Theta\subset Q$ and $Q\setminus \Theta=\cup_{b \in D}\Omega_{r}(b)$ for some $D\subset \dot{r}\Z^2 \cap Q$\\
(9) $\Theta$ is $\varepsilon_{0}^{\frac{1}{5}}$-regular in every box $Q'\subset Q\setminus \cup_{k}Q'_{k}$ with $\ell(Q')=R_{6}$, where $\varepsilon_{0}$ is defined by \eqref{eq:def-of-epsilon0}\\
(10) potential $V':\Theta \rightarrow \{0,\Bar{V}\}$ satisfies the following: for any $V:Q \rightarrow \{0,\Bar{V}\}$ with $V|_{\Theta}=V'$, any $\lambda \in \left[\lambda_{0}- \Bar{V}^{-R_{5}},\lambda_{0}+ \Bar{V}^{-R_{5}}\right]$, any $t \in [0,1]$ and any subset $\mathcal{R}$ of $r$-bits that do not affect $\Theta\cup \bigcup_{k}Q'_{k}$, each $Q_{r}(b) \in \mathcal{R}$ is admissible and $H^{\mathcal{R},t}_{Q} u= \lambda u$ implies
    \begin{equation}\label{eq:control-wegner}
        \Bar{V}^{R_4}\|u\|_{\ell^2(Q\setminus \bigcup_{k}Q'_{k})} \leq \|u\|_{\ell^{2}(Q)}\leq (1+R_{0}^{-\delta})\|u\|_{\ell^2(G)}.
    \end{equation}
Then we have
  \begin{equation}
      \mathbb{P}\left[\|(H_{Q}-\lambda_{0})^{-1}\|\leq \Bar{V}^{R_1}\big| \; V|_{\Theta}=V'\right]\geq 1-R_{0}^{10\varepsilon-\frac{1}{2}}.
  \end{equation}

\end{prop}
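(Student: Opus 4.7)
The plan is to bound the conditional probability of the bad event that $\|(H_Q-\lambda_0)^{-1}\|>\bar V^{R_1}$ by a counting argument: let $\mathcal B$ be the set of extensions $V:Q\setminus\Theta\to\{0,\bar V\}$ of $V'$ such that $H_Q$ has an eigenvalue in $I:=[\lambda_0-\bar V^{-R_1},\lambda_0+\bar V^{-R_1}]$, and show $|\mathcal B|\le R_0^{10\varepsilon-\frac12}\,2^{|Q\setminus\Theta|}$. Each bad $V$ comes equipped with an eigenpair $(\lambda,u)$ with $\lambda\in I$; by hypothesis (10) applied to $\mathcal R=\emptyset$, $u$ is concentrated on $G\subset\bigcup_kQ'_k$, while hypothesis (9) (regularity of $\Theta$) together with Lemma \ref{lem:findbox} lets us locate a clean $r$-dyadic subbox $\hat Q\subset Q\setminus(\Theta\cup\bigcup_k Q'_k)$ of length $\sim R_6$ where $\|u\|_{\ell^\infty(\hat Q)}\ge\bar V^{-CR_6}\|u\|_{\ell^\infty(Q)}$.

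Next, apply the cutting procedure (Definition \ref{def:extended-cutting}) to the admissible $r$-bits inside $\hat Q$. Hypothesis (10) with $\mathcal R$ a subset of admissible $r$-bits not touching $\Theta\cup\bigcup_kQ'_k$ and with $t\in[0,1]$ guarantees that the eigenfunction of the partially cut operator $H^{\mathcal R,t}_Q$ corresponding to $\lambda$ still satisfies the concentration bound of \eqref{eq:control-wegner}. Combined with Lemma \ref{lem:unique-continuation}, which holds on $\hat Q$ with high probability on the $\Theta$-potential (so one can throw away a negligible exceptional set of $V|_\Theta$), this forces $u$ to be pointwise $\ge(\bar V R_6)^{-\alpha R_6}\|u\|_{\ell^\infty(Q)}$ on an $\varepsilon_0^3$-fraction of $\hat Q\setminus\Theta$. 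Choose $r$-bits $Q_r(b_1),\dots,Q_r(b_N)$ inside $\hat Q$ that are mutually well separated (using Remark \ref{rem:geo-of-r-bit}); on each one, $u$ still has a decent mass on $\Omega_r(b_i)\setminus\Theta$.

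Now set up the eigenvalue-variation and Sperner combinatorics. For each bulk $r$-bit $Q_r(b_i)$, the direct sum $H_{S_r(b_i)}\oplus H_{Q_r(b_i)\setminus S_r(b_i)}$ provides the correspondence-of-eigenvalues described in the introduction: under a single-vertex flip $V(a)\colon0\leftrightarrow\bar V$ with $a\in\Omega_r(b_i)\setminus\Theta$, the eigenvalue $\lambda$ of $H_Q$ corresponds to either the same index or a neighboring index on the perturbed operator (via Cauchy interlacing once $t$ is driven to $1$). Lemma \ref{lem:eigenvariation} covers the monotone case and Lemma \ref{lem:eigen-obstruction} the non-monotone one: if $u(a)^2$ is a decent fraction of $\|u\|_2^2$, then the bulk resolvent bound from Proposition \ref{prop:L_1good} (applied to the cut operator) controls the near-$\lambda$ spectral mass and forces the perturbed eigenvalue out of $I$. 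Thus every bad $V$ admits, within each of some $K_0\gtrsim R_0^{2\varepsilon}$ chosen bulk $r$-bits, a distinguished outgoing edge (flip) that exits the interval.

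Encode this by a directed graph $A_i=(T_i,E_i)$ on $T_i=\{0,\bar V\}^{S_r(b_i)}$ whose edges are the single-vertex flips identified above; geometric disjointness of the $S_r(b_i)$'s makes $A_i$ genuinely local, and the maximum degree bound from Lemma \ref{lem:coloring-graph} gives $k$-colourability with $k$ a bounded power of $r$. Applying the generalized Sperner theorem (Lemma \ref{lem:combinatorics}) to $\mathcal B\hookrightarrow\prod_iT_i\times\{0,\bar V\}^{\Theta^c\setminus\bigcup_iS_r(b_i)}$ contradicts $|\mathcal B|/2^{|Q\setminus\Theta|}>R_0^{10\varepsilon-1/2}$, because $K_0^{-1}k^2N^{1/2}$ with $N\sim R_0^2/r^2$ is at most $R_0^{10\varepsilon-1/2}$ under the parameter hierarchy $R_{k+1}\ge R_k^{1-\varepsilon/2}$. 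The main obstacle is Step 3: making the ``new correspondence'' of eigenvalues robust enough that a single Sperner-selected flip produces a genuine exit from $I$, even though the correspondence is non-monotone and may shift the eigenvalue index by $\pm1$; handling this cleanly requires using the cutting interpolation $t\in[0,1]$ to locate the precise moment at which $\lambda$ crosses the interval boundary and invoking the resolvent bound of Proposition \ref{prop:L_1good} to rule out near-coincidences of other eigenvalues.
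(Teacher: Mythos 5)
Your outline follows the same overall architecture as the paper's proof (unique continuation plus Lemma \ref{lem:findbox} to produce many sites where the eigenfunction is large, the cutting procedure to set up a correspondence of eigenvalues, the rank-one lemmas, and the directed-graph Sperner theorem), but two genuine gaps remain.

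First, your quantitative input to Lemma \ref{lem:combinatorics} does not work. With $N\sim R_0^2/r^2$ and $k$ a bounded power of $r$, the Sperner bound $K_0^{-1}k^2N^{1/2}$ is of order $r^{O(1)}R_0/K_0$; taking $K_0\gtrsim R_0^{2\varepsilon}$ as you propose gives a bound of order $R_0^{1-2\varepsilon}$, which exceeds $1$ and proves nothing. One needs $K_0\gtrsim r^{-2}R_4^{3/2}\approx r^{-2}R_0^{3/2-O(\delta)}$, i.e.\ the eigenfunction must be shown to exceed $\Bar{V}^{-R_2/2}\|u\|_2$ on at least $R_4^{3/2}$ points of $Q\setminus\Theta$. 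This is exactly what the combination of Lemma \ref{lem:findbox} and Lemma \ref{lem:unique-continuation} delivers in the paper, and it is the reason the strengthened $\Omega(L^2)$ unique continuation is needed: with only $R_0^{2\varepsilon}$ distinguished sites per bad potential the counting argument collapses.

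Second, the step you yourself flag as ``the main obstacle'' is the actual content of the proof and is not resolved by your sketch. The paper handles it by (i) indexing eigenvalues relative to the fully cut operator via $i(V)=n(H^{\mathcal R,1}_Q;\lambda_0-\Bar{V}^{-R_4})+1$, so that the eigenvalues of $H_Q$ near $\lambda_0$ are exactly $\lambda^0_{i(V)+j}(V)$ with $0\le j\le p$ and $p\le CR_0^{\delta}$ (this uses almost-orthogonality on $G$ and the first-order variation in $t$, neither of which appears in your sketch); (ii) introducing dyadic thresholds $s_\ell$ and events $\mathcal E_{k_1,k_2,\ell}$ so that some annulus around $\lambda_0$ is eigenvalue-free --- without this gap structure Lemma \ref{lem:eigenvariation} cannot be invoked, since its hypotheses require $\lambda_i<r_1<r_2<\lambda_{i+1}$; and (iii) splitting the distinguished sites into crossing and non-crossing ones so that the Sperner-produced pair $V_1\le V_2$ differs only by flips of a single type: in the crossing case $i(V_2)=i(V_1)-\#\{\text{flips}\}$ and Cauchy interlacing combined with Lemma \ref{lem:eigen-obstruction} forces the tracked eigenvalue below $\lambda_0-s_\ell$, while in the non-crossing case $i(V_2)=i(V_1)$ and monotonicity combined with Lemma \ref{lem:eigenvariation} forces it above $\lambda_0+s_\ell$. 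Your proposal treats a single flip in isolation and does not explain how the conclusion survives the possibly many other flips separating $V_1$ from $V_2$, nor how the non-monotone index shift is controlled uniformly across all of them; the crossing/non-crossing dichotomy is precisely the device that makes this work.
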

As mentioned in Section \ref{sec:wegner-outline}, in order to prove the Wegner estimate, we need to prove the upper bound estimate and the lower bound estimate, conditioning on $V|_{\Theta}=V'$. In particular, the upper bound estimate is proved in Claim \ref{cla:total-number-bad-eigenvalues} and it provides a significantly smaller subset $\Lambda_{V}$ of eigenvalues (depending on potential $V$) such that eigenvalues outside $\Lambda_{V}$ have zero conditional probability to be close to $\lambda_{0}$. Thus we only need to consider eigenvalues in $\Lambda_{V}$. 
The lower bound estimate is proved in Claim \ref{cla:wegner-bound} and it implies that any eigenvalue in $\Lambda_{V}$ can be perturbed to move away from $\lambda_{0}$ by changing the potential function on any vertex in a significant portion of the box. By combining this fact and the Sperner lemma (Lemma \ref{lem:combinatorics}), we prove a probability upper bound for the event that there is an eigenvalue in $\Lambda_{V}$ which is close to $\lambda_{0}$ and thus prove the Wegner estimate. 
\begin{proof}[Proof of Proposition \ref{prop:Wegner}]
Throughout the proof, we allow constants $C>1>c>0$ to depend on $\varepsilon$, $\delta$, $K$.
\begin{cla}
We can assume without loss of generality that $\cup_{k}Q'_{k} \subset \Theta$.
\end{cla}
\begin{proof}[Proof of the claim]
Let $\Theta'=\cup_{k}Q'_{k}\setminus \Theta$ and observe that for any event $\mathcal{E}$,
\begin{equation}
    \Prob\left[\mathcal{E} \big| V|_{\Theta}=V'\right]=\mathbb{E} \left[ \Prob\left[ \mathcal{E} \big| V|_{\Theta \cup \Theta'}=V'\cup V''\right] \right]
\end{equation}
where the expectation is taking over all $V'':\Theta' \rightarrow \{0,\Bar{V}\}$. Thus, it suffices to estimate the term in the expectation. Now we replace $\Theta$ by $\Theta \cup \Theta'$ and check the assumptions. Except for assumption $(8)$, other assumptions are straightforward. As for assumption $(8)$, note that $Q\setminus (\Theta\cup \Theta') = (Q\setminus\Theta)\setminus (\cup_{k}Q'_{k})$. For any $a \in \dot{r}\Z^2$, by Lemma \ref{lem:r_dyadic_cube-ele} and the assumption that $Q'_{k}$'s are $r$-dyadic, either $\Omega_{r}(a) \subset (\cup_{k}Q'_{k})$ or $\Omega_{r}(a) \cap (\cup_{k}Q'_{k}) = \emptyset$. Thus $Q\setminus (\Theta\cup\Theta')=\cup_{b \in D'}\Omega_{r}(b)$ where $D' = \{b\in D:\Omega_{r}(a) \cap (\cup_{k}Q'_{k}) = \emptyset\}$. The assumption follows.
\end{proof}
Now we assume $\cup_{k}Q'_{k} \subset \Theta$, then by Lemma \ref{lem:r_dyadic_cube-ele}, $\Tilde{\Omega}_{r}(b) \cap \left(\cup_{k}Q'_{k}\right)=\emptyset$ for each $b\in D$. 

We fix $\mathcal{R}=\{Q_{r}(b):b\in D\}$. By assumption $(10)$, when $Q_{r}(b) \in \mathcal{R}$, $(Q_{r}(b),V|_{F_{r}(b)})$ is admissible. For each $V:Q\rightarrow \{0,\Bar{V}\}$ with $V|_{\Theta}=V'$ and $t \in [0,1]$, denote all the eigenvalues of $H^{\mathcal{R},t}_{Q}$ by $$\lambda^{t}_{1}(V)\leq \lambda^{t}_{2}(V)\leq \cdots \leq \lambda^{t}_{R_{0}^{2}}(V).$$ In particular, $\lambda^{0}_{1}(V)\leq \lambda^{0}_{2}(V)\leq \cdots \leq \lambda^{0}_{R_{0}^{2}}(V)$ are all the eigenvalues of $H_{Q}$. Let $u_{V,k}(k=1,\cdots,R_{0}^{2})$ be an orthonormal eigenbasis such that for each $k$, 
$$H_{Q} u_{V,k}=\lambda_{k}^{0}(V) u_{V,k}.$$

Since $H^{\mathcal{R},1}_{Q}(x,y)=0$ whenever $\{x,y\} \in \bigcup_{b \in D}\partial S_{r}(b)$, we have
\begin{equation}\label{eq:cutting-free-sites1}
    H^{\mathcal{R},1}_{Q}=\bigoplus_{b \in D}H_{S_{r}(b)} \bigoplus H_{Q \setminus \left(\cup_{b \in D}S_{r}(b)\right)}.
\end{equation}
Here, we also used the fact that $S_{r}(b)\cap S_{r}(b')=\emptyset$ whenever $b \not= b' \in D$ (see Remark \ref{rem:geo-of-r-bit}).

Thus eigenvalues of $H^{\mathcal{R},1}_{Q}$ consist of eigenvalues of $H_{S_{r}(b)}(b \in D)$ and eigenvalues of $H_{Q \setminus \left(\cup_{b \in D}S_{r}(b)\right)}$. By item $1$ in Lemma \ref{lem:property-of-Sr}, $Q \setminus \left(\cup_{b \in D}S_{r}(b)\right)\subset \Theta$. Thus $H_{Q \setminus \left(\cup_{b \in D}S_{r}(b)\right)}$ only depends on $V|_{\Theta}=V'$.
Let $\lambda_{1} \leq \lambda_{2} \leq \cdots \leq \lambda_{m}$ be all the eigenvalues of $H_{Q \setminus \left(\cup_{b \in D}S_{r}(b)\right)}$. Let $\lambda_{q} \leq \lambda_{q+1} \leq \cdots \leq \lambda_{q+p}$ be all the eigenvalues of $H_{Q \setminus \left(\cup_{b \in D}S_{r}(b)\right)}$ inside the closed interval $[\lambda_{0}-\Bar{V}^{-R_{4}},\lambda_{0}+ \Bar{V}^{-R_{4}}]$. Then $$\lambda_{q-1}<\lambda_{0}-\Bar{V}^{-R_{4}}$$ if $q>1$. Denote $$i(V)=|\{k: \lambda^{1}_{k}(V)< \lambda_{0}-\Bar{V}^{-R_{4}}\}|+1=n(H^{\mathcal{R},1}_{Q};\lambda_{0}-\Bar{V}^{-R_{4}})+1.$$ 
Because $\lambda_{0} \not \in J_{r}^{\Bar{V}}$, by item $2$ in Proposition \ref{prop:0thscale}, any eigenvalue of $H_{S_{r}(b)}(b \in D)$ is outside the interval $[\lambda_{0}-\frac{1}{2}\Bar{V}^{-\frac{1}{4}},\lambda_{0}+\frac{1}{2}\Bar{V}^{-\frac{1}{4}}]$. Thus by \eqref{eq:cutting-free-sites1},
\begin{equation}\label{eq:eigenvalue-on-frozen-set}
\begin{split}
    &\sigma(H^{\mathcal{R},1}_{Q})\cap [\lambda_{0}-\Bar{V}^{-R_{4}},\lambda_{0}+ \Bar{V}^{-R_{4}}]\\
    &=\sigma(H_{Q \setminus \left(\cup_{b \in D}S_{r}(b)\right)})\cap [\lambda_{0}-\Bar{V}^{-R_{4}},\lambda_{0}+ \Bar{V}^{-R_{4}}]\\
    &=\{\lambda_{q+j}:0\leq j \leq p\}\\
    &=\{\lambda^{1}_{i(V)+j}(V): 0\leq j \leq p\}. 
\end{split}
\end{equation}
    
\begin{cla}\label{cla:bound-p}
    $p \leq C R_{0}^{\delta}$.
\end{cla}
\begin{proof}[Proof of the claim]
Let $$\left\{v_{i} \in \ell^{2}\big(Q \setminus \left(\cup_{b \in D}S_{r}(b)\right)\big):0\leq i \leq p\right\}$$ be an orthonormal set with $H_{Q \setminus \left(\cup_{b \in D}S_{r}(b)\right)} v_{i}=\lambda_{q+i} v_{i}$ for each $0\leq i\leq p$. Consider the function $v'_{i}$ on $Q$ defined by $v'_{i}|_{\cup_{b \in D}S_{r}(b)}=0$ and $v'_{i}|_{Q \setminus \left(\cup_{b \in D}S_{r}(b)\right)}=v_{i}$. By \eqref{eq:cutting-free-sites1}, $v'_{i}$ is an eigenfunction of $H^{\mathcal{R},1}_{Q}$ with the eigenvalue $\lambda_{q+i}$. By assumption $(10)$, $\|v'_{i}\|_{\ell^{2}(G)} \geq (1+R_{0}^{-\delta})^{-1}\geq 1-R_{0}^{-\delta}$. From $\langle v'_{i},v'_{j} \rangle_{\ell^{2}(Q)}=\delta_{i j}$ we deduce that $$|\langle v'_{i},v'_{j} \rangle_{\ell^{2}(G)}-\delta_{i j}| \leq R_{0}^{-\delta}\leq (5 |G|)^{-\frac{1}{2}}.$$
Thus, $\left\{v'_{i}|_{G}:0 \leq i \leq p\right\}$ is a set of almost orthogonal vectors and Lemma \ref{lem:ortho-bdd} implies the claim.
\end{proof}
\begin{cla}\label{cla:total-number-bad-eigenvalues}
Suppose $\lambda^{0}_{k}(V) \in [\lambda_{0}-\Bar{V}^{-R_{2}},\lambda_{0}+ \Bar{V}^{-R_{2}}]$ for some $1\leq k\leq R_{0}^{2}$. Then
there exists $j \in \{0,1,\cdots,p\}$ such that $k=i(V)+j$.
\end{cla}
\begin{proof}[Proof of the claim]
Fix such $V$ and for simplicity, when $t\in [0,1]$ we denote $\lambda_{k}^{t}=\lambda_{k}^{t}(V)$ and choose $u_{k}^{t}$ to be an $\ell^{2}$-normalised eigenfunction of $H_{Q}^{\mathcal{R},t}$ with eigenvalue $\lambda_{k}^{t}$. Denote $X=\cup_{b \in D} \partial S_{r}(b)$. The first order variation implies (see \cite[Chapter 2, Section 6.5]{kato2013perturbation})
\begin{equation}\label{eq:first-order-variation}
    |\lambda_{k}^{t}-\lambda_{k}^{0}|=\left|\int_{0}^{t} \sum_{ \substack{x \sim y\\ \{x,y\} \in X} } u^{s}_{k}(x) u^{s}_{k}(y) ds\right|.
\end{equation}
By Lemma \ref{lem:r_dyadic_cube-ele},
$\bigcup_{b\in D} \Tilde{\Omega}_{r}(b)\cap \left(\cup_{k}Q'_{k}\right)=\emptyset$.
Since $X \subset \bigcup_{b\in D}\Tilde{\Omega}_{r}(b)$, assumption $(10)$ and equation \eqref{eq:control-wegner} imply 
$$|\int_{0}^{t} \sum_{ \substack{x \sim y\\ \{x,y\} \in X} } u^{s}_{k}(x) u^{s}_{k}(y) ds| \leq 2t |X| \Bar{V}^{-2 R_{4}}  \leq 4 t  R_{0}^{2} \Bar{V}^{-2 R_{4}}<\frac{1}{2}\Bar{V}^{-R_{4}}$$ 
as long as $|\lambda_{k}^{t}-\lambda_{0}|\leq \Bar{V}^{-R_{5}}$. Thus \eqref{eq:first-order-variation} implies
\begin{equation}\label{eq:lambda-not-moving-far}
    |\lambda_{k}^{t}-\lambda_{0}|\leq \Bar{V}^{-R_{2}}+ \frac{1}{2}\Bar{V}^{-R_{4}} + 4R_{0}^{2} \mathbbm{1}_{\max_{0\leq s \leq t}|\lambda_{k}^{s}-\lambda_{0}| \geq \Bar{V}^{-R_{5}}}.  
\end{equation}
Since $\lambda_{k}^{t}$ is continuous with respect to $t$, by continuity, \eqref{eq:lambda-not-moving-far} implies $|\lambda^{t}_{k}-\lambda_{0}|\leq \Bar{V}^{-R_{4}}$ for each $t \in [0,1]$. In particular, $|\lambda^{1}_{k}-\lambda_{0}|\leq \Bar{V}^{-R_{4}}$. Thus by $\eqref{eq:eigenvalue-on-frozen-set}$, $k=i(V)+j$ for some $j \in \{0,1,\cdots,p\}$.
\end{proof}
By Claim \ref{cla:total-number-bad-eigenvalues}, we only need to consider eigenvalues in set $\{\lambda_{i(V)+k}:0\leq k \leq p\}$. We will prove that, with high probability, these $(p+1)$ eigenvalues are away from $\lambda_{0}$ with distance at least $\Bar{V}^{-R_{1}}$. We first prove that each of these $(p+1)$ eigenvalues' eigenfunctions has a large support (Claim \ref{cla:unique-continuation-bound}). Then we use these supports of eigenfunctions to do an eigenvalue perturbation argument which, combined with the Sperner lemma, proves that eigenvalues in $\{\lambda_{i(V)+k}:0\leq k \leq p\}$ are away from $\lambda_{0}$ with high probability (Claim \ref{cla:wegner-bound}).
\begin{cla}\label{cla:unique-continuation-bound}
$\Prob\left[\mathcal{E}_{uc} \big| V|_{\Theta}=V'\right]\geq 1-\exp(-R_{0}^{\varepsilon})$, where $\mathcal{E}_{uc}$ denotes the event that $$\left|\left\{a \in Q:|u(a)|\geq \Bar{V}^{-\frac{1}{2}R_{2}}\|u\|_{2}\right\}\setminus \Theta \right|\geq R_{4}^{\frac{3}{2}}
$$ holds whenever $|\lambda-\lambda_{0}|\leq \Bar{V}^{-R_{5}}$ and $H_{Q} u= \lambda u$.
\end{cla}
\begin{proof}[Proof of the claim]
Our strategy here is that we first use Lemma \ref{lem:findbox} to find a vertex $a_{*}$ with $|u(a_{*})|$ being lower bounded, then use the unique continuation principle Lemma \ref{lem:unique-continuation} to find $R_{4}^{\frac{3}{2}}$ vertices in $Q_{R_{6}}(a_{*})$.

Recall the definition of $\mathcal{E}^{\varepsilon,\alpha}_{uc}(Q,\Theta)$ in Lemma \ref{lem:unique-continuation} and that equation \eqref{eq:def-of-epsilon0} implies $\varepsilon_{0}^{\frac{1}{5}} < \varepsilon_{1}$. By Lemma \ref{lem:unique-continuation} and assumption $(9)$, there exists $\alpha'>1$ such that the event 
$$\mathcal{E}'_{uc}=\bigcap_{\substack{Q' \subset Q\setminus \cup_{k}Q'_{k}\\ \ell(Q')=R_{6}}} \mathcal{E}^{\varepsilon_{0}^{\frac{1}{5}},\alpha'}_{uc}(Q',\Theta \cap Q')
$$ satisfies 
\begin{equation}
    \Prob\left[ \mathcal{E}'_{uc} \big| V_{\Theta}=V' \right] \geq 1-\exp(-\varepsilon_{0}^{\frac{1}{5}} R_{6}^{\frac{2}{3}}+C\log(R_{0})) \geq 1-\exp(-R_{0}^{\varepsilon}).
\end{equation}
Thus it suffices to show $\mathcal{E}'_{uc} \subset \mathcal{E}_{uc}$. Suppose $\mathcal{E}'_{uc}$ holds, $|\lambda-\lambda_{0}|\leq \Bar{V}^{-R_{5}}$, and $H_{Q}u =\lambda u$. 

Lemma \ref{lem:findbox} provides an $R_{3}$-box $Q_{*}$ with $Q_{*} \subset Q\setminus \cup_{k}Q'_{k}$ and $a_{*}\in \frac{1}{2}Q_{*}$ such that,
\begin{equation}
    |u(a_{*})| \geq \Bar{V}^{-C_{K}R_{3}} \|u\|_{\ell^{\infty}(Q)} \geq \Bar{V}^{-C'_{K}R_{3}} \|u\|_{\ell^{2}(Q)}.
\end{equation}
Since $\mathcal{E}^{\varepsilon_{0}^{\frac{1}{5}},\alpha'}_{uc}(Q_{R_{6}}(a_{*}),\Theta \cap Q_{R_{6}}(a_{*}))$ holds and $$|\lambda-\lambda_{0}|\leq \Bar{V}^{-R_{5}}\leq  (R_{6}\Bar{V})^{-\alpha' R_{6}},$$ we see that 
\begin{equation}
    |\{|u|\geq (R_{6}\Bar{V})^{-\alpha' R_{6}}|u(a_{*})| \} \cap Q_{R_{6}}(a_{*}) \setminus \Theta| \geq \frac{1}{2} \varepsilon_{0}^{\frac{3}{5}} R_{6}^{2}.
\end{equation}
Here, recall that for any real function $u'$ defined on a set $W$ and any real number $c$, we use $\{u'\geq c\}$ as shorthand for the set $\{a \in W: u'(a) \geq c\}$.

Thus by taking $r> C_{\varepsilon,\delta,K}$ large and observing $R_{6}\geq \exp(c_{2}r)$, we have
\begin{equation}\label{eq:unique-continuation2}
    |\{|u|\geq \Bar{V}^{-\frac{1}{2}R_{2}}\|u\|_{\ell^{2}(Q)} \}\cap Q \setminus \Theta| \geq \frac{1}{2} \varepsilon_{0}^{\frac{3}{5}} R_{6}^{2}\geq R_{4}^{\frac{3}{2}}.
\end{equation}
\eqref{eq:unique-continuation2} provides the inclusion and the claim.
\end{proof}
\begin{cla}\label{cla:wegner-bound}
    For $0 \leq k_1 \leq k_2 \leq p$ and $0 \leq \ell \leq C R_{0}^{\delta}$, we have
    \begin{equation}
        \mathbb{P}\left[\mathcal{E}_{k_1,k_2,\ell} \cap \mathcal{E}_{uc} \big|\; V|_{\Theta} = V' \right] \leq Cr^{6} R_{0} R_{4}^{-\frac{3}{2}} 
    \end{equation}
    where $\mathcal{E}_{k_1,k_2,\ell}$ denotes the event 
    \begin{align}
        &|\lambda^{0}_{i(V)+k_1}(V)-\lambda_{0}|,|\lambda^{0}_{i(V)+k_2}(V)-\lambda_{0}|< s_{\ell}\;\; and\\  
        &|\lambda^{0}_{i(V)+k_1 -1}(V)-\lambda_{0}|,|\lambda^{0}_{i(V)+k_2+1}(V)-\lambda_{0}|\geq s_{\ell+1},
    \end{align}
    where $s_{i}:=\Bar{V}^{-R_{1}+(R_2-\frac{1}{2} R_4+C)i}$ for each $i \in \Z$.
\end{cla}
\begin{proof}
Conditioning on $V|_{\Theta}=V'$, we view events $\mathcal{E}_{uc}$ and $\mathcal{E}_{k_1,k_2,\ell}$ as subsets of $\{0,\Bar{V}\}^{\cup_{b \in D} \Omega_{r}(b)}$. 
Given $\tau \in \{0,1\}$, we denote by $\mathcal{E}_{k_1,k_2,\ell,\tau}$ the intersection of $\mathcal{E}_{k_1,k_2,\ell}$ and the event
\begin{equation}\label{eq:number-points}
    |\{a'\in Q\setminus \Theta:|u_{V,i(V)+k_1}(a')|\geq \Bar{V}^{-\frac{1}{2}R_{2}} \text{ and } V(a')=\tau \Bar{V}\}|
    \geq \frac{1}{2} R_{4}^{\frac{3}{2}}.
\end{equation}
Then $$ \mathcal{E}_{k_1,k_2,\ell}\cap \mathcal{E}_{uc} \subset \mathcal{E}_{k_1,k_2,\ell,0}\cup \mathcal{E}_{k_1,k_2,\ell,1}.
$$

It suffices to prove that
\begin{equation}\label{eq:estimate-tau}
        \mathbb{P}\left[\mathcal{E}_{k_1,k_2,\ell,\tau} \big|\; V|_{\Theta} = V' \right] \leq 200 r^{6} R_{0} R_{4}^{-\frac{3}{2}}
    \end{equation}
for each $\tau \in \{0,1\}$.

We prove it for $\tau=0$, the case where $\tau=1$ is by symmetry. We prove by contradiction, assume \eqref{eq:estimate-tau} does not hold for $\tau=0$. That is,
\begin{equation}\label{eq:estimate-0-opposite}
        \mathbb{P}\left[\mathcal{E}_{k_1,k_2,\ell,0} \big|\; V|_{\Theta} = V' \right] > 200 r^{6} R_{0} R_{4}^{-\frac{3}{2}}.
    \end{equation}

Given $V \in \mathcal{E}_{k_1,k_2,\ell,0}$ with $V|_{\Theta}=V'$ and $a \in \Omega_{r}(b)$ with some $b\in D$, we say $a$ is a \emph{``crossing'' site} with respect to $V$ (or \emph{w.r.t.} $V$) if $V(a)=0$ and $$n\big((-\Delta+V+\Bar{V} \delta_{a})_{S_{r}(b)};\lambda_{0}\big)=n\big((-\Delta+V)_{S_{r}(b)};\lambda_{0}\big)-1;$$ we say $a$ is a \emph{``non-crossing'' site} with respect to $V$ (or w.r.t. $V$) if $V(a)=0$ and 
$$n\big((-\Delta+V+\Bar{V} \delta_{a})_{S_{r}(b)};\lambda_{0}\big)=n\big((-\Delta+V)_{S_{r}(b)};\lambda_{0}\big).$$
Note that by rank one perturbation, for any $a \in Q\setminus \Theta$ with $V(a)=0$, either $a$ is a crossing site w.r.t. $V$ or $a$ is a non-crossing site w.r.t. $V$.

Denote by $\mathcal{E}_{k_1,k_2,\ell,0,cro}$ the intersection of $\mathcal{E}_{k_1,k_2,\ell,0}$ and the event
$$|\{|u_{V,i(V)+k_1}|\geq \Bar{V}^{-\frac{1}{2}R_{2}}\}\cap \{a'\in Q\setminus \Theta :\text{$a'$ is a crossing site w.r.t. $V$}\} |
    \geq \frac{1}{4} R_{4}^{\frac{3}{2}}.
$$

Denote by $\mathcal{E}_{k_1,k_2,\ell,0,ncr}$ the intersection of $\mathcal{E}_{k_1,k_2,\ell,0}$ and the event
$$|\{|u_{V,i(V)+k_1}|\geq \Bar{V}^{-\frac{1}{2}R_{2}}\}\cap \{a'\in Q\setminus \Theta :\text{$a'$ is a non-crossing site w.r.t. $V$}\} |
    \geq \frac{1}{4} R_{4}^{\frac{3}{2}}.
$$
Then by \eqref{eq:number-points}, $$\mathcal{E}_{k_1,k_2,\ell,0} \subset \mathcal{E}_{k_1,k_2,\ell,0,cro} \cup \mathcal{E}_{k_1,k_2,\ell,0,ncr}.
$$
By \eqref{eq:estimate-0-opposite}, one of the following holds:
\begin{equation}\label{eq:estimate-crossing-site}
        \mathbb{P}\left[\mathcal{E}_{k_1,k_2,\ell,0,cro} \big|\; V|_{\Theta} = V' \right] > 100r^{6} R_{0} R_{4}^{-\frac{3}{2}},
    \end{equation}
    or
\begin{equation}\label{eq:estimate-non-crossing-site}
        \mathbb{P}\left[\mathcal{E}_{k_1,k_2,\ell,0,ncr} \big|\; V|_{\Theta} = V' \right] > 100r^{6} R_{0} R_{4}^{-\frac{3}{2}}.
    \end{equation}

We will arrive at a contradiction in each case.

\noindent {\bf Case 1.} \eqref{eq:estimate-crossing-site} holds.

For each $b\in D$, 
we define a directed graph $A_{b}=(T_{b},E_{b})$ with vertex set $T_{b}=\{0,\Bar{V}\}^{\Omega_{r}(b)}$, and the edge set $E_{b}$ is defined as follows. For each $w \in T_{b}$, let $\widetilde{w} \in \{0,\Bar{V}\}^{S_{r}(b)}$ be $\widetilde{w}=w$ in $\Omega_{r}(b)$ and $\widetilde{w}=V'$ in $S_{r}(b)\setminus \Omega_{r}(b)$.  
Given $w_{1},w_{2}\in T_{b}$, 
there is an edge which starts from $w_{1}$ and ends at $w_{2}$ if $w_{2}=w_{1}+\Bar{V}\delta_{b'}$ for some $b'\in \Omega_{r}(b)$ and $n\big( (-\Delta)_{S_{r}(b)}+\widetilde{w_{2}};\lambda_{0}\big)=n\big( (-\Delta)_{S_{r}(b)}+\widetilde{w_{1}};\lambda_{0}\big)-1$. 

For each $w \in T_{b}$, there are less than $2 r^{2}$ edges which start from or end at $w$. By Lemma \ref{lem:coloring-graph}, $A_{b}$ is $4r^{2}$-colourable.

For each $V \in \mathcal{E}_{k_1,k_2,\ell,0,cro}\cap\{V:V|_{\Theta}=V'\}$, by pigeonhole principle,
we can find a subset $D_{0}(V)\subset D$ with $|D_{0}(V)|= \lceil\frac{1}{4}r^{-2} R_{4}^{\frac{3}{2}}\rceil$ such that for each $b\in D_{0}(V)$, there is a crossing site $b' \in \Omega_{r}(b)$ w.r.t. $V$ with $|u_{V,i(V)+k_1}(b')|\geq \Bar{V}^{-\frac{1}{2}R_{2}}$. This provides, for each $b\in D_{0}(V)$, an edge $e_{b}(V) \in E_{b}$ with $e_{b}(V)^{-}=V|_{\Omega_{r}(b)}$, $e_{b}(V)^{+}=V|_{\Omega_{r}(b)}+\Bar{V}\delta_{b'}$ and $|u_{V,i(V)+k_1}(b')|\geq \Bar{V}^{-\frac{1}{2}R_{2}}$. We use Lemma \ref{lem:combinatorics} with directed graphs $A_{b}=(T_{b},E_{b})$ $(b\in D)$, subset $B=\mathcal{E}_{k_1,k_2,\ell,0,cro} \subset \prod_{b\in D} T_{b}$, $N=|D|\leq R_{0}^{2}$, $K_{0}= \lceil\frac{1}{4}r^{-2} R_{4}^{\frac{3}{2}}\rceil$, $k=4 r^{2}$, associated index set $D_{0}(V)$ and edge set $\{e_{b}(V):b\in D_{0}(V)\}$ for each $V\in B$. Here, equation \eqref{eq:estimate-non-crossing-site} serves as assumption \ref{item:second} in Lemma \ref{lem:combinatorics}. Lemma \ref{lem:combinatorics} provides $V_{1},V_{2}\in \mathcal{E}_{k_1,k_2,\ell}$ such that the following holds:
\begin{itemize}
    \item $\forall b \in D$, either $V_{1}|_{Q_{r}(b)}=V_{2}|_{Q_{r}(b)}$ or $V_{2}|_{Q_{r}(b)}=V_{1}|_{Q_{r}(b)}+\Bar{V} \delta_{b'}$ for some crossing site $b'$ w.r.t. $V_{1}$.
    \item There exists a crossing site $a_{0}\in Q\setminus \Theta$ w.r.t. $V_{1}$ such that $V_{2}(a_{0})=\Bar{V}$ and $|u_{V_{1},i(V_{1})+k_{1}}(a_{0})| \geq \Bar{V}^{-\frac{1}{2}R_{2}}$.
\end{itemize}
Denote $V_{3}=V_{1}+\Bar{V}\delta_{a_{0}}$. 
Then by definition of crossing site and \eqref{eq:cutting-free-sites1}, $i(V_{3})=i(V_{1})-1$ and
$$i(V_{2})=i(V_{1})-|\{a \in Q: V_{1}(a) \not = V_{2}(a)\}|.$$ 
By Cauchy interlacing theorem and the fact that $|\{a \in Q: V_{2}(a) \not = V_{3}(a)\}| = i(V_{3}) - i(V_{2})$, we have
\begin{equation}\label{eq:cauchy-interlacing}
    \lambda^{0}_{i(V_{1})+k_{1}}(V_{1})\geq \lambda^{0}_{i(V_{3})+k_{1}}(V_{3})\geq \lambda^{0}_{i(V_{2})+k_{1}}(V_{2}).
\end{equation}
By assumption $(10)$, for each $1\leq j\leq R_{0}^{2}$, we have $|u_{V_{1},j}(a_{0})|\leq \Bar{V}^{-R_{4}}$ when $|\lambda^{0}_{j}(V_{1})-\lambda_{0}|\leq \Bar{V}^{-R_{5}}$.
Since $|u_{V_{1},i(V_{1})+k_{1}}(a_{0})|\geq \Bar{V}^{-\frac{1}{2}R_{2}}$ and  $$\lambda_{0}-s_{\ell}<\lambda^{0}_{i(V_{1})+k_{1}}(V_{1})<\lambda_{0}+ s_{\ell},$$
we have
\begin{align}
\begin{split}\label{eq:condition-to-apply-3.9}
    &\sum_{j=1}^{R_{0}^{2}} \frac{u_{V_{1},j}(a_{0})^{2}}{\lambda^{0}_{j}(V_{1})-(\lambda_{0}-s_{\ell})}\\=
    &\sum_{j = i(V_{1}) + k_{1}}^{i(V_{1}) + k_{2}} \frac{u_{V_{1},j}(a_{0})^{2}}{\lambda^{0}_{j}(V_{1})-(\lambda_{0}-s_{\ell})} +
    \sum_{\substack{j\not \in[ i(V_{1}) + k_{1}, i(V_{1}) + k_{2}]\\|\lambda^{0}_{j}(V_{1})-\lambda_{0}|\leq \Bar{V}^{-R_{5}}}} \frac{u_{V_{1},j}(a_{0})^{2}}{\lambda^{0}_{j}(V_{1})-(\lambda_{0}-s_{\ell})}\\+
    &\sum_{|\lambda^{0}_{j}(V_{1})-\lambda_{0}|> \Bar{V}^{-R_{5}}}
    \frac{u_{V_{1},j}(a_{0})^{2}}{\lambda^{0}_{j}(V_{1})-(\lambda_{0}-s_{\ell})}\\
    \geq &\frac{u_{V_{1},i(V_{1})+k_{1}}(a_{0})^{2}}{\lambda^{0}_{i(V_{1})+k_{1}}(V_{1}) - (\lambda_{0} - s_{\ell})}-
    \sum_{\substack{j\not \in[ i(V_{1}) + k_{1}, i(V_{1}) + k_{2}]\\|\lambda^{0}_{j}(V_{1})-\lambda_{0}|\leq \Bar{V}^{-R_{5}}}} \frac{\Bar{V}^{-2R_{4}}}{s_{\ell+1}-s_{\ell}}\\
    -&\sum_{|\lambda^{0}_{j}(V_{1})-\lambda_{0}|> \Bar{V}^{-R_{5}}} 2 \Bar{V}^{R_{5}} \\\geq &\frac{\Bar{V}^{- R_{2}}}{2s_{\ell}}- R_{0}^{2} \frac{\Bar{V}^{-2R_{4}}}{s_{\ell+1}-s_{\ell}}-2 R_{0}^{2} \Bar{V}^{R_{5}} \\>&0.
\end{split}
\end{align}
Note that $$\lambda^{0}_{i(V_{3})+k_{1}}(V_{1})= \lambda^{0}_{i(V_{1})+k_{1}-1}(V_{1})< \lambda_{0}-s_{\ell}< \lambda^{0}_{i(V_{1})+k_{1}}(V_{1}).$$ 
By Lemma \ref{lem:eigen-obstruction} and \eqref{eq:condition-to-apply-3.9}, 
$\lambda^{0}_{i(V_{3})+k_{1}}(V_{1}+t\delta_{a_{0}})<\lambda_{0}-s_{\ell}$ for each $t>0$.
Let $t=\Bar{V}$, we have $ \lambda^{0}_{i(V_{3})+k_{1}}(V_{3})<\lambda_{0}-s_{\ell}$. Thus by \eqref{eq:cauchy-interlacing}, $\lambda^{0}_{i(V_{2})+k_{1}}(V_{2})<\lambda_{0}-s_{\ell}$ and hence $V_{2} \not \in \mathcal{E}_{k_{1},k_{2},\ell}$. We thus arrived at a contradiction.

\noindent {\bf Case 2.} \eqref{eq:estimate-non-crossing-site} holds.

For each $b\in D$, 
we define a directed graph $A_{b}=(T_{b},E_{b})$ with vertex set $T_{b}=\{0,\Bar{V}\}^{\Omega_{r}(b)}$, and edge set $E_{b}$ is defined as follows. For each $w \in T_{b}$, let $\widetilde{w} \in \{0,\Bar{V}\}^{S_{r}(b)}$ be $\widetilde{w}=w$ in $\Omega_{r}(b)$ and $\widetilde{w}=V'$ in $S_{r}(b)\setminus \Omega_{r}(b)$.  
Given $w_{1},w_{2}\in T_{b}$, 
there is an edge which starts from $w_{1}$ and ends at $w_{2}$ if $w_{2}=w_{1}+\Bar{V}\delta_{b'}$ for some $b'\in \Omega_{r}(b)$ and $n\big( (-\Delta)_{S_{r}(b)}+\widetilde{w_{2}};\lambda_{0}\big)=n\big( (-\Delta)_{S_{r}(b)}+\widetilde{w_{1}};\lambda_{0}\big)$.

By the similar arguments used in Case 1, there exist $V_{1},V_{2}\in \mathcal{E}_{k_1,k_2,\ell}$ such that the following holds:
\begin{itemize}
    \item $\forall b \in D$, either $V_{1}|_{Q_{r}(b)}=V_{2}|_{Q_{r}(b)}$ or $V_{2}|_{Q_{r}(b)}=V_{1}|_{Q_{r}(b)}+\Bar{V} \delta_{b'}$ for some non-crossing site $b'$ w.r.t. $V_{1}$.
    \item There exists a non-crossing site $a_{0}$ w.r.t. $V_{1}$ such that $V_{2}(a_{0})=\Bar{V}$ and $|u_{V_{1},i(V_{1})+k_{1}}(a_{0})| \geq \Bar{V}^{-\frac{1}{2}R_{2}}$.
\end{itemize}
Denote $V_{3}=V_{1}+\Bar{V}\delta_{a_{0}}$. Then by \eqref{eq:cutting-free-sites1} and definition of non-crossing site, $i(V_{3})=i(V_{1})=i(V_{2})$. Since $V_{1}\leq V_{3}\leq V_{2}$, by monotonicity, we have
\begin{equation}\label{eq:monotonicity}
    \lambda^{0}_{i(V_{1})+k_{2}}(V_{1})\leq \lambda^{0}_{i(V_{3})+k_{2}}(V_{3})\leq \lambda^{0}_{i(V_{2})+k_{2}}(V_{2}).
\end{equation}
 Now we apply Lemma \ref{lem:eigenvariation} to $H_{Q}-\lambda_{0}+s_{\ell}$
        with $r_1=2s_{\ell}$, $r_2=s_{\ell+1}$, $r_3=\Bar{V}^{- R_2}$, $r_4=\Bar{V}^{-c R_4}$ and $r_5=\Bar{V}^{-R_5}$. 
        Then $\lambda^{0}_{i(V_{3})+k_{2}}(V_{3})\geq\lambda_{0}+s_{\ell}$. By \eqref{eq:monotonicity}, $\lambda^{0}_{i(V_{2})+k_{2}}(V_{2})\geq\lambda_{0}+s_{\ell}$ and thus $V_{2} \not \in \mathcal{E}_{k_{1},k_{2},\ell}$. We thus arrived at a contradiction.
\end{proof}

\begin{cla}\label{cla:inclusion}
\begin{equation}
    \{\|(H_{Q}-\lambda_{0})^{-1}\| > \Bar{V}^{R_1}\} \cap \{V|_{\Theta}=V'\} \subset \bigcup_{\substack{0\leq k_{1}\leq k_{2}\leq p\\ 0\leq \ell \leq CR_{0}^{\delta}}} \mathcal{E}_{k_{1},k_{2},\ell}
\end{equation}
\end{cla}
\begin{proof}[Proof of the claim]
By Claim \ref{cla:bound-p} and Claim \ref{cla:total-number-bad-eigenvalues}, we can always find $0\leq \ell \leq CR_{0}^{\delta}$ such that the annulus $(\lambda_{0}-s_{\ell+1},\lambda_{0}+s_{\ell+1})\setminus(\lambda_{0}-s_{\ell},\lambda_{0}+s_{\ell})$ contains no eigenvalue of $H_{Q}$. The claim follows.
\end{proof}
 Finally by Claim \ref{cla:inclusion},
        \begin{multline}
            \Prob[\|(H_{Q}-\lambda_{0})^{-1}\| > \Bar{V}^{R_1}| \; V|_{\Theta} = V'] \\
            \leq \sum_{0\leq k_1,k_2 \leq p}\sum_{1 \leq \ell \leq CR_{0}^{\delta}} \Prob[\mathcal{E}_{k_1,k_2,\ell} \cap \mathcal{E}_{uc}|\; V|_{\Theta} = V'] + \Prob[\mathcal{E}_{uc}^{c}\big|\; V|_{\Theta}=V']. 
        \end{multline}
         By Claim \ref{cla:unique-continuation-bound}, \ref{cla:wegner-bound} and let $C_{\varepsilon,\delta,K}$ be large enough,
        \begin{align*}
            &\mathbb{P}[\|(H_{Q}-\lambda_{0})^{-1}\| > \Bar{V}^{R_1}| \; V|_{\Theta} = V']\\
            &\leq Cr^{6} R_{0}^{1+3\delta} R_{4}^{-\frac{3}{2}}+\exp(- R_{0}^{-\varepsilon})\\
            &\leq R_{0}^{10\varepsilon-\frac{1}{2}}.
        \end{align*}
        We used here $r\geq C_{\varepsilon,\delta,K}$ and $R_{0} \geq \exp(c_{2} r)$.
\end{proof}

\section{Larger scales}\label{sec:larger-scale}
We now prove Theorem \ref{thm:resolvent-exponential-decay} by a multi-scale analysis based on \cite[Lemma 8.3]{ding2020localization} with the Wegner estimate Proposition \ref{prop:Wegner}. 
\begin{defn}
Suppose $r$ is an odd number, $\mathcal{R}$ is a set of $r$-bits and $E \subset \Z^2$. We denote 
\begin{equation}
    \mathcal{R}_{E}=\{Q_{r}(b) \in \mathcal{R}: Q_{r}(b) \subset E\}.
\end{equation}
\end{defn}
The following gluing lemma in the multi-scale analysis is a direct modification of \cite[Lemma 6.2]{ding2020localization} and it follows from the same proof as \cite[Lemma 6.2]{ding2020localization}.
\begin{lemma}[Gluing lemma]\label{lem:gluing}
If
\begin{enumerate}
    \item $\varepsilon>\delta>0$ small and $c_{3}>0$ a numerical constant
    \item $K \geq 1$ an integer, $r>C_{\varepsilon,\delta,K}$ a large odd number and $\Bar{V}>\exp(r^{2})$
    \item $t \in [0,1]$ and $\lambda_{0}\in \left[0,8\right]$
    \item scales $R_{0}\geq \cdots \geq R_{6} \geq \exp(c_{3}r)$ with $R_{k}^{1-\varepsilon}\geq R_{k+1}$
    \item $1 \geq m \geq 2R_{5}^{-\delta}$ represents the exponential decay rate
    \item $Q=Q_{R_{0}}(a) \subset \Z^2$ an $r$-dyadic box
    \item $Q_{1}',\cdots,Q_{K}' \subset Q$ disjoint $r$-dyadic $R_{2}$-boxes with $\|(H_{Q_{k}'}-\lambda_{0})^{-1}\|\leq \Bar{V}^{R_{4}}$ (they are called ``defects'')
    \item $\mathcal{R}$ a subset of admissible $r$-bits inside $Q$ which do not affect $\cup_{k}Q_{k}'$ 
    \item for all $b \in Q$ one of the following holds
    \begin{itemize}
        \item there is $Q_{k}'$ such that $b \in Q_{k}'$ and $\dist (b,Q\setminus Q'_{k})\geq \frac{1}{8} \ell(Q_{k}')$
        \item there is an $r$-dyadic $R_{5}$-box $Q'' \subset Q$ such that $b \in Q''$, $\dist(b,Q\setminus Q'') \geq \frac{1}{8}\ell(Q'')$, and $|G_{Q''}^{\mathcal{R}_{Q''},t}(b',b'';\lambda_{0})| \leq \Bar{V}^{R_{6}-m |b'-b''|}$ for $b',b'' \in Q''$,
    \end{itemize}
\end{enumerate}
then $|G_{Q}^{\mathcal{R},t}(b,b';\lambda_{0})|\leq \Bar{V}^{R_{1}-\Bar{m}|b-b'|}$ for $b,b' \in Q$ where $\Bar{m}=m-R_{5}^{-\delta}$.
\end{lemma}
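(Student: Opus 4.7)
The plan is to mimic the proof of Proposition~\ref{prop:L_1good}. Set
\[
g \;:=\; \max_{x,y\in Q}\,\bigl|G^{\mathcal{R},t}_Q(x,y;\lambda_0)\bigr|\,\bar V^{\bar m|x-y|},
\]
fix a pair $(a,b)\in Q\times Q$ attaining this maximum, and note that the conclusion is equivalent to showing $g\leq \bar V^{R_1}$. Before any resolvent manipulation, I would first verify the following compatibility statement: for any $r$-dyadic sub-box $B\subset Q$ that is either a defect $Q'_k$ or one of the good $R_5$-boxes $Q''$ of hypothesis~(ix), the restriction $H^{\mathcal{R},t}_Q|_B$ coincides with $H^{\mathcal{R}_B,t}_B$. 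Indeed, if some $r$-bit $Q_r(a')\in\mathcal{R}$ has one of its $\partial S_r(a')$-edges inside $B$, then $\tilde\Omega_r(a')\cap B\neq\emptyset$, and Lemma~\ref{lem:r_dyadic_cube-ele} forces $Q_r(a')\subset B$, hence $Q_r(a')\in\mathcal{R}_B$; when $B$ is a defect, hypothesis~(viii) rules out any such $r$-bit appearing at all (since $\Omega_r(a')\subset Q_r(a')$), so $H^{\mathcal{R},t}_Q|_{Q'_k}=H_{Q'_k}$ exactly.

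Given this compatibility, the resolvent identity at any box $B\ni b$ reads
\[
G^{\mathcal{R},t}_Q(a,b) \;=\; \mathbbm{1}_{a\in B}\,G^{\mathcal{R}_B,t}_B(a,b) \;-\; \sum_{\substack{c\in B,\;c'\in Q\setminus B\\ c\sim c'}} G^{\mathcal{R}_B,t}_B(b,c)\,G^{\mathcal{R},t}_Q(a,c'),
\]
and symmetrically with the roles of $a,b$ swapped. In the easier case, one of $a$ or $b$—say $b$—is deep inside some good $R_5$-box $Q''$. Taking $B=Q''$ and using $|G^{\mathcal{R}_B,t}_B(b,c)|\leq\bar V^{R_6-m|b-c|}$ with $|b-c|\geq R_5/8-1$, $|b-c'|\leq|b-c|+1$, and the gap $m-\bar m=R_5^{-\delta}$, each summand is bounded by $g\cdot\bar V^{R_6+\bar m-R_5^{1-\delta}/9}$; since $R_6\leq R_5^{1-\varepsilon}$ with $\varepsilon>\delta$, summing over the $O(R_5)$ boundary pairs yields a total at most $g/2$. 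Combined with the diagonal contribution $\leq\bar V^{R_6}$, this gives $g\leq 2\bar V^{R_6}\leq\bar V^{R_1}$.

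The harder case is that both $a$ and $b$ are deep in defects (using $G(a,b)=G(b,a)$, one may otherwise swap and apply the previous paragraph). The crude bound $|G_{Q'_k}(b,c)|\leq\bar V^{R_4}$ gives no spatial decay inside the defect, so a single resolvent expansion at $b$ cannot close the bound. The remedy is to iterate: each boundary point $c'$ produced by the expansion at $b$ with $B=Q'_{k}$ lies within distance $1$ of $Q'_{k}$, and because the defects are disjoint $R_2$-boxes, $c'$ cannot be deep in any other defect; hypothesis~(ix) therefore forces $c'$ to be deep in some good $R_5$-box $B_{c'}$. A second resolvent expansion at each such $c'$ using $B_{c'}$ reintroduces the small factor $\bar V^{-R_5^{1-\delta}/9}$ from the good-box analysis, and, after summing the two-step series and invoking the scale hierarchy $R_1\geq R_2\geq\cdots\geq R_6\geq\exp(c_3 r)$ together with $R_{k+1}\leq R_k^{1-\varepsilon}$ and $KR_4\leq R_2\leq R_1$, one extracts $g\leq\bar V^{R_1}$, the defect cost $\bar V^{R_4}$ being absorbed into the additive constant $R_1$.

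The main obstacle is precisely this two-step expansion in the defect case: one must track exponents delicately to confirm simultaneously that (i) the polynomial loss $\bar V^{R_4}$ from crossing a defect fits into the additive constant $R_1$, and (ii) the good-box decay on the outer side of the defect supplies enough exponential decay at rate $m$ (strictly exceeding $\bar m$ by $R_5^{-\delta}$) to reproduce a decay rate of $\bar m$ in the final bound while keeping the coefficient of $g$ in the self-referential inequality below $1/2$. Verifying both simultaneously is what forces the specific scale separation $R_{k+1}\leq R_k^{1-\varepsilon}$ with $\varepsilon>\delta$ to appear in the hypotheses.
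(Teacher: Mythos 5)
Your setup (the normalized maximum $g$, the compatibility check $H^{\mathcal{R},t}_Q|_B=H^{\mathcal{R}_B,t}_B$ via Lemma \ref{lem:r_dyadic_cube-ele}, and the single-expansion treatment when the maximizing pair has a point deep in a good $R_5$-box) is sound and matches the spirit of the argument the paper imports from \cite[Lemma 6.2]{ding2018localization}. The good-box case closes exactly as you say. But the defect case contains a genuine gap: the two-step expansion does \emph{not} produce a coefficient of $g$ below $1/2$. Expanding at $b$ deep in $Q'_k$ and then at $c'\in\partial^{+}Q'_k$ deep in a good box $B_{c'}$, the generic term is bounded by
\[
\Bar{V}^{R_4}\cdot\Bar{V}^{R_6-m|c'-d|}\cdot g\,\Bar{V}^{-\bar m|a-d'|}
\;\leq\; g\,\Bar{V}^{-\bar m|a-b|}\cdot\Bar{V}^{\,R_4+R_6+\bar m(|b-c'|+|c'-d'|)-m|c'-d|},
\]
and since $b$ is deep in $Q'_k$ the jump $|b-c'|$ can be as large as $2R_2$. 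The total gain from the good step is $(m-\bar m)|c'-d|\le R_5^{-\delta}\cdot 2R_5=2R_5^{1-\delta}$, while the loss is at least $R_4+\bar m R_2\geq R_5+R_5^{-\delta}R_2\geq R_5+R_5^{1-\delta}$ (using $\bar m\geq R_5^{-\delta}$ and $R_2\geq R_5$). So the coefficient of $g$ is at least $\Bar{V}^{R_5}\gg 1$, and the self-referential inequality cannot be closed; no amount of further alternation helps, because even $R_2/(16R_5)$ consecutive good steps (the minimum needed before the expansion can re-enter a defect) gain only $\Bar{V}^{-R_2R_5^{-\delta}/C}$, which never compensates a single loss of order $\Bar{V}^{R_4+2\bar mR_2}$.

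The point you are missing is that the defect cost is \emph{never} compensated multiplicatively by good-box decay; it can only be absorbed additively into $R_1$ (note $R_1\geq R_2^{1/(1-\varepsilon)}\gg KR_2\geq K(R_4+2\bar mR_2)$), and this is legitimate only in terms where the expansion has terminated, or after one shows that the total number of defect crossings accumulated along any contributing branch of the (long, iterated) expansion is bounded by $O(K)$, so that the cumulative loss $\Bar{V}^{O(K)(R_4+2R_2)}$ sits harmlessly inside $\Bar{V}^{R_1}$. Controlling the number of crossings — using that the $Q'_k$ are disjoint, that a branch leaving a defect lands adjacent to it and must then traverse a macroscopic good region, and a stopping rule for the iteration — is exactly the content of the Ding--Smart argument that the paper invokes, and it is the step your writeup asserts rather than proves. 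As written, the claim that the exponent bookkeeping ``keeps the coefficient of $g$ below $1/2$'' is false for the two-step scheme, so the defect case needs to be reworked along these lines.
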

\begin{rem}
As in \cite[Remark 6.3]{ding2020localization}, the scales $R_{0},\cdots,R_{6}$ have the following interpretations:
\begin{enumerate}
    \item $R_{0}$: large scale
    \item $\exp(R_{1})$: large scale resolvent bound
    \item $R_{2}$: defect scale
    \item $-R_{3}$: defect edge weight
    \item $\exp(R_{4})$: defect resolvent bound
    \item $R_{5}$: small scale
    \item $\exp(R_{6})$: small scale resolvent bound
\end{enumerate}
They are set up to be compatible with the multi-scale analysis (Theorem \ref{thm:multi-2}) below.
\end{rem}

The following covering lemma is a direct modification of \cite[Lemma 8.1]{ding2020localization} and it follows from the same proof as \cite[Lemma 8.1]{ding2020localization}.
\begin{lemma}\label{lem:findlargerbox}
Suppose $K\geq 1$ is an integer, $r$ is a large odd number, $\alpha \geq C^{K}$ is a power of $2$,  $R_{0}\geq R_{1}\geq R_{2}$ are $r$-dyadic scales with $R_{i}\geq \alpha R_{i+1}(i=0,1)$, $Q\subset \Z^2$ is an $r$-dyadic $R_{0}$-box and $Q''_{1},\cdots,Q''_{K} \subset Q$ are $r$-dyadic $R_{2}$-boxes. Then there is an $r$-dyadic scale $R_{3}\in \left[R_{1},\alpha R_{1}\right]$ and disjoint $r$-dyadic $R_{3}$-boxes $Q'_{1},\cdots,Q'_{K}\subset Q$ such that, 
\begin{equation}\label{eq:covering-condition}
    \text{for each $Q_{k}''$, there is $Q'_{j}$ with $Q_{k}'' \subset Q'_{j}$ and $\dist(Q''_{k},Q\setminus Q'_{j})\geq \frac{1}{8}R_{3}$}.
\end{equation}

\end{lemma}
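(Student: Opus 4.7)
The plan is a dyadic pigeonhole combined with a single-linkage clustering of the defects. First I will enumerate the $r$-dyadic scales in $[R_1, \alpha R_1]$ as $S_0 = R_1 < S_1 < \cdots < S_J$; because consecutive $r$-dyadic scales $2^{k+1}r'+r$ are essentially doubling, one has $J \geq \log_2 \alpha - O(1)$, and so $J \geq K \log_2 C - O(1)$ by the assumption $\alpha \geq C^K$. For each $S_j$ I form the graph $G_j$ on the vertex set $\{Q_1'', \ldots, Q_K''\}$ joining two defects whenever their $\ell^\infty$-distance is at most $S_j$, and let $\mathcal{C}_j$ denote its set of connected components. As $j$ increases, components of $G_j$ can only merge, so the sequence $\mathcal{C}_0, \mathcal{C}_1, \ldots, \mathcal{C}_J$ takes at most $K$ distinct values.

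Choosing $C$ large enough, pigeonhole then yields an index $j_\ast$ and an integer $M \geq \log_2(100K) + 10$ with $\mathcal{C}_j = \mathcal{C}_{j_\ast}$ for every $j$ with $j_\ast \leq j \leq j_\ast + M$. Inside a single component at scale $S_{j_\ast}$, the (at most $K$) defects form a chain with consecutive $\ell^\infty$-gaps $\leq S_{j_\ast}$, so the component's $\ell^\infty$-diameter is at most $(K-1) S_{j_\ast} + K R_2 \leq 2 K S_{j_\ast}$. Meanwhile, distinct components remain distinct at every scale up to $S_{j_\ast + M}$, hence any two distinct components are separated by $\ell^\infty$-distance strictly more than $S_{j_\ast + M} \geq 2^M S_{j_\ast}$.

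I then set $R_3 := S_{j_\ast + m}$ for an intermediate $m$ chosen so that $2^m \geq 16 K$ and $M - m \geq 5$; both are feasible once $C$ is sufficiently large. For each component $\mathfrak{c} \in \mathcal{C}_{j_\ast}$ I select a center $b_\mathfrak{c}$ on the grid $2^{k_3} r' \Z^2$ (where $R_3 = 2^{k_3+1} r' + r$) lying within $\ell^\infty$-distance $R_3/4$ of a centroid of $\mathfrak{c}$ and arranged so that $Q_{R_3}(b_\mathfrak{c}) \subset Q$. The grid spacing $2^{k_3} r' \leq R_3/2$, together with the fact that this grid refines the center grid of $Q$, guarantees such a $b_\mathfrak{c}$ exists. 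Then $Q_\mathfrak{c}' := Q_{R_3}(b_\mathfrak{c})$ contains $\mathfrak{c}$ with $\ell^\infty$-buffer at least $R_3/2 - R_3/4 - K S_{j_\ast} \geq R_3/8$, and any two boxes $Q_{\mathfrak{c}_1}', Q_{\mathfrak{c}_2}'$ are disjoint because $|b_{\mathfrak{c}_1} - b_{\mathfrak{c}_2}|_\infty > S_{j_\ast + M} - R_3/2 - 2 K S_{j_\ast} > R_3$. Assigning each defect $Q_k''$ to the $Q_\mathfrak{c}'$ of its component yields the required covering.

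The hardest step will be this last geometric verification: locating $b_\mathfrak{c}$ simultaneously on the correct $r$-dyadic grid, within $R_3/4$ of $\mathfrak{c}$, and with $Q_{R_3}(b_\mathfrak{c}) \subset Q$, even when $\mathfrak{c}$ is near $\partial Q$. Here one uses that the grid $2^{k_3} r' \Z^2$ refines the center grid $2^{k_0} r' \Z^2$ of $r$-dyadic $R_0$-boxes (so the center of $Q$ itself lies on the grid) and that $R_0 - R_3 \gg R_3$ leaves a thick strip of aligned $R_3$-boxes inside $Q$; a short case distinction then handles the boundary case.
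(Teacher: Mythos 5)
Your proposal is correct in substance but takes a genuinely different route from the paper's. The paper's proof is a greedy merging procedure: start with one $R_1$-box per defect satisfying \eqref{eq:covering-condition}, and whenever two boxes in the list intersect, delete one and inflate the common scale by a fixed factor $2^{100}$; each stage strictly shrinks the list, so at most $K-1$ inflations occur, which is why a \emph{universal} base in $\alpha\geq C^{K}$ suffices. You instead run single-linkage clustering of the defects over all $r$-dyadic scales in $[R_1,\alpha R_1]$ and pigeonhole for a window of $M+1$ consecutive scales on which the partition into components is stable, then choose $R_3$ strictly between the component diameter ($\leq 2KS_{j_*}$) and the inter-component separation ($>S_{j_*+M}$). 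Your version makes completely transparent why the covering condition survives (the paper is terse about why, after a deletion and inflation, the orphaned defects land deep inside a surviving box), but it costs a constant: the stable window must have length $M\gtrsim \log_2 K$ to absorb a chain of $K$ defects, so the pigeonhole needs $\log_2\alpha\gtrsim K\log_2 K$, i.e.\ $\alpha\geq (CK)^{K}$ rather than $C^{K}$ with $C$ universal. This weaker form is still ample for the application (there $K=N$ is a fixed constant and $\alpha$ may depend on it), but you should state it explicitly. Two smaller points to tighten: consecutive $r$-dyadic scales satisfy $S_{j+1}=2S_j-r$, so $S_{j_*+M}\geq 2^{M}S_{j_*}$ is not literally true (though $S_{j_*+M}\geq 1.9^{M}S_{j_*}$ is, and suffices); and, exactly as in the paper's own proof, when the number of components is smaller than $K$ you still need to pad the list with additional pairwise disjoint $r$-dyadic $R_3$-boxes, which requires $R_0/R_3$ to be large --- true in the intended application but worth a sentence.
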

The following lemma provides the continuity of resolvent bounds and its proof was given in \cite{ding2020localization}.
\begin{lemma}[Lemma 6.4 in \cite{ding2020localization}]\label{lem:continuity_resol}
If square $Q \subset \Z^{2}$, $\lambda \in \R$, $\alpha > \beta >0$, and 
$$
|(H_{Q} - \lambda)^{-1}(x,y)| \leq \exp(\alpha - \beta |x-y|)
$$
for any $x,y \in Q$, then for all $|\lambda' - \lambda|\leq c\beta |Q|^{-1} \exp(-\alpha)$, we have
$$
|(H_{Q} - \lambda')^{-1}(x,y)| \leq 2\exp(\alpha - \beta |x-y|)
$$
for any $x,y\in Q$.
\end{lemma}

\begin{defn}\label{def:goodness}
Suppose $\gamma,\varepsilon>0$, large odd number $r$, real $\Bar{V}>\exp(r^{2})$, energy $\lambda_{0}$, $r$-dyadic box $Q_{L}(a)$, $\Theta \subset Q_{L}(a)$ and $V':\Theta \rightarrow \{0,\Bar{V}\}$. We say $(Q_{L}(a),\Theta,V')$ is $(\gamma,\varepsilon)$-good if the following holds:

Whenever we have
\begin{itemize}
    \item $V:Q_{L}(a) \rightarrow \{0,\Bar{V}\}$ with $V|_{\Theta}=V'$,
    \item $b,c \in Q_{L}(a)$,
    \item $t \in [0,1]$,
    \item $\mathcal{R}$ a subset of $r$-bits inside $Q_{L}(a)$ such that each $Q_{r}(b) \in \mathcal{R}$ does not affect $\Theta$,
\end{itemize}
\;\;\;\; then 
\begin{itemize}
    \item  for each $Q_{r}(b)\in \mathcal{R}$, $(Q_{r}(b),V|_{F_{r}(b)})$ is admissible,
    \item  the following inequality holds:
\begin{equation}
    |G^{\mathcal{R},t}_{Q_{L}(a)}(b,c;\lambda_{0})| \leq \Bar{V}^{-\gamma |b-c|+L^{1-\varepsilon}}.
\end{equation}
\end{itemize}

\end{defn}

The following multi-scale analysis is a direct modification of \cite[Lemma 8.3]{ding2020localization}. By using a standard argument (see, e.g. \cite[Proof of Theorem 3.1]{li2022anderson}), it implies Theorem \ref{thm:resolvent-exponential-decay} with $Y_{\Bar{V}}=J_{r}^{\Bar{V}}$.

Recall that in Definition \ref{theta_1},  we defined $\Theta^{r}=\cup_{a \in \dot{r}\Z^2}F_{r}(a)$ for any large odd number $r$.
\begin{theorem}[Multi-scale Analysis]\label{thm:multi-2}
  For each $\kappa <\frac{1}{2}$, we can pick $\varepsilon >\delta>0$ such that, for each odd number $r>C_{\varepsilon,\delta}$, $\Bar{V}>\exp(r^{2})$ and $\lambda_{0} \not \in J_{r}^{\Bar{V}}$, the following holds. 
  
  There exist
  \begin{enumerate}
      \item $r$-dyadic scales $L_k$ for $k\geq 1$ with $L_{k+1}\in \left[\frac{1}{2} L_{k}^{\frac{1}{1-6\varepsilon}},L_{k}^{\frac{1}{1-6\varepsilon}}\right]$ and the first scale satisfying $\frac{1}{2}\exp(\frac{1}{2}c_{1}\delta r)\leq L_{1}\leq \exp(\frac{1}{2}c_{1}\delta r) $ where $c_{1}$ is the constant in Proposition \ref{prop:r-admissible}, \label{multi-cond-1}
      \item decay rates $ \gamma_{k} \geq \frac{1}{10 r}$ with $\gamma_{1}=\frac{1}{8r}$ and $\gamma_{k+1}=\gamma_{k}-L_{k}^{-\delta}$, \label{multi-cond-2}
      \item densities $\eta_{k}<\varepsilon_{0}^{\frac{1}{5}}$ with $\eta_{1}=\varepsilon_{0}^{\frac{1}{4}}$ and $\eta_{k+1}=\eta_{k}+L_{k}^{-\frac{1}{5}\varepsilon}$ where $\varepsilon_{0}$ is defined by \eqref{eq:def-of-epsilon0}, \label{multi-cond-3}
      \item random sets $\Theta_{k} \subset \Theta_{k+1}(k\geq 1)$ where $\Theta_{1}=\Theta^{r}$,\label{multi-cond-4}
    \end{enumerate}
     such that the following statements are true for $k\geq 1$, 
  \begin{enumerate}
      \item when $k\geq 2$, $\Theta_{k}\bigcap Q$ is $V|_{\Theta_{k-1}\bigcap 3Q}$-measurable for any $r$-dyadic box $Q$ with $\ell(Q)\geq L_k$,\label{multi-stat-1}
      \item when $k\geq 2$, $\Theta_{k}$ is a union of $\Theta_{k-1}$ and some $r$-bits,\label{multi-stat-2}
      \item $\Theta_{k}$ is $\eta_{k}$-regular in any $Q_{L}(a)\subset \Z^2$ with $L \geq L^{1-\frac{5}{2}\varepsilon}_{k}$, \label{multi-stat-3}
      \item for any $r$-dyadic box $Q$ with $\ell(Q)=L_{k}$,\label{multi-stat-4} 
      \begin{equation}
          \Prob\left[\text{$(Q,\Theta_{k}\cap Q, V|_{\Theta_{k}\cap Q})$ is $(\gamma_{k},\varepsilon)$-good}\right]\geq 1-L_{k}^{-\kappa}.
      \end{equation}

  \end{enumerate}
\end{theorem}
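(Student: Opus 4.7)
The plan is to prove the theorem by induction on $k$, following the multi-scale analysis framework of Ding--Smart~\cite{ding2018localization}. For the base case $k=1$, set $\Theta_1 = \Theta_1^r$; items \ref{multi-stat-1}--\ref{multi-stat-2} are then vacuous. Item \ref{multi-stat-3} (regularity) follows from direct inspection: $\Theta_1 = \bigcup_{a\in r'\Z^2} F_r(a)$ is a union of boundary annuli of width $\sim \varepsilon_0 r/2$ around an $r'$-lattice, meeting each diagonal $\mathcal{D}_k^\pm$ in a fraction close to $2\varepsilon_0 < \eta_1$. For item \ref{multi-stat-4}, Proposition \ref{prop:prob-of-perfect} gives that $Q_{L_1}(a)$ is perfect with probability $\geq 1 - L_1^{-6}$, and on this event Proposition \ref{prop:L_1good} provides the $(\gamma_1,\varepsilon)$-goodness with $\gamma_1 = 1/(8r)$.

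For the inductive step, fix an $r$-dyadic $L_{k+1}$-box $Q$ and cover it by overlapping $r$-dyadic $L_k$-sub-boxes $\{Q_j\}$ so that every point of $Q$ lies deep inside some $Q_j$. Call $Q_j$ \emph{bad} if $(Q_j,\Theta_k\cap Q_j,V|_{\Theta_k\cap Q_j})$ fails to be $(\gamma_k,\varepsilon)$-good; by the inductive hypothesis \ref{multi-stat-4} and local measurability \ref{multi-stat-1}, badness has probability $\leq L_k^{-\kappa}$ and depends only on $V|_{3Q_j}$. A Chebyshev-type bound over $(N{+}1)$-tuples of sufficiently separated sub-boxes yields, for suitable $\varepsilon,\delta,N$ depending on $\kappa$, that with probability at least $1 - L_{k+1}^{-\kappa}/2$ the number of bad sub-boxes in $Q$ is at most $N$. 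Apply the Covering Lemma \ref{lem:findlargerbox} to enclose the bad sub-boxes in $K \leq N$ disjoint $r$-dyadic $R_3$-boxes $Q'_1,\ldots,Q'_K\subset Q$ at an intermediate scale. Define $\Theta_{k+1}$ to be $\Theta_k$ together with the $r$-bits lying inside $\bigcup_k Q'_k$; statements \ref{multi-stat-1}--\ref{multi-stat-3} follow because each defect depends only on $V|_{\Theta_k\cap 3Q}$, and the newly added $r$-bits contribute at most $L_k^{-\varepsilon/5}$ to the density on any box of side $\geq L_{k+1}^{1 - 5\varepsilon/2}$, consistent with $\eta_{k+1} = \eta_k + L_k^{-\varepsilon/5}$.

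To finish \ref{multi-stat-4} at scale $k{+}1$, invoke the Wegner estimate Proposition \ref{prop:Wegner} on each defect $Q'_k$ with scales tuned to $L_k$; this yields $\|(H_{Q'_k}-\lambda_0)^{-1}\| \leq \bar{V}^{R_4}$ with probability $\geq 1 - R_3^{10\varepsilon - 1/2}$. Feeding this together with the exponentially-decaying resolvents on the good $L_k$-sub-boxes (from the inductive hypothesis) into the Gluing Lemma \ref{lem:gluing} produces exponential decay of $G^{\mathcal{R},t}_Q$ at rate $\gamma_{k+1} = \gamma_k - L_k^{-\delta}$, giving $(\gamma_{k+1},\varepsilon)$-goodness of $(Q,\Theta_{k+1}\cap Q,V|_{\Theta_{k+1}\cap Q})$. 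Balancing the probability bounds so that $R_3^{10\varepsilon - 1/2} \leq \tfrac{1}{2} L_{k+1}^{-\kappa}$, while also maintaining $R_4 \ll \gamma_k R_3$, drives the scale relation $L_{k+1}\in [\tfrac{1}{2} L_k^{1/(1-6\varepsilon)}, L_k^{1/(1-6\varepsilon)}]$.

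The \textbf{main obstacle} is verifying hypothesis (10) of Proposition \ref{prop:Wegner}, namely that eigenfunctions of $H^{\mathcal{R},t}_Q$ with eigenvalue in the small window around $\lambda_0$ are nearly supported on a set $G\subset \bigcup_k Q'_k$ of size $|G|\leq R_0^\delta$. This is not circular: the exponential decay of resolvents on the good $L_k$-sub-boxes (inductive hypothesis, \emph{independent} of the defects) allows a Schur-complement / resolvent-identity argument to localize any such eigenfunction to the defect region up to an error of size $\bar{V}^{-R_4}$, and a separate rank-counting argument bounds the dimension of the eigenspace near $\lambda_0$ via the preliminary resolvent bound one obtains on the defects before Wegner is applied. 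Once hypothesis (10) is secured, the remaining analytic content is book-keeping of scales, densities and probabilities, which go through exactly as in \cite[Section~8]{ding2018localization}.
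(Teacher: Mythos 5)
Your overall architecture (induction on scales, counting bad sub-boxes, covering lemma, Wegner on the defects, gluing) matches the paper's, but there is a concrete gap in the step you yourself flag as the main obstacle: the verification of hypotheses (7) and (10) of Proposition \ref{prop:Wegner}. Hypothesis (7) requires the concentration set $G$ to satisfy $|G|\leq R_{0}^{\delta}$, i.e.\ an eigenfunction near $\lambda_{0}$ must be almost entirely supported on a set of cardinality $L_{k+1}^{O(\delta)}$. Your Schur-complement argument localizes the eigenfunction only to ``the defect region'' $\bigcup_{j}Q'_{j}$, whose cardinality is of order $N\,L_{k+1}^{2(1-3\varepsilon)}$ --- vastly larger than $L_{k+1}^{\delta}$. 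With that $G$ the Wegner estimate simply cannot be invoked (and the almost-orthogonality bound on $p$ inside its proof, Claim \ref{cla:bound-p}, would also fail). The paper's device for this is the \emph{hereditary bad subbox}: one tracks chains $Q\supseteq Q_{1}\supseteq\cdots\supseteq Q_{M_{0}}$ of bad boxes through $M_{0}$ consecutive scales down to $\ell(Q_{M_{0}})=L_{k-M_{0}}\leq L_{k}^{\delta}$, shows there are at most $N$ such chains with high probability, and then (Claim \ref{cla:inductive-hypothesis-good}) localizes the eigenfunction onto $G=Q'_{i}\cap\bigcup_{j}Q'''_{j}$, a union of $O(N)$ boxes of side $L_{k-M_{0}}$, so that $|G|\leq C_{N}L_{k}^{2\delta}$. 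This multi-level structure is also why the paper's base case is ``thick'': $\Theta_{k}=\Theta_{1}$ for all $k\leq M_{0}$ (all dyadic scales up to $\exp(c_{1}r)$), so that $M_{0}$ previous scales are always available. Your induction starting at $k=1$ already breaks at $k=2$, where the only sub-scale available is $L_{1}\approx L_{2}^{1-6\varepsilon}$ and hence $|G|\gtrsim L_{2}^{2-12\varepsilon}\gg L_{2}^{\delta}$.

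Two smaller points. First, your base-case regularity argument (``$\Theta_{1}$ meets each diagonal in a fraction close to $2\varepsilon_{0}$'') is false for small tilted squares contained entirely in a frame $F_{r}(a)$; this is exactly why the statement is $\eta$-\emph{regularity} rather than $\eta$-sparseness, and the correct argument bounds the total measure of the non-sparse tilted squares by confining them to $Q\setminus\bigcup_{b}Q_{(1-100\sqrt{\varepsilon_{0}})r}(b)$. Second, the ``separate rank-counting argument'' you propose for the eigenspace dimension is not needed in the multi-scale step: that bound is Claim \ref{cla:bound-p} inside the Wegner proof and is itself a consequence of the smallness of $G$, so it cannot be used to substitute for it.
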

\begin{proof}
Assume $\varepsilon,\delta$ are small and we impose further constraints on these objects during the proof.  Set $r$-dyadic scale $$L_{1}\in \left[\frac{1}{2}\exp(\frac{1}{2} c_{1} \delta r),\exp(\frac{1}{2} c_{1} \delta r)\right]$$ where $c_{1}$ is the constant in Proposition \ref{prop:r-admissible}. Set $\gamma_{1} = \frac{1}{8r}$ and $\eta_{1}=\varepsilon_{0}^{\frac{1}{4}}$. By letting $r>C_{\varepsilon,\delta}$, we can pick $L_{k},\gamma_{k},\eta_{k}$ as in conditions \ref{multi-cond-1}, \ref{multi-cond-2} and \ref{multi-cond-3} for $k\geq 2$. Let $M_{0}$ be the largest integer such that $L_{M_{0}}\leq \exp(c_{1}r)$. Then $M_{0}\leq C'_{\varepsilon,\delta}$ for a constant $C'_{\varepsilon,\delta}$ depending on $\varepsilon,\delta$ and we have $L_{k-M_{0}}\leq L_{k}^{\delta}$ for each $k>M_{0}$. Set $\Theta_{k}=\Theta_{1}$ for $k=1,\cdots,M_{0}$.
\begin{figure}
    \centering
    \includegraphics{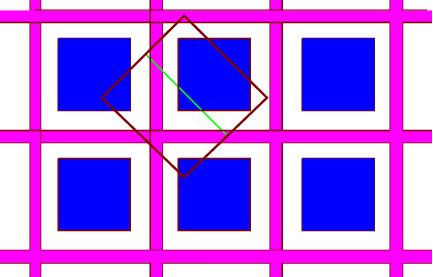}
    \caption{The figure illustrates the proof that $\Theta_{1}$ (the pink region) is $\eta_{1}$-regular in $Q_{L}(a)$. The blue region indicates $\bigcup\{Q_{(1-100\sqrt{\varepsilon_{0}})r}(b): b \in \dot{r}\Z^{2}\}$, the black tilted square indicates $\Tilde{Q}$ and the green line indicates the diagonal $\mathcal{D}$.}
    \label{fig:sparse}
\end{figure}
We prove by induction on $k$. We first prove the conclusion for $k\leq M_{0}$. Statements \ref{multi-stat-1} and \ref{multi-stat-2} are true since $\Theta_{k}=\Theta_{1}$ when $k\leq M_{0}$. To see Statement \ref{multi-stat-3}, let $Q_{L}(a)\subset \Z^2$ such that $L \geq L^{1-\frac{5}{2}\varepsilon}_{1}$. Suppose $\Tilde{Q} \subset Q_{L}(a)$ is a tilted square. We claim that, if there exists $b_{1} \in Q_{L}(a) \cap \dot{r}\Z^2$ such that $\Tilde{Q} \cap Q_{(1-100\sqrt{\varepsilon_{0}})r}(b_{1}) \not = \emptyset$, then $\Theta_{1}$ is $ \varepsilon_{0}^{\frac{1}{4}}$-sparse in $\Tilde{Q}$. We prove our claim by elementary geometry (see Figure \ref{fig:sparse}).
To see this, if $\Tilde{Q}\cap \Theta_{1}=\emptyset$ then our claim is obvious. Otherwise, note that we have
\begin{align}
    \begin{split}
        &\dist(\Theta_{1},Q_{(1-100\sqrt{\varepsilon_{0}})r}(b_{1}))\\= &\dist(F_{r}(b_{1}),Q_{(1-100\sqrt{\varepsilon_{0}})r}(b_{1}))\\=
&\dist(Q_{r}(b_{1})\setminus Q_{(1-2\varepsilon_{0})r}(b_{1}),Q_{(1-100\sqrt{\varepsilon_{0}})r}(b_{1}))\\=
& \left\lfloor \frac{(1-2\varepsilon_{0})r - 1}{2} \right\rfloor - \left\lfloor \frac{(1-100\sqrt{\varepsilon_{0}})r - 1}{2} \right\rfloor + 1\\
> &(50\sqrt{\varepsilon_{0}} - \varepsilon_{0})r.
\end{split}
\end{align}
Thus $\Tilde{Q}\cap \Theta_{1}\not=\emptyset$ implies that the edge length of $\Tilde{Q}$ is larger than $\frac{(50\sqrt{\varepsilon_{0}} - \varepsilon_{0})r}{\sqrt{2}} > 25\sqrt{\varepsilon_{0}}r$. Suppose $l\in \Z$ and $\varsigma\in\{+,-\}$ such that $\Tilde{Q}\cap \mathcal{D}_{l}^{\varsigma}\not=\emptyset$ where $\mathcal{D}_{l}^{\varsigma}$ is a diagonal defined in Definition \ref{def:diagonals}. Write $\mathcal{D}= \Tilde{Q} \cap \mathcal{D}_{l}^{\varsigma}$ and then 
\begin{equation}\label{eq:size-of-diagonal-1}
    |\mathcal{D}|>25\sqrt{\varepsilon_{0}}r.
\end{equation}
 By elementary geometry,
\begin{equation}\label{eq:regularity-2}
    |\{b\in \dot{r}\Z^2: \mathcal{D}\cap Q_{r}(b) \not=\emptyset\}|\leq 10+\frac{10|\mathcal{D}|}{r}.
\end{equation}
Since $\mathcal{D}$ has at most one intersection with any horizontal or vertical line, we have
\begin{equation}\label{eq:regularity-1}
    |\mathcal{D}\cap F_{r}(b)|\leq 10\varepsilon_{0}r
\end{equation}
for each $b\in  \dot{r}\Z^2$.
On the other hand, by Definition \ref{theta_1} we have $$|\Theta_{1}\cap \mathcal{D}|\leq \sum_{b\in \dot{r}\Z^2: \mathcal{D}\cap Q_{r}(b) \not=\emptyset} |\mathcal{D}\cap F_{r}(b)|.$$ Thus by \eqref{eq:regularity-1} and \eqref{eq:regularity-2}, we have $|\Theta_{1}\cap \mathcal{D}|\leq 100\varepsilon_{0}r + 100\varepsilon_{0}|\mathcal{D}| \leq \varepsilon_{0}^{\frac{1}{4}}|\mathcal{D}|$. The second inequality here is due to \eqref{eq:size-of-diagonal-1}. Our claim follows.

Thus any tilted square in which $\Theta_{1}$ is not $\varepsilon_{0}^{\frac{1}{4}}$-sparse is contained in
$$Q_{L}(a) \setminus \bigcup_{b \in Q_{L}(a) \cap \dot{r}\Z^2}  Q_{(1-100\sqrt{\varepsilon_{0}})r}(b),$$
whose cardinality is less than $ 10^{4}\sqrt{\varepsilon_{0}}L^{2}+8 r L \leq \varepsilon_{0}^{\frac{1}{4}} L^{2}$. Here, we used 
$L\geq \exp(c \delta r)$, $r>C_{\varepsilon,\delta}$ and $\varepsilon_{0}$ small enough (provided by \eqref{eq:def-of-epsilon0}).
Thus $\Theta_{1}$ is $\varepsilon_{0}^{\frac{1}{4}}$-regular in $Q_{L}(a)$ and Statement \ref{multi-stat-3} follows.

To see Statement \ref{multi-stat-4}, by Proposition \ref{prop:L_1good}, an $r$-dyadic $Q$ is perfect implies $(Q,\Theta_{1}\cap Q, V|_{\Theta_{1}\cap Q})$ is $(\frac{1}{8r},1)$-good. Thus Proposition \ref{prop:prob-of-perfect} implies Statement \ref{multi-stat-4} when $k\leq M_{0}$. 

Assume $k\geq M_{0}+1$ and our conclusions hold for any smaller $k$. We proceed to prove it for $k$. The general strategy is to apply Lemma \ref{lem:gluing}.

For each $j< k$, we call an $r$-dyadic box $Q_{L_{j}}(a)$ ``good'' if 
$$(Q_{L_{j}}(a),\Theta_{j} \cap Q_{L_{j}}(a),V|_{\Theta_{j} \cap Q_{L_{j}}(a)}) \text{ is } (\gamma_{j},\varepsilon)\text{-good}.$$ Otherwise, we call it ``bad''. We must control the number of bad boxes in order to apply Lemma \ref{lem:gluing}.

For any $0< k' < k$, by Lemma \ref{lem:gluing}, any bad $r$-dyadic $L_{k'}$-box $Q$ must contain a bad $L_{k'-1}$-box.
For any $0 < i \leq k$, and a bad $L_{k-i}$-box $Q'\subset Q$, we call $Q'$ a \emph{hereditary bad $L_{k-i}$-subbox of $Q$}, if there exists a sequence $Q'=\overline Q_i \subset \overline Q_{i-1} \subset \cdots \subset \overline Q_1 \subset Q$, where for each $j=1, \cdots, i$, $\overline Q_j$ is a bad $L_{k-j}$-box. We also call such sequence $\{\overline Q_{j}\}_{1\leq j \leq i}$ a hereditary bad chain of length $i$.
Note that the set of hereditary bad chains of $Q$ is $V_{\Theta_{k-1}\cap Q}$-measurable. 
\begin{cla}
If $\varepsilon<c_{\kappa}$ and $N> C_{M_{0},\kappa}$, then for all $k>M_{0}$, 
$$\Prob[\text{Q has no more than $N$ hereditary bad chains of length $M_{0}$}]\geq 1-L_{k}^{-1}.
$$
\end{cla}
\begin{proof}[Proof of the claim]
Let $N=(N')^{M_{0}}$ with $N'$ to be determined.  We can use the inductive hypothesis to estimate 
\begin{align}
    &\Prob\left[\text{$Q$ has more than $N$ hereditary bad chains of length $M_{0}$} \right]\\
    &\leq \sum_{\substack{\text{$r$-dyadic $Q' \subset Q$} \\  \ell(Q')=L_{j} \\ k-M_{0}<j\leq k }} \Prob \left[ \text{$Q'$ has more than $N'$ bad $L_{j-1}$-subboxes} \right]\\
     &\leq \sum_{\substack{ k-M_{0}<j\leq k }} L_{k}^{2} (L_{j}/L_{j-1})^{C N'} (L^{-\kappa}_{j-1})^{c N'} \\
    &\leq C M_{0} L_{k}^{2} (L_{k}^{(C\varepsilon-c \kappa) N' }+L_{k-M_{0}}^{(C\varepsilon-c \kappa) N' })\\ 
    &\leq C M_{0} L_{k}^{2} (L_{k}^{(C\varepsilon-c \kappa) N' }+L_{k}^{(C\varepsilon-c \kappa)(1-6\varepsilon)^{M_{0}} N' }).
\end{align}
Here, $c, C$ denote absolute constants. The claim follows by taking $c_{\kappa}=\frac{c \kappa}{2 C}$ and $C_{M_{0},\kappa} = (\frac{20 + 2M_{0}}{c\kappa (1-6c_{\kappa})^{M_{0}}})^{M_{0}}$, and letting $\varepsilon<c_{\kappa}$ and $N'>C^{\frac{1}{M_{0}}}_{M_{0},\kappa}$.
\end{proof}
Now fix $N$ as in the claim above. We call an $L_{k}$-box $Q$ \emph{ready} if $Q$ is $r$-dyadic and $Q$ contains no more than $N'$ hereditary bad chains of length $M_{0}$. Note that the event that $Q$ is ready is $V|_{\Theta_{k-1}\cap Q}$-measurable. 

Suppose the $L_{k}$-box $Q$ is ready. Let $Q'''_{1},\cdots,Q'''_{N} \subset Q$ be a complete list of $L_{k-M_{0}}$-boxes that includes every hereditary bad $L_{k-M_{0}}$-subboxes of $Q$. 
Let $Q''_{1},\cdots,Q''_{N} \subset Q$ be the corresponding bad $L_{k-1}$-subboxes of $Q$, such that $Q'''_{i} \subset Q''_{i}$ for each $i=1,2,\cdots,N$. These cubes are chosen in a way such that $\{Q''_{1},\cdots,Q''_{N}\}$ contains all the bad $L_{k-1}$-subboxes in $Q$.
Applying Lemma \ref{lem:findlargerbox}, we can choose an $r$-dyadic scale $L' \in [c_{N}L_{k}^{1-3\varepsilon},L_{k}^{1-3\varepsilon}]$ and disjoint $r$-dyadic $L'$-subboxes $$Q_{1}',\cdots,Q_{N}'\subset Q$$ such that, for each $Q_{i}''$, there is $Q_{j}'$ such that $Q_{i}''\subset Q'_{j}$ and $\dist(Q''_{i},Q \setminus Q'_{j})\geq \frac{1}{8}L'$. Note that we can choose $Q_{i}'$, $Q_{i}''$, $Q_{i}'''$ in a $V_{\Theta_{k-1}\cap Q}$-measurable way.

We define $\Theta_{k}$ to be the union of $\Theta_{k-1}$ and the subboxes $Q'_{1},\cdots,Q'_{N} \subset Q$ of each ready $L_{k}$-box $Q$.
We need to verify statements \ref{multi-stat-1} to \ref{multi-stat-4}. Note that Statement \ref{multi-stat-2} is true since each $r$-dyadic box is a union of $r$-bits (Lemma \ref{lem:r_dyadic_cube-ele}).
\begin{cla}
Statements \ref{multi-stat-1}, \ref{multi-stat-3} hold.
\end{cla}
\begin{proof}[Proof of the claim]
For each $L_{k}$-box $Q$, the event that $Q$ is ready, the scale $L'$ and $L'$-boxes $Q'_{i}\subset Q$ are all $V|_{Q \cap \Theta_{k-1}}$-measurable. Thus $\Theta_{k}\cap Q$ is $V|_{\Theta_{k-1}\cap 3Q}$-measurable. Note that we have $3Q$ in place of $Q$ because each $r$-dyadic $L_{k}$-box $Q$ intersects $24$ other $r$-dyadic $L_{k}$-boxes contained in $3 Q$.

As for Statement \ref{multi-stat-3}, for each $L^{1-\frac{5}{2}\varepsilon}_{k}$-box $Q\subset \Z^2$, the set $Q \cap \Theta_{k} \setminus \Theta_{k-1}$ is covered by at most $25N$ boxes $Q'_{i}$ with length at most $L_{k}^{1-3\varepsilon}$. Suppose $\Tilde{Q}$ is a tilted square such that $Q\cap \Theta_{k-1}$ is $\eta_{k-1}$-sparse in $\Tilde{Q}$ but $Q\cap \Theta_{k}$ is not $\eta_{k}$-sparse in $\Tilde{Q}$, then $\Tilde{Q}$ must intersect one of $Q'_{i}$'s and have length at most $L_{k}^{1-\frac{11}{4}\varepsilon}$. This implies $\Theta_{k}\cap Q$ is $\eta_{k}$-regular in $Q$.
\end{proof}
\begin{cla}\label{cla:inductive-hypothesis-good}
If the $L_{k}$-box $Q$ is ready, $\mathcal{R}$ a subset of $r$-bits inside $Q'_{i}$ that do not affect $\Theta_{k-1}\cup \bigcup_{j} Q_{j}''$, then each $Q_{r}(b)\in \mathcal{R}$ is admissible. Furthermore, if $|\lambda-\lambda_{0}|\leq \Bar{V}^{-2 L_{k-1}^{1-\varepsilon}}$,  $t \in [0,1]$ and $H^{\mathcal{R},t}_{Q_{i}'}u=\lambda u$, then
$$\Bar{V}^{c L_{k-1}^{1-\delta}} \|u\|_{\ell^{\infty}(E)} \leq \|u\|_{\ell^{2}(Q'_{i})}\leq (1+\Bar{V}^{-c L_{k-M_{0}}^{1-\delta}}) \|u\|_{\ell^{2}(G)},
$$
where $E=Q'_{i}\setminus \cup_{i}Q''_{j}$ and $G=Q'_{i} \cap \cup_{j}Q'''_{j}$.
\end{cla}
\begin{proof}[Proof of the claim]
If $r$-bit $Q_{r}(b)\subset Q$ does not affect $\Theta_{k-1}\cup \bigcup_{j} Q_{j}''$, then it is contained in a good $L_{k-1}$-box $Q_{L_{k-1}}(a')\subset Q$. By Definition \ref{def:goodness}, since $Q_{r}(b)$ does not affect $\Theta_{k-1}\cap Q_{L_{k-1}}(a')$, it is admissible.

If $a \in Q'_{i} \setminus G$, then there is $j \in \{1,\cdots,M_{0}\}$ and a good $L_{k-j}$-box $Q'' \subset Q'_{i}$ with $a \in Q''$ and $\dist(a,Q'_{i}\setminus Q'')\geq \frac{1}{8}L_{k-j}$. Moreover, if $a \in E$, then $j=1$. By Definition \ref{def:goodness} and Lemma \ref{lem:continuity_resol},
\begin{align*}
        |u(a)| &=\left|\sum_{\substack{b \in Q''\\ b'\in Q'_{i}\setminus Q''\\ b \sim b'}}G^{\mathcal{R_{Q''}},t}_{Q''}(a,b;\lambda) u(b')\right|\\
        &\leq 4 L_{k-j}\Bar{V}^{L_{k-j}^{1-\varepsilon}-\frac{1}{8}\gamma_{k-j}L_{k-j}}\|u\|_{\ell^{2}(Q'_{i})}\\ 
        &\leq \Bar{V}^{-c L_{k-j}^{1-\delta}} \|u\|_{\ell^{2}(Q'_{i})}.
\end{align*}
Here we used $\gamma_{k-j}\geq \frac{1}{10r}$ and $L_{k-j}\geq \exp(c\delta r)$.
In particular, we see that
$$\|u\|_{\ell^{\infty}(E)} \leq \Bar{V}^{-c L_{k-1}^{1-\delta}} \|u\|_{\ell^{2}(Q'_{i})}
$$
and
$$\|u\|_{\ell^{\infty}(Q'_{i}\setminus G)}\leq \Bar{V}^{-c L_{k-M_{0}}^{1-\delta}} \|u\|_{\ell^{2}(Q'_{i})}.$$
\end{proof}
\begin{cla}
If $Q$ is an $r$-dyadic $L_{k}$-box and $\mathcal{E}_{i}(Q)$ denotes the event that 
$$\text{$Q$ is ready and $\Prob[\|(H_{Q'_{i}}-\lambda_{0})^{-1}\|\leq \Bar{V}^{L_{k}^{1-4\varepsilon}} \big| V|_{\Theta_{k}\cap Q}]=1$},
$$
then $\Prob[\mathcal{E}_{i}(Q)]\geq 1-L_{k}^{10\varepsilon-\frac{1}{2}}$. 
\end{cla}
\begin{proof}[Proof of the claim]
Recall the event $Q$ ready and boxes $Q'_{i} \subset Q$ are $V|_{\Theta_{k-1}\cap Q}$-measurable. We may assume $i=1$. We apply Proposition \ref{prop:Wegner} to box $Q'_{1}$ with $5\varepsilon>\delta>0$, $K=N$, scales $$L' \geq L_{k}^{1-4\varepsilon}\geq L_{k}^{1-5\varepsilon} \geq L_{k-1} \geq L_{k-1}^{1-2\delta}\geq 2L_{k-1}^{1-\varepsilon}\geq L_{k-1}^{1-\frac{5}{2}\varepsilon},$$ $\Theta=\Theta_{k-1} \cap Q'_{1}$, defects $\{Q''_{j}:Q''_{j} \subset Q'_{1}\}$, and $G=\cup\{Q'''_{j}:Q'''_{j} \subset Q'_{1}\}$. Assume $\varepsilon>5\delta$ and note that $k\geq M_{0}+1$ and $L_{k-1}\geq L_{M_{0}}\geq \exp(\frac{1}{2}c_{1} r)$. The previous claims provide the conditions to verify the hypothesis of Proposition \ref{prop:Wegner}. Since $Q'_{1} \subset \Theta_{k}$ when $Q$ is ready, the claim follows. 
\end{proof}
\begin{cla}
If $Q$ is an $r$-dyadic $L_{k}$-box and $\mathcal{E}_{1}(Q),\cdots,\mathcal{E}_{N}(Q)$ hold, then $Q$ is good.
\end{cla}
\begin{proof}[Proof of the claim]

Suppose $\mathcal{R}$ is a subset of $r$-bits inside $Q$ that do not affect $\Theta_{k}$ and $t \in [0,1]$. By Claim \ref{cla:inductive-hypothesis-good}, each $Q_{r}(b')\in \mathcal{R}$ is admissible.
We apply Lemma \ref{lem:gluing} to the box $Q$ with small parameters $\frac{\varepsilon}{3}>\delta>0$, decay rate $\gamma_{k-1}$, scales $L_{k}\geq L_{k}^{1-\varepsilon}\geq L' \geq L_{k}^{1-\frac{7}{2}\varepsilon}\geq L_{k}^{1-4\varepsilon}\geq L_{k-1}\geq L_{k-1}^{1-\varepsilon}$, and defects $Q'_{1},\cdots,Q'_{N}$. We conclude that 
$$|G^{\mathcal{R},t}_{Q}(a,b)|\leq \Bar{V}^{L_{k}^{1-\varepsilon} -\gamma_{k} |a-b|}
$$
for each $a,b\in Q$.
Since the events $\mathcal{E}_{i}(Q)$ are $V|_{\Theta_{k}\cap Q}$-measurable, we see that $Q$ is good.
\end{proof}

Finally we verify Statement \ref{multi-stat-4}.
Combining the previous two claims, for any $r$-dyadic $L_{k}$-box $Q$, we have 
$$\Prob[\text{$(Q,\Theta_{k}\cap Q, V|_{\Theta_{k}\cap Q})$ is $(\gamma_{k},\varepsilon)$-good}]\geq 1-N L_{k}^{10\varepsilon-\frac{1}{2}} \geq 1-L_{k}^{-\kappa},
$$ provided $\kappa < \frac{1}{2} -10\varepsilon$. 
\end{proof}
\section{Proof of Lemma \ref{lem:unique-continuation}}\label{sec:proof-Lemma-3.5}
Our approach follows the scheme in \cite[Section 3]{ding2020localization} and \cite{buhovsky2017discrete}. The key for the proofs of \cite[Theorem 1.6]{ding2020localization}, \cite[Theorem (A)]{buhovsky2017discrete} and Lemma \ref{lem:unique-continuation} is the following observation for functions $u$ satisfying $H u =\lambda u$ on a tilted rectangle $R_{[1,a],[1,b]}$ defined in Definition \ref{def:geo1}.
\begin{obs}\label{obs:tilted}
Let $V:\Z^{2} \rightarrow \R$ and $u:R_{[1,a],[1,b]} \rightarrow \R$. Suppose $a\geq 10b$ and $-\Delta u +V u =\lambda u$ in $R_{[2,a-1],[2,b-1]}$. If $$\|u\|_{\ell^{\infty}(R_{[1,a],[1,2]})}\leq 1$$ and $|u|\leq 1$ on a $1-\varepsilon$ fraction of $R_{[1,a],[b-1,b]}$, then $\|u\|_{\ell^{\infty}(R_{[1,a],[1,b]})}$ is ``suitably'' bounded. 
\end{obs}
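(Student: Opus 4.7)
The plan is to pass to tilted coordinates $s = x+y$, $t = x-y$, in which the four nearest-neighbors of $(x,y) \in \Z^2$ are precisely $(s\pm 1, t\pm 1)$, so that the eigenvalue equation $-\Delta u + Vu = \lambda u$ at an interior lattice point of $R_{[1,a],[1,b]}$ becomes the five-point identity
\begin{equation*}
u(s+1,t+1) + u(s-1,t+1) = (4 + V(x,y) - \lambda)\, u(x,y) - u(s+1,t-1) - u(s-1,t-1).
\end{equation*}
Reading $t$ as ``time'' and $s$ as ``space'', the rows $\{t = \text{const}\}$ are indexed by $s$ with the correct parity, and this identity is a propagation rule: given $u$ on rows $t$ and $t-1$, the values on row $t+1$ are determined by a tridiagonal linear system in $s$, once endpoint boundary data at $s = 1$ and $s = a$ are supplied.

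First I would initialize the recursion using the hypothesis $\|u\|_{\ell^\infty(R_{[1,a],[1,2]})}\leq 1$, which furnishes the two starting rows $t = 1, 2$ together with the endpoint data needed at subsequent steps. Marching from $t = 2$ up to $t = b$, a crude norm bound on the inverse of each tridiagonal block gives a growth factor of order $|\lambda| + \|V\|_\infty + O(1)$ per step, so after $b$ steps one obtains a pointwise bound $\|u\|_{\ell^\infty(R_{[1,a],[1,b]})} \leq C^{b}$ depending only on the potential, the energy, and $b$. This already establishes the observation in its informal form; the role of the assumption $a \geq 10b$ is to guarantee a large enough aspect ratio so that the tridiagonal solves in $s$ do not dominate the growth over $t$.

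For the strengthened, quantitative version needed to feed into Lemma \ref{lem:unique-continuation}, the top-side hypothesis that $|u|\leq 1$ on a $1-\varepsilon$ fraction of $R_{[1,a],[b-1,b]}$ must also be exploited. My plan is to view the full propagation as a single linear map from the two bottom rows to the two top rows, and to combine the bottom smallness and top smallness via a Green's-identity interpolation across the rectangle: good control of both ``horizontal boundaries'' forces control of $u$ in between. The main obstacle will be the $\varepsilon$-fraction of exceptional top points on which $u$ may be very large; in principle such a sparse exceptional set could conceal large interior values. Ruling this out is precisely the role of the Marcus--Spielman--Srivastava restricted-invertibility input captured in Lemma \ref{lem:Boolean-cube}, which guarantees that no low-dimensional affine subspace can hide near a small neighborhood of a sparse set of Boolean sites. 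Combining this with the forward propagation above yields the effective two-sided bound on $u$ throughout $R_{[1,a],[1,b]}$.
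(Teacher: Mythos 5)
Your first step contains the fatal gap. The hypothesis $\|u\|_{\ell^{\infty}(R_{[1,a],[1,2]})}\leq 1$ controls $u$ only on the two bottom rows, \emph{not} on the west column $R_{[1,2],[3,b]}$, and it is exactly this west-column data that plays the role of the ``endpoint data at subsequent steps'' in your marching scheme: equation \eqref{eq:equ-on-tilted-coordinate} determines row $t$ from rows $t-1,t-2$ only after the value at $s\in\{1,2\}$ on row $t$ is prescribed, and nothing in the hypotheses bounds those values. Hence your claimed deterministic bound $\|u\|_{\ell^{\infty}}\leq C^{b}$ is false: the extension (Lemma \ref{lem:extension}) of boundary data vanishing on $R_{[1,a],[1,2]}$ and equal to an arbitrarily large constant at $(1,3)$ is a solution that is zero on the bottom rows but unbounded elsewhere. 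This is not a repairable oversight. The observation is genuinely false for arbitrary $V$ and $\lambda$ (the paper says so immediately after stating it); its rigorous form, Lemma \ref{lem:key-lemma}, is a probabilistic statement over the Bernoulli potential, so the sentence ``this already establishes the observation in its informal form'' cannot stand — randomness must enter.

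Your second half names the right tool, Lemma \ref{lem:Boolean-cube}, but assigns it the wrong job. In the paper the exceptional $\varepsilon$-fraction of top sites is disposed of by a plain union bound over the $\binom{a}{\lfloor a/10^{5}\rfloor}$ possible exceptional sets (Claims \ref{cla:all-S} and \ref{cla:bound-one-S}); Lemma \ref{lem:Boolean-cube} is instead applied to the \emph{random vector of potential values} $\vec{V}_{t'}$ on a single row $t'$. The actual mechanism is: split $u=u_{1}+u_{2}$ with $u_{1}$ the extension of the bottom data with zero west data (bounded deterministically by Lemma \ref{lem:extension-bound}) and $u_{2}$ vanishing on the bottom; write $u_{2}$ restricted to the top row as $M^{t'+1,b_{0}}_{[1,a]}A_{0}(\vec{V}_{t'})$ plus a vector constrained to an affine subspace of dimension at most about $2b+a/25<\tfrac{2}{5}a$ coming from the unknown west-column data; invert the triangular operators (Corollary \ref{cor:transfer-bound-M}, Lemma \ref{lem:upper-triangular}); and only then invoke Lemma \ref{lem:Boolean-cube}, which applies because $\vec{V}_{t'}$ ranges over a Boolean cube of dimension on the order of $a/2$. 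The hypothesis $a\geq 10b$ is exactly this dimension count — the unknown boundary data must span far fewer dimensions than the random potential row — not an ``aspect ratio so that the tridiagonal solves do not dominate.'' Your ``Green's-identity interpolation'' has no counterpart in the paper and, as stated, no content; without the decomposition, the triangular transfer operators, and the dimension count, the proposal does not close.
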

Observation \ref{obs:tilted} does not hold for arbitrary $V$ and $\lambda$. It was proved in \cite[Lemma 3.4]{buhovsky2017discrete} for the case where $V\equiv 0$ and $\lambda=0$ (i.e. $u$ is a harmonic function). In \cite{ding2020localization}, Observation \ref{obs:tilted} was also proved to hold with high probability for the case where $a\geq C b^{2}\log(a)$ and $\{V(x)\}_{x\in \Z^{2}}$ is a family of i.i.d. Bernoulli random variables taking values in $\{0,1\}$ (the ``key lemma'' \cite[Lemma 3.13]{ding2020localization}).

In Lemma \ref{lem:key-lemma} below, we will prove that Observation \ref{obs:tilted} holds with high probability only requiring $a\geq 10b$ and $\{V(x)\}_{x\in \Z^{2}}$ is a family of i.i.d. Bernoulli random variables taking values in $\{0,\Bar{V}\}$. 

In \cite{ding2020localization}, the authors used the ``key lemma'' \cite[Lemma 3.13]{ding2020localization} to prove a ``growth lemma'' (\cite[Lemma 3.18]{ding2020localization}) and then used a covering argument to conclude the proof of \cite[Theorem 1.6]{ding2020localization}.

Our Lemma \ref{lem:key-lemma} is analog to \cite[Lemma 3.13]{ding2020localization} with weaker assumptions.
Lemma \ref{lem:key-lemma} is the main new ingredient in the current proof. 
As long as Lemma \ref{lem:key-lemma} is proved, we follow the same scheme in \cite{ding2020localization} by proving a ``growth lemma'' (Lemma \ref{lem:growthlemma}) and using a covering argument (Section \ref{sec:covering}) to conclude the proof of Lemma \ref{lem:unique-continuation}. 

To prove Observation \ref{obs:tilted} (Lemma \ref{lem:key-lemma}), we first consider the case where $u=0$ on $R_{[1,a],[1,2]}$ (Lemma \ref{lem:transfer-u_3}).  
We use the triangular matrix structure of the operator $M_{[1,a]}^{k,k'}$ defined in Definition \ref{def:transfer-matrix}. Then we use Lemma \ref{lem:Boolean-cube} to estimate the probability. We refer the reader to the beginning of Section \ref{sec:key-lemmas} for an intuitive argument of the simple case where $u|_{R_{[1,a],[1,2]}\cup R_{[1,a],[b-1,b]}}=0$.

\subsection{Auxiliary lemmas}\label{sec:auxiliary-boolean}
We first prove Lemma \ref{lem:Boolean-cube} using Lemma \ref{lem:restricted-invertibility}. 
\begin{proof}[Proof of Lemma \ref{lem:Boolean-cube}]
Write $\{e_{j}\}_{j=1}^{n}$ to be the standard normal basis in $\mathbb{R}^{n}$.
Write $\Gamma=\Gamma_{0}+a_{0}$ where $\Gamma_{0}$ is a $k$ dimensional subspace and $a_{0}\in \mathbb{R}^{n}$. Let $\Gamma_{1}$ be the orthogonal complement of $\Gamma_{0}$ and let $P:\mathbb{R}^{n} \rightarrow \Gamma_{1}$ be the orthogonal projection. Define $v_{i}=P e_{i}$ for $i=1,2,\cdots,n$, then $\sum_{i=1}^{n} v_{i} v_{i}^{\dag} =I_{n-k}$ (the identity operator on $\Gamma_{1}$). 

Using Lemma \ref{lem:restricted-invertibility} with $l=n$, $m=n-k$ and $m'=n-k-1$, we can find $\mathcal{S}\subset\{1,2,\cdots,n\}$ with $|\mathcal{S}|=n-k-1$ such that the $n-k-1$-th largest eigenvalue of
\begin{equation}\label{eq:restricted-subset}
    \sum_{i\in \mathcal{S}} v_{i} v_{i}^{\dag}=\sum_{i\in \mathcal{S}} P e_{i}e_{i}^{\dag} P^{\dag} 
\end{equation}
is at least $\frac{1}{4n(n-k)}$. Assume without loss of generality that $\mathcal{S}=\{1,2,\cdots,n-k-1\}$. Denote by $\Gamma'$ the subspace generated by $\{e_{i}\}_{i=1}^{n-k-1}$ and let $Q:\mathbb{R}^{n}\rightarrow \Gamma'$ be the orthogonal projection onto $\Gamma'$. Then \eqref{eq:restricted-subset} is just $P Q^{\dag} Q P^{\dag}$. Note that the dimension of the range of $Q P^{\dag}$ is at most $n-k-1$, thus the rank of the operator $P Q^{\dag} Q P^{\dag}$ is at most $n-k-1$.
Hence the $n-k-1$-th largest eigenvalue (which is also the smallest eigenvalue) of the positive semi-definite operator $Q P^{\dag}P Q^{\dag}$ is at least $\frac{1}{4n(n-k)}$. This implies 
\begin{equation}\label{eq:invertibility}
    \|P Q^{\dag} a\|_{2}\geq \sqrt{\frac{1}{4n(n-k)}}\|a\|_{2} 
\end{equation}
for any $a\in \Gamma'$.

Consider the Boolean subcube $B'=\left\{\sum_{i=1}^{n-k-1} x_{i} e_{i}:x_{i}\in \{0,1\}\right\}\subset \Gamma'$. We claim that for any $v' \in \mathbb{R}^{n}$, 
\begin{equation}\label{eq:intersection-at-most-one}
    \#\{a \in B'+v': \min_{b\in \Gamma} \|a-b\|_{2}< \frac{1}{4} n^{-\frac{1}{2}} (n-k)^{-\frac{1}{2}}\} \leq 1.
\end{equation}
To see this, assume the claim does not hold. Then for some $v'' \in \mathbb{R}^{n}$, there are two different $a_{1},a_{2} \in (B'+v'')$ with $\min_{b\in \Gamma} \|a_{j}-b\|_{2}< \frac{1}{4} n^{-\frac{1}{2}} (n-k)^{-\frac{1}{2}}$ for $j=1,2$. Choose $b_{1},b_{2}\in \Gamma$ with $\|a_{j}-b_{j}\|_{2}< \frac{1}{4} n^{-\frac{1}{2}} (n-k)^{-\frac{1}{2}}$ for $j=1,2$. Let $a'=a_{1}-a_{2}$ and $b'=b_{1}-b_{2}$. Then $\|a'-b'\|_{2}<\frac{1}{2} n^{-\frac{1}{2}} (n-k)^{-\frac{1}{2}}$  and $a'\in \Gamma'$, $b'\in \Gamma_{0}$. 

Since any two vectors in $B'+v''$ has $\ell^{2}$ distance at least $1$, we have $\|a'\|_{2}\geq 1$. On the other hand, we have $\min_{b\in \Gamma_{0}} \|a'-b\|_{2}<\frac{1}{2} n^{-\frac{1}{2}} (n-k)^{-\frac{1}{2}}$ which is equivalent to $\|P Q^{\dag} a'\|_{2}< \frac{1}{2} n^{-\frac{1}{2}} (n-k)^{-\frac{1}{2}}$.  However, this contradicts with \eqref{eq:invertibility} and our claim \eqref{eq:intersection-at-most-one} follows.

Finally, $B=\bigcup \left\{B'+\sum_{j=n-k}^{n} x_{j}e_{j}: x_{j} \in \{0,1\}\text{ for } n-k\leq j\leq n\right\}$. Thus by \eqref{eq:intersection-at-most-one}, 
\begin{align*}
             &\#\left\{a \in B: \min_{b\in \Gamma} \|a-b\|_{2}< \frac{1}{4} n^{-\frac{1}{2}} (n-k)^{-\frac{1}{2}}\right\}\\ 
             &\leq \sum_{x_{j} \in \{0,1\} \text{ for } n-k\leq j \leq n} \#\left\{a \in B'+\sum_{j=n-k}^{n} x_{j}e_{j}: \min_{b\in \Gamma} \|a-b\|_{2}< \frac{1}{4} n^{-\frac{1}{2}} (n-k)^{-\frac{1}{2}}\right\}\\ &\leq \sum_{x_{j} \in \{0,1\} \text{ for }  n-k\leq j \leq n} 1\\
             &= 2^{k+1}.
\end{align*}
\end{proof}
We will also need the following lemma to bound the inverse norm of principal submatrices of a triangular matrix.
\begin{lemma}\label{lem:upper-triangular}
Let $d>0$ be an integer, $K>1$ be a real number and $\{m_{1}<m_{2}<\cdots<m_{d}\}$ be a set of positive integers. Let $A=\left(a_{i j}\right)_{1\leq i,j\leq d}$ be a lower (or upper) triangular matrix. Assume that $|a_{i i}|=1$ for each $i=1,\cdots,d$ and $|a_{i j}|\leq K^{|m_{i}-m_{j}|}$ for each $1\leq i,j\leq d$. Then the Euclidean operator norm of the inverse $A^{-1}$ satisfies $\|A^{-1}\|\leq d (2 K)^{m_{d}}$. 

\end{lemma}
\begin{proof}
We assume $A$ to be a lower triangular matrix, the case for upper triangular matrix follows the same argument.
Denote $A^{-1}=\left(a'_{i j}\right)_{1\leq i,j \leq d}$.

We prove that $|a'_{i j}| \leq (2 K) ^{|m_{i}-m_{j}|}$ by induction on $k=i-j$. For $k=0$, since $A$ is lower triangular, we have $a'_{i i}=(a_{i i})^{-1}$ and thus $|a'_{i i}|=1$. Assume our conclusion holds for $0\leq k<k'$, we prove the case where $i-j=k'$. Note that 
\begin{equation}
    \sum_{l=1}^{d} a_{i l} a'_{l j}=0.
\end{equation}
This implies 
\begin{equation}
    a_{i i} a'_{i j}=- \sum_{l=j}^{i-1} a_{i l} a'_{l j}.
\end{equation}
Since $|a_{i i}|=1$, by inductive hypothesis and $|a_{l l'}|\leq K^{|m_{l}-m_{l'}|}$, we have
\begin{equation}
    |a'_{i j}|\leq \sum_{l=j}^{i-1} K^{m_{i}-m_{l}} (2 K)^{m_{l}-m_{j}} =K^{m_{i}-m_{j}} \sum_{l=j}^{i-1} 2^{m_{l}-m_{j}}\leq (2 K)^{m_{i}-m_{j}}.
\end{equation}
Thus the induction proves $|a'_{i j}| \leq (2 K)^{m_{i}-m_{j}}$ for $1\leq i,j\leq d$. Finally, $$\|A^{-1}\|\leq (\sum_{1\leq i,j \leq d} |a'_{i j}|^{2} )^{\frac{1}{2}}\leq d (2 K)^{m_{d}}$$ since $0<m_{1}<\cdots < m_{d}$.
\end{proof}
\subsection{Tilted rectangles}
In this section, we collect basic lemmas on functions satisfying the equation $H u=\lambda u$ on a tilted rectangle (see Definition \ref{def:geo1}). The following Lemma \ref{lem:extension}, Lemma \ref{lem:extension-bound} and Lemma \ref{lem:variate-lambda} are rewrites of \cite[Lemma 3.8]{ding2020localization}, \cite[Lemma 3.10]{ding2020localization} and \cite[Lemma 3.11]{ding2020localization} respectively. They are modified to depend on $\Bar{V}$ explicitly. 

We will keep several notations from \cite[Section 3]{ding2020localization}. In particular, we work in the tilted coordinates of 
\begin{equation}\label{eq:def-tilted-coordinate}
    (s,t)=(x+y,x-y).
\end{equation}
Under coordinate transformation \eqref{eq:def-tilted-coordinate}, the transformed lattice is $\widetilde{\Z^{2}}=\{(s,t)\in \Z^{2}:\text{$s-t$ is even}\}$.
The equation 
\begin{equation}\label{eq:extension}
    H u= \lambda u
\end{equation}
becomes
\begin{equation}\label{eq:equ-on-tilted-coordinate}
    u(s,t)=(4+V(s-1,t-1)-\lambda) u(s-1,t-1) -u(s-2,t)-u(s-2,t-2)-u(s,t-2).
\end{equation}

Given two intervals $J_1,J_2\subset \Z$, by Definition \ref{def:geo1}, under the coordinate transformation, the tilted rectangle $R_{J_{1},J_{2}}\subset \Z^{2}$ is transformed to $$\widetilde{R_{J_{1},J_{2}}}=\{(s,t)\in J_{1}\times J_{2}: \text{$s-t$ is even}\}$$
in the new lattice $\widetilde{\Z^{2}}$. With a little abuse of notations, we also use $R_{J_{1},J_{2}}$ to denote $\widetilde{R_{J_{1},J_{2}}}$ for the rest of this section. 

\begin{defn}
Given integers $a_{1}<a_{2}$ and $b_{1}<b_{2}$,
the west boundary of the tilted rectangle is
$$\partial^{w}R_{[a_{1},a_{2}],[b_{1},b_{2}]}=R_{[a_{1},a_{2}],[b_{1},b_{1}+1]}\cup R_{[a_{1},a_{1}+1],[b_{1},b_{2}]}.$$
\end{defn}

The following lemma is a rewrite of \cite[Lemma 3.8]{ding2020localization} and it follows from the same proof of \cite[Lemma 3.8]{ding2020localization}.
\begin{lemma}\label{lem:extension}
Suppose energy $\lambda\in \R$, real number $\Bar{V}\in \R$ and integers $a_{1}< a_{2},b_{1}< b_{2}$. Then every function $u:\partial^{w}R_{[a_{1},a_{2}],[b_{1},b_{2}]} \rightarrow \R$ has a unique extension $$u^{0}=E^{(\lambda,\Bar{V})}_{R_{[a_{1},a_{2}],[b_{1},b_{2}]}}(u):R_{[a_{1},a_{2}],[b_{1},b_{2}]}\rightarrow \R$$ such that 
\begin{equation}\label{eq:equation-in-extension}
    H u^{0}=\lambda u^{0}
\end{equation}
in $R_{[a_{1}+1,a_{2}-1],[b_{1}+1,b_{2}-1]}$.
Moreover, $E^{(\lambda,\Bar{V})}_{R_{[a_{1},a_{2}],[b_{1},b_{2}]}}$ is a random linear operator and is $V|_{R_{[a_{1}+1,a_{2}-1],[b_{1}+1,b_{2}-1]}}$-measurable.
\end{lemma}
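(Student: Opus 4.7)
The plan is to define $u^0$ by solving the recurrence \eqref{eq:equ-on-tilted-coordinate}, which is nothing but the equation $Hu = \lambda u$ at the tilted point $(s-1,t-1)$, rearranged to express $u(s,t)$ in terms of $u$ at four other lattice points: $(s-1,t-1)$, $(s-2,t)$, $(s-2,t-2)$, and $(s,t-2)$. The key structural observation is that each of these four points has $s'+t'$ strictly less than $s+t$, so ordering $R_{[a_1,a_2],[b_1,b_2]}$ by the value of $s+t$ makes the recurrence well-founded.

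First I would set $u^0 := u$ on $\partial^w R_{[a_1,a_2],[b_1,b_2]}$, and then proceed by induction on $s+t$: for each $(s,t) \in R_{[a_1+2,a_2],[b_1+2,b_2]}$ (i.e.\ not on the west boundary), define $u^0(s,t)$ using \eqref{eq:equ-on-tilted-coordinate}. A trivial check confirms that the four dependency points $(s-1,t-1)$, $(s-2,t)$, $(s-2,t-2)$, $(s,t-2)$ all lie inside $R_{[a_1,a_2],[b_1,b_2]}$, respect the parity constraint that $s'-t'$ is even, and have $s'+t' < s+t$, so they are either on the west boundary (where $u^0$ equals the prescribed data) or already determined by the induction.

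By construction, $u^0$ satisfies \eqref{eq:equation-in-extension} on $R_{[a_1+1,a_2-1],[b_1+1,b_2-1]}$, since rewriting the recurrence at $(s,t)=(s_0+1,t_0+1)$ is exactly the equation $Hu^0 = \lambda u^0$ at $(s_0,t_0)$. Uniqueness is equally clear: any candidate extension must satisfy the same recurrence and hence is forced diagonal by diagonal. Linearity of $E^{(\lambda,\Bar{V})}_{R_{[a_1,a_2],[b_1,b_2]}}$ is immediate from the linearity of \eqref{eq:equ-on-tilted-coordinate} in $u$. For the measurability statement, I would note that the only place where $V$ enters when computing $u^0(s,t)$ is the scalar factor $4+V(s-1,t-1)-\lambda$; for every $(s,t)\in R_{[a_1+2,a_2],[b_1+2,b_2]}$ the argument $(s-1,t-1)$ lies in $R_{[a_1+1,a_2-1],[b_1+1,b_2-1]}$, which gives the stated $V|_{R_{[a_1+1,a_2-1],[b_1+1,b_2-1]}}$-measurability.

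There is no real obstacle here — the lemma is essentially a well-posedness check for a one-sided discrete Cauchy-type problem in the rotated coordinates. The only mild point of care is that the west boundary must be a \emph{two}-layer strip (rows $t\in\{b_1,b_1+1\}$ and columns $s\in\{a_1,a_1+1\}$) rather than a single row and single column, because \eqref{eq:equ-on-tilted-coordinate} reaches back two units in each of the variables $s$ and $t$; with only a one-layer boundary the induction would have nothing to stand on at the first diagonal.
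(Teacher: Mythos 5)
Your proposal is correct and is essentially the same argument as the paper's: the paper simply defers to the proof of Lemma~3.8 in the Ding--Smart reference, which is exactly this diagonal-by-diagonal induction on $s+t$ solving the recurrence \eqref{eq:equ-on-tilted-coordinate}, with the measurability following because the equation is only invoked at points of $R_{[a_{1}+1,a_{2}-1],[b_{1}+1,b_{2}-1]}$. Your remark about the two-layer west boundary is exactly the right point of care, and the rest checks out.
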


\begin{rem}
Given energy $\lambda$, real $\Bar{V}$ and integers $a_{1}<a_{2}$ and $b_{1}<b_{2}$, we also denote $E^{(\lambda,\Bar{V})}_{R_{[a_{1},a_{2}],[b_{1},b_{2}]}}$ by $E^{(\lambda,\Bar{V})}_{[a_{1},a_{1}],[b_{1},b_{2}]}$ for simplicity. When energy $\lambda$ and real number $\Bar{V}$ are given in context, we also omit $\lambda,\Bar{V}$ and denote $E^{(\lambda,\Bar{V})}_{R_{[a_{1},a_{2}],[b_{1},b_{2}]}}$ by $E_{R_{[a_{1},a_{2}],[b_{1},b_{2}]}}$ and $E^{(\lambda,\Bar{V})}_{[a_{1},a_{2}],[b_{1},b_{2}]}$ by $E_{[a_{1},a_{2}],[b_{1},b_{2}]}$.
\end{rem}
\begin{lemma}\label{lem:extension-bound}
Suppose we have real numbers $\lambda,\Bar{V}$ and integers $a_{1}\leq a_{2}$ and $b_{1}\leq b_{2}$. Assume $\lambda\in [-2,10]$ and $\Bar{V}\geq 2$.
If $H u= \lambda u$ in $R_{[a_{1}+1,a_{2}-1],[b_{1}+1,b_{2}-1]}$ and $\|u\|_{\ell^{\infty}(\partial^{w}R_{[a_{1},a_{2}],[b_{1},b_{2}]})}=1$, then 
\begin{align}
    &\|u\|_{\ell^{\infty}(R_{[a_{1},a_{2}],[b_{1},b_{2}]})} \leq (\Bar{V} (a_{2}-a_{1}+1))^{C_{1} (b_{2}-b_{1}-1)\vee 0} \label{eq:exten-bound-1}\\
    &\|u\|_{\ell^{\infty}(R_{[a_{1},a_{2}],[b_{1},b_{2}]})} \leq (\Bar{V} (b_{2}-b_{1}+1))^{C_{1} (a_{2}-a_{1}-1)\vee 0} \label{eq:exten-bound-2}
\end{align}
for a numerical constant $C_{1}$.
\end{lemma}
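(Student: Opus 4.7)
The plan is to derive both bounds directly from the one-step recursion \eqref{eq:equ-on-tilted-coordinate}. Under the hypotheses $\lambda\in[-2,10]$ and $\bar{V}\geq 2$, the coefficient satisfies $|4+V(s-1,t-1)-\lambda|\leq \bar V+6\leq 4\bar V$. Moreover the west boundary $\partial^{w}R_{[a_{1},a_{2}],[b_{1},b_{2}]}$ supplies initial data both for an induction in the ``time'' variable $t$ (through its two horizontal strips $R_{[a_{1},a_{2}],[b_{1},b_{1}+1]}$) and for an induction in $s$ (through its two vertical strips $R_{[a_{1},a_{1}+1],[b_{1},b_{2}]}$).

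To prove \eqref{eq:exten-bound-1}, I would rewrite the recursion in the equivalent form
\begin{equation*}
u(s,t)+u(s-2,t)=(4+V(s-1,t-1)-\lambda)u(s-1,t-1)-u(s-2,t-2)-u(s,t-2),
\end{equation*}
whose right-hand side depends only on values with second coordinate at most $t-1$. Setting $M_{t}=\max_{s\in[a_{1},a_{2}]}|u(s,t)|$ and telescoping in $s$ by steps of $2$ (to respect the parity of $\widetilde{\Z^{2}}$), starting from the west-boundary bounds $|u(a_{1},t)|,|u(a_{1}+1,t)|\leq 1$, gives, for every $t\geq b_{1}+2$,
\begin{equation*}
M_{t}\leq 1+\tfrac{a_{2}-a_{1}+1}{2}\bigl(4\bar V\,M_{t-1}+2M_{t-2}\bigr),
\end{equation*}
together with the trivial base cases $M_{b_{1}},M_{b_{1}+1}\leq 1$. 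A direct induction on $t$ with $K:=40\bar V(a_{2}-a_{1}+1)$ then yields $M_{t}\leq K^{(t-b_{1}-1)\vee 0}$, and choosing $C_{1}$ large enough so that $K\leq (\bar V(a_{2}-a_{1}+1))^{C_{1}}$ (which is possible because $\bar V(a_{2}-a_{1}+1)\geq 2$) produces \eqref{eq:exten-bound-1}.

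The proof of \eqref{eq:exten-bound-2} is entirely symmetric. One instead isolates the $t$-shift by writing
\begin{equation*}
u(s,t)+u(s,t-2)=(4+V(s-1,t-1)-\lambda)u(s-1,t-1)-u(s-2,t)-u(s-2,t-2),
\end{equation*}
whose right-hand side depends only on values at columns $s-1$ and $s-2$. Setting $N_{s}=\max_{t\in[b_{1},b_{2}]}|u(s,t)|$ and telescoping in $t$ by steps of $2$ from the boundary bounds $|u(s,b_{1})|,|u(s,b_{1}+1)|\leq 1$, the same second-order induction (now in $s$) gives $N_{s}\leq (40\bar V(b_{2}-b_{1}+1))^{(s-a_{1}-1)\vee 0}$, from which \eqref{eq:exten-bound-2} follows. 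No step here is a genuine obstacle; the only care required is in the telescoping (tracking the parity alternation and the alternating signs so that the triangle inequality is not wasted) and in choosing $C_{1}$ large enough to absorb both the numerical factor of $40$ and the lower-order term $M_{t-2}$ into the exponent.
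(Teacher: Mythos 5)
Your proof is correct, and it is the same basic strategy as the paper's (a direct induction on the transfer recursion \eqref{eq:equ-on-tilted-coordinate} starting from the west boundary), but the two arguments handle the one delicate point --- the term $u(s-2,t)$, which lives on the \emph{same} row $t$ as $u(s,t)$ --- in genuinely different ways. The paper keeps the recursion in the form $u(s,t)=\cdots-u(s-2,t)-\cdots$ and runs a pointwise double induction with the $s$-dependent ansatz $|u(s,t)|\leq (C\Bar{V}s)^{(t-2)\vee 0}$, absorbing the same-row term through the inequality $\left(\tfrac{s-2}{s}\right)^{t-2}\leq 1-2s^{-1}$; the $s$-dependence of the base is essential there. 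You instead move $u(s-2,t)$ to the left, exploit that its coefficient is exactly $\pm 1$ to telescope along the row, and thereby reduce to a clean second-order recursion for the row maxima $M_t\leq 1+\tfrac{a_2-a_1+1}{2}(4\Bar{V}M_{t-1}+2M_{t-2})$, which closes by a one-variable induction. Your version avoids the paper's slightly fiddly absorption step at the cost of the extra factor $(a_2-a_1+1)$ per row, which is harmless since it is swallowed by $C_1$. For \eqref{eq:exten-bound-2} the paper invokes the lattice reflection along $s-t=a_1-b_1$ rather than rerunning the argument, but your symmetric rerun is equally valid. The constants all check out: $|4+V-\lambda|\leq \Bar{V}+6\leq 4\Bar{V}$, the base cases $M_{b_1},M_{b_1+1}\leq 1$ are supplied by $\partial^{w}R$, and $40\Bar{V}(a_2-a_1+1)\leq(\Bar{V}(a_2-a_1+1))^{C_1}$ for $C_1\geq 7$ since $\Bar{V}(a_2-a_1+1)\geq 2$.
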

\begin{proof}
We only prove \eqref{eq:exten-bound-1}, and \eqref{eq:exten-bound-2} follows by symmetry.

Assume without loss of generality that $a_{1}=b_{1}=1$. We prove 
\begin{equation}\label{eq:induction-extension-bound}
    |u(s,t)|\leq (C \Bar{V} s)^{(t-2)\vee 0} 
\end{equation}
by induction on $(s,t)\in R_{[1,a_{2}],[1,b_{2}]}$. Here, $C\geq 10$ is a universal constant to be determined. Firstly, if $(s,t)\in R_{[1,a_{2}],[1,2]}$, then $t\leq 2$ and $$|u(s,t)|\leq 1 \leq (C \Bar{V} s)^{(t-2)\vee 0}$$ by assumption. Secondly, if $(s,t)\in R_{[1,2],[3,b_{2}]}$, then $|u(s,t)|\leq 1 \leq (C \Bar{V} s)^{(t-2)\vee 0}$ by assumption. Now suppose $(s,t)\in R_{[3,a_{2}],[3,b_{2}]}$ and assume $\eqref{eq:induction-extension-bound}$ holds for $(s',t')\in R_{[1,s],[1,t]}\setminus \{(s,t)\}$. We use \eqref{eq:equ-on-tilted-coordinate} to get
\begin{align*}
    &|u(s,t)|\\
    \leq&(14+\Bar{V})|u(s-1,t-1)| +|u(s-2,t)|+|u(s-2,t-2)|+|u(s,t-2)|\\
    \leq&(14+\Bar{V})(C \Bar{V} s)^{t-3} +(C \Bar{V} (s-2))^{t-2}+(C \Bar{V} (s-2))^{(t-4)\vee 0}+(C \Bar{V} s)^{(t-4)\vee 0}\\
    \leq&(16+\Bar{V})(C \Bar{V} s)^{t-3} +(C \Bar{V} (s-2))^{t-2}\\
    \leq&(C \Bar{V} s)^{t-2}\left(\frac{16+\Bar{V}}{C \Bar{V}}s^{-1} + \left(\frac{s-2}{s}\right)^{t-2}\right)\\
    \leq&(C \Bar{V} s)^{t-2}\left(\frac{16+\Bar{V}}{C \Bar{V}}s^{-1} + 1-2s^{-1}\right)\\
    \leq &(C \Bar{V} s)^{t-2}.
\end{align*}
Here, we used $|\lambda|\leq 10$, $\Bar{V}\geq 2$ and $C\geq 10$.
\end{proof}
The following lemma follows the same proof of \cite[Lemma 3.11]{ding2020localization}.
\begin{lemma}\label{lem:variate-lambda}
Suppose real numbers $\lambda_{1},\lambda_{2},\Bar{V}$ and positive integers $a,b>2$. Assume $\lambda_{1}, \lambda_{2}\in [-2,10]$ and $\Bar{V}\geq 2$.
If $H u_{1}=\lambda_{1} u_{1}$ and $H u_{2}=\lambda_{2} u_{2}$ in $R_{[2,a-1],[2,b-1]}$ and $u_{1}=u_{2}$ in $\partial^{w}R_{[1,a],[1,b]}$, then 
\begin{equation}
    \|u_{1}-u_{2}\|_{\ell^{\infty}(R_{[1,a],[1,b]})}\leq (a \Bar{V})^{C_{2} b}\|u_{1}\|_{\ell^{\infty}(\partial^{w} R_{[1,a],[1,b]})} |\lambda_{1}-\lambda_{2}|,
\end{equation}
where $C_{2}$ is a numerical constant.
\end{lemma}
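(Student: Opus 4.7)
The plan is to set $w := u_{1}-u_{2}$ and run an induction in the tilted coordinates $(s,t)=(x+y,x-y)$ mirroring the proof of Lemma \ref{lem:extension-bound}, with an extra inhomogeneous term accounting for the factor $|\lambda_{1}-\lambda_{2}|$. Since $u_{1}=u_{2}$ on $\partial^{w}R_{[1,a],[1,b]}$, one has $w\equiv 0$ there, and subtracting the two instances of \eqref{eq:equ-on-tilted-coordinate} gives, for $3\leq s\leq a$ and $3\leq t\leq b$,
\begin{equation*}
w(s,t)=\bigl(4+V(s-1,t-1)-\lambda_{1}\bigr)w(s-1,t-1)+(\lambda_{2}-\lambda_{1})u_{2}(s-1,t-1)-w(s-2,t)-w(s,t-2)-w(s-2,t-2).
\end{equation*}
Setting $M:=\|u_{1}\|_{\ell^{\infty}(\partial^{w}R_{[1,a],[1,b]})}$ and using $u_{1}=u_{2}$ on the west boundary, Lemma \ref{lem:extension-bound} yields $M_{2}:=\|u_{2}\|_{\ell^{\infty}(R_{[1,a],[1,b]})}\leq M(\Bar{V}a)^{C_{1}(b-1)}$. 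Write $\delta:=|\lambda_{1}-\lambda_{2}|$.

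I would then prove by induction on $s+t$ the ansatz
\begin{equation*}
|w(s,t)|\leq \delta M_{2}(C\Bar{V}s)^{(t-2)\vee 0}
\end{equation*}
for a suitable numerical constant $C$. The base case covering $\{s\leq 2\}\cup\{t\leq 2\}\subset \partial^{w}R$ is immediate because $w\equiv 0$ there. For the inductive step on $R_{[3,a],[3,b]}$, bounding the coefficient $|4+V-\lambda_{1}|$ by $6+\Bar{V}$, the inhomogeneous term by $\delta M_{2}$, and each of the four remaining $w$-terms via the inductive hypothesis, reduces the task to the scalar inequality
\begin{equation*}
(C\Bar{V}s)^{t-2}\geq (6+\Bar{V})(C\Bar{V}s)^{t-3}+1+(C\Bar{V}(s-2))^{t-2}+(C\Bar{V}s)^{(t-4)\vee 0}+(C\Bar{V}(s-2))^{(t-4)\vee 0},
\end{equation*}
which is essentially the same algebraic inequality driving the proof of Lemma \ref{lem:extension-bound} (with one extra absorbed unit term) and holds for $s\geq 3$, $t\geq 3$, $\Bar{V}\geq 2$ once $C$ is large enough.

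Combining the bound $|w(s,t)|\leq \delta M_{2}(C\Bar{V}a)^{b-2}$ with $M_{2}\leq M(\Bar{V}a)^{C_{1}(b-1)}$, and absorbing $C^{b-2}$ into the exponent via $\Bar{V}a\geq 4$ (which holds since $a>2$ and $\Bar{V}\geq 2$), then yields the desired bound $\|w\|_{\ell^{\infty}(R_{[1,a],[1,b]})}\leq M\delta(\Bar{V}a)^{C_{2}b}$. The argument is entirely routine once the ansatz is identified; the only subtlety worth noting is that one must freeze the a priori size of $u_{2}$ via Lemma \ref{lem:extension-bound} \emph{before} running the induction on $w$, so that the source term $(\lambda_{2}-\lambda_{1})u_{2}(s-1,t-1)$ contributes a clean constant multiple of $\delta M_{2}$ at every inductive step.
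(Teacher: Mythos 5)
Your proof is correct and follows essentially the same route as the paper, which simply defers to the proof of \cite[Lemma 3.11]{ding2018localization}: subtract the two instances of \eqref{eq:equ-on-tilted-coordinate}, treat $(\lambda_{2}-\lambda_{1})u_{2}$ as an inhomogeneous source bounded a priori via Lemma \ref{lem:extension-bound}, and close the same induction that proves \eqref{eq:exten-bound-1}. The ansatz, the recurrence for $w=u_{1}-u_{2}$, and the final absorption of constants into $(\Bar{V}a)^{C_{2}b}$ all check out.
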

\subsection{Key lemmas}\label{sec:key-lemmas}
The main task in this subsection is to prove the following Lemma \ref{lem:transfer-u_3} which will be used to prove the key estimate Lemma \ref{lem:key-lemma}. 
\begin{lemma}\label{lem:transfer-u_3}
There are constants $\alpha_{1}>1>c_{4}>0$ such that, if
\begin{enumerate}
    \item integers $a>b>\alpha_{1}$ with $10b \leq a \leq 60b$,
    \item $\lambda_{0}\in[0,8]$ and $\Bar{V}\geq 2$,
    \item $\Theta\subset \Z^{2}$ is $(c_{4},-)$-sparse in $R_{[1,a],[1,b]}$,
    \item $V':\Theta \rightarrow \{0,\Bar{V}\}$, 
    \item $\mathcal{E}_{tr}(R_{[1,a],[1,b]})$ denotes the event that,
    \begin{equation}\label{eq:cond-u-5.15}
        \left\{\begin{array}{l}
    \text{$H u= \lambda_{0} u$ in $R_{[2,a-1],[2,b-1]}$ }\\
    \text{$\|u\|_{\ell^{\infty}(R_{[1,2],[1,b]})} = 1$} \\
    \text{$u\equiv 0$ on $R_{[1,a],[1,2]}$} 
    \end{array}\right. 
    \end{equation}
    implies $|u|\geq (a\Bar{V})^{-\alpha_{1} a}$ on a $\frac{1}{10^{6}}$ fraction of $R_{[1,a],[b-1,b]}$,
\end{enumerate}
then $\mathbb{P}\left[\mathcal{E}_{tr}(R_{[1,a],[1,b]})\big|\; V|_{\Theta}=V'\right]\geq 1-\exp(-c_{4} a)$. 

\end{lemma}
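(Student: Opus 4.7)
My plan is to make the column-by-column heuristic at the start of Section~\ref{sec:key-lemmas} quantitative, combining the transfer-matrix structure (Proposition~\ref{prop:transfer-conditions-col}, Corollary~\ref{cor:transfer-bound-M}, Lemma~\ref{lem:transfer-relation}) with the Boolean-cube count (Lemma~\ref{lem:Boolean-cube}). In tilted coordinates, each row of $R_{[1,a],[1,b]}$ coincides with a single $-$-diagonal, so the $(c_{4},-)$-sparseness hypothesis says every row $t$ satisfies $|\Theta \cap R_{[1,a],\{t\}}| \leq c_{4}|R_{[1,a],\{t\}}|$. I will condition on $V|_{\Theta}$ and on $V$ at every row other than a witness row $t_{*}\in[3,b-1]$; the residual randomness $\vec V := V|_{R_{[1,a],\{t_{*}\}}\setminus \Theta}$ is then uniform on a Boolean cube $B^{n}$ with $n\geq (1-c_{4})(a-2)/2 \gtrsim a$.

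Write $w := u|_{R_{[1,2],[3,b]}}$ for the free boundary data (of dimension $\leq b$). Applying Lemma~\ref{lem:transfer-relation} with $b_{1}=1$, $b_{*}=t_{*}$, $b_{2}\in\{b-1,b\}$ gives
\[
u|_{R_{[1,a],\{b_{2}\}}} = F_{b_{2}}(w) + M^{t_{*}+1,b_{2}}_{[1,a]}\bigl(u|_{R_{[1,a],\{t_{*}+1\}}}\bigr),
\]
with $F_{b_{2}}$ independent of $V|_{R_{[1,a],\{t_{*}\}}}$. Unrolling the recurrence \eqref{eq:equ-on-tilted-coordinate} at row $t_{*}+1$ expresses $u|_{R_{[1,a],\{t_{*}+1\}}}=C(w)+D(w)\vec V$, where $D(w)$ is triangular in $s$ with diagonal entries $\pm u(\cdot,t_{*})$ (a specific linear function of $w$). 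Setting $y := u|_{R_{[1,a],\{t_{*}+1\}}}$ converts the relation into $M^{t_{*}+1,b_{2}}_{[1,a]}y + F_{b_{2}}(w) = u|_{R_{[1,a],\{b_{2}\}}}$, which is \emph{linear} in the joint variable $(y,w)$ with $w$-independent matrix.

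Suppose $\mathcal{E}_{tr}$ fails: some $w$ with $\|w\|_{\infty}=1$ yields $|u|\leq (a\Bar V)^{-\alpha_{1}a}$ on a set $S\subset R_{[1,a],[b-1,b]}$ with $|S|\geq (1-10^{-6})|R_{[1,a],[b-1,b]}|$. Restricting the linear system to $S$ and invoking the triangular submatrix of $M^{t_{*}+1}_{[1,a]}$ produced by Corollary~\ref{cor:transfer-bound-M} (inverse norm $\leq a(2\Bar V b)^{Ca}$), one pins $(y,w)$ to within $\ell^{2}$-error $\varepsilon \leq (a\Bar V)^{-\alpha_{1}a + Ca}$ of an affine subspace of dimension $k = O(b) + (|R_{[1,a],[b-1,b]}|-|S|) = O(b + a/10^{6})$. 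Since $y = C(w) + D(w)\vec V$ and $D(w)$ is triangular with diagonal $\pm u(\cdot,t_{*})$, the pinning of $(y,w)$ pins $\vec V$ after a bounded amplification by $D(w)^{-1}$ (controlled provided the relevant diagonal entries $u(s,t_{*})$ are not too small, which is arranged by choosing the triangular subindex adaptively to where $|u(\cdot,t_{*})|$ is largest, using Lemma~\ref{lem:extension-bound} to rule out pathological cancellation). Taking $\alpha_{1}$ large enough depending on $C_{1}$ makes $\varepsilon < \tfrac{1}{4}n^{-1/2}(n-k)^{-1/2}$, so Lemma~\ref{lem:Boolean-cube} bounds the bad $\vec V$ in $B^{n}$ by $2^{k+1}\leq 2^{O(b)}$. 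The conditional probability is thus $\leq 2^{O(b)-n}\leq \exp(-c_{4}a)$ once $c_{4}$ is sufficiently small and $a\geq 10b$.

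The main obstacle will be reconciling the bilinear coupling $(w,\vec V)\mapsto u|_{R_{[1,a],[b-1,b]}}$ with Lemma~\ref{lem:Boolean-cube}, which requires $\vec V$ to lie near a genuine affine subspace: a direct $\varepsilon$-net over $\{w:\|w\|_{\infty}=1\}\subset\R^{O(b)}$ is ruled out because $\varepsilon\sim (a\Bar V)^{-\alpha_{1}a}$ would force a net of size $(a\Bar V)^{\Omega(ab)}$, dwarfing the $\exp(c_{4}a)$ gain. The change of variable to $(y,w)$ linearizes the constraint, but the inverse passage via $D(w)^{-1}$ is delicate because the triangular diagonal of $D(w)$ equals $\pm u(\cdot,t_{*})$, which can degenerate as $w$ varies; managing this robustly—by picking $t_{*}$ and the triangular subset adaptively to $w$, or by iterating the argument with $t_{*}$ chosen columnwise along $R_{[1,2],[3,b]}$ in the spirit of the heuristic—will be the principal technical burden.
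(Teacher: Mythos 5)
Your overall strategy---condition on all rows except a witness row $t_{*}$, linearize the dependence on $\vec V$ through the triangular transfer structure, and invoke Lemma~\ref{lem:Boolean-cube} against a low-dimensional affine subspace built from the $O(b)$-dimensional free boundary data---is exactly the paper's strategy, and your dimension counting ($n\gtrsim a$ versus $k=O(b)+O(a/10^{5})$) is sound. But the difficulty you flag at the end, the degeneration of the diagonal of $D(w)$ (i.e.\ of $\pm u(\cdot,t_{*})$) as the boundary data $w$ varies, is a genuine gap, not a technicality: without a uniform lower bound on that diagonal the map $\vec V\mapsto u|_{R_{[1,a],\{t_{*}+1\}}}$ can collapse, and the "bad" set of $\vec V$ is then not close to an affine subspace at all, so Lemma~\ref{lem:Boolean-cube} cannot be applied. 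Choosing the triangular subindex "adaptively to $w$" reintroduces a union over $w$, which you have already correctly observed is fatal at the scale $\varepsilon\sim(a\Bar V)^{-\alpha_{1}a}$.

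The paper's resolution, which is the idea missing from your proposal, is to never work with general boundary data on a fixed row. It decomposes $\mathcal{E}_{tr}$ into events $\mathcal{E}_{tr}^{(s',t')}$ indexed by $(s',t')\in R_{[1,2],[3,b]}$, where $(s',t')$ is chosen (Claim~\ref{cla:all-s'-t'}) to maximize $(a\Bar V)^{-10C_{1}t'}|u(s',t')|$; after normalizing $u(s',t')=1$ this forces the boundary data \emph{below} row $t'$ to satisfy $|u(s,t)|\le(a\Bar V)^{10C_{1}(t-t')}$. The witness row is then $t'$ itself, and the boundary input splits into three pieces: the delta $u_{0}=\delta_{(s',t')}$, whose extension equals $(-1)^{(s-s')/2}$ identically on row $t'$ (so the coupling matrix $A_{0}$ has diagonal exactly $\pm1$, never degenerate); the data $u_{1}$ below row $t'$, whose contribution to row $t'+1$ is provably of size $(a\Bar V)^{-5}$ by the decay normalization and hence enters only as a harmless additive shift $u^{*}$; and the data $u_{2}$ above row $t'$, which enters through a rank-$\le b$ operator independent of $V|_{R_{[1,a],\{t'\}}}$ and is absorbed into the affine subspace $\Gamma$. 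This replaces your problematic $D(w)^{-1}$ by the fixed unit-diagonal triangular matrix $A_{0}$, at the cost of a benign union bound over the $\le b$ choices of $(s',t')$ and (separately, in Claim~\ref{cla:all-S}) over the $\binom{a}{\lfloor a/10^{5}\rfloor}$ possible exceptional subsets $S$ of the target row, the latter being another point your sketch leaves implicit since $S$ depends on the random $u$.
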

In the proof of \cite[Lemma 3.13]{ding2020localization}, the authors considered an $\varepsilon$-net in the space of all functions satisfying \eqref{eq:cond-u-5.15} and then they proved that the desired property holds with high probability for each function in the $\varepsilon$-net and finally they used a probability union bound to conclude the proof of \cite[Lemma 3.13]{ding2020localization}. 

The $\varepsilon$-net method in \cite{ding2020localization} requires $a \geq  C b^{2} \log(a)$. Our method does not use $\varepsilon$-net and proves the desired property for all functions satisfying \eqref{eq:cond-u-5.15} in a single step. This allows to prove the lemma in the case where $a$ is roughly linear to $b$.

Our method exploits the exact formula for functions satisfying \eqref{eq:cond-u-5.15}. Let us give an intuitive argument here for the simple case where $H u=\lambda u$ in $R_{[2,a-1],[2,b-1]}$ with $a\geq 10 b$. We claim that, with high probability, $$u|_{R_{[1,a],[1,2]}\cup R_{[1,a],[b-1,b]}}=0$$ will force $u\equiv 0$ in $R_{[1,a],[1,b]}$ (which is implied by Observation \ref{obs:tilted} and linearity).

To see this, by Lemma \ref{lem:extension}, we can regard $u|_{R_{[1,a],[b-1,b]}}$ as the image of $u|_{R_{[1,a],[1,2]}\cup R_{[1,2],[3,b]}}$ under a linear mapping determined by the potential $V$.
We assume $u|_{R_{[1,a],[1,2]}}=0$ and $u(1,3)=1$ (recall that we are working in the tilted coordinate \eqref{eq:def-tilted-coordinate}). It suffices to prove that, with high probability,
\begin{equation}\label{eq:equaton(107)}
    \text{$u|_{R_{[1,a],[b-1,b]}}\not=0$ for \emph{any} choice of $u|_{R_{[1,2],[4,b]}}$.}
\end{equation}
Once this is proved, $u|_{R_{[1,a],[1,2]}\cup R_{[1,a],[b-1,b]}}=0$ will force $u(1,3)=0$ and further $u|_{R_{[1,a],[1,3]}}=0$. By repeating this argument, $u|_{R_{[1,a],[1,2]}\cup R_{[1,a],[b-1,b]}}=0$ will force $u(s,t)=0$ for each $(s,t)\in R_{[1,2],[3,b]}$ and then $u\equiv 0$ in $R_{[1,a],[1,b]}$ by Lemma \ref{lem:extension}. 

To see \eqref{eq:equaton(107)}, let us first calculate $u|_{R_{[1,a],\{3\}}}$. Using equation \eqref{eq:equ-on-tilted-coordinate} for $t=2$, we have $u(s,3)+u(s-2,3)=0$ for any odd number $s\in [3,a]$. Since $u(1,3)=1$, inductively we have
\begin{equation}\label{eq:transfer-intro-3rd-column}
    u(s,3)=(-1)^{\frac{s-1}{2}}
\end{equation}
for odd $s\in [1,a]$. Let us calculate further $u|_{R_{[1,a],\{4\}}}$. Using equations \eqref{eq:equ-on-tilted-coordinate} and \eqref{eq:transfer-intro-3rd-column} for $t=3$, we have $u(s,4)+u(s-2,4)=(-1)^{\frac{s-2}{2}}(4+ V(s-1,3)-\lambda )$ for any even number $s\in [3,a]$. Inductively, for even $s\in [1,a]$,
\begin{equation}\label{eq:transfer-intro-4th-column}
    u(s,4)=(-1)^{\frac{s-2}{2}}\left(u(2,4) + \sum_{2<s'<s\text{, $s'$ is odd}} \left(4+V(s',3)-\lambda\right) \right).
\end{equation}
By equations \eqref{eq:transfer-intro-3rd-column} and \eqref{eq:transfer-intro-4th-column}, we can write $u|_{R_{[1,a],[3,4]}}=u^{(1)}+u^{(2)}+u(2,4)u^{(3)}$ with $u^{(i)}\in \ell^{2}(R_{[1,a],[3,4]})$ for $i=1,2,3$. Here, we have $u^{(1)}|_{R_{[1,a],\{3\}}} = 0$ and $u^{(1)}|_{R_{[1,a],\{4\}}}=A (\vec{V})$ in which $A$ is a triangular matrix and the vector
\begin{equation}\label{eq:transfer-intro-def-V}
    \vec{V}=\left(V(3,3),V(5,3),\cdots,V(a-i_{a},3)\right)
\end{equation}
satisfies $i_{a}\in \{1,2\}$ and $a-i_{a}$ is an odd number. Moreover, $u^{(2)}(s,3)=(-1)^{\frac{s-1}{2}}$ for odd $s \in [1,a]$ and $u^{(2)}(s,4) = (-1)^{\frac{s-2}{2}}\frac{(s-2)(4-\lambda)}{2}$ for even $s \in [1,a]$; $u^{(3)}|_{R_{[1,a],\{3\}}} = 0$ and $u^{(3)}(s,4)=(-1)^{\frac{s-2}{2}}$ for even $s \in [1,a]$. Note that, $u^{(2)}$ and $u^{(3)}$
are independent of potential $V$ (in the sense of random variables).
By Lemma \ref{lem:extension}, $u|_{R_{[1,a],[b-1,b]}}$ is determined linearly by $u|_{\partial^{w}R_{[1,a],[3,b]}}$. Hence, there are linear operators $M_{0},M_{1}$ such that
\begin{equation}
    u|_{R_{[1,a],[b-1,b]}}= M_{0}(u^{(1)}+u^{(2)}+u(2,4)u^{(3)}) + M_{1}(u|_{R_{[1,2],[5,b]}}).
\end{equation}
Since $u^{(1)}$ is the zero extension of $A(\vec{V})$ and $u(2,4)u^{(3)}$ is determined linearly by $u(2,4)$, we have  
\begin{equation}\label{eq:sec5-intui}
    u|_{R_{[1,a],[b-1,b]}} = M( A (\vec{V})) + M_{0} (u^{(2)}) +M_{2}(u|_{R_{[1,2],[4,b]}})
\end{equation}
with linear operators $A$, $M$, $M_{0}$ and $M_{2}$ all independent of $V|_{R_{[1,a],\{3\}}}$. Thus $u|_{R_{[1,a],[b-1,b]}}=0$ implies
\begin{equation}\label{eq:transfer-intro-preV}
     M (A(\vec{V})) + M_{0} (u^{(2)}) +M_{2}(u|_{R_{[1,2],[4,b]}})=0.
\end{equation}
It will be proved later that $M$ can be regarded as a triangular matrix and the operator $M A$ is injective. Thus \eqref{eq:transfer-intro-preV} implies
\begin{equation}\label{eq:transfer-intro-V}
    \vec{V} = -(M A)^{-1}( M_{0} (u^{(2)})+ M_{2}(u|_{R_{[1,2],[4,b]}}))
\end{equation}
with $(M A)^{-1}$ defined on the range of $M A$.

However, the rank of operator $M_{2}$ is at most $|R_{[1,2],[4,b]}|$ which is bounded by $b\leq \frac{a}{10}$. Thus, conditioning on $V|_{R_{[1,a],[1,b]}\setminus R_{[1,a],\{3\}}}$, $$\left\{-(M A)^{-1}( M_{0} (u^{(2)})+ M_{2}(v)):v \in \ell^{2}(R_{[1,2],[4,b]})\right\}$$ is an affine subspace with dimension no larger than $\frac{a}{10}$. Recall \eqref{eq:transfer-intro-def-V}, $\vec{V}$ is $V|_{R_{[1,a],\{3\}}}$-measurable and can be regarded as a random element in a Boolean cube with dimension larger than $\frac{a}{3}$. Thus by Lemma \ref{lem:Boolean-cube}, with probability no less than $1-2^{\frac{a}{10}-\frac{a}{3}+1}>1-\exp(-c a)$, \eqref{eq:transfer-intro-V} fails for \emph{any} $u|_{R_{[1,2],[4,b]}}$. Our claim follows.

The proof of Lemma \ref{lem:transfer-u_3} below makes the above argument quantitative. Lemma \ref{lem:transfer-u_3} is also the key in proving Lemma \ref{lem:key-lemma}. We start by defining the operator $M$ in \eqref{eq:sec5-intui} and prove its triangular matrix structure.
\begin{defn}
Given $S_{1}\subset S_{2}\subset \widetilde{\Z^{2}}$, we use $P^{S_{2}}_{S_{1}}:\ell^{2}(S_{2})\rightarrow \ell^{2}(S_{1})$ to denote the restriction operator from $S_{2}$ to $S_{1}$. i.e. $P^{S_{2}}_{S_{1}}(u)=u|_{S_{1}}$ for $u\in \ell^{2}(S_{2})$. We use $I^{S_{2}}_{S_{1}}$ to denote the adjoint operator $(P^{S_{2}}_{S_{1}})^{\dag}$, i.e. $I^{S_{2}}_{S_{1}}(u)=u$ on $S_{1}$ and $I^{S_{2}}_{S_{1}}(u)=0$ on $S_{2}\setminus S_{1}$ for each $u\in \ell^{2}(S_{1})$.
\end{defn}
\begin{defn}\label{def:transfer-matrix}
Given energy $\lambda \in [0,8]$, real number $\Bar{V}$ and integers $a,k,k'$ such that $a> 1$ and $k<k'$, we define the linear operator $$M_{[1,a]}^{k,k'}: \ell^{2}(R_{[1,a],\{k\}})\rightarrow \ell^{2}(R_{[1,a],\{k'\}})$$ as follows:
\begin{equation}
    M_{[1,a]}^{k,k'}=P^{R_{[1,a],[k-1,k']}}_{R_{[1,a],\{k'\}}} E_{[1,a],[k-1,k']} I^{\partial^{w}R_{[1,a],[k-1,k']}}_{R_{[1,a],\{k\}}}.
\end{equation}
\end{defn}
\begin{lemma}
Given energy $\lambda \in [0,8]$, real number $\Bar{V}$ and integers $a,k,k'$ such that $a> 1$ and $k<k'$, the linear operator $M_{[1,a]}^{k,k'}$ is $V|_{R_{[2,a-1],[k,k'-1]}}$-measurable.
\end{lemma}
\begin{proof}
By Lemma \ref{lem:extension}, the extension operator $E_{[1,a],[k-1,k']}$ is $V|_{R_{[2,a-1],[k,k'-1]}}$-measurable, thus $M_{[1,a]}^{k,k'}$ is also $V|_{R_{[2,a-1],[k,k'-1]}}$-measurable.
\end{proof}
Given $(s,t)\in \widetilde{\Z^{2}}$,
we use $\delta_{(s,t)}$ to denote the function that equals $1$ on $(s,t)$ and $0$ elsewhere. 

\begin{prop}\label{prop:transfer-conditions-col}
Suppose we have energy $\lambda\in [0,8]$, real number $\Bar{V}>2$ and integers $a,k,k',s,s'$ such that $a\geq 4$, $k<k'$, $(s,k),(s',k')\in \widetilde{\Z^{2}}$ and $4 \leq s,s' \leq a$. Then
\begin{align}
|\langle \delta_{(s',k')},M_{[1,a]}^{k,k'} \delta_{(s,k)} \rangle| =
\begin{cases}
     0 &\text{ if $s'<s$}\\
     1 &\text{ if $s'=s$}\\
\end{cases}
\end{align}
and 
\begin{equation}
    |\langle \delta_{(s',k')},M_{[1,a]}^{k,k'} \delta_{(s,k)} \rangle|  \leq ((k'-k+2)\Bar{V})^{C_{1}(s'-s)} \;\;\text{ if $s'>s$}.
\end{equation}
Here, $C_{1}$ is the constant in Lemma \ref{lem:extension-bound}.
\end{prop}
\begin{proof}
    Denote $R_{1}=R_{[1,a],[k-1,k']}$. Assume the function $u:R_{1}\rightarrow \R$ satisfies  $u|_{\partial^{w}R_{1}}=\delta_{(s,k)}$ and $H u=\lambda u$ in $R_{[2,a-1],[k,k'-1]}$. It suffices to show that 
    \begin{align}
    u(s',k') &= 0 &\text{ if $s'<s$}\label{eq:transf-cond-1} \\
    u(s',k') &= (-1)^{\frac{k'-k}{2}} &\text{ if $s'=s$}\label{eq:transf-cond-2}  \\
    |u(s',k')|& \leq ((k'-k+2)\Bar{V})^{C_{1}(s'-s)} &\text{ if $s'>s$}.\label{eq:transf-cond-3}
\end{align}
    Firstly, since $u=0$ on $\partial^{w}R_{[1,s-1],[k-1,k']}$, we have $u=0$ on $R_{[1,s-1],[k-1,k']}$ by Lemma \ref{lem:extension}. Thus $\eqref{eq:transf-cond-1}$ holds.
    Secondly, we inductively prove $u(s,k+2i)=(-1)^{i}$ for $i=0,1,\cdots,\left\lfloor\frac{k'-k}{2}\right\rfloor$. This is true for $i=0$ since $u|_{\partial^{w}R_{1}}=\delta_{(s,k)}$. Suppose $u(s,k+2i)=(-1)^{i}$ for some $i<\left\lfloor\frac{k'-k}{2}\right\rfloor$. Since $s \geq 4$, we can use the equation $H u=\lambda u$ at the point $(s-1,k+2i+1)$. By \eqref{eq:transf-cond-1}, we have $u(s,k+2i)+u(s,k+2i+2)=0$ and thus $u(s,k+2i+2)=(-1)^{i+1}$. By induction we have $\left|u\left(s,k+2\left\lfloor\frac{k'-k}{2}\right\rfloor\right)\right|=1$. Since $s=s'$ implies $k-k'$ is even, \eqref{eq:transf-cond-2} follows. 
    
    Finally we suppose $s'>s$. By \eqref{eq:transf-cond-1} and $\eqref{eq:transf-cond-2}$, $\|u\|_{\ell^{\infty}(\partial^{w}R_{[s-1,s'],[k-1,k']})}=1$. Then by \eqref{eq:exten-bound-2} in Lemma \ref{lem:extension-bound},  $$\|u\|_{\ell^{\infty}(R_{[s-1,s'],[k-1,k']})}\leq (\Bar{V}(k'-k+2))^{C_{1}(s'-s)}.$$ In particular, $|u(s',k')|\leq (\Bar{V}(k'-k+2))^{C_{1}(s'-s)}$ and \eqref{eq:transf-cond-3} follows.
\end{proof}

\begin{cor}\label{cor:transfer-bound-M}
Suppose we have energy $\lambda\in [0,8]$, real number $\Bar{V}>2$ and integers $a,k,k'$ such that $a\geq 6$ and $k<k'$. Assume $k$ and $k'$ have the same parity. Suppose $$S_{1} \subset R_{[4,a-1],\{k'\}}$$ and let $S_{2}=\{(s,k): (s,k')\in S_{1}\}\subset R_{[4,a-1],\{k\}}$. Then 
\begin{equation}
    \|(P^{R_{[1,a],\{k'\}}}_{S_{1}} M_{[1,a]}^{k,k'} I^{R_{[1,a],\{k\}}}_{S_{2}})^{-1}\|\leq  a (2 \Bar{V} (k'-k+2))^{2C_{1}a}.
\end{equation}
\end{cor}
\begin{proof}
    By Proposition \ref{prop:transfer-conditions-col}, $P^{R_{[1,a],\{k'\}}}_{R_{[4,a-1],\{k'\}}} M_{[1,a]}^{k,k'} I^{R_{[1,a],\{k\}}}_{R_{[4,a-1],\{k\}}}$ can be regarded as an upper triangular matrix $\left(a_{i j}\right)_{1\leq i,j \leq d}$ such that $|a_{i i}|=1$ and $$|a_{i j}|\leq ((k'-k+2)\Bar{V})^{2C_{1}|i-j|}$$ for $1\leq i,j\leq d$. Here, $d=|R_{[4,a-1],\{k\}}|\leq a$.
    
    Since $P^{R_{[1,a],\{k'\}}}_{S_{1}} M_{[1,a]}^{k,k'} I^{R_{[1,a],\{k\}}}_{S_{2}}$ can be regarded as a principal submatrix which is also an upper triangular matrix, our conclusion follows from Lemma \ref{lem:upper-triangular}.
\end{proof}

\begin{lemma}\label{lem:transfer-relation}
Suppose we have real numbers $\lambda,\Bar{V}$, integers $a>1$ and $2 < b_{*}< b$. Denote $R_{1}=R_{[1,a],[1,b]}$, $R_{2}=R_{[1,a],[1,b_{*}+1]}$ and $R_{3}=R_{[1,a],[1,b_{*}-1]}$. Then the following linear operator from $\ell^{2}(\partial^{w} R_{3})\rightarrow \ell^{2}(R_{[1,a],\{b\}})$
\begin{equation}\label{eq:transfer-relation-op}
    P^{R_{1}}_{R_{[1,a],\{b\}}} E_{R_{1}} I_{\partial^{w}R_{3}}^{\partial^{w}R_{1}}-M_{[1,a]}^{b_{*}+1,b} P_{R_{[1,a],\{b_{*}+1\}}}^{R_{2}} E_{R_{2}} I_{\partial^{w}R_{3}}^{\partial^{w}R_{2}}
\end{equation}
is independent of $V|_{R_{[1,a],\{b_{*}\}}}$ (in the sense of random variables).
\end{lemma}
Lemma \ref{lem:transfer-relation} allows us to write the operator $P^{R_{1}}_{R_{[1,a],\{b\}}} E_{R_{1}} I_{\partial^{w}R_{3}}^{\partial^{w}R_{1}}$ as the sum of two operators: a $V|_{R_{[1,a],\{b_{*}\}}}$-measurable operator and a $V|_{R_{[1,a],\{b_{*}\}}}$-independent operator. Here, the $V|_{R_{[1,a],\{b_{*}\}}}$-measurable operator can be written as the composition of a $V|_{R_{[1,a],\{b_{*}\}}}$-independent operator $M_{[1,a]}^{b_{*}+1,b}$ and the operator $P_{R_{[1,a],\{b_{*}+1\}}}^{R_{2}} E_{R_{2}} I_{\partial^{w}R_{3}}^{\partial^{w}R_{2}}$. Thus intuitively, Lemma \ref{lem:transfer-relation} says that the $V|_{R_{[1,a],\{b_{*}\}}}$-measurable ``part'' of $P^{R_{1}}_{R_{[1,a],\{b\}}} E_{R_{1}} I_{\partial^{w}R_{3}}^{\partial^{w}R_{1}}$ is ``contained'' in $P_{R_{[1,a],\{b_{*}+1\}}}^{R_{2}} E_{R_{2}} I_{\partial^{w}R_{3}}^{\partial^{w}R_{2}}$.
The proof is by direct calculation.
\begin{proof}[Proof of Lemma \ref{lem:transfer-relation}]
    Denote $R_{4}=R_{[1,a],[b_{*},b]}$ and let $u\in \ell^{2}(\partial^{w}R_{3})$.
    Let $v=E_{R_{1}} I^{\partial^{w}R_{1}}_{\partial^{w}R_{3}} (u)$, then by uniqueness in Lemma \ref{lem:extension},
    \begin{equation}\label{eq:transfer-rest-v}
        v|_{R_{[1,a],\{b\}}}=P^{R_{4}}_{R_{[1,a],\{b\}}} E_{R_{4}} (v|_{\partial^{w}R_{4}}). 
    \end{equation}
    Let $v_{1}=v|_{R_{[1,a],\{b_{*}\}}}$ and $v_{2}=v|_{R_{[1,a],\{b_{*}+1\}}}$. Note that $v|_{R_{[1,2],[b_{*},b]}}=0$. By \eqref{eq:transfer-rest-v} and linearity of $E_{R_{4}}$, 
    \begin{align}
    \begin{split}\label{eq:split-v-into-two -parts}
            &v|_{R_{[1,a],\{b\}}}\\
            &=P^{R_{4}}_{R_{[1,a],\{b\}}} E_{R_{4}} I_{R_{[1,a],\{b_{*}\}}}^{\partial^{w}R_{4}}(v_{1}) + P^{R_{4}}_{R_{[1,a],\{b\}}} E_{R_{4}} I_{R_{[1,a],\{b_{*}+1\}}}^{\partial^{w}R_{4}}(v_{2})\\
            &=P^{R_{4}}_{R_{[1,a],\{b\}}} E_{R_{4}} I_{R_{[1,a],\{b_{*}\}}}^{\partial^{w}R_{4}}(v_{1})+
            M_{[1,a]}^{b_{*}+1,b}(v_{2}).
    \end{split}
    \end{align}
    Here, we used Definition \ref{def:transfer-matrix}.
    
    By uniqueness in Lemma \ref{lem:extension}, $v_{2}=P_{R_{[1,a],\{b_{*}+1\}}}^{R_{2}} E_{R_{2}} I^{\partial^{w}R_{2}}_{\partial^{w}R_{3}} (u)$. Thus the image of $u$ under the operator $\eqref{eq:transfer-relation-op}$ is $v|_{R_{[1,a],\{b\}}}- M_{[1,a]}^{b_{*}+1,b}(v_{2})$.
    Thus by \eqref{eq:split-v-into-two -parts}, in order to prove the conclusion, it suffices to prove that the linear operator $$u\mapsto P^{R_{4}}_{R_{[1,a],\{b\}}} E_{R_{4}} I_{R_{[1,a],\{b_{*}\}}}^{\partial^{w}R_{4}}(v_{1})$$ is independent of $V|_{R_{[1,a],\{b_{*}\}}}$.
    
    To see this, note that $E_{R_{4}}$ is independent of $V|_{R_{[1,a],\{b_{*}\}}}$ by Lemma \ref{lem:extension}. On the other hand, let $R_{5}=R_{[1,a],[1,b_{*}]}$, then by uniqueness in Lemma \ref{lem:extension} again, we have $v_{1}=P^{R_{5}}_{R_{[1,a],\{b_{*}\}}} E_{R_{5}} I_{\partial^{w}R_{3}}^{\partial^{w}R_{5}}(u)$. Since $E_{R_{5}}$ is also independent of $V|_{R_{[1,a],\{b_{*}\}}}$ by Lemma \ref{lem:extension}, the conclusion follows.
\end{proof}

\begin{proof}[Proof of Lemma \ref{lem:transfer-u_3}]
    For each $(s',t')\in R_{[1,2],[3,b]}$,
    let $\mathcal{E}_{tr}^{(s',t')}$ denote the following event:
    \begin{equation}\label{eq:event-transfer}
        \left\{\begin{array}{l}
    \text{$H u= \lambda_{0} u$ in $R_{[2,a-1],[2,b-1]}$ }\\
    \text{$u(s',t')=1$} \\
    \text{$u(s,t)=0$ on $R_{[1,a],[1,2]}$}\\
    \text{$|u(s,t)|\leq (a\Bar{V})^{10C_{1} (t-t')}$ on $R_{[1,2],[1,t'-1]}$} 
    \end{array}\right.
    \end{equation}
    implies $|u|\geq (a\Bar{V})^{-\frac{1}{2}\alpha_{1} a}$ on a $\frac{1}{10^{6}}$ fraction of $R_{[1,a],[b-1,b]}$.
    
\begin{cla}\label{cla:all-s'-t'}
$\bigcap \left\{\mathcal{E}_{tr}^{(s',t')} : (s',t')\in R_{[1,2],[3,b]}\right\}\subset \mathcal{E}_{tr}(R_{[1,a],[1,b]})$ for $\alpha_{1}>20 C_{1}$.  
\end{cla}
\begin{proof}[Proof of the claim]
    Assume $\mathcal{E}_{tr}^{(s,t)}$ holds for each $(s,t)\in R_{[1,2],[3,b]}$, we prove $\mathcal{E}_{tr}(R_{[1,a],[1,b]})$ also holds.
    
    Given any $u: R_{[1,a],[1,b]}\rightarrow \R$ satisfying \eqref{eq:cond-u-5.15}, let $(s',t')\in R_{[1,2],[3,b]}$ maximize $(a\Bar{V})^{-10C_{1}t'}|u(s',t')|$. Then $\|u\|_{\ell^{\infty}(R_{[1,2],[1,b]})} = 1$ implies $$|u(s',t')|\geq (a\Bar{V})^{-10C_{1}b}.$$ Let $\Tilde{u}=\frac{u}{u(s',t')}$, then $\Tilde{u}$ satisfies \eqref{eq:event-transfer} and thus $|\Tilde{u}|\geq (a\Bar{V})^{-\frac{1}{2}\alpha_{1} a}$ on a $\frac{1}{10^{6}}$ fraction of $R_{[1,a],[b-1,b]}$. Hence $|u|\geq (a\Bar{V})^{-(\frac{1}{2}\alpha_{1}+10C_{1}) a}$ on a $\frac{1}{10^{6}}$ fraction of $R_{[1,a],[b-1,b]}$. The claim follows from $\alpha_{1}>20C_{1}$.
\end{proof}

\begin{cla}\label{cla:special-t'}
If $t'\in \{b-1,b\}$, then $\mathbb{P}\left[\mathcal{E}_{tr}^{(s',t')} \big|\; V|_{\Theta}=V'\right]=1$.
\end{cla}
\begin{proof}
    If $t'\in \{b-1,b\}$ and $u$ satisfies \eqref{eq:event-transfer}, we claim that 
    \begin{equation}\label{eq:bound-in-claim-5.17}
        \|u\|_{\ell^{\infty}(R_{[1,a],\{t\}})}\leq (a\Bar{V})^{5C_{1}(t-t')}
    \end{equation}
    for each $t=1,\cdots,t'-1$ and we prove \eqref{eq:bound-in-claim-5.17} by induction. For $t=1,2$, this is true since $u=0$ on $R_{[1,a],[1,2]}$. Suppose our claim holds up to $t<t'-1$, using equation \eqref{eq:equ-on-tilted-coordinate} on $R_{[1,a],\{t\}}$ and inductive hypothesis, we have
    \begin{equation}\label{eq:on-the-line-t+1}
        |u(s,t+1)+u(s+2,t+1)|\leq |16+\Bar{V}|(a\Bar{V})^{5C_{1}(t-t')}
    \end{equation}
    for $s\in [1,a-2]$ with the same parity as $t+1$.
    By \eqref{eq:event-transfer}, $$|u(s_{0},t+1)|\leq (a\Bar{V})^{10C_{1}(t+1-t')}$$ for $s_{0}\in \{1,2\}$ with the same parity as $t+1$. Recursively using \eqref{eq:on-the-line-t+1}, we have 
    $$|u(s,t+1)|\leq s(16+\Bar{V})(a\Bar{V})^{5C_{1}(t-t')}\leq (a\Bar{V})^{5C_{1}(t+1-t')}$$
    for any $s\in [1,a]$ with the same parity as $t+1$. Thus induction proves \eqref{eq:bound-in-claim-5.17} and $$\|u\|_{\ell^{\infty}(R_{[1,a],\{t'-2,t'-1\}})}\leq (a\Bar{V})^{-5C_{1}}.$$ Using equation \eqref{eq:equ-on-tilted-coordinate} on $R_{[1,a],\{t'-1\}}$, we have 
    \begin{equation}\label{eq:last-line-bdd}
        |u(s,t')+u(s+2,t')|\leq |16+\Bar{V}|(a\Bar{V})^{-5C_{1}}\leq (a\Bar{V})^{-2}
    \end{equation}
    for $s\in [1,a-2]$ with the same parity as $t'$. Since $u(s',t')=1$, using \eqref{eq:last-line-bdd} recursively, we have $|u(s,t')|\geq \frac{1}{2}$ for any $s\in [1,a]$ with the same parity as $t'$. Thus $\mathcal{E}_{tr}^{(s',t')}$ holds since $t'\in \{b-1,b\}$.
\end{proof}
\begin{cla}\label{cla:expand-u-first}
Suppose $(s',t')\in R_{[1,2],[3,b-2]}$. Let $s''\in \{1,2\}$ and $b_{0}\in \{b-1,b\}$ both have the same parity as $t'+1$. Then there exist
\begin{enumerate}
    \item operators $A_{1}:\ell^{2}(R_{[1,2],[1,t'-1]})\rightarrow \ell^{2}(R_{[1,a],\{b_{0}\}})$ and $A_{2}:\ell^{2}(R_{[1,2],[t'+1,b]})\rightarrow \ell^{2}(R_{[1,a],\{b_{0}\}})$ which are independent of $V|_{R_{[1,a],\{t'\}}}$,
    \item vector $v^{*}\in \ell^{2}(R_{[1,a],\{b_{0}\}})$ which is independent of $V|_{R_{[1,a],\{t'\}}}$,
    \item vector $\vec{V}_{t'}\in \mathbb{R}^{d_{0}}$ with $d_{0}=|R_{[1,a],\{t'+1\}}|-1$ defined by
    \begin{equation}\label{eq:transfer-potential-vector-on-t'}
        \vec{V}_{t'}=\sum_{i=1}^{d_{0}} V(s''+2i-1,t') e_{i} 
    \end{equation}
    where $\{e_{i}:1\leq i\leq d_{0}\}$ is the standard basis of $\mathbb{R}^{d_{0}}$,
    \item $A_{0}:\R^{d_{0}}\rightarrow \ell^{2}(R_{[1,a],\{t'+1\}})$ defined as follows: for any $(s,t'+1)\in R_{[1,a],\{t'+1\}}$ and $i\in \{1,\cdots,d_{0}\}$,
    \begin{equation}\label{eq:def-A_0}
    \langle \delta_{(s,t'+1)} ,A_{0} e_{i}\rangle=
        \left\{
        \begin{array}{l}
        \text{$(-1)^{\frac{s-s'-1}{2}}$  \; if $s>s''$ and $1\leq i\leq \frac{s-s''}{2}$}\\
        \text{$0$ \qquad\qquad\;\; otherwise,}
    \end{array}\right.
    \end{equation}
\end{enumerate}
such that the following holds.

For any $u$ satisfying \eqref{eq:event-transfer}, there exists $u^{*} \in \ell^{2}(R_{[1,a],\{t'+1\}})$ with
\begin{equation}\label{eq:transfer-bound-u*}
    \|u^{*}\|\leq (a\Bar{V})^{-5},
\end{equation}such that
    \begin{equation}\label{eq:tranfer-combine}
        u|_{R_{[1,a],\{b_{0}\}}}=M_{[1,a]}^{t'+1,b_{0}}(u^{*}+ A_{0}(\vec{V}_{t'}))+ A_{1}(u|_{R_{[1,2],[1,t'-1]}})+A_{2}(u|_{R_{[1,2],[t'+1,b]}})+v^{*}.
    \end{equation}
\end{cla}
\begin{figure}
    \centering
    \includegraphics{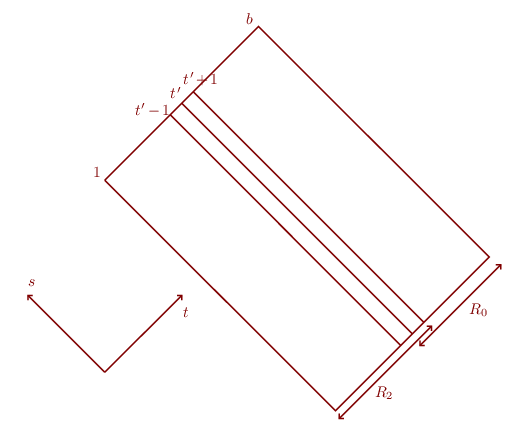}
    \caption{An illustration for tilted rectangles $R_{0} = R_{[1,a],[t',b]}$ and $R_{2} = R_{[1,a],[1,t'+1]}$ which are contained in $R_{1} = R_{[1,a],[1,b]}$.}
    \label{fig:tilted}
\end{figure}
\begin{proof}
    Assume $u$ satisfies \eqref{eq:event-transfer}. Denote $R_{1}=R_{[1,a],[1,b]}$. Let $u_{0}=\delta_{(s',t')}$ on $\partial^{w}R_{1}$, $u_{1}=u|_{R_{[1,2],[1,t'-1]}}$ and $u_{2}=u|_{R_{[1,2],[t'+1,b]}}$. Then $u$ is determined by $u_{1}$ and $u_{2}$ since we can decompose 
    \begin{align}
     u|_{\partial^{w}R_{1}}&=u'_{1}+u'_{2}+u_{0},
    \end{align}
    where $u'_{1}=I_{R_{[1,2],[1,t'-1]}}^{\partial^{w}R_{1}}u_{1}$ and $u'_{2}=I_{R_{[1,2],[t'+1,b]}}^{\partial^{w}R_{1}}u_{2}$.
    Thus $$u=E_{R_{1}}(u'_{1})+E_{R_{1}}(u'_{2})+E_{R_{1}}(u_{0})$$ and
    \begin{align}
        &u|_{R_{[1,a],\{b_{0}\}}}\\
        &=P^{R_{1}}_{R_{[1,a],\{b_{0}\}}} E_{R_{1}}(u'_{1})+P^{R_{1}}_{R_{[1,a],\{b_{0}\}}}E_{R_{1}}(u'_{2})+P^{R_{1}}_{R_{[1,a],\{b_{0}\}}}E_{R_{1}}(u_{0}).\label{eq:transfer-decompose}
    \end{align}
    We analyse each of three terms in \eqref{eq:transfer-decompose} and will arrive at equation \eqref{eq:tranfer-combine}. More specifically, we will derive the correspondence between terms in \eqref{eq:tranfer-combine} and \eqref{eq:transfer-decompose} as follows:
    \begin{enumerate}
        \item $P^{R_{1}}_{R_{[1,a],\{b_{0}\}}} E_{R_{1}}(u'_{1})=A_{1}(u_{1})+M^{t'+1,b_{0}}_{[1,a]}(u^{*})$, 
        \item $P^{R_{1}}_{R_{[1,a],\{b_{0}\}}}E_{R_{1}}(u'_{2})=A_{2}(u_{2})$,
        \item $P^{R_{1}}_{R_{[1,a],\{b_{0}\}}}E_{R_{1}}(u_{0}) = v^{*}+M^{t'+1,b_{0}}_{[1,a]} A_{0} (\vec{V}_{t'})$. 
    \end{enumerate}
    Here, $A_{0}$, $A_{1}$, $A_{2}$, $u^{*}$, $v^{*}$ and $\vec{V}_{t'}$ satisfy the properties in the conditions of this claim.

    \noindent{\textbf{The first term in \eqref{eq:transfer-decompose}:}}
    The strategy here is to apply Lemma \ref{lem:transfer-relation}.
    Note that 
    \begin{equation}\label{eq:transfer-express-u_1'}
            P^{R_{1}}_{R_{[1,a],\{b_{0}\}}} E_{R_{1}}(u'_{1})=P^{R_{1}}_{R_{[1,a],\{b_{0}\}}} E_{R_{1}}I_{R_{[1,2],[1,t'-1]}}^{\partial^{w}R_{1}}(u_{1}).
    \end{equation}
    Denote $R_{2}=R_{[1,a],[1,t'+1]}$ (see Figure \ref{fig:tilted}),
    using Lemma \ref{lem:transfer-relation} with $b_{*}=t'$, we can write 
    \begin{equation}\label{eq:transfer-decompose-first-term}
        P^{R_{1}}_{R_{[1,a],\{b_{0}\}}} E_{R_{1}}I_{R_{[1,2],[1,t'-1]}}^{\partial^{w}R_{1}}= A_{1}+M^{t'+1,b_{0}}_{[1,a]} P^{R_{2}}_{R_{[1,a],\{t'+1\}}} E_{R_{2}}I_{R_{[1,2],[1,t'-1]}}^{\partial^{w}R_{2}}.
    \end{equation}
    Here, $A_{1}:\ell^{2}(R_{[1,2],[1,t'-1]})\rightarrow \ell^{2}(R_{[1,a],\{b_{0}\}})$ is a linear operator which is independent of $V|_{R_{[1,a],\{t'\}}}$.
    We claim that 
    \begin{equation}\label{eq:transfer-v_1-on-t'+1}
        \|P^{R_{2}}_{R_{[1,a],\{t'+1\}}} E_{R_{2}}I_{R_{[1,2],[1,t'-1]}}^{\partial^{w}R_{2}} u_{1}\|_{2}\leq (\Bar{V}a)^{-5}. 
    \end{equation}
    To see this, let $v_{1}=E_{R_{2}}I_{R_{[1,2],[1,t'-1]}}^{\partial^{w}R_{2}} u_{1}$. We inductively prove that 
    \begin{equation}\label{eq:transfer-inductive-v-1}
        |v_{1}(s,t)|\leq (a\Bar{V})^{5 C_{1}(t-t')}
    \end{equation}
    for each $(s,t)\in R_{[1,a],[1,t'-1]}$. For $t=1,2$, this is true since $v_{1}=0$ on $R_{[1,a],[1,2]}$. Suppose \eqref{eq:transfer-inductive-v-1} is true for $t$ and $t+1$ and suppose $t+2<t'$, using inductive hypothesis and \eqref{eq:equ-on-tilted-coordinate} on $R_{[1,a],\{t+1\}}$, we have $$|v_{1}(s,t+2)+v_{1}(s+2,t+2)|\leq |16+\Bar{V}|(a\Bar{V})^{5C_{1}(t+1-t')}$$ for each $s\in [1,a-2]$ with the same parity as $t$. By \eqref{eq:event-transfer}, $$|v_{1}(s_{1},t+2)|\leq (a\Bar{V})^{10C_{1}(t+2-t')}$$ for $s_{1}\in \{1,2\}$ with the same parity as $t+2$. We recursively have 
    \begin{equation}
        |v_{1}(s,t+2)|\leq s(16+\Bar{V})(a\Bar{V})^{5C_{1}(t+1-t')}\leq (a\Bar{V})^{5C_{1}(t+2-t')}
    \end{equation}
    for each $s\in [1,a]$ with the same parity as $t+2$. Thus by induction we have $|v_{1}(s,t)|\leq (a\Bar{V})^{-5C_{1}}$ for $(s,t)\in R_{[1,a],[1,t'-1]}$. Finally, since $v_{1}=0$ on $R_{[1,2],[t',t'+1]}$, we have $$\|v_{1}\|_{\ell^{\infty}(\partial^{w}R_{[1,a],[t'-2,t'+1]})}\leq (a\Bar{V})^{-5C_{1}}.$$ Thus \eqref{eq:transfer-v_1-on-t'+1} follows from Lemma \ref{lem:extension-bound} and $C_{1}\geq 10$.
    
    In conclusion, by \eqref{eq:transfer-express-u_1'}, \eqref{eq:transfer-decompose-first-term} and \eqref{eq:transfer-v_1-on-t'+1},
    \begin{equation}\label{eq:transfer-A_1}
        P^{R_{1}}_{R_{[1,a],\{b_{0}\}}} E_{R_{1}}(u'_{1})=A_{1}(u_{1})+M^{t'+1,b_{0}}_{[1,a]}(u^{*}),
    \end{equation}
    where $u^{*}= P^{R_{2}}_{R_{[1,a],\{t'+1\}}} E_{R_{2}}I_{R_{[1,2],[1,t'-1]}}^{\partial^{w}R_{2}}(u_{1})$ with 
    \begin{equation}\label{eq:control-u^*}
        \|u^{*}\|_{2}\leq (a\Bar{V})^{-5}.
    \end{equation}
    
   \noindent{\textbf{The second term in \eqref{eq:transfer-decompose}:}} We have
    \begin{align}
        P^{R_{1}}_{R_{[1,a],\{b_{0}\}}}E_{R_{1}}(u'_{2})=A_{2}(u_{2}) \label{eq:transfer-A_2}
    \end{align}
    where $A_{2}=P^{R_{1}}_{R_{[1,a],\{b_{0}\}}}E_{R_{1}}I^{\partial^{w} R_{1}}_{R_{[1,2],[t'+1,b]}}$. We claim that $A_{2}$ is independent of $V|_{R_{[1,a],\{t'\}}}$. To see this, let $v_{2}=E_{R_{1}}I^{\partial^{w} R_{1}}_{R_{[1,2],[t'+1,b]}} (u_{2})$. Since $I^{\partial^{w} R_{1}}_{R_{[1,2],[t'+1,b]}} (u_{2})=0$ on $\partial^{w}R_{[1,a],[1,t']}$, we have $v_{2} \equiv 0$ on $R_{[1,a],[1,t']}$ by Lemma \ref{lem:extension}. 
    Using equation $\eqref{eq:equ-on-tilted-coordinate}$ for $v_{2}$ on $R_{[1,a],\{t'\}}$, we get 
    \begin{equation}\label{eq:transfer-v_2-on-t'+1}
        v_{2}(s,t'+1)=(-1)^{\frac{s-s''}{2}} u_{2}(s'',t'+1)
    \end{equation}
    for $s \in [1,a]$ with the same parity as $s''$. By \eqref{eq:transfer-v_2-on-t'+1} and $v_{2}|_{R_{[1,a],\{t'\}}}=0$ and Lemma \ref{lem:extension}, the linear transform $$I^{*}: u_{2} \mapsto v_{2}|_{\partial^{w}R_{[1,a],[t',b]}}$$ is independent of $V|_{R_{[1,a],\{t'\}}}$.
    Note that 
    \begin{align*}
        \begin{split}
                v_{2} |_{R_{[1,a],\{b_{0}\}}}=& P^{R_{[1,a],[t',b]}}_{R_{[1,a],\{b_{0}\}}}E_{[1,a],[t',b]} (v_{2}|_{\partial^{w}R_{[1,a],[t',b]} })\\
                =& P^{R_{[1,a],[t',b]}}_{R_{[1,a],\{b_{0}\}}}E_{[1,a],[t',b]} I^{*} (u_{2}).
        \end{split}
    \end{align*}
    By Lemma \ref{lem:extension}, $E_{[1,a],[t',b]}$ is independent of $V|_{R_{[1,a],\{t'\}}}$. Since $$A_{2}= P^{R_{[1,a],[t',b]}}_{R_{[1,a],\{b_{0}\}}}E_{[1,a],[t',b]} I^{*},$$ thus $A_{2}$ is independent of $V|_{R_{[1,a],\{t'\}}}$. 
    
    \noindent{\textbf{The third term in \eqref{eq:transfer-decompose}:}}
    Let $v_{0}=E_{R_{1}}(u_{0})$. The strategy here is to express $v_{0}|_{R_{[1,a],[t',t'+1]}}$ as a function of $\vec{V}_{t'}$. We have
    \begin{equation}\label{eq:former-region}
        v_{0}|_{R_{[1,a],[1,t'-1]}}=E_{[1,a],[1,t'-1]}(u_{0}|_{\partial^{w}R_{[1,a],[1,t'-1]}})=0.
    \end{equation}
    Using equation \eqref{eq:extension} on the segment $R_{[2,a-1],\{t'-1\}}$, by \eqref{eq:former-region}, we have $$v_{0}(s,t')+v_{0}(s+2,t')=0$$ for each $(s,t')\in R_{[1,a-2],\{t'\}}$.
    Thus recursively from $v_{0}(s',t')=1$, we have
    \begin{equation}\label{eq:v_0-on-t'}
        v_{0}(s,t')=(-1)^{\frac{s-s'}{2}} 
    \end{equation}
    for $(s,t')\in R_{[1,a],\{t'\}}$. Using equation \eqref{eq:extension} on the segment $R_{[2,a-1],\{t'\}}$, we have $$v_{0}(s+1,t'+1)+v_{0}(s-1,t'+1)=(V(s,t')-\lambda_{0}+4) v_{0}(s,t')$$ for each $(s,t')\in R_{[2,a-1],\{t'\}}$. Recall $s'' \in \{1,2\}$ has the same parity as $t'+1$. Then recursively from $v_{0}(s'',t'+1)=0$ and \eqref{eq:v_0-on-t'} we have
    \begin{align}
        &v_{0}(s_{1},t'+1)\\
        =&(-1)^{\frac{s_{1}-s'-1}{2}} \sum_{\substack{s''<s<s_{1}\\ s\not \equiv s'' \bmod 2}} (V(s,t')-\lambda_{0}+4) \\
        =&(-1)^{\frac{s_{1}-s'-1}{2}}\frac{(4-\lambda_{0})(s_{1}-s'')}{2}+ (-1)^{\frac{s_{1}-s'-1}{2}}\sum_{i=1}^{\frac{s_{1}-s''}{2}}V(s''+2i-1,t')\label{eq:on-the-t'+1}
    \end{align}
    for any $s_{1}\in (s'',a]$ with the same parity as $s''$. 
    By \eqref{eq:transfer-potential-vector-on-t'} and \eqref{eq:def-A_0}, we can rewrite
    \eqref{eq:on-the-t'+1} as
    \begin{equation}
        v_{0}|_{R_{[1,a],\{t'+1\}}}=v_{*}+A_{0}(\vec{V}_{t'}),
    \end{equation}
    where $v_{*}\in \ell^{2}(R_{[1,a],\{t'+1\}})$ satisfies $$v_{*}(s,t'+1)=(-1)^{\frac{s-s'-1}{2}}\frac{(4-\lambda_{0})(s-s'')}{2},$$ for $s\in [1,a]$ with the same parity as $s''$. Hence we have 
    \begin{equation}\label{eq:transfer-v_0}
        v_{0}|_{R_{[1,a],[t',t'+1]}}=v_{**}+I^{R_{[1,a],[t',t'+1]}}_{R_{[1,a],\{t'+1\}}} A_{0} (\vec{V}_{t'}),
    \end{equation}
    where $v_{**}|_{R_{[1,a],\{t'+1\}}}=v_{*}$ and $v_{**}(s,t')=(-1)^{\frac{s-s'}{2}}$ for $s\in [1,a]$ with the same parity as $s'$.
    
    Denote $R_{0}=R_{[1,a],[t',b_{0}]}$ (see Figure \ref{fig:tilted}), then $$v_{0}|_{R_{[1,a],\{b_{0}\}}}=P^{R_{0}}_{R_{[1,a],\{b_{0}\}}} E_{R_{0}} I_{R_{[1,a],[t',t'+1]}}^{\partial^{w} R_{0}} v_{0}|_{R_{[1,a],[t',t'+1]}}.$$ Together with \eqref{eq:transfer-v_0}, we have
    \begin{align}
    \begin{split}
        &v_{0}|_{R_{[1,a],\{b_{0}\}}}\\
        &= P^{R_{0}}_{R_{[1,a],\{b_{0}\}}} E_{R_{0}} I_{R_{[1,a],[t',t'+1]}}^{\partial^{w} R_{0}}(v_{**}) + P^{R_{0}}_{R_{[1,a],\{b_{0}\}}} E_{R_{0}} I_{R_{[1,a],\{t'+1\}}}^{\partial^{w} R_{0}} A_{0} (\vec{V}_{t'})\\
        &=P^{R_{0}}_{R_{[1,a],\{b_{0}\}}} E_{R_{0}} I_{R_{[1,a],[t',t'+1]}}^{\partial^{w} R_{0}}(v_{**})+M^{t'+1,b_{0}}_{[1,a]} A_{0} (\vec{V}_{t'})\\
        &=v^{*}+M^{t'+1,b_{0}}_{[1,a]} A_{0} (\vec{V}_{t'}).
    \end{split}
    \end{align}
    Here, we used the Definition \ref{def:transfer-matrix} of $M^{t'+1,b_{0}}_{[1,a]}$, and in the last equation we denoted $$v^{*}=P^{R_{0}}_{R_{[1,a],\{b_{0}\}}} E_{R_{0}} I_{R_{[1,a],[t',t'+1]}}^{\partial^{w} R_{0}}(v_{**})$$
    which is independent of $V|_{R_{[1,a],\{t'\}}}$ by Lemma \ref{lem:extension}. 
    In conclusion,
    \begin{equation}\label{eq:transfer-A_0}
        P^{R_{1}}_{R_{[1,a],\{b_{0}\}}}E_{R_{1}}(u_{0}) = v_{0}|_{R_{[1,a],\{b_{0}\}}}= v^{*}+M^{t'+1,b_{0}}_{[1,a]} A_{0} (\vec{V}_{t'}).
    \end{equation}
    
    Finally plug \eqref{eq:transfer-A_1},\eqref{eq:transfer-A_2} and $\eqref{eq:transfer-A_0}$ into equation \eqref{eq:transfer-decompose}, we have
    \begin{equation}
        u|_{R_{[1,a],\{b_{0}\}}}=M_{[1,a]}^{t'+1,b_{0}}(u^{*}+ A_{0}(\vec{V}_{t'}))+ A_{1}(u_{1})+A_{2}(u_{2})+v^{*}
    \end{equation}
    which is equation \eqref{eq:tranfer-combine} and our claim follows.
\end{proof}

Now let $c_{4}<\frac{1}{10^{7}}$. Fix $(s',t')\in R_{[1,2],[3,b-2]}$. Since $\Theta$ is $(c_{4},-)$-sparse in $R_{[1,a],[1,b]}$, we have
\begin{equation}\label{eq:sparsity-theta}
    |\Theta \cap R_{[1,a],\{t'\}}|\leq c_{4} a \leq \frac{a}{10^{7}}.
\end{equation}
Pick $b_{0}\in \{b-1,b\}$ and $s''\in \{1,2\}$ with the same parity as $t'+1$. Denote
\begin{equation}\label{eq:define-theta}
\Theta_{*}=\left\{(s,b_{0}):(s-1,t')\in \Theta\right\}\cap R_{[4,a-1],\{b_{0}\}}.
\end{equation}
For any $S\subset R_{[4,a-1],\{b_{0}\}}$, let
$\mathcal{E}_{S}^{(s',t')}$ denote the event: 
\begin{equation}\label{eq:def-E_S}
    \text{\eqref{eq:event-transfer} implies $\|u\|_{\ell^{2}(S)}\geq (a\Bar{V})^{-\frac{1}{3}\alpha_{1} a}$.}
\end{equation}

\begin{cla}\label{cla:all-S} For any $a\geq 10^{7}$, we have
$$\bigcap \left\{\mathcal{E}_{S}^{(s',t')}:S \subset R_{[4,a-1],\{b_{0}\}}\setminus \Theta_{*}, |R_{[1,a],\{b_{0}\}}\setminus S|= \left\lfloor a/10^{5} \right\rfloor\right\}\subset \mathcal{E}_{tr}^{(s',t')}.$$
\end{cla}
\begin{proof}[Proof of the claim]
    Assume the event $\mathcal{E}_{tr}^{(s',t')}$ does not hold.
    Then we can find $u\in \ell^{2}(R_{[1,a],[1,b]})$ satisfying \eqref{eq:event-transfer} but $$|\{(s,t)\in R_{[1,a],\{b-1,b\}}:|u(s,t)|\geq (a\Bar{V})^{-\alpha_{1} a}\}|\leq 10^{-6} a.$$
    Hence by \eqref{eq:sparsity-theta},
    \begin{align*}
            &|\Theta_{*} \cup\{(s,t)\in R_{[1,a],\{b_{0}\}}:|u(s,t)|\geq (a\Bar{V})^{-\alpha_{1} a}\}| \\
            &\leq 10^{-7}a +10^{-6}a \\
            &\leq 10^{-5}a-5. 
    \end{align*}

    Thus there is $S\subset R_{[4,a-1],\{b_{0}\}}\setminus \Theta_{*}$ such that $|R_{[1,a],\{b_{0}\}}\setminus S|= \left\lfloor a/10^{5} \right\rfloor$ and $\|u\|_{\ell^{\infty}(S)}\leq (a\Bar{V})^{-\alpha_{1} a}$. This implies
    $$\|u\|_{\ell^{2}(S)}\leq a (a\Bar{V})^{-\alpha_{1} a}< (a\Bar{V})^{-\frac{1}{3}\alpha_{1} a}.$$ Hence $\mathcal{E}_{S}^{(s',t')}$ does not hold.
\end{proof}

\begin{cla}\label{cla:bound-one-S} For large enough $a$,
$\mathbb{P}\left[\mathcal{E}_{S}^{(s',t')}\big|\; V|_{\Theta}=V'\right]\geq 1-\exp(-a/50)$ holds for any subset $S\subset R_{[4,a-1],\{b_{0}\}}\setminus \Theta_{*}$ such that $|R_{[1,a],\{b_{0}\}}\setminus S|= \left\lfloor a/10^{5} \right\rfloor$.
\end{cla} 
\begin{proof}[Proof of the claim]
    Denote $R_{1}=R_{[1,a],[1,b]}$.
    It is sufficient to prove that
    \begin{equation}\label{eq:transfer-conditioning}
        \mathbb{P}\left[\mathcal{E}_{S}^{(s',t')}\big|\; V|_{\Theta\cup (R_{1}\setminus R_{[1,a],\{t'\}})}\right]\geq 1-\exp(-a/50)
    \end{equation}
    for any $S\subset R_{[4,a-1],\{b_{0}\}}\setminus \Theta_{*}$ such that $|R_{[1,a],\{b_{0}\}}\setminus S|\leq a/50$.
    We pick an arbitrary $S_{0}\subset R_{[4,a-1],\{b_{0}\}}\setminus \Theta_{*}$ with 
    \begin{equation}\label{eq:transfer-bound-S}
            |R_{[1,a],\{b_{0}\}}\setminus S_{0}|\leq a/50.
    \end{equation}
    Let $$S_{0}'=\{(s,t'+1): (s,b_{0})\in S_{0}\}\subset R_{[4,a-1],\{t'+1\}}$$ and
    $$M_{S_{0}}=P^{R_{[1,a],\{b_{0}\}}}_{S_{0}} M^{t'+1,b_{0}}_{[1,a]} I^{R_{[1,a],\{t'+1\}}}_{S_{0}'}.$$ 
    By Corollary \ref{cor:transfer-bound-M},
    \begin{equation}\label{eq:transfer-bounding-M-inverse}
        \|M_{S_{0}}^{-1}\|\leq a(2(b_{0}-t'+1)\Bar{V})^{2C_{1} a}\leq (a\Bar{V})^{3C_{1} a}.
    \end{equation}
    Let $d_{0}=|R_{[1,a],\{t'+1\}}|-1$ and $\{e_{i}\}_{i=1}^{d_{0}}$ be the standard basis in $\R^{d_{0}}$.
    For any $\mathcal{S}\subset \{1,\cdots,d_{0}\}$, let $P_{\mathcal{S}}$ be the orthogonal projection onto the span of $\{e_{i}:i\in \mathcal{S}\}$ and $P_{\mathcal{S}}^{\dag}$ be its adjoint. Denote
    \begin{equation}\label{eq:transfer-def-S_0}
        \mathcal{S}_{0}=\{(s-s'')/2: (s,t'+1)\in S'_{0}\}\subset \{1,\cdots,d_{0}\},
    \end{equation}
    and let
    $$A_{S_{0}}= P^{R_{[1,a],\{t'+1\}}}_{S_{0}'} A_{0} P_{\mathcal{S}_{0}}^{\dag}$$
    where $A_{0}$ is defined in \eqref{eq:def-A_0}.
    By \eqref{eq:def-A_0}, $A_{S_{0}}$ can be regarded as a triangular matrix and by simple calculations, we have
    \begin{equation}\label{eq:transfer-bounding-A-inverse}
            \|A_{S_{0}}^{-1}\|\leq a.
    \end{equation}
    Denote $A'=I^{R_{[1,a],\{t'+1\}}}_{R_{[1,a],\{t'+1\}}\setminus S_{0}'} P^{R_{[1,a],\{t'+1\}}}_{R_{[1,a],\{t'+1\}}\setminus S_{0}'}$ and $\mathcal{S}_{0}^{c}=\{1,\cdots,d_{0}\}\setminus \mathcal{S}_{0}$. Then we can decompose the identity operator on $\ell^{2}(R_{[1,a],\{t'+1\}})$ by $I^{(1)}=A'+I^{R_{[1,a],\{t'+1\}}}_{ S_{0}'} P^{R_{[1,a],\{t'+1\}}}_{ S_{0}'}$, and the identity operator on $\mathbb{R}^{d_{0}}$ by $I^{(2)}= P_{\mathcal{S}_{0}}^{\dag} P_{\mathcal{S}_{0}}+P_{\mathcal{S}_{0}^{c}}^{\dag} P_{\mathcal{S}_{0}^{c}}$.
    
    Suppose $u$ satisfies \eqref{eq:event-transfer}.
    By Claim \ref{cla:expand-u-first}, there exists $u^{*} \in \ell^{2}(R_{[1,a],\{t'+1\}})$ with
    $\|u^{*}\|\leq (a\Bar{V})^{-5}$ and we have
    \begin{equation}\label{eq:decom_u}
        u|_{R_{[1,a],\{b_{0}\}}}=M_{[1,a]}^{t'+1,b_{0}}(u^{*}+ A_{0}(\vec{V}_{t'}))+ A_{1}(u|_{R_{[1,2],[1,t'-1]}})+A_{2}(u|_{R_{[1,2],[t'+1,b]}})+v^{*}
    \end{equation}
    such that $A_{0},A_{1},A_{2},v^{*}$ are all independent of $V|_{R_{[1,a],\{t'\}}}$ and vector $\vec{V}_{t'} \in \R^{d_{0}}$ is $V|_{R_{[1,a],\{t'\}}}$-measurable.
    By the argument above, we can expand the first term in \eqref{eq:decom_u} (or \eqref{eq:tranfer-combine}) as follows:
    \begin{align}\label{eq:expand-first-step}
    \begin{split}
        &M_{[1,a]}^{t'+1,b_{0}}(u^{*}+ A_{0}(\vec{V}_{t'}))\\
        = & M_{[1,a]}^{t'+1,b_{0}} I^{(1)} (u^{*}+ A_{0}(\vec{V}_{t'}))\\
        = & M_{[1,a]}^{t'+1,b_{0}} \left(A'+I^{R_{[1,a],\{t'+1\}}}_{ S_{0}'} P^{R_{[1,a],\{t'+1\}}}_{ S_{0}'}\right) (u^{*}+ A_{0}(\vec{V}_{t'}))\\
        = & M_{[1,a]}^{t'+1,b_{0}} A' (u^{*}+ A_{0}(\vec{V}_{t'})) +  M_{[1,a]}^{t'+1,b_{0}} I^{R_{[1,a],\{t'+1\}}}_{ S_{0}'} P^{R_{[1,a],\{t'+1\}}}_{ S_{0}'} (u^{*})\\
        + &M_{[1,a]}^{t'+1,b_{0}} I^{R_{[1,a],\{t'+1\}}}_{ S_{0}'} P^{R_{[1,a],\{t'+1\}}}_{ S_{0}'}A_{0}(\vec{V}_{t'}),
    \end{split}
    \end{align}
    and the last term in the last equation of \eqref{eq:expand-first-step} can be further expanded:
    \begin{align}\label{eq:expand-second-step}
    \begin{split}
        &M_{[1,a]}^{t'+1,b_{0}} I^{R_{[1,a],\{t'+1\}}}_{ S_{0}'} P^{R_{[1,a],\{t'+1\}}}_{ S_{0}'}A_{0}(\vec{V}_{t'})\\
        =& M_{[1,a]}^{t'+1,b_{0}} I^{R_{[1,a],\{t'+1\}}}_{ S_{0}'} P^{R_{[1,a],\{t'+1\}}}_{ S_{0}'}A_{0} I^{(2)} (\vec{V}_{t'})\\
        =& M_{[1,a]}^{t'+1,b_{0}} I^{R_{[1,a],\{t'+1\}}}_{ S_{0}'} P^{R_{[1,a],\{t'+1\}}}_{ S_{0}'}A_{0} \left( P_{\mathcal{S}_{0}}^{\dag} P_{\mathcal{S}_{0}}+P_{\mathcal{S}_{0}^{c}}^{\dag} P_{\mathcal{S}_{0}^{c}}\right) (\vec{V}_{t'})\\
        =& M_{[1,a]}^{t'+1,b_{0}} I^{R_{[1,a],\{t'+1\}}}_{ S_{0}'} A_{S_{0}} P_{\mathcal{S}_{0}}(\vec{V}_{t'})\\
        +&M_{[1,a]}^{t'+1,b_{0}} I^{R_{[1,a],\{t'+1\}}}_{ S_{0}'} P^{R_{[1,a],\{t'+1\}}}_{ S_{0}'}A_{0} P_{\mathcal{S}_{0}^{c}}^{\dag} P_{\mathcal{S}_{0}^{c}}(\vec{V}_{t'}).
    \end{split}
    \end{align}
    Plug \eqref{eq:expand-first-step} and \eqref{eq:expand-second-step} into \eqref{eq:tranfer-combine},
    after projecting onto $S_{0}$, we have
    \begin{align}
    \begin{split}\label{eq:transfer-combine-2}
        &u|_{S_{0}}\\
        =&M_{S_{0}} A_{S_{0}} (A_{S_{0}}^{-1} P^{R_{[1,a],\{t'+1\}}}_{S_{0}'}(u^{*})
        +P_{\mathcal{S}_{0}} (\vec{V}_{t'}) )\\
        +&P_{S_{0}}^{R_{[1,a],\{b_{0}\}}} M^{t'+1,b_{0}}_{[1,a]} A' (u^{*}+A_{0} (\vec{V}_{t'})) + M_{S_{0}} P^{R_{[1,a],\{t'+1\}}}_{S_{0}'} A_{0} P_{\mathcal{S}_{0}^{c}}^{\dag} P_{\mathcal{S}_{0}^{c}} (\vec{V}_{t'})\\
        + &P_{S_{0}}^{R_{[1,a],\{b_{0}\}}} A_{1}(u|_{R_{[1,2],[1,t'-1]}})+P_{S_{0}}^{R_{[1,a],\{b_{0}\}}} A_{2}(u|_{R_{[1,2],[t'+1,b]}})\\
        + &v^{*}|_{S_{0}}.
    \end{split}
    \end{align}
    Let $\Gamma\subset \ell^{2}(S_{0})$ be the direct sum of the ranges of the following four operators appeared in the third and fourth lines of \eqref{eq:transfer-combine-2}:
    \begin{equation*}
    P_{S_{0}}^{R_{[1,a],\{b_{0}\}}} M^{t'+1,b_{0}}_{[1,a]} A', \;
    M_{S_{0}}P^{R_{[1,a],\{t'+1\}}}_{S_{0}'} A_{0} P_{\mathcal{S}_{0}^{c}}^{\dag} P_{\mathcal{S}_{0}^{c}},\;
    P_{S_{0}}^{R_{[1,a],\{b_{0}\}}}A_{1},\; 
    P_{S_{0}}^{R_{[1,a],\{b_{0}\}}}A_{2}.
    \end{equation*}
    Let us denote $$\Gamma-v^{*}|_{S_{0}}=\{-v^{*}|_{S_{0}}+x:x\in \Gamma\}\subset \ell^{2}(S_{0})$$ 
     and define the event $\mathcal{E}_{dist}$ as
    \begin{equation}\label{eq:event-dist}
        \dist(P_{\mathcal{S}_{0}} (\vec{V}_{t'}) , (M_{S_{0}} A_{S_{0}})^{-1}(\Gamma-v^{*}|_{S_{0}}))\geq \frac{\Bar{V}}{4}a^{-1}
    \end{equation}
    where $\dist$ is the Euclidean distance.
    We claim that $\mathcal{E}_{dist}\subset \mathcal{E}_{S_{0}}^{(s',t')}$ by choosing $\alpha_{1}>15C_{1}$  (recall definition \eqref{eq:def-E_S} of $\mathcal{E}_{S}^{(s',t')}$). To see this, assume $\mathcal{E}_{dist}$ holds. \eqref{eq:transfer-combine-2} implies
    \begin{align}
    \begin{split}\label{eq:transfer-bound-distance}
    &\|u\|_{\ell^{2}(S_{0})}\\
    &\geq \dist \big(M_{S_{0}} A_{S_{0}} (A_{S_{0}}^{-1} P^{R_{[1,a],\{t'+1\}}}_{S_{0}'} (u^{*})+P_{\mathcal{S}_{0}} (\vec{V}_{t'}) ), \Gamma-v^{*}|_{S_{0}}\big)\\
    & \geq \|(M_{S_{0}} A_{S_{0}})^{-1} \|^{-1} \dist\big(A_{S_{0}}^{-1} P^{R_{[1,a],\{t'+1\}}}_{S_{0}'}(u^{*})+P_{\mathcal{S}_{0}} (\vec{V}_{t'}) , (M_{S_{0}} A_{S_{0}})^{-1}(\Gamma-v^{*}|_{S_{0}})\big)\\
    & \geq (a\Bar{V})^{-4C_{1} a} \dist\big(A_{S_{0}}^{-1} P^{R_{[1,a],\{t'+1\}}}_{S_{0}'}(u^{*})+P_{\mathcal{S}_{0}} (\vec{V}_{t'}) , (M_{S_{0}} A_{S_{0}})^{-1}(\Gamma-v^{*}|_{S_{0}})\big).
    \end{split}
    \end{align}
    Here, we used \eqref{eq:transfer-bounding-M-inverse} and $\eqref{eq:transfer-bounding-A-inverse}$. By \eqref{eq:transfer-bound-u*} and \eqref{eq:transfer-bounding-A-inverse}, we have $$\|A_{S_{0}}^{-1} P^{R_{[1,a],\{t'+1\}}}_{S_{0}'}(u^{*})\|\leq \|A_{S_{0}}^{-1}\|\|u^{*}\|\leq  a (a\Bar{V})^{-5}\leq a^{-4}.$$ Thus \eqref{eq:transfer-bound-distance} further implies
    \begin{equation}\label{eq:transfer-bound-u-3}
        \|u\|_{\ell^{2}(S_{0})} \geq (a\Bar{V})^{-4C_{1} a} \big(\dist\big(P_{\mathcal{S}_{0}} (\vec{V}_{t'}) , (M_{S_{0}} A_{S_{0}})^{-1}(\Gamma-v^{*}|_{S_{0}})\big)-a^{-4}\big).
    \end{equation}
    By letting $\alpha_{1}>15 C_{1}$, $\mathcal{E}_{dist}$ (or \eqref{eq:event-dist}) implies
    $$\|u\|_{\ell^{2}(S_{0})} \geq (a\Bar{V})^{-4C_{1} a} \left(\frac{\Bar{V}}{4}a^{-1}-a^{-4}\right)\geq (a\Bar{V})^{-5C_{1}a} \geq (a\Bar{V})^{-\frac{1}{3}\alpha_{1} a}.
    $$
    This proves our claim that $\mathcal{E}_{dist}\subset \mathcal{E}_{S_{0}}^{(s',t')}$. Thus in order to prove \eqref{eq:transfer-conditioning} with $S=S_{0}$, it suffices to prove
    \begin{equation}\label{eq:transfer-conditioning-2}
        \mathbb{P}\left[\mathcal{E}_{dist}\big|\; V|_{\Theta\cup (R_{1}\setminus R_{[1,a],\{t'\}})}\right]\geq 1-\exp(-a/50).
    \end{equation}
    To see this, we first prove an upper bound for the dimension of $\Gamma$.
    The ranks of operators $P_{S_{0}}^{R_{[1,a],\{b_{0}\}}}A_{1}$ and $P_{S_{0}}^{R_{[1,a],\{b_{0}\}}}A_{2}$ are less than $b$ since the dimensions of their domains are less than $b$. On the other hand, the ranks of operators $M_{S_{0}} P^{R_{[1,a],\{t'+1\}}}_{S_{0}'} A_{0} P_{\mathcal{S}_{0}^{c}}^{\dag} P_{\mathcal{S}_{0}^{c}}$ and $P_{S_{0}}^{R_{[1,a],\{b_{0}\}}} M^{t'+1,b_{0}}_{[1,a]} A'$ are at most $a/50$ since $|\mathcal{S}_{0}^{c}|,|R_{[1,a],\{t'+1\}}\setminus S_{0}'|\leq a/50$. Since $b\leq a/10$, the dimension of $\Gamma$ is at most $b+b+a/50+a/50\leq \frac{2}{5}a$. 
    
    Together with Claim \ref{cla:expand-u-first}, these imply that $(M_{S_{0}} A_{S_{0}})^{-1}(\Gamma-v^{*}|_{S_{0}})\subset \R^{\mathcal{S}_{0}}$ is an affine subspace with dimension at most $\frac{2}{5}a$ and is independent of $V|_{R_{[1,a],\{t'\}}}$. On the other hand, since $S_{0}\subset R_{[4,a-1],\{b_{0}\}}\setminus \Theta_{*}$, by definition \eqref{eq:define-theta} of $\Theta_{*}$ and equations \eqref{eq:transfer-potential-vector-on-t'} and \eqref{eq:transfer-def-S_0}, $P_{\mathcal{S}_{0}} (\vec{V}_{t'})$ is independent of $V|_{\Theta}$.
    Moreover, by \eqref{eq:transfer-bound-S}, we have $$a\geq |\mathcal{S}_{0}|\geq d_{0}-a/50\geq \frac{2}{5}a+\frac{1}{20}a.$$ 
    Thus by Lemma \ref{lem:Boolean-cube}, conditioning on $V|_{\Theta\cup (R_{1}\setminus R_{[1,a],\{t'\}})}$, with probability no less than $1-2^{-\frac{1}{20}a +1} \geq 1-\exp(-a/50)$, we have
    $$ \dist(P_{\mathcal{S}_{0}} (\vec{V}_{t'}) , (M_{S_{0}} A_{S_{0}})^{-1}(\Gamma-v^{*}|_{S_{0}}))\geq \frac{\Bar{V}}{4}a^{-1}
    $$
    which is \eqref{eq:event-dist}.
    Hence \eqref{eq:transfer-conditioning-2} holds and Claim \ref{cla:bound-one-S} follows.
\end{proof}
    Now, by Claim \ref{cla:all-S} and Claim \ref{cla:bound-one-S}, and letting $c_{4}$ be small enough,
    \begin{align*}
        &\mathbb{P}\left[\left(\mathcal{E}_{tr}^{(s',t')}\right)^{c}\big|\; V|_{\Theta}\right]\\
        &\leq \sum_{\substack{S\subset R_{[4,a-1],\{b_{0}\}}\setminus \Theta_{*}\\ |R_{[1,a],\{b_{0}\}}\setminus S|= \left\lfloor a/10^{5} \right\rfloor}} \mathbb{P}\left[\left(\mathcal{E}_{S}^{(s',t')}\right)^{c}\big|\; V|_{\Theta}\right]\\
        &\leq \sum_{\substack{S\subset R_{[4,a-1],\{b_{0}\}}\setminus \Theta_{*}\\ |R_{[1,a],\{b_{0}\}}\setminus S|= \left\lfloor a/10^{5} \right\rfloor}}
        \exp(-a/50)\\
        &\leq \binom{a}{\left\lfloor a/10^{5} \right\rfloor} \exp(-a/50)\\
        &\leq \exp(-2c_{4} a)
    \end{align*}
    for any large enough $a$.
    
    Finally, by Claim \ref{cla:all-s'-t'} and Claim \ref{cla:special-t'},
     \begin{align*}
       &\mathbb{P}\left[\mathcal{E}_{tr}(R_{[1,a],[1,b]})\big|\; V|_{\Theta}=V'\right]\\
       &\geq 1-\sum_{(s',t')\in R_{[1,2],[3,b]}} \mathbb{P}\left[\left(\mathcal{E}_{tr}^{(s',t')}\right)^{c}\big|\; V|_{\Theta}=V'\right]\\
       &\geq 1-b\exp(-2c_{4} a)\\
       &\geq 1-\exp(-c_{4}a).
    \end{align*}
    Our conclusion follows.
\end{proof}
\begin{lemma}\label{lem:key-lemma}
There are constants $\alpha_{2}>1>c_{5}>0$ such that, if
\begin{enumerate}
    \item integers $a>b>\alpha_{2}$ with $10b \leq a \leq 60b$,
    \item $\lambda_{0}\in [0,8]$, $\Bar{V}\geq 2$,
    \item $\Theta\subset \Z^{2}$ is $(c_{5},-)$-sparse in $R_{[1,a],[1,b]}$,
    \item $V':\Theta \rightarrow \{0,\Bar{V}\}$,
    \item $\mathcal{E}_{ni}(R_{[1,a],[1,b]})$ denotes the event that,
    \begin{equation}\label{eq:event-ni}
    \left\{\begin{array}{l}
    |\lambda-\lambda_{0}|\leq (a\Bar{V})^{-\alpha_{2} a}\\
    \text{$H u= \lambda u$ in $R_{[2,a-1],[2,b-1]}$ }\\
    \text{$|u|\leq 1$ on $R_{[1,a],[1,2]}$} \\
    \text{$|u|\leq 1$ on a $1-10^{-7}$ fraction of $R_{[1,a],[b-1,b]}$} 
    \end{array}\right. 
    \end{equation}
    implies $|u|\leq (a\Bar{V})^{\alpha_{2} a}$ in $R_{[1,a],[1,b]}$,
\end{enumerate}
then $\mathbb{P}\left[\mathcal{E}_{ni}(R_{[1,a],[1,b]})\big|\; V|_{\Theta}=V'\right]\geq 1-\exp(-c_{5} a)$. 

\end{lemma}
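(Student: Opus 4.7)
The plan is to reduce Lemma \ref{lem:key-lemma} to Lemma \ref{lem:transfer-u_3} by a linear decomposition of $u$ along the west boundary of $R := R_{[1,a],[1,b]}$. Choose the constant $c_{5} \leq c_{4}$ so the sparsity assumption on $\Theta$ transfers. Given a function $u$ satisfying the hypothesis in \eqref{eq:event-ni}, split the west boundary as the disjoint union $R_{[1,a],[1,2]} \sqcup R_{[1,2],[3,b]}$, and use the extension operator of Lemma \ref{lem:extension} (with energy $\lambda$) to write $u = u_{*} + v$, where $u_{*}$ extends the boundary data $u|_{R_{[1,a],[1,2]}}$ (with zero on $R_{[1,2],[3,b]}$), and $v$ extends $u|_{R_{[1,2],[3,b]}}$ (with zero on $R_{[1,a],[1,2]}$). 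Since $|u| \leq 1$ on $R_{[1,a],[1,2]}$ by assumption, Lemma \ref{lem:extension-bound} gives $\|u_{*}\|_{\ell^{\infty}(R)} \leq (a\Bar{V})^{C_{1} b}$ deterministically, so the task reduces to bounding $v$.

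Set $M = \|v\|_{\ell^{\infty}(R_{[1,2],[3,b]})}$. If $M = 0$ then $v \equiv 0$ and there is nothing to prove, so assume $M > 0$ and define $\Tilde{v} = v/M$. Then $\Tilde{v}$ has $\ell^{\infty}$-norm $1$ on the west boundary $\partial^{w} R$, vanishes on $R_{[1,a],[1,2]}$, and satisfies $H\Tilde{v} = \lambda \Tilde{v}$ in $R_{[2,a-1],[2,b-1]}$. Let $\Tilde{v}_{0}$ denote the solution of $H\Tilde{v}_{0} = \lambda_{0} \Tilde{v}_{0}$ with $\Tilde{v}_{0}|_{\partial^{w}R} = \Tilde{v}|_{\partial^{w}R}$. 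By Lemma \ref{lem:variate-lambda} and $|\lambda - \lambda_{0}| \leq (a\Bar{V})^{-\alpha_{2} a}$,
\begin{equation}
\|\Tilde{v} - \Tilde{v}_{0}\|_{\ell^{\infty}(R)} \leq (a\Bar{V})^{C_{2} b - \alpha_{2} a},
\end{equation}
which can be made smaller than $\tfrac{1}{2}(a\Bar{V})^{-\alpha_{1} a}$ by choosing $\alpha_{2}$ large compared to $\alpha_{1}, C_{1}, C_{2}$.

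Now invoke Lemma \ref{lem:transfer-u_3}: the event $\mathcal{E}_{tr}(R)$ has conditional probability at least $1 - \exp(-c_{4} a)$ given $V|_{\Theta} = V'$, and on this event $|\Tilde{v}_{0}| \geq (a\Bar{V})^{-\alpha_{1} a}$ on a $\tfrac{1}{10^{6}}$ fraction of $R_{[1,a],[b-1,b]}$ (note that $\|\Tilde{v}_{0}\|_{\ell^{\infty}(R_{[1,2],[1,b]})} = 1$ since $\Tilde{v}_{0}$ vanishes on $R_{[1,a],[1,2]}$). Combined with the $\lambda_{0}$-$\lambda$ comparison, this gives $|v| = M|\Tilde{v}| \geq \tfrac{M}{2}(a\Bar{V})^{-\alpha_{1} a}$ on a $\tfrac{1}{10^{6}}$ fraction of $R_{[1,a],[b-1,b]}$. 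Since $|u| \leq 1$ on a $(1 - 10^{-7})$ fraction of $R_{[1,a],[b-1,b]}$ and $\tfrac{1}{10^{6}} + 1 - 10^{-7} > 1$, there exists a common point where $|v| \leq |u| + |u_{*}| \leq 1 + (a\Bar{V})^{C_{1} b} \leq 2(a\Bar{V})^{C_{1} b}$, yielding $M \leq 4(a\Bar{V})^{C_{1} b + \alpha_{1} a}$. A final use of Lemma \ref{lem:extension-bound} applied to $\Tilde{v}$ gives $\|v\|_{\ell^{\infty}(R)} \leq M (a\Bar{V})^{C_{1} b} \leq 4(a\Bar{V})^{2 C_{1} b + \alpha_{1} a}$, so $\|u\|_{\ell^{\infty}(R)} \leq (a\Bar{V})^{\alpha_{2} a}$ for $\alpha_{2}$ chosen large enough; here the inequality $b \leq a/10$ is crucial for absorbing all $C_{1} b$ and $C_{2} b$ contributions into $\alpha_{2} a$. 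The main obstacle I expect is the bookkeeping of constants, specifically ensuring that $\alpha_{2}$ dominates $\alpha_{1} + O(C_{1} + C_{2})$ simultaneously so that the $\lambda_{0}$-perturbation error is swamped by the lower bound from Lemma \ref{lem:transfer-u_3} and the final bound closes.
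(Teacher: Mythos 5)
Your proposal is correct and is essentially the paper's own proof: your $u_{*}$, $v$, and $M\Tilde{v}_{0}$ are exactly the paper's $u_{1}$, $u_{2}$, and $u_{3}$, and the reduction to $\mathcal{E}_{tr}$ via Lemma \ref{lem:extension-bound}, Lemma \ref{lem:variate-lambda}, and the $10^{-6}$ versus $10^{-7}$ fraction overlap is the same. The only difference is the cosmetic normalization by $M$; the constant bookkeeping ($\alpha_{2}>2\alpha_{1}+C_{2}$ and $\alpha_{2}>2\alpha_{1}+3C_{1}$, using $b\leq a/10$) closes just as you anticipate.
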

\begin{proof}
    Denote $R_{1}=R_{[1,a],[1,b]}$. Set $c_{5}=c_{4}$ where $c_{4}$ is the constant in Lemma \ref{lem:transfer-u_3} and $\alpha_{2}$ to be determined.
    We prove that $\mathcal{E}_{tr}(R_{1})\subset \mathcal{E}_{ni}(R_{1})$ where $\mathcal{E}_{tr}(R_{1})$ is defined in Lemma \ref{lem:transfer-u_3}. Suppose event $\mathcal{E}_{tr}(R_{1})$ holds and $u$ satisfies \eqref{eq:event-ni}. By Lemma \ref{lem:extension}, there is $u_{1}:R_{1}\rightarrow \R$ such that
    \begin{equation}
    \left\{\begin{array}{l}
    \text{$H u_{1}= \lambda u_{1}$ in $R_{[2,a-1],[2,b-1]}$ }\\
    \text{$u_{1}=u$ on $R_{[1,a],[1,2]}$} \\
    \text{$u_{1}=0$ on $R_{[1,2],[3,b]}$}.
    \end{array}\right. 
    \end{equation}
    By Lemma \ref{lem:extension-bound}, we have $\|u_{1}\|_{\ell^{\infty}(R_{1})}\leq (a\Bar{V})^{C_{1}b}$ since $\|u_{1}\|_{\ell^{\infty}(\partial^{w}R_{1})}\leq 1$. Let $u_{2}=u-u_{1}$, then 
    \begin{equation}\label{eq:portion-u_2}
        |u_{2}|\leq 1+(a\Bar{V})^{C_{1}b} 
    \end{equation}
    on a $1-10^{-7}$ fraction of $R_{[1,a],[b-1,b]}$.
    Define $u_{3}:R_{1}\rightarrow \R$ as follows:
     \begin{equation}
    \left\{\begin{array}{l}
    \text{$H u_{3}= \lambda_{0} u_{3}$ in $R_{[2,a-1],[2,b-1]}$ }\\
    \text{$u_{3}=0$ on $R_{[1,a],[1,2]}$} \\
    \text{$u_{3}=u_{2}$ on $R_{[1,2],[3,b]}$}.
    \end{array}\right. 
    \end{equation}
    By Lemma \ref{lem:variate-lambda}, 
    \begin{align}
    \begin{split}\label{eq:dist-u_2-u_3}
        &\|u_{3}-u_{2}\|_{\ell^{\infty}(R_{1})}\\
        &\leq (a\Bar{V})^{C_{2}b} \|u_{3}\|_{\ell^{\infty}(\partial^{w}R_{1})}|\lambda-\lambda_{0}|\\
        &\leq (a\Bar{V})^{C_{2}b-\alpha_{2}a} \|u_{3}\|_{\ell^{\infty}(\partial^{w}R_{1})}\\
        &\leq (a\Bar{V})^{-2\alpha_{1}a} \|u_{3}\|_{\ell^{\infty}(R_{[1,2],[3,b]})},
    \end{split}
    \end{align}
    as long as $\alpha_{2}>2\alpha_{1}+C_{2}$.
    By the definition of $\mathcal{E}_{tr}(R_{1})$, $$|u_{3}|\geq (a\Bar{V})^{-\alpha_{1}a} \|u_{3}\|_{\ell^{\infty}(R_{[1,2],[3,b]})}$$ on a $10^{-6}$ fraction of $R_{[1,a],[b-1,b]}$. Thus by \eqref{eq:dist-u_2-u_3}, $$|u_{2}|\geq \big( (a\Bar{V})^{-\alpha_{1}a}-(a\Bar{V})^{-2\alpha_{1}a}\big) \|u_{3}\|_{\ell^{\infty}(R_{[1,2],[3,b]})}\geq (a\Bar{V})^{-2\alpha_{1}a} \|u_{3}\|_{\ell^{\infty}(R_{[1,2],[3,b]})}$$ on a $10^{-6}$ fraction of $R_{[1,a],[b-1,b]}$. By \eqref{eq:portion-u_2}, $$(a\Bar{V})^{-2\alpha_{1}a} \|u_{3}\|_{\ell^{\infty}(R_{[1,2],[3,b]})} \leq 1+(a\Bar{V})^{C_{1}b}$$ and thus $$\|u_{2}\|_{\ell^{\infty}(R_{[1,2],[3,b]})}=\|u_{3}\|_{\ell^{\infty}(R_{[1,2],[3,b]})} \leq (a\Bar{V})^{2\alpha_{1}a}+ (a\Bar{V})^{C_{1}b+2\alpha_{1}a}.$$
    Since $u_{2}=0$ on $R_{[1,a],[1,2]}$, by Lemma \ref{lem:extension-bound}, we have $\|u_{2}\|_{\ell^{\infty}(R_{1})}\leq 2 (a\Bar{V})^{2C_{1}b+2\alpha_{1}a}$.
    
    Finally, 
    \begin{align*}
        \|u\|_{\ell^{\infty}(R_{1})}
        &\leq \|u_{1}\|_{\ell^{\infty}(R_{1})} +\|u_{2}\|_{\ell^{\infty}(R_{1})}\\
        &\leq (a\Bar{V})^{C_{1}b} + 2 (a\Bar{V})^{2C_{1}b+2\alpha_{1}a}\\
        &\leq (a\Bar{V})^{\alpha_{2}a}
    \end{align*}
    as long as $\alpha_{2}>2\alpha_{1}+3C_{1}$. Thus $\mathcal{E}_{tr}(R_{1})\subset \mathcal{E}_{ni}(R_{1})$ and our conclusion follows from Lemma \ref{lem:transfer-u_3}.
\end{proof}
\subsection{Growth lemma}
\begin{defn}
Given a tilted square $R_{[a_{1},a_{2}],[b_{1},b_{2}]}$ and integer $k\in \Z^{+}$, we define
$k R_{[a_{1},a_{2}],[b_{1},b_{2}]}$ to be $R_{[a_{3},a_{4}],[b_{3},b_{4}]}$
where $a_{3}=\left\lceil \frac{(k+1)a_{1}-(k-1)a_{2}}{2} \right\rceil$, $a_{4}=\left\lfloor \frac{(k+1)a_{2}-(k-1)a_{1}}{2} \right\rfloor$, $b_{3}=\left\lceil \frac{(k+1)b_{1}-(k-1)b_{2}}{2} \right\rceil$ and $b_{4}=\left\lfloor \frac{(k+1)b_{2}-(k-1)b_{1}}{2} \right\rfloor$.
\end{defn}
For a tilted square $\Tilde{Q}$, the following lemma allows us to estimate $\|u\|_{\ell^{\infty}(2 \Tilde{Q})}$ from an upper bound of $\|u\|_{\ell^{\infty}(\Tilde{Q})}$, provided the portion of points with $|u|>1$ is small enough in $4 \Tilde{Q}$. The proof is similar to that of \cite[Lemma 3.18]{ding2020localization} and \cite[Lemma 3.6]{buhovsky2017discrete}.
\begin{lemma}\label{lem:growthlemma}
For every small $\varepsilon>0$, there is a large $\alpha>1$ such that, if
\begin{enumerate}
    \item $\Tilde{Q}$ tilted square with $\ell(\Tilde{Q})>\alpha$,
    \item $\Theta\subset \Z^{2}$ is $\varepsilon$-sparse in $2\Tilde{Q}$,
    \item $\lambda_{0}\in [0,8]$ and $\Bar{V}\geq 2$,
    \item $V':\Theta \rightarrow \{0,\Bar{V}\}$,
    \item $\mathcal{E}^{\varepsilon,\alpha}_{ex}(\Tilde{Q},\Theta)$ denotes the event that,\\
    \begin{equation}\label{eq:cond-covering-argu}
        \left\{\begin{array}{l}
    |\lambda-\lambda_{0}|\leq (\ell(\Tilde{Q})\Bar{V})^{-\alpha \ell(\Tilde{Q})}\\
    \text{$H u= \lambda u$ in $2\Tilde{Q}$ }\\
    \text{$|u|\leq 1$ in $\frac{1}{2} \Tilde{Q}$} \\
    \text{$|u|\leq 1$ in a $1-\varepsilon$ fraction of $2\Tilde{Q}\setminus \Theta$} 
    \end{array}\right. 
    \end{equation}
    implies $|u|\leq (\ell(\Tilde{Q})\Bar{V})^{\alpha \ell(\Tilde{Q})}$ in $\Tilde{Q}$,
\end{enumerate}
then $\mathbb{P}\left[\mathcal{E}^{\varepsilon,\alpha}_{ex}(\Tilde{Q},\Theta)\big|\; V|_{\Theta}=V'\right]\geq 1-\exp(-\varepsilon \ell(\Tilde{Q}))$.
\end{lemma}

\begin{figure}
        \centering
        \includegraphics{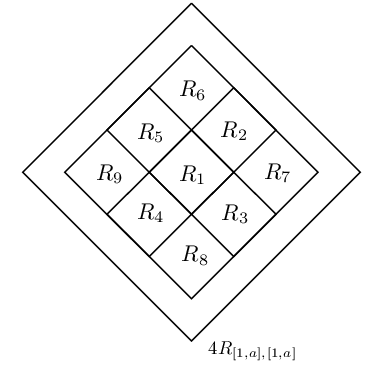}
        \caption{An illustration of covering argument. Here, we have $R_{1} = R_{[1,a],[1,a]}$, $R_{2}=R_{[1,a],[a+1,2a]}$, $R_{3}=R_{[1-a,0],[1,a]}$, $R_{4}=R_{[1,a],[1-a,0]}$, $R_{5}=R_{[a+1,2a],[1,a]}$, $R_{6}=R_{[a+1,2a],[a+1,2a]}$, $R_{7}=R_{[1-a,0],[a+1,2a]}$, $R_{8}=R_{[1-a,0],[1-a,0]}$ and $R_{9}=R_{[a+1,2a],[1-a,0]}$.}
        \label{fig:rotation}
    \end{figure}
\begin{proof}
    We identify $\Tilde{Q}$ with $2R_{[1,a],[1,a]}$. For each $1 \leq i \leq 9$, let $R_{i}$ be the square illustrated in Figure \ref{fig:rotation} and its caption.
    For some large $a$ and $1\leq i,j\leq 9$, let $\mathcal{E}'_{ex}(i,j)$ denote the event that
    \begin{equation}\label{eq:extend-square}
        \left\{\begin{array}{l}
        |\lambda-\lambda_{0}|\leq (a\Bar{V})^{-\alpha a}\\
             \text{$H u=\lambda u$ in $R_{i}\cup R_{j}$}\\
             \text{$|u|\leq 1$ in $R_{i}$} \\ 
             \text{$|\{(s,t) \in R_{j}:|u(s,t)|> 1\}| \leq 100\varepsilon a^{2}$} 
        \end{array}\right.
    \end{equation}
    implies $|u|\leq (a\Bar{V})^{\frac{1}{2} \alpha a}$ in $R_{j}$.
    \begin{cla}\label{cla:rotation}
        Let $S = \{(1,2),(1,3),(1,4),(1,5),(2,6),(3,7),(4,8),(5,9)\}$. Then
        $\bigcap_{(i,j) \in S} \mathcal{E}'_{ex}(i,j) \subset \mathcal{E}^{\varepsilon,\alpha}_{ex}(2R_{1},\Theta)$.
    \end{cla}
    \begin{proof}[Proof of the claim]
        The strategy here is to use a covering argument from elementary geometry. The covering argument was used in \cite[Lemma 3.18]{ding2020localization} without giving the details.
        Assume event $\bigcap_{(i,j) \in S} \mathcal{E}'_{ex}(i,j)$ holds and $u$ satisfies \eqref{eq:extend-square}. Our goal is to prove $|u|\leq (\ell(2 R_{1})\Bar{V})^{\alpha \ell(2 R_{1})}$ in $2 R_{1}$.
        
        Since $\Theta$ is $\varepsilon$-spares in $4R_{[1,a],[1,a]}$ and $|u|\leq 1$ in a $1-\varepsilon$ fraction of $4R_{[1,a],[1,a]}\setminus \Theta$, we have $|\{(s,t) \in 4R_{[1,a],[1,a]}:|u(s,t)|> 1\}| \leq 100\varepsilon a^{2}$.
        Then the event $\mathcal{E}'_{ex}(1,2)\cap\mathcal{E}'_{ex}(1,3)\cap\mathcal{E}'_{ex}(1,4)\cap \mathcal{E}'_{ex}(1,5)$ implies $|u|\leq (a\Bar{V})^{\frac{1}{2}\alpha a}$ in $\bigcup_{1\leq j \leq 5} R_{j}$. Finally, the event $\mathcal{E}'_{ex}(2,6)\cap\mathcal{E}'_{ex}(3,7)\cap\mathcal{E}'_{ex}(4,8)\cap \mathcal{E}'_{ex}(5,9)$ implies $|u|\leq (a\Bar{V})^{\alpha a}$ in $\bigcup_{1\leq j \leq 9} R_{j}$. Since $2 R_{1} \subset \bigcup_{1\leq j \leq 9} R_{j}$, the claim follows.
    \end{proof}
    Denote $\mathcal{E}'_{ex}(1,2)$ by $\mathcal{E}'_{ex}$ and let $S$ be the set in the Claim \ref{cla:rotation}. By Claim \ref{cla:rotation}, it is sufficient to prove that $\Prob\left [\mathcal{E}'_{ex}(i,j)\big|\; V|_{\Theta}=V'\right ] \geq 1-\exp(-\varepsilon a)$ for each $(i,j) \in S$. By symmetry,
    we only need to prove for the case where $(i,j) = (1,2)$, i.e. $\Prob\left [\mathcal{E}'_{ex}(1,2)\big|\; V|_{\Theta}=V'\right ] \geq 1-\exp(-\varepsilon a)$. 
    
    By Lemma \ref{lem:key-lemma} and a union bound, the event 
    $$ \mathcal{E}_{ni}= \bigcap_{\substack{[c,d]\subset [1,\frac{5}{2}a]\\ \frac{a}{60}\leq d-c\leq \frac{a}{10}}} \mathcal{E}_{ni}(R_{[1,a],[c,d]})
    $$ 
    satisfies $\Prob[\mathcal{E}_{ni}\big| V|_{\Theta}=V']\geq 1-\exp(-c_{5} a+ C \log(a))$ where $c_{5}$ is the constant in Lemma \ref{lem:key-lemma}. It suffices to prove that, for every small $\varepsilon<\frac{1}{4}c_{5}$, there is a large $\alpha$ such that $\mathcal{E}_{ni}\subset \mathcal{E}'_{ex}(1,2)$. Assume $\mathcal{E}_{ni}$ and \eqref{eq:extend-square} hold, our goal is to prove $|u|\leq (a\Bar{V})^{\frac{1}{2}\alpha a}$ in $R_{2} = R_{[1,a],[a+1,2a]}$.

    \begin{cla}\label{cla:extension_cla}
    Suppose $\varepsilon<10^{-12}$. Then there is a sequence $b_{0}\leq \cdots \leq b_{25}\in [a,\frac{5}{2}a-2]$ such that
    \begin{enumerate}
        \item $b_{0}=a$
        \item $b_{25}\geq 2a$
        \item $\frac{1}{60}a \leq b_{k+1}-b_{k}\leq \frac{3}{40}a$ for $0\leq k < 25$
        \item $|u|\leq 1$ on a $1-10^{-7}$ fraction of $R_{[1,a],[b_{k+1}-1,b_{k+1}]}$ for $0\leq k< 25$
    \end{enumerate}
    \end{cla}
    \begin{proof}[Proof of the claim]
        Let $b_{0}=a$. For each $k\in \{1,\cdots,25\}$, let interval $$J_{k}=\left(a+\frac{2k}{40}a,a+\frac{2k+1}{40}a\right].$$ Since $|u|> 1$ on at most $100\varepsilon a^{2}$ points in $R_{[1,a],[1,2a]}$, we have
        $$ \#\{(s,t)\in R_{[1,a],J_{k}}: |u(s,t)|>1\}< 10^{4}\varepsilon \left|R_{[1,a],J_{k}}\right|
        $$
        for each $k=1,\cdots,25$.
        The pigeonhole principle implies that, there is $b_{k}\in J_{k}\cap \Z$ such that 
        $$ \#\{(s,t)\in R_{[1,a],[b_{k}-1,b_{k}]}: |u(s,t)|>1\}< 10^{5}\varepsilon \left|R_{[1,a],[b_{k}-1,b_{k}]}\right|. 
        $$
        Since $\varepsilon<10^{-12}$, we have $$ \#\{(s,t)\in R_{[1,a],[b_{k}-1,b_{k}]}: |u(s,t)|>1\}< 10^{-7} \left|R_{[1,a],[b_{k}-1,b_{k}]}\right|
        $$
        for each $k=1,\cdots,25$.
        On the other hand, $b_{k+1}-b_{k}\in \left[\frac{1}{40}a, \frac{3}{40}a\right] \subset \left[\frac{1}{60}a,\frac{3}{40}a\right]$ for $0\leq k <25$. Finally, $b_{25}> a+\frac{5}{4}a >2a$ and our claim follows.
    \end{proof}
    With the claim in hand, we apply $\mathcal{E}_{ni}(R_{[1,a],[b_{k}-1,b_{k+1}]})$ to conclude
    $$\|u\|_{\ell^{\infty}(R_{[1,a],[b_{k}-1,b_{k+1}]})}\leq (a\Bar{V})^{\alpha_{2} a}(1+ \|u\|_{\ell^{\infty}(R_{[1,a],[b_{k}-1,b_{k}]})})
    $$
    for $k=0,\cdots,24$. Since $\|u\|_{\ell^{\infty}(R_{[1,a],[1,a]})}\leq 1$, by induction, we obtain $$\|u\|_{\ell^{\infty}(R_{[1,a],[1,2a]})}\leq 2^{25} (a\Bar{V})^{25 \alpha_{2} a} <(a\Bar{V})^{\frac{1}{2}\alpha a}$$ by setting $\alpha > 100\alpha_{2}$.
\end{proof}
\subsection{Covering argument}\label{sec:covering}
The proof of Lemma \ref{lem:unique-continuation} below is a random version of \cite[Proposition 3.9]{buhovsky2017discrete}. 
\begin{defn}
Given a tilted square $R_{[a_{1},a_{2}],[b_{1},b_{2}]}$ with $a_{2}-a_{1}=b_{2}-b_{1}>0$, we call the point $$\left(\left\lfloor\frac{a_{1}+a_{2}}{2} \right\rfloor, \left\lfloor\frac{b_{1}+b_{2}}{2} \right\rfloor +i\right)\in \widetilde{\Z^{2}}$$ the \emph{center} of $R_{[a_{1},a_{2}],[b_{1},b_{2}]}$. Here, $i\in \{0,1\}$ such that $\left\lfloor\frac{a_{1}+a_{2}}{2} \right\rfloor- \left\lfloor\frac{b_{1}+b_{2}}{2} \right\rfloor-i$ is an even number.
\end{defn}
\begin{proof}[Proof of Lemma \ref{lem:unique-continuation}]
    Let $\alpha'>1>\varepsilon'>0$ be a pair of valid constants in Lemma \ref{lem:growthlemma}. Let 
    \begin{equation}\label{eq:bound-epsilon_1}
        \varepsilon_{1}<10^{-30}\varepsilon'
    \end{equation}
    and suppose $\varepsilon<\varepsilon_{1}$. We impose further constraints on $\varepsilon_{1},\alpha$ during the proof.
    
    Assume without loss of generality that $Q=Q_{n}(\mathbf{0})$.
    Given integers $|s_{1}|,|t_{1}|\leq 10^{-10} \ell(Q)^{\frac{1}{3}}$ and $|s_{2}|,|t_{2}|\leq \varepsilon \ell(Q)^{\frac{2}{3}}$, let $Q_{s_{1},t_{1},s_{2},t_{2}}$ be the tilted square with center 
    $$ \left(100 \left\lceil \ell(Q)^{\frac{2}{3}} \right\rceil s_{1} +2 \left\lceil \varepsilon^{-1} \right\rceil s_{2},100 \left\lceil \ell(Q)^{\frac{2}{3}} \right\rceil t_{1}+2 \left\lceil \varepsilon^{-1} \right\rceil t_{2}\right)
    $$ and length being any integer satisfying
    \begin{equation}\label{eq:length-of-s_1-t_1-s_2-t_2}
            (4\varepsilon)^{-1}\leq \ell(Q_{s_{1},t_{1},s_{2},t_{2}}) \leq (2\varepsilon)^{-1}.
    \end{equation}
    Then for different pairs $(s_{1},t_{1},s_{2},t_{2})$ and $(s'_{1},t'_{1},s'_{2},t'_{2})$, 
    \begin{equation}\label{eq:disjoint-s_1-t_1-s_2-t_2}
             Q_{s_{1},t_{1},s_{2},t_{2}}\cap Q_{s'_{1},t'_{1},s'_{2},t'_{2}}=\emptyset.
    \end{equation}
    Meanwhile, for any $s_{2},t_{2}\in \left[-\varepsilon \ell(Q)^{\frac{2}{3}},\varepsilon \ell(Q)^{\frac{2}{3}}\right]$,
    \begin{equation}\label{eq:dist-s_2-t_2}
        \dist(Q_{s_{1},t_{1},s_{2},t_{2}},Q_{s'_{1},t'_{1},s_{2},t_{2}})>50\ell(Q)^{\frac{2}{3}} 
    \end{equation}
    when $(s_{1},t_{1})\not=(s'_{1},t'_{1})$. Let 
    \begin{align*}
        \mathcal{E}^{s_{1},t_{1},s_{2},t_{2}}_{ex}
        =\bigcap\{\mathcal{E}^{\alpha',\varepsilon'}_{ex}(Q',\Theta): Q'\supseteq Q_{s_{1},t_{1},s_{2},t_{2}},\ell(Q')\leq \ell(Q)^{\frac{2}{3}}\}.
    \end{align*}
    By Lemma \ref{lem:growthlemma} and \eqref{eq:length-of-s_1-t_1-s_2-t_2}, for each $s_{1},t_{1},s_{2},t_{2}$,
    \begin{equation}\label{eq:large-deviation-cond}
            \Prob\left[\mathcal{E}^{s_{1},t_{1},s_{2},t_{2}}_{ex}\big| \; V|_{\Theta}=V'\right]\geq 1-\sum_{l\geq (4\varepsilon)^{-1}}10l^{2} \exp(-\varepsilon' l)>\frac{9}{10}
    \end{equation}
    by choosing $\varepsilon<\varepsilon_{1}$ small enough. Here, we used the fact that for any integer $l$, the number of tilted squares with length $l$ that contain $Q_{s_{1},t_{1},s_{2},t_{2}}$ is less than $10 l^{2}$.
    Note that, for each tilted $Q'$, $\mathcal{E}^{\alpha',\varepsilon'}_{ex}(Q',\Theta)$ is $V|_{2 Q'}$-measurable. Thus 
    for any $s'_{2},t'_{2}\in \left[-\varepsilon \ell(Q)^{\frac{2}{3}},\varepsilon \ell(Q)^{\frac{2}{3}}\right]$, by \eqref{eq:dist-s_2-t_2}, we have $$\left\{\mathcal{E}^{s_{1},t_{1},s'_{2},t'_{2}}_{ex}:|s_{1}|,|t_{1}|\leq 10^{-10} \ell(Q)^{\frac{1}{3}}\right\}$$ is a family of independent events. We denote by $\mathcal{E}^{s'_{2},t'_{2}}_{ex}$ the following event 
    \begin{equation}\label{eq:half-of-events-hold}
    \textit{{at least half of events in $\left\{\mathcal{E}^{s_{1},t_{1},s'_{2},t'_{2}}_{ex}:|s_{1}|,|t_{1}|\leq 10^{-10} \ell(Q)^{\frac{1}{3}}\right\}$ happen.}}
    \end{equation}
    Then by \eqref{eq:large-deviation-cond} and a large deviation estimate, 
    \begin{equation}\label{eq:large-deviation-es}
            \Prob\left[\mathcal{E}^{s'_{2},t'_{2}}_{ex}\big| \; V|_{\Theta}=V'\right]\geq 1-\exp(-c \ell(Q)^{\frac{2}{3}})
    \end{equation}
    for a numerical constant $c$.
    Let 
    \begin{align*}
             &\mathcal{E}_{ex}\\
             =&\bigcap\{\mathcal{E}^{s_{2},t_{2}}_{ex}:|s_{2}|,|t_{2}|\leq \varepsilon\ell(Q)^{\frac{2}{3}}\}\cap \bigcap\{\mathcal{E}^{\alpha',\varepsilon'}_{ex}(Q',\Theta):\ell(Q')\geq \ell(Q)^{\frac{2}{3}},Q'\subset Q\}.
    \end{align*}
    Then by Lemma \ref{lem:growthlemma} and \eqref{eq:large-deviation-es}, $$\Prob[\mathcal{E}_{ex}\big| \; V|_{\Theta}=V']\geq 1- \exp(-c' \ell(Q)^{\frac{2}{3}} +C \log\ell(Q))\geq 1-\exp(-c'' \ell(Q)^{\frac{2}{3}})$$ for constants $c',c''$ depending on $\varepsilon'$. Hence, it is sufficient to prove that $$\mathcal{E}_{ex}\subset \mathcal{E}_{uc}^{\varepsilon,\alpha}(Q,\Theta).$$Thus we assume $\mathcal{E}_{ex}$ holds and $u$ satisfies \eqref{eq:unique-conti}. Our goal is to prove
    \begin{equation}\label{eq:final-goal}
        \|u\|_{\ell^{\infty}(\frac{1}{100}Q)}\leq (\ell(Q)\Bar{V})^{\alpha\ell(Q)}.
    \end{equation}
    Let $\mathcal{Q}$ denote the subset of all $Q_{s_{1},t_{1},s_{2},t_{2}}$'s such that $\mathcal{E}^{s_{1},t_{1},s_{2},t_{2}}_{ex}$ happens.
    Then by definition of $\mathcal{E}_{ex}$ and \eqref{eq:half-of-events-hold}, we have 
    \begin{equation}\label{eq:card-of-mathcalQ}
        |\mathcal{Q}|\geq 10^{-21} \varepsilon^{2} \ell(Q)^{2}.
    \end{equation}
    \begin{cla}\label{cla:containing-mathcalQ}
    For any $Q_{s_{1},t_{1},s_{2},t_{2}}\in \mathcal{Q}$ and $Q''\subset Q$ with $Q''\supseteq Q_{s_{1},t_{1},s_{2},t_{2}}$, $\mathcal{E}^{\alpha',\varepsilon'}_{ex}(Q'',\Theta)$ holds.
    \end{cla}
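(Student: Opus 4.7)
The plan is to observe that the claim is essentially a bookkeeping statement that follows by unpacking the definitions of $\mathcal{Q}$, $\mathcal{E}^{s_1,t_1,s_2,t_2}_{ex}$, and $\mathcal{E}_{ex}$, and then splitting into two cases depending on the size of $\ell(Q'')$ relative to the threshold $\ell(Q)^{2/3}$ that appears in those definitions.

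Fix $Q_{s_1,t_1,s_2,t_2} \in \mathcal{Q}$ and $Q'' \subset Q$ with $Q'' \supseteq Q_{s_1,t_1,s_2,t_2}$. By membership in $\mathcal{Q}$, the event $\mathcal{E}^{s_1,t_1,s_2,t_2}_{ex}$ holds, which by its definition means that $\mathcal{E}^{\alpha',\varepsilon'}_{ex}(Q',\Theta)$ holds for every tilted square $Q'$ with $Q' \supseteq Q_{s_1,t_1,s_2,t_2}$ and $\ell(Q') \leq \ell(Q)^{2/3}$. First I would handle the case $\ell(Q'') \leq \ell(Q)^{2/3}$: here $Q''$ is exactly one of those $Q'$, so $\mathcal{E}^{\alpha',\varepsilon'}_{ex}(Q'',\Theta)$ is immediate.

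In the complementary case $\ell(Q'') > \ell(Q)^{2/3}$, I would use that we have assumed $\mathcal{E}_{ex}$ to hold. By the definition of $\mathcal{E}_{ex}$, it contains as one of its conjuncts $\bigcap\{\mathcal{E}^{\alpha',\varepsilon'}_{ex}(Q',\Theta): \ell(Q') \geq \ell(Q)^{2/3},\ Q' \subset Q\}$, and $Q''$ satisfies both $\ell(Q'') \geq \ell(Q)^{2/3}$ and $Q'' \subset Q$, so $\mathcal{E}^{\alpha',\varepsilon'}_{ex}(Q'',\Theta)$ holds in this case as well. Combining the two cases proves the claim.

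There is no real obstacle here: the claim is a definitional consistency statement whose only purpose is to let the subsequent covering argument chain together the growth inequality from $\mathcal{E}^{\alpha',\varepsilon'}_{ex}(Q',\Theta)$ across all scales from $(2\varepsilon)^{-1}$ up to $\ell(Q)$. The threshold $\ell(Q)^{2/3}$ was chosen precisely so that the small-scale events can be made independent (via the spacing \eqref{eq:dist-s_2-t_2}) and treated by a large-deviation argument, while the large-scale events are few enough to be handled by a plain union bound; the case split above just reflects that dichotomy.
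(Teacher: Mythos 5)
Your proof is correct and follows exactly the paper's argument: split on whether $\ell(Q'')\leq \ell(Q)^{2/3}$, use membership in $\mathcal{Q}$ (hence $\mathcal{E}^{s_1,t_1,s_2,t_2}_{ex}$) in the small case, and the second conjunct in the definition of $\mathcal{E}_{ex}$ in the large case. No gaps.
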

    \begin{proof}
        If $\ell(Q'')\leq \ell(Q)^{\frac{2}{3}}$, then $\mathcal{E}^{s_{1},t_{1},s_{2},t_{2}}_{ex}\subset \mathcal{E}^{\alpha',\varepsilon'}_{ex}(Q'',\Theta)$. Otherwise, $$ \bigcap\{\mathcal{E}^{\alpha',\varepsilon'}_{ex}(Q',\Theta):\ell(Q')\geq \ell(Q)^{\frac{2}{3}},Q'\subset Q\}\subset \mathcal{E}^{\alpha',\varepsilon'}_{ex}(Q'',\Theta).$$
        The claim follows from the definition of $\mathcal{E}_{ex}$.
    \end{proof}
    Let 
    $$\mathcal{Q}_{sp}=\{Q'\in \mathcal{Q}: \exists Q''\subset Q\text{ such that $Q''\supseteq Q'$ and $\Theta$ is not $\varepsilon$-sparse in $Q''$}\}.
    $$
    Write $\mathcal{Q}_{sp}=\{Q^{(i)}_{sp}:1\leq i\leq K_{1}\}$. For each $1\leq i\leq K_{1}$, choose $Q^{(i)}_{spm}\subset Q$ to be a tilted square in which $\Theta$ is not $\varepsilon$-sparse and $Q^{(i)}_{sp}\subset Q^{(i)}_{spm}$. By Vitalli covering theorem, there exists $J'\subset \{1,\cdots,K_{1}\}$ such that 
    $$Q^{(i_{1})}_{spm} \cap Q^{(i_{2})}_{spm} =\emptyset
    $$
    for each $i_{1}\not=i_{2}\in J'$ and 
    \begin{align}
    \begin{split}\label{eq:lower-bound-spm}
         &|\bigcup \{Q^{(i)}_{spm}:i\in J'\}|\\
         \geq &\frac{1}{100}|\bigcup \{Q^{(i)}_{spm}:1\leq i\leq K_{1}\}|\\
         \geq &\frac{1}{100}|\bigcup \{Q^{(i)}_{sp}:1\leq i\leq K_{1}\}|.
    \end{split}
    \end{align}
    Since $\Theta$ is $\varepsilon$-regular in $Q$, we have
    $$
    |\bigcup \{Q^{(i)}_{spm}:i\in J'\}|\leq \varepsilon \ell(Q)^{2}.
    $$
    Thus by \eqref{eq:lower-bound-spm}, $|\bigcup\{Q^{(i)}_{sp}:1\leq i\leq K_{1}\}|\leq 100\varepsilon \ell(Q)^{2}$. Note that by \eqref{eq:disjoint-s_1-t_1-s_2-t_2}, $\{Q^{(i)}_{sp}:1\leq i\leq K_{1}\}$ are pairwise disjoint. Thus by \eqref{eq:length-of-s_1-t_1-s_2-t_2},
    \begin{equation}\label{eq:bound-K_1}
        K_{1}\leq 10^{4}\varepsilon^{3}\ell(Q)^{2}.
    \end{equation}
    By choosing $\varepsilon<10^{-26}$, \eqref{eq:card-of-mathcalQ} and \eqref{eq:bound-K_1} imply
    \begin{equation}\label{eq:lower-bound-number-of-sp}
        |\mathcal{Q}\setminus\mathcal{Q}_{sp}|>10^{-22}\varepsilon^{2}\ell(Q)^{2}.
    \end{equation}
    Now for any $Q'\in \mathcal{Q}\setminus\mathcal{Q}_{sp}$ and any $Q''\subset Q$ with $Q'' \supseteq Q'$, $\Theta$ is $\varepsilon$-sparse in $Q''$. In particular, $\Theta$ is $\varepsilon$-sparse in $Q'$ and by \eqref{eq:length-of-s_1-t_1-s_2-t_2} and Definition \ref{def:sparsity}, $\Theta\cap Q'=\emptyset$. Thus by \eqref{eq:unique-conti},
    \begin{equation}\label{eq:number-larger-1}
        \left|\{ |u|>1\}\cap \bigcup\left\{Q':Q'\in \mathcal{Q}\setminus\mathcal{Q}_{sp}\right\}\right|<\varepsilon^{3}\ell(Q)^{2}.
    \end{equation}
    Equations \eqref{eq:number-larger-1}, \eqref{eq:disjoint-s_1-t_1-s_2-t_2} and \eqref{eq:lower-bound-number-of-sp}, together with $\varepsilon<10^{-26}$, guarantee that there is $\mathcal{Q}_{good}\subset (\mathcal{Q}\setminus\mathcal{Q}_{sp})$ with 
    \begin{equation}\label{eq:lower-bound-mathcalQ-good}
            |\mathcal{Q}_{good}|>10^{-23}\varepsilon^{2}\ell(Q)^{2} 
    \end{equation}
    such that 
    $$ \|u\|_{\ell^{\infty}(Q')}\leq 1
    $$
    for each $Q'\in \mathcal{Q}_{good}$. 
    
    We call a tilted square $Q'\subset Q$ ``\emph{tamed}'' if the following holds:
    \begin{enumerate}
        \item the center of $Q'$ is in $\frac{1}{50}Q$,
        \item $Q'\supseteq Q''$ for some $Q''\in \mathcal{Q}_{good}$,
        \item $Q'\subset Q'''\subset Q$ implies $\Theta$ is $\varepsilon$-sparse in $Q'''$,
        \item $\|u\|_{\ell^{\infty}(Q')}\leq (\ell(Q')\Bar{V})^{\alpha \ell(Q')}$.
    \end{enumerate}
    Let $\mathcal{Q}_{ta}$ be the set of tamed squares. Then $\mathcal{Q}_{good}\subset \mathcal{Q}_{ta}$. We call $Q'\in \mathcal{Q}_{ta}$ maximal if any $Q''\in \mathcal{Q}_{ta}$ with $Q''\supseteq Q'$ implies $Q''=Q'$.
    \begin{cla}\label{cla:not-extensible}
    Suppose maximal $Q' \in \mathcal{Q}_{ta}$ with $\ell(Q')\leq \frac{1}{24}\ell(Q)$. Then $|u|>1$ on at least a $\varepsilon'$ fraction of $4Q'\setminus \Theta$. 
    \end{cla}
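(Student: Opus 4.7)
The plan is to argue by contradiction. Suppose $|u|\leq 1$ on a $1-\varepsilon'$ fraction of $4Q'\setminus\Theta$; I will deduce that $2Q'\in \mathcal{Q}_{ta}$, contradicting the maximality of $Q'$.

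First, $4Q'\subset Q$: since the center of $Q'$ lies in $\tfrac{1}{50}Q$ and $\ell(Q')\leq \tfrac{1}{24}\ell(Q)$, a direct computation places $4Q'$ well inside $Q$. I then check the first three items in the definition of ``tamed'' for $2Q'$. Item (1) is immediate since $2Q'$ has the same center as $Q'$. Item (2) follows because $2Q'\supseteq Q'\supseteq Q''$ for some $Q''\in \mathcal{Q}_{good}$. For item (3), any tilted square $Q'''\subset Q$ with $Q'''\supseteq 2Q'$ also contains $Q'$, so $\varepsilon$-sparsity of $\Theta$ in $Q'''$ follows from item (3) for $Q'$. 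The only substantive point is item (4), the bound $\|u\|_{\ell^\infty(2Q')}\leq (\ell(2Q')\Bar{V})^{\alpha\ell(2Q')}$.

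To prove item (4) I will apply Lemma \ref{lem:growthlemma} to the tilted square $\tilde Q=2Q'$ with the rescaled function $\tilde u= u/M$, where $M=(\ell(Q')\Bar{V})^{\alpha\ell(Q')}$. By item (4) for the tamed square $Q'$, $|\tilde u|\leq 1$ in $Q'=\tfrac{1}{2}\tilde Q$; by the contradiction hypothesis, $|\tilde u|\leq 1/M\leq 1$ on a $1-\varepsilon'$ fraction of $2\tilde Q\setminus\Theta$; and $\Theta$ is $\varepsilon$-sparse (hence $\varepsilon'$-sparse by \eqref{eq:bound-epsilon_1}) in $4Q'\subset Q$ by item (3) for $Q'$. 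The energy-window condition $|\lambda-\lambda_0|\leq (\ell(\tilde Q)\Bar{V})^{-\alpha'\ell(\tilde Q)}$ is inherited from $|\lambda-\lambda_0|\leq (\ell(Q)\Bar{V})^{-\alpha\ell(Q)}$ in \eqref{eq:unique-conti} as long as $\alpha\geq \alpha'$. Since $2Q'$ contains some $Q''\in \mathcal{Q}_{good}\subset \mathcal{Q}$ and is contained in $Q$, Claim \ref{cla:containing-mathcalQ} guarantees $\mathcal{E}^{\alpha',\varepsilon'}_{ex}(2Q',\Theta)$ holds, so Lemma \ref{lem:growthlemma} yields $\|\tilde u\|_{\ell^\infty(2Q')}\leq (\ell(2Q')\Bar{V})^{\alpha'\ell(2Q')}$. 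Unscaling,
\[
\|u\|_{\ell^\infty(2Q')}\leq (\ell(Q')\Bar{V})^{\alpha\ell(Q')}(\ell(2Q')\Bar{V})^{\alpha'\ell(2Q')}\leq (\ell(2Q')\Bar{V})^{\alpha\ell(2Q')},
\]
where the last inequality holds once $\alpha\geq 2\alpha'$, a choice I impose at the outset.

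Hence $2Q'$ is a tamed square strictly containing $Q'$, contradicting maximality. The only real obstacle is constant bookkeeping: one must choose $\alpha$ in the statement of Lemma \ref{lem:unique-continuation} at least $2\alpha'$ (and at least $\alpha'$ for the energy window) so that doubling strictly preserves item (4) of tameness. All other checks reduce to geometric containment or direct inheritance of sparsity from $Q'$.
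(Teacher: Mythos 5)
Your proposal is correct and follows essentially the same route as the paper: assume the contrary, note $4Q'\subset Q$, invoke Claim \ref{cla:containing-mathcalQ} to get $\mathcal{E}^{\alpha',\varepsilon'}_{ex}(2Q',\Theta)$, apply it (your explicit normalization by $M$ is exactly the paper's factor $(1+\|u\|_{\ell^{\infty}(Q')})$), and conclude that $2Q'$ is tamed, contradicting maximality. The only cosmetic difference is the constant bookkeeping ($\alpha\geq 2\alpha'$ in your version versus $\alpha>10\alpha'$ in the paper), and both choices work.
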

    \begin{proof}
        Since $Q'$'s center is in $\frac{1}{50}Q$, we have $\ell(Q')\leq \frac{1}{24}\ell(Q)$ implies $4Q' \subset Q$. Assume $|u|\leq 1$ on a $1-\varepsilon'$ fraction of $4Q'\setminus \Theta$. Since $Q'\supseteq Q''$ for some $Q''\in \mathcal{Q}$, by Claim \ref{cla:containing-mathcalQ}, $\mathcal{E}^{\alpha',\varepsilon'}_{ex}(2Q',\Theta)$ holds. Moreover,
        $Q'$ containing some $Q''\in \mathcal{Q}_{good}$ implies $\Theta$ is $\varepsilon'$-sparse in $4 Q'$ and thus $\mathcal{E}^{\alpha',\varepsilon'}_{ex}(2Q',\Theta)$ implies $$\|u\|_{\ell^{\infty}(2Q')}\leq (2\ell(Q')\Bar{V})^{2 \alpha' \ell(Q')}(1+ \|u\|_{\ell^{\infty}(Q')}) \leq (\ell(2Q')\Bar{V})^{\alpha \ell(2Q')},$$ as long as $\alpha>10\alpha'$. Thus $2Q'$ is also tamed and this contradicts with the maximality of $Q'$.
    \end{proof}
    Write $\mathcal{Q}_{good}=\{Q^{(i)}:1\leq i\leq K_{2}\}$ and by \eqref{eq:lower-bound-mathcalQ-good},
    \begin{equation}\label{eq:bounding-K_2}
        K_{2}>10^{-23}\varepsilon^{2}\ell(Q)^{2}. 
    \end{equation}
    For each $1\leq i\leq K_{2}$, pick a maximal $Q^{(i)}_{max}\in \mathcal{Q}_{ta}$ with $Q^{(i)}\subset Q^{(i)}_{max}$. Assume 
    \begin{equation}\label{eq:length-larger-than-1/24}
        \ell(Q^{(i_{0})}_{max})> \frac{1}{24}\ell(Q) 
    \end{equation}
    for some $1\leq i_{0}\leq K_{2}$. By definition of $\mathcal{Q}_{ta}$, the center of $Q^{(i_{0})}_{max}$ is in $\frac{1}{50}Q$ and \eqref{eq:length-larger-than-1/24} implies $\frac{1}{100}Q\subset Q^{(i_{0})}_{max}$. Hence 
    $$ \|u\|_{\ell^{\infty}(\frac{1}{100}Q)}\leq \|u\|_{\ell^{\infty}(Q^{(i_{0})}_{max})}\leq (\ell(Q^{(i_{0})}_{max})\Bar{V})^{\alpha \ell(Q^{(i_{0})}_{max})}\leq (\ell(Q)\Bar{V})^{\alpha \ell(Q)}
    $$
    and our conclusion \eqref{eq:final-goal} follows. 
    
    Now we assume $\ell(Q^{(i)}_{max})\leq \frac{1}{24}\ell(Q)$ for each $1\leq i\leq K_{2}$ and we will arrive at a contradiction. By Vitalli covering theorem, there is $J''\subset \{1,\cdots,K_{2}\}$ such that
    $$ 4Q^{(i_{1})}_{max}\cap 4Q^{(i_{2})}_{max}=\emptyset
    $$
    for $i_{1}\not=i_{2}\in J''$ and
    \begin{align}
    \begin{split}\label{eq:lower-bound-J''}
             &\sum_{i\in J''} |4Q^{(i)}_{max}|\\
        \geq &\frac{1}{100}|\bigcup \{4Q^{(i)}_{max}:1\leq i\leq K_{2}\}|\\
        \geq &\frac{1}{100}|\bigcup \{Q^{(i)}:1\leq i\leq K_{2}\}|.
    \end{split}
    \end{align}
    By Claim \ref{cla:not-extensible}, for each $1\leq i\leq K_{2}$, $|u|>1$ on a $\varepsilon'$ fraction of $4Q^{(i)}_{max}\setminus \Theta$.
    Thus
    \begin{align}
        |\{|u|>1\}\setminus \Theta|\geq &\varepsilon' \sum_{i\in J''} |4Q^{(i)}_{max}\setminus \Theta|\\ 
        \geq &\frac{1}{2}\varepsilon' \sum_{i\in J''} |4Q^{(i)}_{max}| \label{eq:lower-bond-bad points0}\\
        \geq &\frac{1}{200}\varepsilon'|\bigcup \{Q^{(i)}:1\leq i\leq K_{2}\}| \label{eq:lower-bond-bad points1}\\ 
        \geq &\frac{1}{200}\varepsilon' K_{2} (4\varepsilon)^{-2} \label{eq:lower-bond-bad points2}\\ 
        \geq &10^{-30} \varepsilon' \ell(Q)^{2}.\label{eq:lower-bond-bad points3}
    \end{align}
    Here, \eqref{eq:lower-bond-bad points0} is because $\Theta$ is $\varepsilon$-sparse in $4Q^{(i)}_{max}$; \eqref{eq:lower-bond-bad points1} is due to \eqref{eq:lower-bound-J''}; \eqref{eq:lower-bond-bad points2} is due to \eqref{eq:length-of-s_1-t_1-s_2-t_2} and \eqref{eq:disjoint-s_1-t_1-s_2-t_2}; \eqref{eq:lower-bond-bad points3} is due to \eqref{eq:bounding-K_2}. However,  by \eqref{eq:bound-epsilon_1},
    \eqref{eq:lower-bond-bad points3} contradicts with $|\{|u|>1\}\setminus \Theta|\leq \varepsilon^{3}\ell(Q)^{2}$ in \eqref{eq:unique-conti}.
\end{proof}
\subsection*{Acknowledgement}
The author thanks Professor Jian Ding for several stimulating discussions and helpful comments. The author also thanks Professor Charles Smart for helpful comments (communicated via Professor Jian Ding). 

\bibliographystyle{style}
\bibliography{bib}

\end{document}